\documentclass[a4paper,11pt]{amsart}
\usepackage{amsmath}
\usepackage{cases}
\usepackage{amsfonts}
\usepackage[colorlinks,linkcolor=blue,citecolor=blue]{hyperref}
\usepackage{latexsym, amssymb, amsmath, amsthm, bbm}
\usepackage[all]{xy}
\usepackage{pgfplots}

\DeclareSymbolFont{EulerExtension}{U}{euex}{m}{n}
\DeclareMathSymbol{\euintop}{\mathop} {EulerExtension}{"52}
\DeclareMathSymbol{\euointop}{\mathop} {EulerExtension}{"48}

\allowdisplaybreaks[4]

\setlength{\textwidth}{5.6truein}
\setlength{\textheight}{8.2truein}
\setlength{\topmargin}{-0.13truein}
\setlength{\parindent}{0pt}
\addtolength{\parskip}{5pt}

\def \id{\operatorname{Id}}
\def \ker{\operatorname{Ker}}

\def \Z{\mathbb{Z}}

\def \k{\mathbbm{k}}

\def \Id{\operatorname{Id}}

\def \Aut{\operatorname{Aut}}
\def \id{\operatorname{Id}}
\def \ker{\operatorname{Ker}}

\def \Z{\mathbb{Z}}

\numberwithin{equation}{section}

\newtheorem{theorem}{Theorem}[section]
\newtheorem{lemma}[theorem]{Lemma}
\newtheorem{proposition}[theorem]{Proposition}
\newtheorem{corollary}[theorem]{Corollary}
\newtheorem{definition}[theorem]{Definition}
\newtheorem{example}[theorem]{Example}
\newtheorem{remark}[theorem]{Remark}

\newtheorem*{notation}{Notation}

\begin{document}
\title{On the quasitriangular structures of abelian extensions of $\mathbb{Z}_{2}$}
\thanks{$^\dag$Supported by NSFC 11722016.}

\subjclass[2010]{16T05 (primary), 16T25 (secondary)}
\keywords{Quasitriangular Hopf algebra, Abelian extension.}

\author{Kun Zhou and Gongxiang Liu}
\address{Department of Mathematics, Nanjing University, Nanjing 210093, China} \email{xzkdh4712@gmail.com gxliu@nju.edu.cn}
\date{}
\maketitle
\begin{abstract}  The aim of this paper is to study quasitriangular structures on a class of semisimple Hopf algebras $\Bbbk^G\#_{\sigma,\tau}\Bbbk \mathbb{Z}_{2}$ constructed through abelian extensions of $\k\Z_2$ by $\Bbbk^G$ for an abelian group $G.$ We prove that there are only two forms of them. Using such description together with some other techniques, we get a complete list of all universal $\mathcal{R}$-matrices on Hopf algebras $H_{2n^2}$, $A_{2n^2,t}$ and $K(8n,\sigma,\tau)$ (see Section 2 for the definition of these Hopf algebras). Then we find a simple criterion for a $K(8n,\sigma,\tau)$ to be a minimal quasitriangular Hopf algebra. As a product, some minimal quisitriangular semisimple Hopf algebras are found.
\end{abstract}

\section{Introduction}
Throughout the paper we work over an algebraically closed field $\Bbbk$ of characteristic 0. Our original motivation comes from the following well-known fact: There is one to one correspondence between the conjugacy classes irreducible depth 2 inclusions of hyperfinite II$_1$ factors with finite index and the isomorphism classes of all Kac algebras, that is, finite dimensional $\mathbb{C}^\ast$-Hopf algebras. Due to the undoubted importance of quasitriangular structure, a natural question is: if a Kac algebra is quasitriangular, then what can say about the corresponding subfactor? Soon, we realized that it seems quite hard to determine when a Hopf algebra is quasitriangular. For example, even for the very simple case, say, the eight-dimensional Kac-Paljutkin algebra $K_8$, all quasitriangular structure on it weren't until 2010 that Wakui figured them out \cite{W}.

In this paper and subsequent works, we try to determine possible quasitriangular structures on a class of semisimple Hopf algebras arising from exact factorizations of
finite groups. The well-known eight-dimensional Kac-Paljutkin algebra $K_8$ is a very special case of them.  The idea of constructing these semisimple Hopf algebras can be tracked back to G. Kac \cite{Kac}: Suppose that $L= G\Gamma$ is an exact factorization of the finite group $L$, into its subgroups $G$ and $\Gamma$, such that $G\cap \Gamma = 1.$ Associated to this exact factorization and appropriate cohomology data $\sigma$ and
$\tau$, there is a semisimple bicrossed product Hopf algebra $H =\Bbbk^G\#_{\sigma,\tau}\Bbbk \Gamma$ (see Section 2 for the definition and \cite{M3,M5,M6} for details and generalizations). The question of existence of quasitriangular structures on $\Bbbk^G\#_{\sigma,\tau}\Bbbk \Gamma$ has been considered before. In 2011, S. Natale \cite{Na} proved that if $L$ is almost simple, then the extension admits no quasitriangular structure.

But for our purpose, we want to find more concrete quasitriangular structures rather than absence of quasitriangular structures. So comparing the Natale's viewpoint, we consider the other extreme case: the almost commutative case. That is, we assume that both $G$ and $\Gamma$ are commutative groups. As the start point, we further assume that $\Gamma$ is just the $\Z_2$ in this paper.


At first, we find that there is a dichotomy on the forms of the quasitriangular structures of $\Bbbk^G\#_{\sigma,\tau}\Bbbk \mathbb{Z}_{2}$. For convenience, we call one form trivial and the other form nontrivial. In principle, the trivial form corresponds to the bicharacters and thus is not very complicated. The difficult point of the this paper is to determine the nontrivial forms. To do that, we get some necessary conditions for the existence of nontrivial forms.  To state the applications of such observations, we give three classes of Hopf algebras which are denoted by $H_{2n^2}$, $A_{2n^2,t}$ and $K(8n,\sigma,\tau)$ respectively. We need point out that the first two classes of Hopf algebras were studied by some authors \cite{D,M2} before. As the main conclusion of this paper,  all universal $\mathcal{R}$-matrices on Hopf algebras $H_{2n^2}$, $A_{2n^2,t}$ and $K(8n,\sigma,\tau)$ are given explicitly.  In addition,  we have identified which are the minimal Hopf algebras among $K(8n,\sigma,\tau)$ (we show that there is almost no minimal quasitriangular structures on $H_{2n^2}$ and $A_{2n^2,t}$). When $K(8n,\sigma,\tau)$ is a minimal quasitriangular Hopf algebra, we explicitly write out all minimal quasitriangular structures on it. As an application of the above conclusions, we find a class of minimal quasitriangular Hopf algebras which are denoted by $K(8n,\zeta)$ ($n\geq 4$ is even) in this paper.
%

This paper is organized as follows. In Section 2, we recall the definition of a  Hopf algebra $\Bbbk^G\#_{\sigma,\tau}\Bbbk \mathbb{Z}_{2}$ and give some examples of them. In Section 3, we show that there are only two possible forms of quasitriangular structures on $\Bbbk^G\#_{\sigma,\tau}\Bbbk \mathbb{Z}_{2}$ and give some necessary conditions for $\Bbbk^G\#_{\sigma,\tau}\Bbbk \mathbb{Z}_{2}$ preserving non-trivial quasitriangular structures. Using our necessary conditions, we can easily get all universal $\mathcal{R}$-matrices on Hopf algebras $H_{2n^2}$, $A_{2n^2,t}$. Section 4 is devoting to figure out all universal $\mathcal{R}$-matrices on Hopf algebras $K(8n,\sigma,\tau)$. This section occupies most of parts of this paper due to the situation becoming complicated than before. As the results, we not only get a complete list of all quasitriangular structures on $K(8n,\sigma,\tau)$ but also find a simple criterion to determine which one is minimal. Moreover, the new class of minimal quasitriangular Hopf algebras are also given in this section by using this criterion.

 All Hopf algebras in this paper are finite dimensional. For the symbol $\delta$ in Section 2, we mean the classical Kronecker's symbol.

\section{Abelian extensions of $\mathbb{Z}_{2}$ and examples}
In this section, we recall the definition of $\Bbbk^G\#_{\sigma,\tau}\Bbbk \mathbb{Z}_{2}$, and then we give some examples of $\Bbbk^G\#_{\sigma,\tau}\Bbbk \mathbb{Z}_{2}$ for guiding our further research.
\subsection{The definition of $\Bbbk^G\#_{\sigma,\tau}\Bbbk \mathbb{Z}_{2}$}
\begin{definition}\label{def2.1.1}
A short exact sequence of Hopf algebras is a sequence of Hopf algebras
and Hopf algebra maps
\begin{equation}\label{ext}
\;\; K\xrightarrow{\iota} H \xrightarrow{\pi} A
\end{equation}
such that
\begin{itemize}
  \item[(i)] $\iota$ is injective,
  \item[(ii)]  $\pi$ is surjective,
  \item[(iii)] $\ker(\pi)= HK^+$, $K^+$ is the kernel of the counit of $K$.
\end{itemize}
\end{definition}
In this situation it is said that $H$ is an extension of $A$ by $K$ \cite[Definiton 1.4]{M3}. An extension \eqref{ext} above such that $K$ is commutative and $A$ is cocommutative is called abelian.
In this paper, we only study the following special abelian extensions
\begin{equation*}
\;\; \Bbbk^G\xrightarrow{\iota} A \xrightarrow{\pi} \Bbbk \mathbb{Z}_{2},
\end{equation*}
where $G$ is a finite abelian group. Abelian extensions were classified by Masuoka
(see \cite[Proposition 1.5]{M3}), and the above $A$ can be expressed as $\Bbbk^G\#_{\sigma,\tau}\Bbbk \mathbb{Z}_{2}$ which is defined as follows.

Let $\mathbb{Z}_2=\{1,x\}$ be the cyclic group of order 2 and let $G$ be a finite group. To give the description of $\Bbbk^G\#_{\sigma,\tau}\Bbbk \mathbb{Z}_{2}$, we need the following data
\begin{itemize}
\item[(i)] $\triangleleft :\mathbb{Z}_2 \rightarrow \Aut(G)$ is an injective group homomorphism.
\item[(ii)] $\sigma:G\rightarrow \Bbbk^\times$ is a map such that $\sigma(g\triangleleft x)=\sigma(g)$ for $g \in G$ and $\sigma(1)=1$.
\item[(iii)] $\tau:G\times G \rightarrow \Bbbk^\times$ is a unital 2-cocycle and satisfies that $\sigma(gh)\sigma(g)^{-1}\sigma(h)^{-1}=\tau(g,h)\tau(g\triangleleft x,h\triangleleft x)$\ for $g,h \in G$.
\end{itemize}
We need remark that here (i) is not necessary for the definition of $\Bbbk^G\#_{\sigma,\tau}\Bbbk \mathbb{Z}_{2}$ and our aim for adding this additional requirement is just to avoid making a commutative algebra (in such case all quasitriangular structures are given by bicharacters and thus is known).
\begin{definition}\cite[Section 2.2]{AA}\label{def2.1.2}
As an algebra, the Hopf algebra $\Bbbk^G\#_{\sigma,\tau}\Bbbk \mathbb{Z}_{2}$ is generated by $\{ e_{g},x \}_{g \in G}$  satisfying
 \begin{equation*}
 e_{g}e_{h}=\delta_{g,h}e_{g},\ xe_{g}=e_{g\triangleleft x}x,\ x^2=\sum\limits_{g \in G}\sigma(g)e_{g}, \;\;\;\;g,h\in G.
 \end{equation*}
 The coproduct, counit and antipode are given by
 \begin{align*}
 &\Delta (e_{g})=\sum_{ h,k \in G,\ hk=g} e_{h}\otimes e_{k},\ \Delta(x)=[\sum\limits_{g,h \in G}\tau(g,h)e_{g}\otimes e_{h}](x\otimes x),\\
  &\epsilon(x)=1,\ \epsilon(e_{g})=\delta_{g,1}1,\\
   &\mathcal{S}(x)=\sum_{g\in G}\sigma(g)^{-1}\tau(g,g^{-1})^{-1}e_{g\triangleleft x}x,\ \mathcal{S}(e_g)=e_{g^{-1}},\;\;g\in G.
 \end{align*}
\end{definition}
\subsection{Examples} The following are some examples of $\Bbbk^G\#_{\sigma,\tau}\Bbbk \mathbb{Z}_2$ and we will discuss them in next sections.
\begin{example}\label{ex2.1.5}
\emph{Let $n \in \mathbb{N}$ and assume that $w$ is a primitive $n$th root of 1 in $\Bbbk$. Then \emph{the generalized Kac-Paljutkin algebra} $H_{2n^2}$ \cite[Section 2.2]{D} belongs to $\Bbbk^G\#_{\sigma,\tau}\Bbbk \mathbb{Z}_2$. By definition, the data $(G,\triangleleft,\sigma,\tau)$ of $H_{2n^2}$ is given by the following way
\begin{itemize}
             \item[(i)] $G=\mathbb{Z}_{n}\times \mathbb{Z}_{n}=\langle a,b|a^n=b^n=1,ab=ba\rangle$ and $a\triangleleft x=b,b\triangleleft x=a$.
              \item[(ii)] $\sigma(a^i b^j)=w^{ij}$ for $1 \leq i,j\leq n$.
              \item[(iii)] $\tau(a^i b^j ,a^k b^l)=(w)^{jk}$ for $1 \leq i,j,k,l\leq n$.
\end{itemize}}
\emph{Among of them, if we take $n=2$ then the resulting Hopf algebra is just the well-known Kac-Paljutkin 8-dimensional algebra $K_8.$ That's the reason why we call $H_{2n^2}$ the generalized Kac-Paljutkin algebra.}
\end{example}

We give another kind of generalization of $K_8$, which is defined as follows and we denote it by $K(8n,\sigma,\tau)$.

\begin{example}\label{def2.1.3}
\emph{ Let $n$ be a natural number. A Hopf algebra $H$ belonging to $\Bbbk^G\#_{\sigma,\tau}\Bbbk \mathbb{Z}_{2}$ is denoted by $K(8n,\sigma,\tau)$ if the data $(G,\triangleleft,\sigma,\tau)$ of $H$ satisfies}
  \begin{itemize}
  \item[(i)] $G=\mathbb{Z}_{2n}\times \mathbb{Z}_{2}=\langle a,b|a^{2n}=b^2=1,ab=ba\rangle;$
       \item[(ii)] $a\triangleleft x=ab,b\triangleleft x=b$.
  \end{itemize}
\end{example}
If we take $n=1$ and let $\sigma(a^ib^j)=(-1)^{(i-j)j}$ and $\tau(a^ib^j,a^kb^l)=(-1)^{j(k-l)}$ for $1\leq i,j,k,l \leq 2$, then we can easily check that the resulting 8-dimensional Hopf algebra is just the Kac-Paljutkin $8$-dimensional algebra $K_8$. Therefore, we give another kind of generalization of $K_8.$  Among of these Hopf algebras $K(8n,\sigma,\tau)$, the following class of Hopf algebras are particularly interesting for us since at least they will provide us a number of minimal quasitriangular Hopf algebras.
\begin{example}\label{ex2.1.1}
\emph{Let $n \in \mathbb{N}$ such that $n\geq 2$ and assume that $\zeta$ is a primitive $2n$th root of 1. A Hopf algebra $H$ belonging to $\Bbbk^G\#_{\sigma,\tau}\Bbbk \mathbb{Z}_{2}$ is denoted by $K(8n,\zeta)$ if the data $(G,\triangleleft,\sigma,\tau)$ of $H$ satisfies the following conditions
\begin{itemize}
             \item[(i)] $G=\mathbb{Z}_{2n}\times \mathbb{Z}_{2}=\langle a,b|a^{2n}=b^2=1,ab=ba\rangle$ and $a\triangleleft x=ab,b\triangleleft x=b$.
              \item[(ii)] $\sigma(a^i b^j)=(-1)^{\frac{i(i-1)}{2}}\zeta^i$ for $1 \leq i\leq 2n$ and $1 \leq j\leq 2$.
              \item[(iii)] $\tau(a^i b^j ,a^k b^l)=(-1)^{jk}$ for $1 \leq i,k\leq 2n$ and $1 \leq j,l\leq 2$.
\end{itemize}}
\end{example}

This also recover some familiar examples of semisimple Hopf algebras. For example, $K(16,\zeta)$ is the 16 dimensional Hopf algebra $H_{c:\sigma_1}$ in \cite[Section 3.1]{K} \label{ex2.1.3}.

At last, we recall another kind of semisimple Hopf algebras for our research.
\begin{example}\label{ex2.1.6}
\emph{Take an odd number $n$ and let $t$ be a primitive $n$th root of 1 in $\Bbbk$, then the Hopf algebras $A_{2n^2,t}$ were defined in \cite[Definition 1.2]{M2}. By definition, they belong to $\Bbbk^G\#_{\sigma,\tau}\Bbbk \mathbb{Z}_2$ and the data $(G,\triangleleft,\sigma,\tau)$ of $A_{2n^2,t}$ can be described as follows (see \cite[Section 2.3.4]{AA})
\begin{itemize}
             \item[(i)] $G=\mathbb{Z}_n \times \mathbb{Z}_n=\langle a,b|a^n=b^n=1,ab=ba\rangle$ and $a\triangleleft x=a^{-1},\; b\triangleleft x=b$.
              \item[(ii)] $\sigma(g)=1$, $g \in G$.
              \item[(iii)] $\tau(a^i b^j ,a^k b^l)=t^{jk}$ for $1 \leq i,j,k,l\leq n$.
            \end{itemize}}
\end{example}

\section{Forms of universal $\mathcal{R}$-matrices}\label{sec2.2}
In this section, we will prove that for $\Bbbk^G\#_{\sigma,\tau}\Bbbk \mathbb{Z}_2$ there are at most two forms of universal $\mathcal{R}$-matrices. Based on this observation, we determine all possible quasitriangular structures on generalized Kac-Paljutkin algebras $H_{2n^2}$(see Example \ref{ex2.1.5}) and semisimple Hopf algebras $A_{2n^2,t}$ (see Example \ref{ex2.1.6}).
\subsection{Forms of Universal $\mathcal{R}$-matrices}
Recall that a quasitriangular Hopf algebra is a pair $(H, R)$ where $H$ is a Hopf algebra and $R=\sum R^{(1)} \otimes R^{(2)}$ is an invertible element in $H\otimes H$ such that
\begin{equation*}
 (\Delta \otimes \id)(R)=R_{13}R_{23},\; (\id \otimes \Delta)(R)=R_{13}R_{12},\;\Delta^{op}(h)R=R\Delta(h),
 \end{equation*}
for $h\in H$. Here by definition $R_{12}= \sum R^{(1)} \otimes R^{(2)}\otimes 1,\; R_{13}= \sum R^{(1)}\otimes 1 \otimes R^{(2)}$ and
$R_{23}=\sum 1 \otimes R^{(1)}\otimes R^{(2)}$. The element $R$ is called a universal $\mathcal{R}$-matrix of $H$ or a quasitriangular structure on $H$.

To find the possible forms of universal $\mathcal{R}$-matrices, we need the following Lemmas \ref{lem2.2.1}-\ref{lem2.2.3} which will help us to check the braiding conditions. The first lemma is well-known.
\begin{lemma}\cite[Proposition 12.2.11]{R}\label{lem2.2.1}
Let $H$ be a Hopf algebra and $R \in H\otimes H$. For $f \in H^*$, if we denote $l(f):=(f \otimes \id)(R)$ and $r(f):=(\id \otimes f)(R)$, then the following statements are equivalent
\begin{itemize}
\item[(i)] $(\Delta \otimes \id)(R)=R_{13}R_{23}$ and $(\id \otimes \Delta)(R)=R_{13}R_{12}$.
\item[(ii)] $l(f_1)l(f_2)=l(f_1f_2)$ and $r(f_1)r(f_2)=r(f_2f_1) $ for $f_1,f_2 \in H^*$.
\end{itemize}
\end{lemma}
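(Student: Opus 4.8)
The plan is to translate each of the two hexagon-type identities in (i) into the corresponding multiplicativity statement about $l$ or $r$ in (ii), by pairing both sides against suitable functionals. The bridge between the two sides is the standard fact that the convolution product on $H^*$ is dual to the coproduct of $H$: for $f_1,f_2 \in H^*$ and $a \in H$ one has $(f_1f_2)(a)=(f_1\otimes f_2)(\Delta(a))$. Writing $R=\sum R^{(1)}\otimes R^{(2)}$ throughout, the entire argument is then a bookkeeping exercise in Sweedler notation.

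First I would handle $(\Delta \otimes \id)(R)=R_{13}R_{23}$ by applying $f_1\otimes f_2\otimes \id$ to both sides. Using $(f_1\otimes f_2)(\Delta(R^{(1)}))=(f_1f_2)(R^{(1)})$, the left-hand side collapses to $\sum (f_1f_2)(R^{(1)})\,R^{(2)}=l(f_1f_2)$. On the right-hand side, expanding $R_{13}R_{23}=\sum R^{(1)}\otimes \widetilde R^{(1)}\otimes R^{(2)}\widetilde R^{(2)}$ (tildes marking the second copy of $R$) and pairing gives $\bigl[\sum f_1(R^{(1)})R^{(2)}\bigr]\bigl[\sum f_2(\widetilde R^{(1)})\widetilde R^{(2)}\bigr]=l(f_1)l(f_2)$. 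Thus pairing the first identity with $f_1\otimes f_2\otimes \id$ yields exactly $l(f_1f_2)=l(f_1)l(f_2)$, with the leg orderings preserved.

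The second identity $(\id \otimes \Delta)(R)=R_{13}R_{12}$ is handled the same way, now pairing with $\id \otimes f_1\otimes f_2$. The left-hand side gives $r(f_1f_2)$. On the right, writing $R_{13}R_{12}=\sum R^{(1)}\widetilde R^{(1)}\otimes \widetilde R^{(2)}\otimes R^{(2)}$ one sees that $f_1$ meets the middle leg (coming from $R_{12}$) and $f_2$ meets the last leg (coming from $R_{13}$); tracking which copy of $R^{(1)}$ sits on the left in the product shows the two surviving factors appear in the order $r(f_2)r(f_1)$. Hence the identity becomes $r(f_1f_2)=r(f_2)r(f_1)$, equivalently $r(f_1)r(f_2)=r(f_2f_1)$ after relabelling, which is the antimultiplicativity recorded in (ii). This reversal of order, in contrast to the first case, is the one point that deserves care and is the natural source of error.

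Finally, to obtain a genuine equivalence rather than only one implication, I would invoke nondegeneracy: since $H$ is finite dimensional, functionals of the form $f_1\otimes f_2$ span $(H\otimes H)^*$ and therefore separate points of $H\otimes H$. Consequently the two elements of $H^{\otimes 3}$ in each identity of (i) coincide if and only if their pairings against all $f_1\otimes f_2$ in the appropriate two slots coincide, i.e. if and only if the corresponding relation in (ii) holds for all $f_1,f_2$. Running each computation in both directions then gives (i) $\Leftrightarrow$ (ii). There is no serious obstacle here; the only subtlety is keeping the leg orderings straight so that $l$ comes out as a homomorphism and $r$ as an antihomomorphism.
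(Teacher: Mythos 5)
Your proof is correct. The paper itself gives no proof of this lemma---it is quoted directly from Radford's book \cite[Proposition 12.2.11]{R}---and your dual-pairing argument (applying $f_1\otimes f_2\otimes \id$, resp.\ $\id\otimes f_1\otimes f_2$, to the two identities and invoking separation of points of $H^{\otimes 3}$ by pure tensor functionals) is exactly the standard proof of that cited result, with the one delicate point, the order reversal making $r$ an antihomomorphism, handled correctly.
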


\begin{lemma}\label{lem2.2.2}
 Denote the dual basis of $\{e_g,e_gx\}_{g\in G}$ by $\{E_g,X_g\}_{g\in G}$, that is,
$E_g(e_h)=\delta_{g,h},\;E_g(e_hx)=0,\;X_g(e_h)=0,\;X_g(e_hx)=\delta_{g,h}$ for $g,h\in G$. Then the following equations hold in the dual Hopf algebra $(\Bbbk^G\#_{\sigma,\tau}\Bbbk \mathbb{Z}_2)^{*}$:
\begin{equation*}
E_gE_h=E_{gh},\ E_gX_h=X_hE_g=0,\ X_gX_h=\tau(g,h)X_{gh}, \;\; g,h\in G.
\end{equation*}
\end{lemma}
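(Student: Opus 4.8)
The goal is to compute products in the dual Hopf algebra $(\Bbbk^G\#_{\sigma,\tau}\Bbbk\Z_2)^*$, where the multiplication is dual to the comultiplication $\Delta$ of $\Bbbk^G\#_{\sigma,\tau}\Bbbk\Z_2$. Recall that for $f_1,f_2\in H^*$ the product is defined by $(f_1f_2)(h)=(f_1\otimes f_2)(\Delta(h))$. So my plan is simply to evaluate each of the three asserted products on the basis $\{e_h,e_hx\}_{h\in G}$ of $H=\Bbbk^G\#_{\sigma,\tau}\Bbbk\Z_2$ and read off the coefficients, using the explicit formula for $\Delta$ from Definition~\ref{def2.1.2}.

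The first step is to record the coproduct on the two families of basis elements. We have $\Delta(e_g)=\sum_{h'k'=g}e_{h'}\otimes e_{k'}$ directly, and for $e_gx$ I would compute $\Delta(e_gx)=\Delta(e_g)\Delta(x)$. Plugging in $\Delta(x)=\bigl[\sum_{p,q}\tau(p,q)e_p\otimes e_q\bigr](x\otimes x)$ and using the algebra relation $xe_g=e_{g\triangleleft x}x$ together with $e_pe_{p'}=\delta_{p,p'}e_p$, a short calculation gives an expression for $\Delta(e_gx)$ as a sum of terms of the form $(\text{coefficient})\,e_{h'}x\otimes e_{k'}x$ over the appropriate index set. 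This bookkeeping is the only place where one must be careful about where the $\tau$ and the twist $\triangleleft x$ enter.

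With these two coproduct formulas in hand, each product is a one-line evaluation. For $E_gE_h$, I pair $E_g\otimes E_h$ against $\Delta(e_mx)$ (which lands entirely in the $e_\bullet x\otimes e_\bullet x$ part, so $E_g\otimes E_h$ kills it, giving $0$ consistent with no $X$ appearing) and against $\Delta(e_m)$; only the term $e_g\otimes e_h$ survives and forces $gh=m$, yielding $E_gE_h=E_{gh}$. For $X_gX_h$, only $\Delta(e_mx)$ contributes, and the $e_g x\otimes e_h x$ term carries exactly the coefficient $\tau(g,h)$ while forcing $gh=m$ (after accounting for the $\triangleleft x$ indexing, which must be checked to land on $gh$), giving $X_gX_h=\tau(g,h)X_{gh}$. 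The mixed products $E_gX_h$ and $X_hE_g$ vanish because $\Delta(e_m)$ has no $e_\bullet\otimes e_\bullet x$ term and $\Delta(e_mx)$ has no $e_\bullet\otimes e_\bullet x$ term either, so the functionals never pair nontrivially.

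The main obstacle, such as it is, lies entirely in Step~two: getting the coefficient and the index shift in $\Delta(e_gx)$ right, since the interaction of $\tau(p,q)$ with the relation $xe_g=e_{g\triangleleft x}x$ can easily introduce a spurious $\triangleleft x$ in the wrong slot and thereby change the final index from $gh$ to something like $(gh)\triangleleft x$ or shift the $\tau$ argument. I would therefore verify the $\Delta(e_gx)$ formula by checking it is coassociative-consistent, or at least sanity-check it against $\Delta(x)$ itself (the case $g=1$, where $e_1x=\e$-supported piece), before extracting the three products. Everything after that is routine Kronecker-delta pairing.
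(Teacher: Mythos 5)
Your proposal is correct and follows essentially the same route as the paper: the paper's proof is exactly this direct evaluation of the dual products against the coproducts of the basis elements $e_k$ and $e_kx$. One small simplification: your worry about the relation $xe_g=e_{g\triangleleft x}x$ is unfounded, since in $\Delta(e_g)\Delta(x)$ all idempotents multiply on the left of $x$, giving cleanly $\Delta(e_gx)=\sum_{hk=g}\tau(h,k)\,e_hx\otimes e_kx$ with no twist appearing anywhere.
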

\begin{proof} Direct computations show that
$$E_gE_h(e_k)=E_{gh}(e_k)=\delta_{gh,k},\;\;E_gE_h(e_kx)=E_{gh}(e_kx)=0$$
 for $g,h,k\in G$. As a result, we have $E_gE_h=E_{gh}$. Similarly, one can get the last two equations.
\end{proof}

Let $\Bbbk^G\#_{\sigma,\tau}\Bbbk \mathbb{Z}_2$ as before. We need following two notions which will be used freely throughout this paper. Let
      $$S:=\{g\;|\;g\in G,\; g\triangleleft x=g\},\;\;T:=\{g\;|\; g\in G,\; g\triangleleft x\neq g\}.$$
A very basic observation is:

\begin{lemma}\label{lem2.2.3}
We have $S \subseteq TT$ where $TT=\{gh\;|\;g,h\in T\}$.
\end{lemma}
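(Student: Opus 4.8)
The plan is to exploit that, since $\triangleleft : \Z_2 \to \Aut(G)$ is injective, the element $x$ acts on $G$ as an automorphism of order exactly $2$; write $g \mapsto g\triangleleft x$ for this automorphism. Then $S$ is precisely its subgroup of fixed points (in particular $S$ is a subgroup of $G$), while $T = G\setminus S$ is its complement, which is \emph{nonempty} precisely because the automorphism is nontrivial. Phrased this way, the statement $S\subseteq TT$ just asks that every fixed point of $\triangleleft x$ factor as a product of two non-fixed points.

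First I would fix an arbitrary $s \in S$ and choose any $t \in T$, which is possible since $T \neq \emptyset$. The candidate factorization is
\[
s = (st^{-1})\, t,
\]
so it suffices to check that the complementary factor $st^{-1}$ again lies in $T$, since $t\in T$ holds by choice. Using that $\triangleleft x$ is a group automorphism together with $s\triangleleft x = s$, I would compute
\[
(st^{-1})\triangleleft x = (s\triangleleft x)(t\triangleleft x)^{-1} = s\,(t\triangleleft x)^{-1}.
\]
Were $st^{-1}$ fixed, i.e.\ $(st^{-1})\triangleleft x = st^{-1}$, this would force $(t\triangleleft x)^{-1} = t^{-1}$, hence $t\triangleleft x = t$, contradicting $t\in T$. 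Therefore $st^{-1}\in T$, and the displayed factorization exhibits $s\in TT$.

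There is essentially no hard step here: the only point requiring care is confirming that the complementary factor $st^{-1}$ does not accidentally become a fixed point, and this reduces — via the homomorphism property of $\triangleleft x$ and the fixedness of $s$ — to exactly the condition $t\in T$ that we already have in hand. The one hypothesis that must genuinely be invoked is the injectivity of $\triangleleft$ (equivalently, that $x$ acts nontrivially), which guarantees $T\neq\emptyset$ so that such a $t$ exists; without it one could have $S = G$ and the inclusion would be false.
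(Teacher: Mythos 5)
Your proof is correct and follows essentially the same route as the paper: both arguments use the injectivity of $\triangleleft$ to get $T\neq\emptyset$, pick $t\in T$, and factor $s\in S$ as a product of $t$ with a complementary factor ($st^{-1}$ in your case, $t^{-1}s$ in the paper's, the same thing since $G$ is abelian), checking via the automorphism property that the complementary factor cannot be fixed by $\triangleleft x$. If anything, you spell out the verification that the complementary factor lies in $T$, which the paper leaves as "clearly."
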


\begin{proof}
Clearly, for $s\in S, t\in T$, we have $ts\in T$. From the Definition \ref{def2.1.2} we know that the action $\triangleleft$ is injective, therefore $T \neq \emptyset$. Let $t\in T$ and it is obvious that $S=t(t^{-1}S)$ and hence $S \subseteq TT$.
\end{proof}

With the help of $S,T$, we find that

\begin{lemma}\label{lem2.2.4}
Let $w^1:G\times G\rightarrow \Bbbk$, $w^2:G\times G\rightarrow \Bbbk$, $w^3:G\times G\rightarrow \Bbbk$, $w^4:G\times G\rightarrow \Bbbk$ be four maps and define $R$ as follows
\begin{align*}
R&\colon=\sum\limits_{g,h \in G}w^1(g,h)e_{g} \otimes e_{h}+ \sum\limits_{g,h \in G}w^2(g,h)e_{g}x \otimes e_{h}+ \\
&\ \ \ \ \sum\limits_{g,h \in G}w^3(g,h)e_{g} \otimes e_{h}x+\sum\limits_{g,h \in G}w^4(g,h)e_{g}x \otimes e_{h}x.
\end{align*}
If $R$ satisfies $\Delta(e_g)R=R\Delta(e_g)$ for $ g\in G$, then
 \begin{itemize}
  \item[(i)] \label{E} $w^2(t,g)=0,\;t \in T,g\in G$.
  \item[(ii)] \label{F} $w^3(g,t)=0,\;t \in T,g\in G$.
  \item[(iii)] \label{G} $w^4(s,t)=w^4(t,s)=0,\;s \in S,t \in T$.
\end{itemize}
\end{lemma}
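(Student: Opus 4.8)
The plan is to extract the three vanishing conditions directly from the commutation relation $\Delta(e_g)R = R\Delta(e_g)$, by expanding both sides and comparing coefficients of the basis elements of $(\k^G\#_{\sigma,\tau}\k\Z_2)\otimes(\k^G\#_{\sigma,\tau}\k\Z_2)$. First I would record the action of the primitive idempotents from the left and from the right on the four types of tensor summands. Using $e_g e_h = \delta_{g,h}e_g$ and $xe_g = e_{g\triangleleft x}x$, one computes that $e_p\,e_g = \delta_{p,g}e_g$ while $e_p\,(e_g x) = \delta_{p,g}e_g x$, and on the right $e_g\, e_p = \delta_{g,p}e_g$ whereas $(e_g x)\,e_p = e_g\,x\,e_p = e_g\,e_{p\triangleleft x}x = \delta_{g,p\triangleleft x}e_g x = \delta_{g\triangleleft x,p}e_g x$. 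This single asymmetry — that right multiplication by $e_p$ sees the index $g$ through $e_g x$ but left multiplication by $e_p$ sees the \emph{twisted} index $g\triangleleft x$ — is the mechanism that will force the stated vanishing, so I would isolate it cleanly before touching $R$.

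Next I would write $\Delta(e_g)=\sum_{uv=g}e_u\otimes e_v$ and apply it on both sides of $R$, leg by leg. For the summand $\sum w^2(p,q)\,e_p x\otimes e_q$, the left action gives $\sum_{uv=g}(e_u\, e_p x)\otimes(e_v\, e_q)=\sum w^2(p,q)\,e_p x\otimes e_q$ subject to $u=p,\ v=q,\ uv=g$, i.e.\ $pq=g$; the right action gives $(e_p x\,e_u)\otimes(e_q\, e_v)$ with $e_p x\,e_u=\delta_{p,u\triangleleft x}e_p x$, so the constraint becomes $(p\triangleleft x)q=g$. Comparing the coefficient of $e_p x\otimes e_q$ on the two sides yields $w^2(p,q)\big[\delta_{pq,g}-\delta_{(p\triangleleft x)q,g}\big]=0$ for every $g$; choosing $g=pq$ and noting that for $p=t\in T$ one has $t\triangleleft x\neq t$, hence $(t\triangleleft x)q\neq tq$, forces $w^2(t,q)=0$, which is (i). Statement (ii) is entirely symmetric: applying the same bookkeeping to the third summand $\sum w^3(p,q)\,e_p\otimes e_q x$ and tracking the twist on the \emph{second} tensor leg gives $w^3(g,t)=0$ for $t\in T$.

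For (iii) I would repeat the computation on the fourth summand $\sum w^4(p,q)\,e_p x\otimes e_q x$, where now \emph{both} legs carry the twist. Left multiplication by $\Delta(e_g)$ imposes $pq=g$, while right multiplication imposes $(p\triangleleft x)(q\triangleleft x)=g$, so matching coefficients of $e_p x\otimes e_q x$ gives $w^4(p,q)\big[\delta_{pq,g}-\delta_{(p\triangleleft x)(q\triangleleft x),g}\big]=0$, i.e.\ $w^4(p,q)=0$ whenever $pq\neq(p\triangleleft x)(q\triangleleft x)$. Taking $p=s\in S$ (so $s\triangleleft x=s$) and $q=t\in T$ gives $(s\triangleleft x)(t\triangleleft x)=s(t\triangleleft x)\neq st=sq$ since $t\triangleleft x\neq t$, forcing $w^4(s,t)=0$; the symmetric choice $p=t\in T,\ q=s\in S$ gives $w^4(t,s)=0$, completing (iii).

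I do not expect a genuine obstacle here — the whole statement reduces to coefficient comparison once the twist asymmetry in $xe_g=e_{g\triangleleft x}x$ is laid out. The only point requiring care is bookkeeping: the action $\triangleleft$ lands on the \emph{left} factor under left multiplication but on the argument of $e_g$ under right multiplication, so I must be consistent about which index gets twisted on each leg and avoid conflating $g\triangleleft x$ with $(g\triangleleft x)^{-1}$. Verifying that $w^1$ is genuinely unconstrained by this particular relation (it is, since neither leg of $e_p\otimes e_q$ carries an $x$) is a useful sanity check that the argument is not overreaching.
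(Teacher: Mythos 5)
Your proposal is correct and follows essentially the same route as the paper: both expand $\Delta(e_g)R=R\Delta(e_g)$ summand by summand, exploit the twist $xe_g=e_{g\triangleleft x}x$ to see that left multiplication fixes the index while right multiplication twists it, and compare coefficients of $e_px\otimes e_q$, $e_p\otimes e_qx$, and $e_px\otimes e_qx$ at $g=pq$ to force the vanishing. The paper phrases this as certain basis terms appearing on one side but not the other rather than via explicit Kronecker deltas, but the argument is identical.
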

\begin{proof}
Since $\Delta(e_g)R=R\Delta(e_g)$ for $g\in G$, we have the following equations
\begin{gather}
       \Delta(e_g) [\sum\limits_{h,k \in G}w^2(h,k)e_{h}x \otimes e_{k}]=[\sum\limits_{h,k \in G}w^2(h,k)e_{h}x \otimes e_{k}] \Delta(e_g),\label{38}\\
		\Delta(e_g) [\sum\limits_{h,k \in G}w^3(h,k)e_{h} \otimes e_{k}x]=[\sum\limits_{h,k
 \in G}w^3(h,k)e_{h} \otimes e_{k}x] \Delta(e_g), \label{39}\\
		\Delta(e_g) [\sum\limits_{h,k \in G}w^4(h,k)e_{h}x \otimes e_{k}x]=[\sum\limits_{h,k \in G}w^4(h,k)e_{h}x \otimes e_{k}x] \Delta(e_g).\label{40}
\end{gather}
Firstly, we analyze equation \eqref{38} as follows
\begin{align}
            \Delta(e_g) [\sum\limits_{h,k \in G}w^2(h,k)e_{h}x \otimes e_{k}]&=\sum\limits_{\begin{subarray}{l}  h,k \in G \\
                             hk=g  \\
        \end{subarray}}w^2(h,k)e_{h}x \otimes e_{k}, \label{41}\\
             [\sum\limits_{h,k \in G}w^2(h,k)e_{h}x \otimes e_{k}] \Delta(e_g)&=\sum\limits_{\begin{subarray}{l}  h,k \in G  \\
                             hk=g  \\
        \end{subarray}}w^2(h\triangleleft x,k)e_{h\triangleleft x}x \otimes e_{k}.  \label{42}
\end{align}
Note that if $h \in T,k \in G$ such that $hk=g$, then $e_{h}x \otimes e_{k}$ will appear in \eqref{41} while not in \eqref{42}. As a result $w^2(h,k)=0\ $ for $ h \in T,k \in G$ and thus (i) has been proved.

Similarly, for equation \eqref{39}, there are the following equations
\begin{align}
            \Delta(e_g) [\sum\limits_{h,k \in G}w^3(h,k)e_{h} \otimes e_{k}x]&=\sum\limits_{\begin{subarray}{l}  h,k \in G   \\
                             hk=g  \\
        \end{subarray}}w^3(h,k)e_{h} \otimes e_{k}x,\   \label{43}  \\
              [\sum\limits_{h,k \in G}w^3(h,k)e_{h} \otimes e_{k}x] \Delta(e_g)&=\sum\limits_{\begin{subarray}{l}  h,k \in G   \\
                             hk=g  \\
        \end{subarray}}w^3(h,k \triangleleft x)e_{h} \otimes e_{k \triangleleft x}x.\   \label{44}
\end{align}
Observe that if $h \in G, k \in T$ such that $hk=g$, then $e_{h} \otimes e_{k}x$ will appear in \eqref{43} while not in \eqref{44}. Therefore $w^3(h,k)=0\ $ for $h \in G,k \in T$ and so (ii) is proved.

 For  equation \eqref{40}, we obtain the following equations
 \begin{align}
  \Delta(e_g) [\sum\limits_{h,k \in G}w^4(h,k)e_{h}x \otimes e_{k}x]&=\sum\limits_{\begin{subarray}{l}  h,k \in G  \\
                             hk=g  \\
        \end{subarray}}w^4(h,k)e_{h}x \otimes e_{k}x,\   \label{45} \\
  [\sum\limits_{h,k \in G}w^4(h,k)e_{h}x \otimes e_{k}x] \Delta(e_g)&=\sum\limits_{\begin{subarray}{l}  h,k \in G   \\
                             hk=g  \\
        \end{subarray}}w^4(h \triangleleft x,k \triangleleft x)e_{h \triangleleft x} \otimes e_{k \triangleleft x}x.\    \label{46}
\end{align}
Note that if $h \in S,k\in T$, then $e_{h}x \otimes e_{k}x$ and $e_{k}x \otimes e_{h}x$ will appear in \eqref{45} and not in \eqref{46}. This implies that $w^4(h,k)=0$ for $h \in S,k\in T$. Similarly, one can find that $w^4(h,k)=0$ for $h \in T,k\in S$. Therefore (iii) has been proved.
\end{proof}
\begin{lemma}\label{lem2.2.5}
 Let $R$ be the element given in Lemma \ref{lem2.2.4} and assume that $(\Delta \otimes \Id)(R)=R_{13}R_{23},\;(\Id \otimes \Delta)(R)=R_{13}R_{12}$. Then the following equations hold
\begin{itemize}
  \item[(i)] \label{H} $w^2(s_1,s_2)=w^3(s_1,s_2)=w^4(s_1,s_2)=0,\; s_1,s_2\in S$.
  \item[(ii)] \label{K} $w^1(g,t_2)w^4(t_1,t_2)=0,\; g\in G,\;t_1,t_2 \in T$.
  \item[(iii)] \label{L} $w^1(t_1,g)w^4(t_1,t_2)=0,\; g \in G,\;t_1,t_2 \in T$.
\end{itemize}
\end{lemma}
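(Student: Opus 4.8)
The plan is to run everything through Lemma \ref{lem2.2.1}, which says that the two braiding identities $(\Delta\otimes\Id)(R)=R_{13}R_{23}$ and $(\Id\otimes\Delta)(R)=R_{13}R_{12}$ are equivalent to the statement that $l\colon H^{*}\to H$ is an algebra homomorphism and $r\colon H^{*}\to H$ is an algebra anti-homomorphism, where $H=\Bbbk^{G}\#_{\sigma,\tau}\Bbbk\Z_{2}$. First I would pair the $R$ of Lemma \ref{lem2.2.4} with the dual basis of Lemma \ref{lem2.2.2} in the appropriate slot to record the explicit images $l(E_{p})=\sum_{h}w^{1}(p,h)e_{h}+\sum_{h}w^{3}(p,h)e_{h}x$ and $l(X_{p})=\sum_{h}w^{2}(p,h)e_{h}+\sum_{h}w^{4}(p,h)e_{h}x$, together with $r(E_{q})=\sum_{g}w^{1}(g,q)e_{g}+\sum_{g}w^{2}(g,q)e_{g}x$ and $r(X_{q})=\sum_{g}w^{3}(g,q)e_{g}+\sum_{g}w^{4}(g,q)e_{g}x$. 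Since Lemma \ref{lem2.2.2} gives $E_{p}X_{q}=X_{q}E_{p}=0$ and $X_{p}X_{q}=\tau(p,q)X_{pq}$, the (anti)multiplicativity of $l$ and $r$ produces the four master relations $l(E_{p})l(X_{q})=0$, $r(E_{q})r(X_{p})=0$, $l(X_{p})l(X_{q})=\tau(p,q)l(X_{pq})$ and $r(X_{p})r(X_{q})=\tau(q,p)r(X_{pq})$, which are the only identities the proof will exploit. All products are expanded in $H$ using $e_{h}e_{k}x=\delta_{h,k}e_{h}x$ and $e_{h}x\,e_{k}x=\delta_{h,k\triangleleft x}\sigma(h)e_{h}$.

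For claims (ii) and (iii) I would use the mixed relations. Since $t_{1}\in T$, Lemma \ref{lem2.2.4}(i) kills the first summand of $l(X_{t_{1}})$, leaving $l(X_{t_{1}})=\sum_{k}w^{4}(t_{1},k)e_{k}x$; multiplying $l(E_{g})l(X_{t_{1}})=0$ and reading off the coefficient of $e_{h}x$ gives the single-term identity $w^{1}(g,h)w^{4}(t_{1},h)=0$ for all $h$, and specializing $h=t_{2}$ is exactly (ii). Claim (iii) is the mirror computation: Lemma \ref{lem2.2.4}(ii) gives $r(X_{t_{2}})=\sum_{k}w^{4}(k,t_{2})e_{k}x$, and the coefficient of $e_{h}x$ in $r(E_{g})r(X_{t_{2}})=0$ is $w^{1}(h,g)w^{4}(h,t_{2})$, so $h=t_{1}$ yields (iii).

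For claim (i) the crucial input is Lemma \ref{lem2.2.3}, which lets me write $s_{1}=t_{1}t_{2}$ with $t_{1},t_{2}\in T$. Because $w^{2}(t_{i},\cdot)=0$, the product $l(X_{t_{1}})l(X_{t_{2}})=\sum_{h}\sigma(h)w^{4}(t_{1},h)w^{4}(t_{2},h\triangleleft x)e_{h}$ lies entirely in $\operatorname{span}\{e_{h}:h\in G\}$; hence $l(X_{s_{1}})=\tau(t_{1},t_{2})^{-1}l(X_{t_{1}})l(X_{t_{2}})$ has no $e_{h}x$-component, forcing $w^{4}(s_{1},h)=0$ for every $h$ (in particular $w^{4}(s_{1},s_{2})=0$), while its $e_{h}$-component reads $w^{2}(s_{1},h)=\tau(t_{1},t_{2})^{-1}\sigma(h)w^{4}(t_{1},h)w^{4}(t_{2},h\triangleleft x)$; evaluating at $h=s_{2}\in S$ and invoking $w^{4}(t_{1},s_{2})=0$ from Lemma \ref{lem2.2.4}(iii) gives $w^{2}(s_{1},s_{2})=0$. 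The vanishing $w^{3}(s_{1},s_{2})=0$ is obtained the mirror way by writing $s_{2}=t_{1}t_{2}$ and expanding $r(X_{s_{2}})=\tau(t_{1},t_{2})^{-1}r(X_{t_{2}})r(X_{t_{1}})$, whose $e_{g}$-component at $g=s_{1}$ contains the factor $w^{4}(s_{1},t_{2})=0$.

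I expect the only genuine delicacy to be bookkeeping rather than any conceptual obstacle: one must keep careful track of the involution $h\mapsto h\triangleleft x$ and the scalar $\sigma$ when simplifying $e_{h}x\,e_{k}x$, and must match each vanishing against the correct case of Lemma \ref{lem2.2.4}. Once the four master relations are in place, every claim reduces to isolating the coefficient of a single basis vector $e_{h}$ or $e_{h}x$, so no hard estimate or extra structural fact is needed.
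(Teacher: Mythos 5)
Your proof is correct and follows essentially the same route as the paper's: both convert the braiding hypotheses into the (anti)multiplicativity of $l$ and $r$ via Lemma \ref{lem2.2.1}, exploit the dual-algebra relations of Lemma \ref{lem2.2.2} and the factorization $S\subseteq TT$ of Lemma \ref{lem2.2.3}, and then compare coefficients of $e_h$ and $e_hx$ after restricting the forms of $l(X_t)$, $r(X_t)$ using Lemma \ref{lem2.2.4}. The only differences are cosmetic: you keep full sums and specialize at the end, whereas the paper restricts the supports to $S$ and $T$ before multiplying.
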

\begin{proof}
We have known $l(X_g)l(X_h)=l(X_gX_h)$ for $g,h\in G$ due to Lemma \ref{lem2.2.1}. Let $s \in S$ and we can find $t_1,t_2 \in T$ such that $t_1t_2=s$ because of Lemma \ref{lem2.2.3} and hence the following equation holds
\begin{equation*}
l(X_{t_1}X_{t_2})=\tau(t_1,t_2)l(X_{t_1t_2})=\tau(t_1,t_2)[\sum\limits_{g \in G}w^2(t_1t_2,g)e_g+\sum\limits_{s \in S}w^4(t_1t_2,s)e_sx].
\end{equation*}
At the same time,
\begin{align*}
l(X_{t_1})l(X_{t_2})&=(\sum\limits_{t \in T}w^4(t_1,t)e_tx)(\sum\limits_{t \in T}w^4(t_2,t)e_tx)\\
                &=\sum\limits_{t \in T}w^4(t_1,t)w^4(t_2,t\triangleleft x)e_tx^2  \\
        &=\sum\limits_{t \in T}w^4(t_1,t)w^4(t_2,t\triangleleft x)\sigma(t)e_t.
\end{align*}
Since $l(X_{t_1})l(X_{t_2})=l(X_{t_1}X_{t_2})$, we get that $w^4(s,s')=w^2(s,s')=0$ for $s' \in S$ and thus $w^4(s,s')=w^2(s,s')=0$ for $s,s' \in S$. Similarly by $r(X_{t_1})r(X_{t_2})=r(X_{t_2}X_{t_1})$ one can get that $w^3(s,s')=0$ for $s,s' \in S.$ Therefore, (i) is proved.

It remains to show (ii) and (iii). We have known $l(E_g)l(X_{t_1})=0$ due to Lemma \ref{lem2.2.2}. However a direct computation shows that $l(E_g)l(X_{t_1})=\sum_{t \in T}w^1(g,t)w^4(t_1,t)e_tx$. Therefore $w^1(g,t)w^4(t_1,t)=0$ for $ g\in G,t_1,t\in T$.  Similarly, by $r(E_g)r(X_{t_1})=0$ we get that $w^1(t,g)w^4(t,t_1)=0$ for $g\in G,t_1,t\in T$. These are exactly (ii), (iii).
\end{proof}
The following proposition shows that universal $\mathcal{R}$-matrices of $\Bbbk^G\#_{\sigma,\tau}\Bbbk \mathbb{Z}_{2}$ has only two possible forms.
\begin{proposition}\label{pro2.2.1}
Let $R$ be the element given in Lemma \ref{lem2.2.4} and assume that it is a universal $\mathcal{R}$-matrix of $\Bbbk^G\#_{\sigma,\tau}\Bbbk \mathbb{Z}_{2}$. Then $R$ must belong to one of the following two cases:
\begin{itemize}
             \item[Case 1:]$R=\sum\limits_{g,h\in G}w^1(g,h)e_g \otimes e_h$;
              \item[Case 2:] $R=\sum\limits_{s_1,s_2 \in S}w^1(s_1,s_2)e_{s_1} \otimes e_{s_2}+ \sum\limits_{s \in S, t \in T}w^2(s,t)e_{s}x \otimes e_{t}+
        \sum\limits_{t \in T,s \in S}w^3(t,s)e_{t} \otimes e_{s}x+
         \sum\limits_{t_1,t_2 \in T}w^4(t_1,t_2)e_{t_1}x \otimes e_{t_2}x$.
            \end{itemize}
\end{proposition}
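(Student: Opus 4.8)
The plan is to start from the general four-block expression for $R$ in Lemma~\ref{lem2.2.4} and systematically kill off the forbidden terms using the three preceding lemmas, so that what survives is exactly one of the two listed forms. The key dichotomy will be forced by the interaction between the $w^4$ block (the $e_gx\otimes e_hx$ coefficients) and the $w^1$ block, as captured in parts (ii) and (iii) of Lemma~\ref{lem2.2.5}.

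First I would record what is already known. From Lemma~\ref{lem2.2.4} we have $w^2(t,g)=0$ and $w^3(g,t)=0$ for all $t\in T$, $g\in G$; hence $w^2$ is supported on $S\times G$ and $w^3$ on $G\times T^{c}=G\times S$. Lemma~\ref{lem2.2.4}(iii) further gives $w^4(s,t)=w^4(t,s)=0$ for $s\in S$, $t\in T$, and Lemma~\ref{lem2.2.5}(i) gives $w^2(s_1,s_2)=w^3(s_1,s_2)=w^4(s_1,s_2)=0$ for $s_1,s_2\in S$. Combining these: $w^4$ is forced to vanish unless \emph{both} arguments lie in $T$ (the $S\times S$, $S\times T$, $T\times S$ pieces are all zero), so $w^4$ is supported on $T\times T$; meanwhile $w^2$ now vanishes on $T\times G$ and on $S\times S$, leaving it supported on $S\times T$, and symmetrically $w^3$ is supported on $T\times S$. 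Thus the $w^2,w^3,w^4$ blocks already have exactly the support described in Case~2, and if these three blocks are simultaneously present then $R$ is automatically of Case~2 form once we also pin down $w^1$.

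The heart of the argument, and the step I expect to be the main obstacle, is extracting the clean dichotomy on the $w^1$ block. Here I would invoke Lemma~\ref{lem2.2.5}(ii)--(iii): $w^1(g,t_2)w^4(t_1,t_2)=0$ and $w^1(t_1,g)w^4(t_1,t_2)=0$ for all $g\in G$, $t_1,t_2\in T$. The trick is to split into two cases according to whether the $w^4$ block vanishes identically. If $w^4\equiv 0$, then I claim $w^2$ and $w^3$ must also vanish: one re-examines the multiplicativity relations $l(X_{t_1})l(X_{t_2})=l(X_{t_1}X_{t_2})$ (and its $r$-analogue) exactly as in the proof of Lemma~\ref{lem2.2.5}(i), now observing that with $w^4=0$ the left side computation forces $w^2$ (resp. $w^3$) to be zero on all of $S\times T$ as well, leaving only the $w^1$ block and hence Case~1. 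Conversely, if $w^4$ is not identically zero, pick $(t_1,t_2)\in T\times T$ with $w^4(t_1,t_2)\neq 0$; then Lemma~\ref{lem2.2.5}(ii)--(iii) force $w^1(g,t_2)=0$ and $w^1(t_1,g)=0$ for all $g$, and I would then need to propagate this to conclude $w^1$ is supported on $S\times S$. The propagation is the delicate point: one uses the multiplicativity of $l$ and $r$ on the $E_g,X_h$ basis together with Lemma~\ref{lem2.2.3} (which guarantees every $s\in S$ factors as $t_1t_2$ with $t_i\in T$) to show that the nonvanishing of a single $w^4(t_1,t_2)$ spreads, via the relations $X_gX_h=\tau(g,h)X_{gh}$, to force $w^4$ to be nonzero across enough of $T\times T$ that the vanishing constraints on $w^1$ cover all of $(G\times G)\setminus(S\times S)$.

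Finally, once $w^1$ is confined to $S\times S$ in the second case, collecting the surviving blocks yields precisely the Case~2 expression, while the first case yields Case~1; since the two cases are exhaustive (either $w^4\equiv 0$ or not), this completes the proof. The only genuinely non-routine verification is the spreading argument in the last paragraph, and I would phrase it carefully so that it reduces, via $S\subseteq TT$, to a statement about a single nonzero $w^4$ coefficient generating the required pattern of constraints; the rest is bookkeeping with the already-established support conditions.
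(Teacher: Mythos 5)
Your reduction to the four-block form and your handling of the case $w^4\equiv 0$ are correct and coincide with the paper's own argument: if $w^4\equiv 0$ then $l(X_t)=0$ for all $t\in T$, hence $l(X_{t_1}X_{t_2})=l(X_{t_1})l(X_{t_2})=0$, and writing $s=t_1t_2$ with $t_1,t_2\in T$ (Lemma \ref{lem2.2.3}) forces $w^2(s,\cdot)=0$, while the $r$-analogue kills $w^3$, giving Case 1. The gap is in the other case, which is the heart of the proposition, and you have flagged it yourself without closing it: you must show that a single nonzero entry $w^4(t_0,t_0')$ forces every $t\in T$ to lie in both the row support and the column support of $w^4$, so that Lemma \ref{lem2.2.5}(ii)--(iii) annihilate $w^1$ on all of $(G\times G)\setminus(S\times S)$. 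You assert that this ``spreading'' follows from multiplicativity of $l,r$ together with $S\subseteq TT$, but no mechanism is given, and the relations you name do not produce it by themselves: for $t_1,t_2\in T$ with $t_1t_2\in S$, the identity $l(X_{t_1})l(X_{t_2})=\tau(t_1,t_2)l(X_{t_1t_2})$ only expresses products $w^4(t_1,t')\,w^4(t_2,t'\triangleleft x)$ in terms of $w^2(t_1t_2,t')$, and every such relation involving the known entry $(t_0,t_0')$ also involves a second, unknown entry, so from one nonzero value alone nothing nonzero is generated.

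The ingredient you are missing is the one the paper uses and you never invoke: the invertibility of $R$. Once Lemma \ref{lem2.2.5}(ii) gives $w^1(g,t_0')=0$ for all $g\in G$, one computes $(e_t\otimes e_{t_0'})R=w^4(t,t_0')\,e_tx\otimes e_{t_0'}x$; a nonzero element cannot annihilate the invertible $R$, so $w^4(t,t_0')\neq 0$ for every $t\in T$, whence Lemma \ref{lem2.2.5}(iii) gives $w^1(t,g)=0$ for all $t\in T$, $g\in G$; then $(e_{t_1}\otimes e_{t_2})R=w^4(t_1,t_2)\,e_{t_1}x\otimes e_{t_2}x$ forces every entry of $w^4$ to be nonzero, and Lemma \ref{lem2.2.5}(ii) finishes. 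A multiplicativity-only route along your lines can in fact be completed, but it needs steps absent from your sketch: from $X_1X_{t_0}=X_{t_0}$ (unitality of $\tau$) one gets $w^2(1,t')\,w^4(t_0,t')=w^4(t_0,t')$, hence $w^2(1,t_0')=1$; then, since $t_1^{-1}\in T$ and $t_1t_1^{-1}=1\in S$, the relation $l(X_{t_1})l(X_{t_1^{-1}})=\tau(t_1,t_1^{-1})l(X_1)$ yields $w^4(t_1,t_0')\,w^4(t_1^{-1},t_0'\triangleleft x)\,\sigma(t_0')=\tau(t_1,t_1^{-1})\neq 0$, so the whole column $t_0'$ of $w^4$ is nonzero, and the $r$-versions of these two relations then make every entry nonzero. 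Note that this hinges on the particular factorization $1=t_1t_1^{-1}$ and the normalization at $X_1$, not merely on $S\subseteq TT$. As written, the conclusion of Case 2 is asserted rather than proved, so the proposal has a genuine gap exactly at its decisive step.
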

\begin{proof}
Owing to Lemmas \ref{lem2.2.4} and \ref{lem2.2.5}, we can assume that $R$ has the following form:
\begin{align*}
R&=\sum_{g,h \in G}w^1(g,h)e_{g} \otimes e_{h}+ \sum_{s \in S,t \in T}w^2(s,t)e_{s}x \otimes e_{t}+\\
&\ \ \   \sum_{t \in T,s \in S}w^3(t,s)e_{t} \otimes e_{s}x+
         \sum_{t_1,t_2 \in T}w^4(t_1,t_2)e_{t_1}x \otimes e_{t_2}x.
\end{align*}
If $w^4(t_1,t_2)=0$ for all $t_1,t_2 \in T$, then $l(X_{t_1})=l(X_{t_2})=0$. Using Lemma \ref{lem2.2.2} we know that $l(X_{t_1})l(X_{t_2})=l(X_{t_1}X_{t_2})$ and as a result $l(X_{t_1}X_{t_2})=0$ for all $t_1,t_2 \in T$. For $s\in S$ , we can take $t_1,t_2\in T$ such that $s=t_1t_2$. Hence we have that $l(X_{t_1}X_{t_2})=\tau(t_1,t_2)(\sum_{t \in T}w^2(s,t)e_t)=0$ which implies that $w^2(s,t)=0$ for $s\in S, \;t \in T$. Similarly, by $r(X_{t_1})=r(X_{t_2})=0$ and $r(X_{t_2}X_{t_1})=\sum_{t \in T}\tau(t_2,t_1)w^3(t,s)e_t$, we have $w^3(t,s)=0$ for $s\in S,t \in T$. Since $w^2(s,t)=w^3(t,s)=0$ for $s\in S, t\in T$, we know that $R=\sum_{g,h\in G}w^1(g,h)e_g \otimes e_h$ and therefore we get the first case.

If there are $t_0,t_0' \in T$ such that $w^4(t_0,t_0')\neq 0$, then we will show that $w^1(t,g)=w^1(g,t)=0$ for all $g\in G, t\in T$.
For any $g\in G$, we have $w^1(g,t_0')w^4(t_0,t_0')=0$ by (ii) of Lemma \ref{lem2.2.5} and as a result $w^1(g,t_0')=0$. Since $R$ is invertible and $(e_t\otimes e_{t'_0})R=w^4(t,t_0')e_tx\otimes e_{t_0'}x$, we know that $w^4(t,t_0') \neq 0$ for $t\in T$. Next, we use (ii) and (iii) of Lemma \ref{lem2.2.5} repeatedly. We have $w^1(t,g)w^4(t,t_0')=0$ due to (iii) of Lemma \ref{lem2.2.5}. Thus $w^1(t,g)=0$ for $t\in T ,g\in G$. Since $R$ is invertible and $(e_{t_1}\otimes e_{t_2})R=w^4(t_1,t_2)e_{t_1}x\otimes e_{t_2}x$  for $t_1,t_2\in T$,  we get that $w^4(t_1,t_2)\neq 0$ for $t_1,t_2\in T$. Because $w^1(g,t)w^4(t_1,t)=0$ by (ii) of Lemma \ref{lem2.2.5}, we know that $w^1(g,t)=0$ for $g\in G,t \in T$ and hence we get the second case.
\end{proof}

\begin{remark}\emph{For simple, we will call a universal $\mathcal{R}$-matrix $R$ in Case 1 (resp. Case 2) of Proposition \ref{pro2.2.1} by a \emph{trivial} (resp. \emph{non-trivial}) quasitriagular structure.}\end{remark}

Recall that a minimal quasitriangular structure (see \cite[Definition 12.2.14]{R}) can be equivalently defined by: Assume that $R$ is a universal $\mathcal{R}$-matrix on Hopf algebra $H$, and if we let $H_l:=\{l(f)\;|\;f\in H^*\}$ and $H_r:=\{r(f)\;|\;f\in H^*\}$, then $(H,R)$ is minimal if $H=H_lH_r$ (see \cite[Proposition 12.2.13]{R}). In this case, we will say that $R$ is a minimal quasitriangular structure on $H$ and $H$ is a minimal quasitriangular Hopf algebra. Above proposition clearly implies the following corollary.

\begin{corollary}\label{coro2.2.1}
Every minimal quasitriangular structure on $\Bbbk^G\#_{\sigma,\tau}\Bbbk \mathbb{Z}_{2}$ is non-trivial.
\end{corollary}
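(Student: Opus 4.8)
The plan is to exploit the dichotomy furnished by Proposition~\ref{pro2.2.1}: every universal $\mathcal{R}$-matrix of $\Bbbk^G\#_{\sigma,\tau}\Bbbk \mathbb{Z}_2$ is either trivial (Case~1) or non-trivial (Case~2), so it suffices to prove that a trivial $R$ can never be minimal. Accordingly I would fix a universal $\mathcal{R}$-matrix of the first form, $R=\sum_{g,h\in G}w^1(g,h)\,e_g\otimes e_h$, and show that $H_lH_r$ is forced into a proper subalgebra of $H:=\Bbbk^G\#_{\sigma,\tau}\Bbbk \mathbb{Z}_2$.

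The crucial point is that a Case~1 matrix lives in $\Bbbk^G\otimes\Bbbk^G$, so the images of $l$ and $r$ collapse into $\Bbbk^G$. First I would compute, for an arbitrary $f\in H^*$,
\[
l(f)=(f\otimes\id)(R)=\sum_{g,h\in G}w^1(g,h)\,f(e_g)\,e_h\in\Bbbk^G,
\]
and likewise $r(f)=(\id\otimes f)(R)=\sum_{g,h\in G}w^1(g,h)\,f(e_h)\,e_g\in\Bbbk^G$. Hence $H_l\subseteq\Bbbk^G$ and $H_r\subseteq\Bbbk^G$, and since $\Bbbk^G=\operatorname{span}\{e_g\}_{g\in G}$ is a subalgebra, I get $H_lH_r\subseteq\Bbbk^G$.

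To finish I would observe that $\Bbbk^G$ is a \emph{proper} subalgebra: $H$ has basis $\{e_g,\,e_gx\}_{g\in G}$, whence $\dim H=2|G|>|G|=\dim\Bbbk^G$. Thus $H_lH_r\subseteq\Bbbk^G\subsetneq H$, the minimality condition $H=H_lH_r$ fails, and no trivial quasitriangular structure is minimal; combined with Proposition~\ref{pro2.2.1} this yields the claim. I do not expect any genuine obstacle here — the entire statement reduces to the single elementary fact that a Case~1 $R$ pushes both $l(f)$ and $r(f)$ into $\Bbbk^G$. The only point deserving a sentence of care is that the elements $e_gx$ really lie outside $\Bbbk^G$, which is immediate from the relations of Definition~\ref{def2.1.2}, since $\Bbbk^G$ is the $|G|$-dimensional span of the orthogonal idempotents $e_g$.
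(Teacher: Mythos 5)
Your proof is correct and is exactly the argument the paper leaves implicit when it says Proposition~\ref{pro2.2.1} ``clearly implies'' the corollary: a trivial (Case~1) $\mathcal{R}$-matrix lies in $\Bbbk^G\otimes\Bbbk^G$, forcing $H_l,H_r\subseteq\Bbbk^G$ and hence $H_lH_r\subseteq\Bbbk^G\subsetneq H$, so minimality fails. Nothing to add; you have simply written out the details the authors omitted.
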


\subsection{Universal $\mathcal{R}$-matrices of $H_{2n^2}$, $A_{2n^2,t}$}
To determine all universal $\mathcal{R}$-matrices of $H_{2n^2}$, $A_{2n^2,t}$, we give necessary conditions for $\Bbbk^G\#_{\sigma,\tau}\Bbbk \mathbb{Z}_{2}$ preserving a non-trivial quasitriangular structure firstly. For any finite set $X$, we use $|X|$ to denote the number of elements in $X$.
\begin{proposition}\label{pro2.3.1}
If there is a non-trivial quasitrianglar structure on $\Bbbk^G\#_{\sigma,\tau}\Bbbk \mathbb{Z}_{2}$, then
\begin{itemize}
  \item[(i)] $|S|=|T|$;
  \item[(ii)]  there is $b\in S$ such that $b^2=1$ and $t\triangleleft x=tb$ for $t\in T$;
  \item[(iii)] $|G|=4m$ for some $m\in \mathbb{N^+}$.
\end{itemize}
\end{proposition}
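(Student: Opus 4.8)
The plan is to exploit the multiplicativity of Lemma~\ref{lem2.2.1} for the functionals $X_g$, together with the fact recorded inside the proof of Proposition~\ref{pro2.2.1} that a non-trivial (Case~2) $R$ forces $w^4(t_1,t_2)\neq0$ for \emph{all} $t_1,t_2\in T$. All three assertions will follow from the single inclusion $TT\subseteq S$, which I prove first. As preparation I evaluate $l(X_g)=(X_g\otimes\id)(R)$ on the basis $\{e_h,e_hx\}$ and find that for $t\in T$, $l(X_t)=\sum_{t'\in T}w^4(t,t')e_{t'}x$ lies in the linear span of $\{e_{t'}x\}_{t'\in T}$, whereas for $s\in S$, $l(X_s)=\sum_{t'\in T}w^2(s,t')e_{t'}$ lies in the complementary span of $\{e_{t'}\}_{t'\in T}$.

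For $t_1,t_2\in T$, using $xe_{t'}=e_{t'\triangleleft x}x$ and $x^2=\sum_{g}\sigma(g)e_g$, the product is
\[
l(X_{t_1})l(X_{t_2})=\sum_{t'\in T}w^4(t_1,t')\,w^4(t_2,t'\triangleleft x)\,\sigma(t')\,e_{t'},
\]
in which every coefficient is nonzero. On the other hand, Lemmas~\ref{lem2.2.1} and~\ref{lem2.2.2} give $l(X_{t_1})l(X_{t_2})=l(X_{t_1}X_{t_2})=\tau(t_1,t_2)\,l(X_{t_1t_2})$. If $t_1t_2$ were in $T$, the right-hand side would lie in the span of $\{e_{t'}x\}$, which meets the span of $\{e_{t'}\}$ only in $0$; this would force the nonzero expression above to vanish, a contradiction. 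Hence $t_1t_2\in S$, that is, $TT\subseteq S$. Isolating this inclusion is the \textbf{main difficulty}; everything after it is elementary abelian group theory.

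To finish, I consider the homomorphism $c\colon G\to G$, $c(g)=g^{-1}(g\triangleleft x)$ (well defined and multiplicative since $G$ is abelian and $\triangleleft x\in\Aut(G)$), whose kernel is exactly $S$; thus $S$ is a subgroup with $G/S\cong\operatorname{im}c$. In $Q:=G/S$ the inclusion $TT\subseteq S$ says that every product of two non-identity classes is trivial, and since $T\neq\emptyset$ (the action is injective, cf.\ Lemma~\ref{lem2.2.3}) while a finite group with this property has order at most $2$, I get $|Q|=2$, whence $|S|=|T|=|G|/2$, proving (i). For (ii), taking $t_1=t_2=t$ gives $t^2\in S$, so $c(t)^2=1$, and comparing two elements of $T$ via $t_1t_2\in S$ shows $c$ is constant on $T$, say $c(t)=b$ for all $t\in T$; then $b\neq1$ (as $t\notin S$), $b^2=1$, and the identity $c(g)\triangleleft x=c(g)^{-1}$ combined with $b^2=1$ yields $b\triangleleft x=b$, i.e.\ $b\in S$, while by definition of $c$ the equality $c(t)=b$ is precisely $t\triangleleft x=tb$. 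Finally (iii) is immediate: $b\in S$ has order $2$, so $2\mid|S|$ by Lagrange, and with $|G|=2|S|$ this gives $|G|=4m$ where $m=|S|/2\in\mathbb{N}^{+}$.
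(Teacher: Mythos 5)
Your proof is correct and takes essentially the same route as the paper's: the paper establishes $TT=S$ by applying Lemma \ref{lem2.2.1} to the functionals $E_{t_1},E_{t_2}$ (the product $l(E_{t_1})l(E_{t_2})$ lands in the span of $\{e_s\}_{s\in S}$ while $l(E_{t_1t_2})$ would lie in the span of $\{e_sx\}_{s\in S}$), which is the exact mirror image of your argument with $X_{t_1},X_{t_2}$ and the non-vanishing of $w^4$ recorded in the proof of Proposition \ref{pro2.2.1}, and it then concludes with the same elementary group theory. Your packaging of that group theory through the homomorphism $g\mapsto g^{-1}(g\triangleleft x)$ with kernel $S$ and the order-$2$ quotient $G/S$ is a clean structural rephrasing of the paper's direct coset counting ($tT\subseteq S$, $tS\subseteq T$, $T=t_0S$, $b=t_0^{-1}(t_0\triangleleft x)$), not a different argument.
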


\begin{proof}
Assume that $R$ is a non-trivial quasitriangular structure on $\Bbbk^G\#_{\sigma,\tau}\Bbbk \mathbb{Z}_{2}$, then we have $l(E_{t_1})l(E_{t_2})=\sum_{s\in S}w^3(t_1,s)w^3(t_2,s)\sigma(s)e_s$ for $t_1,t_2 \in T$. In this situation, we claim that $TT=S$. In fact, suppose that there are $t_1,t_2 \in T$ satisfying $t_1t_2\in T$. Then it is easy to see that $l(E_{t_1t_2})=\sum_{s \in S}w^3(t_1t_2,s)e_{s}x$ which contradicts to the fact $l(E_{t_1})l(E_{t_2})=l(E_{t_1t_2})$ (Lemma \ref{lem2.2.1}).  Thus we have $TT=S$. Take a $t\in T$. We get that $tT\subseteq S$ and thus $|T|\leq |S|$. Since $tS\subseteq T$, $|T|\geq |S|$.
As a result we have $|T|=|S|$ and thus (i) has been proved. Next we will show (ii). Take a $t_0\in T,$ then we have $T=t_0S$. Let $t_0 \triangleleft x=t_1$ and denote $b=t_0^{-1}t_1$, then we have $b\in S$ by $TT=S$. Since $t_0S\subseteq T$ and $(t_0s)\triangleleft x=(t_0s)b$, we have $t\triangleleft x=tb$ for $t\in T$. It is easy to know that $b^2=1$ since $\triangleleft x$ is a group automorphism with order 2 and thus (ii) has been proved. Now let's show (iii).  By definition, $S$ is a subgroup of $G$ and $b^2=1$. Therefore we know that $2\;|\;|S|$. Since $|G|=|S|+|T|$ and $|S|=|T|$, we can see that $|G|=4m$ for some $m\in \mathbb{N^+}$. This completes the proof of (iii).
\end{proof}
\begin{corollary} \label{coro2.3.1}
If $|G|$ is an odd number or there are $t_1,t_2\in T$ such that $t_1^{-1}(t_1\triangleleft x)\neq t_2^{-1}(t_2\triangleleft x)$, then $\Bbbk^G\#_{\sigma,\tau}\Bbbk \mathbb{Z}_{2}$ has no non-trivial quasitriangular structures.
\end{corollary}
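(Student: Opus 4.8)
The plan is to obtain this corollary immediately from Proposition \ref{pro2.3.1} by contraposition, treating the two hypotheses separately. Suppose, toward a contradiction, that $\Bbbk^G\#_{\sigma,\tau}\Bbbk \mathbb{Z}_{2}$ does admit a non-trivial quasitriangular structure $R$; then all three conclusions (i)--(iii) of Proposition \ref{pro2.3.1} are available to us, and I would show that each of the two stated hypotheses clashes with one of them.

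First I would handle the parity hypothesis. If $|G|$ is odd, then conclusion (iii) of Proposition \ref{pro2.3.1} asserts $|G|=4m$ for some $m\in\mathbb{N}^{+}$, so $4\mid |G|$ and in particular $|G|$ is even, contradicting the assumption that $|G|$ is odd. Hence under this hypothesis no non-trivial structure can exist.

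Next I would handle the second hypothesis. Suppose there are $t_1,t_2\in T$ with $t_1^{-1}(t_1\triangleleft x)\neq t_2^{-1}(t_2\triangleleft x)$. Conclusion (ii) of Proposition \ref{pro2.3.1} produces a single element $b\in S$ with $t\triangleleft x=tb$ for every $t\in T$; rearranging, $t^{-1}(t\triangleleft x)=b$ for all $t\in T$. Applying this to $t_1$ and $t_2$ gives $t_1^{-1}(t_1\triangleleft x)=b=t_2^{-1}(t_2\triangleleft x)$, contradicting the choice of $t_1,t_2$. So again no non-trivial structure can exist.

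Since each hypothesis independently forces a contradiction with the necessary conditions of Proposition \ref{pro2.3.1}, the corollary follows. I do not expect any genuine obstacle here: the entire content of the corollary is already packaged in Proposition \ref{pro2.3.1}, and the only point worth flagging is that the quantity $t^{-1}(t\triangleleft x)$ appearing in the second hypothesis is exactly the invariant that conclusion (ii) pins down to be constant over $T$, so the matching of the two statements is transparent.
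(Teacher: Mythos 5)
Your proposal is correct and is exactly how the paper intends the corollary to be read: the paper gives no separate proof because the statement is precisely the contrapositive of Proposition \ref{pro2.3.1}, with the odd-order hypothesis contradicting conclusion (iii) and the hypothesis $t_1^{-1}(t_1\triangleleft x)\neq t_2^{-1}(t_2\triangleleft x)$ contradicting conclusion (ii). Your spelled-out argument, including the observation that conclusion (ii) forces $t^{-1}(t\triangleleft x)=b$ to be constant on $T$, is exactly the implicit reasoning.
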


The following proposition determine all possible trivial quasitriangular structures.

\begin{proposition}\label{pro2.3.2}
If $R$ is a trivial quasitriangular structure on $\Bbbk^G\#_{\sigma,\tau}\Bbbk \mathbb{Z}_{2}$, then $R$ must be given by the following way
\begin{itemize}
\item[(i)] $R=\sum_{g,h \in G}w(g,h)e_{g} \otimes e_{h}$ for some bicharacter $w$ on $G$;
\item[(ii)]$w(g\triangleleft x,h\triangleleft x)=w(g,h)\eta (g,h)$ where $\eta(g,h)=\tau(g,h)\tau(h,g)^{-1}$ for $g,h \in G$.
\end{itemize}
\end{proposition}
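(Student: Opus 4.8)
The plan is to extract the two stated conditions from the two halves of the quasitriangularity axioms that the preceding lemmas have already repackaged. Lemma \ref{lem2.2.1} converts the comultiplicativity identities $(\Delta\otimes\id)(R)=R_{13}R_{23}$ and $(\id\otimes\Delta)(R)=R_{13}R_{12}$ into multiplicativity of the maps $l$ and $r$ on $H^*$, and this will yield (i). The remaining axiom $\Delta^{op}(h)R=R\Delta(h)$ need only be tested on the algebra generators $e_g$ and $x$, since it is multiplicative in $h$ ($\Delta$ and $\Delta^{op}$ are algebra maps); the relation on $e_g$ turns out to be automatic for a Case-1 matrix, so the entire content of (ii) sits in the relation with $x$.

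First I would compute the maps $l$ and $r$ on the dual basis of Lemma \ref{lem2.2.2}. Because the trivial $R=\sum_{g,h}w^1(g,h)e_g\otimes e_h$ carries no $x$-terms, one gets $l(X_g)=r(X_g)=0$ together with $l(E_g)=\sum_k w^1(g,k)e_k$ and $r(E_g)=\sum_k w^1(k,g)e_k$. Multiplying these inside $H$ via $e_ke_l=\delta_{k,l}e_k$ and invoking $l(E_g)l(E_h)=l(E_{gh})$ and $r(E_g)r(E_h)=r(E_{hg})$ from Lemma \ref{lem2.2.1}, comparison of the coefficients of $e_k$ gives $w^1(gh,k)=w^1(g,k)w^1(h,k)$ and $w^1(k,gh)=w^1(k,g)w^1(k,h)$, where commutativity of $G$ is used to rewrite $hg$ as $gh$. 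To finish (i) I still need $w^1$ to be $\Bbbk^\times$-valued: since the $e_g\otimes e_h$ are orthogonal idempotents summing to $1\otimes1$, invertibility of $R$ forces every $w^1(g,h)\neq 0$ (with inverse $\sum (w^1(g,h))^{-1}e_g\otimes e_h$). Hence $w:=w^1$ is a bicharacter.

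For (ii) I would feed the intertwining axiom with $h=x$. Writing $\Delta(x)=\sum_{g,h}\tau(g,h)\,e_gx\otimes e_hx$ and $\Delta^{op}(x)=\sum_{g,h}\tau(h,g)\,e_gx\otimes e_hx$, I compute both products using the straightening rule $xe_p=e_{p\triangleleft x}x$ and the order-two property $(p\triangleleft x)\triangleleft x=p$. On one side $R\Delta(x)$ collapses through $e_pe_g=\delta_{p,g}e_g$ to $\sum_{g,h}w^1(g,h)\tau(g,h)\,e_gx\otimes e_hx$, while $\Delta^{op}(x)R$ collapses through $e_gxe_p=\delta_{p,\,g\triangleleft x}e_gx$ to $\sum_{g,h}\tau(h,g)\,w^1(g\triangleleft x,h\triangleleft x)\,e_gx\otimes e_hx$. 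Equating the coefficients of $e_gx\otimes e_hx$ and dividing yields $w^1(g\triangleleft x,h\triangleleft x)=w^1(g,h)\,\tau(g,h)\tau(h,g)^{-1}$, which is precisely (ii) with $\eta(g,h)=\tau(g,h)\tau(h,g)^{-1}$.

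All of the computations are routine; the one place demanding care is the bookkeeping in the last paragraph, where the action $\triangleleft x$ must be pushed through the noncommuting relation $xe_p=e_{p\triangleleft x}x$ on both tensor legs at once, and the two cocycle factors—$\tau(g,h)$ coming from $\Delta(x)$ versus $\tau(h,g)$ coming from $\Delta^{op}(x)$—must stay correctly attached, since it is exactly their ratio that produces $\eta$. Beyond this index-tracking I anticipate no genuine obstacle.
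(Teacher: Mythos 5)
Your proposal is correct and follows essentially the same route as the paper: (i) is extracted from the two comultiplicativity axioms (which you make precise via the multiplicativity of $l$ and $r$ from Lemma \ref{lem2.2.1}, together with the observation that invertibility of $R$ forces all $w(g,h)\neq 0$), and (ii) comes from expanding $\Delta^{op}(x)R=R\Delta(x)$, exactly as in the paper's proof. You merely supply the routine verifications that the paper leaves implicit, and your index bookkeeping in both computations checks out.
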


\begin{proof}
We can assume that $R=\sum_{g,h \in G}w(g,h)e_{g} \otimes e_{h}$ is a trivial quasitriangular structure on it. Owing to $(\Delta \otimes \id)(R)=R_{13}R_{23}$ and $(\id \otimes \Delta)(R)=R_{13}R_{12}$, we know (i).
Expanding $\Delta^{op}(x)R=R\Delta(x)$, one can get (ii).
\end{proof}

The following examples are applications of above results and we get all universal $\mathcal{R}$-matrices of $H_{2n^2}$, $A_{2n^2,t}$ respectively.

\begin{example}
\emph{Let $H_{2n^2}$ as before in Example \ref{ex2.1.5}. Then by definition we find that $a^{-1}(a\triangleleft x)=a^{-1}b$ and $b^{-1}(b\triangleleft x)=b^{-1}a$. We divide our consideration into two cases.
\begin{itemize}
\item[1) ] If $n=2$, then $H_8$ is the 8-dimensional Kac-Paljutkin algebra $K_8$. All possible quasitrigular structures on $K_8$ were given in \cite{W} (see see \cite[Lemma 5.4]{W}).
\item[2) ] If $n>2$, then $a^{-1}(a\triangleleft x)\neq b^{-1}(b\triangleleft x)$. Therefore $H_{2n^2}$ has no non-trivial quasitriangular structure by Corollary \ref{coro2.3.1}. Assume that $R=\sum_{g,h \in G}w(g,h)e_{g} \otimes e_{h}$ is a trivial quasitriangular structure on $H_{2n^2}$, then $w$ is a bicharacter on $G$ and it satisfies the following equations by Proposition \ref{pro2.3.2}
\begin{align}
&w(a,a)^n=1, \;\;\;\;w(a,b)^n=1, \label{equ7}\\
&w(b,a)=w(a,b), \;\;\;\;w(b,b)=w(a,a). \notag
\end{align}
Using the above series of equations \eqref{equ7}, we can get all universal $\mathcal{R}$-matrices of $H_{2n^2}$ ($n\geq 3$) easily: let $R$ be a universal $\mathcal{R}$-matrix of $H_{2n^2}$, then
$$R=\sum_{1\leq i,j,k,l \leq n}\alpha^{ik+jl}\beta^{il+jk}e_{a^ib^j} \otimes e_{a^kb^l}$$
for some $\alpha, \beta\in \Bbbk$ satisfying $\alpha^n=\beta^n=1.$
\end{itemize} }
\end{example}

\begin{example}
\emph{Let $A_{2n^2,t}$ as before in Example \ref{ex2.1.6}. Since $|G|=n^2$ is odd, we know that $A_{2n^2,t}$ only has trivial quasitriangular structures on it by Corollary \ref{coro2.3.1}. Assume that $R=\sum_{g,h \in G}w(g,h)e_{g} \otimes e_{h}$ is a trivial quasitriangular structure on it, then we get that $w$ is a bicharacter on $G$. It is easy to see that the condition (ii) of Proposition \ref{pro2.3.2} is equivalent to the following equations
\begin{align}
&w(a,a)^n=1,\;\;\;\;
w(a,b)=t^m, \;m\in \mathbb{N},\;n|(2m-1),\label{equ3}\\
&w(b,a)=w(a,b)^{-1},\;\;\;
w(b,b)^n=1. \notag
\end{align}
Using the above series of equations \eqref{equ3}, it is easy to see that  all universal $\mathcal{R}$-matrices of $A_{2n^2,t}$ are given by the following way: Let $R$ be a universal $\mathcal{R}$-matrix of $A_{2n^2,t}$, then
$$R=\sum_{1\leq i,j,k,l \leq n}\alpha^{ik}\beta^{jl}t^{m(il-jk)}e_{a^ib^j} \otimes e_{a^kb^l}$$ for some $\alpha, \beta\in \Bbbk,\; m \in \mathbb{N}$ satisfying $\alpha^n=\beta^n=1,\; n|(2m-1) $.}
\end{example}

\section{Quasitriangular structures on $K(8n,\sigma,\tau)$}
As another continuation of our Proposition \ref{pro2.2.1}, we want to determine all universal $\mathcal{R}$-matrices of $K(8n,\sigma,\tau)$ in this section. As a consequence, we get a class of minimal quaistriangular semisimple Hopf algebras.

\subsection{Analysis of $\eta$}
For a Hopf algebra $K(8n,\sigma,\tau)$, if we let $\eta(g,h)=\tau(g,h)\tau(h,g)^{-1}$ for $g,h \in G$, then  $\eta$ is a bicharacter on $G$ by $\tau$ is a 2-cocycle on the abelian group $G$. Becuase $b^2=1$ and $\eta$ is a bicharacter, we know that $\eta(a,b)^2=1$ and $\eta(a,b)=\eta(b,a)$. As a result, we have two cases for the value of $\eta(a,b)$, that is, $\eta(a,b)=1$ or $\eta(a,b)=-1$. Both of them can occur. For example, for the dihedral group algebra $\k D_8$ (a special case of $K(8,\sigma,\tau)$) we have $\eta(a,b)=1$. For the $8$-dimensional Kac-Paljutkin algebra $K_8$, we have $\eta(a,b)=-1$. We found that the quasitriangular structures in the two cases have similar expressions. To present our results conveniently, we assume that $\eta(a,b)=-1$ in subsections \ref{sec4.2}-\ref{sec4.3} and then we will give
all universal $\mathcal{R}$-matrices of $K(8n,\sigma,\tau)$ for the case $\eta(a,b)=1$ in subsection \ref{sec4.4}.

\subsection{Trivial form}\label{sec4.2} The trivial quasitriangular structures on $K(8n,\sigma,\tau)$ can be determined easily.

\begin{proposition}\label{trivial}
Assume that $R$ is a trivial quasitriangular structure on $K(8n,\sigma,\tau)$, then $$R=\sum_{1\leq i,k \leq n,\;1\leq j,l \leq 2}\alpha^{ik}\beta^{il-jk}(-1)^{j(k+l)}e_{a^ib^j} \otimes e_{a^kb^l}$$ for some $\alpha, \beta\in \Bbbk$ satisfying $\alpha^{2n}=\beta^2=1.$
\end{proposition}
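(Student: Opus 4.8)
The plan is to start from Proposition \ref{pro2.3.2}, which already reduces a trivial quasitriangular structure to an $R$ of the form $\sum_{g,h}w(g,h)e_g\otimes e_h$ with $w$ a bicharacter on $G=\mathbb{Z}_{2n}\times\mathbb{Z}_2=\langle a,b\rangle$ satisfying the compatibility condition $w(g\triangleleft x,h\triangleleft x)=w(g,h)\eta(g,h)$. Since $w$ is a bicharacter on a group generated by $a$ and $b$, it is completely determined by the four scalars $w(a,a),w(a,b),w(b,a),w(b,b)\in\Bbbk^{\times}$, and the relations $a^{2n}=b^2=1$ force $w(a,a)^{2n}=1$ and $w(a,b)^2=w(b,a)^2=w(b,b)^2=1$. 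So the whole task is to pin down these four values.

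First I would record the relevant values of $\eta$: since $\eta(g,g)=\tau(g,g)\tau(g,g)^{-1}=1$ we have $\eta(a,a)=\eta(b,b)=1$, while the standing assumption of this subsection is $\eta(a,b)=-1$, and $\eta(a,b)^2=1$ gives $\eta(b,a)=\eta(a,b)^{-1}=-1$. Next I would feed the action $a\triangleleft x=ab$, $b\triangleleft x=b$ into the compatibility condition on each generator pair. The pair $(a,b)$ gives $w(ab,b)=w(a,b)w(b,b)$ on the left and $w(a,b)\eta(a,b)=-w(a,b)$ on the right, forcing $w(b,b)=-1$. The pair $(a,a)$ gives $w(ab,ab)=w(a,a)w(a,b)w(b,a)w(b,b)$ equal to $w(a,a)\eta(a,a)=w(a,a)$, hence $w(a,b)w(b,a)w(b,b)=1$; combined with $w(b,b)=-1$ this yields $w(a,b)w(b,a)=-1$, so $w(b,a)=-w(a,b)$ using $w(a,b)^2=1$. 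The remaining pairs $(b,a)$ and $(b,b)$ are then automatically satisfied and give no further restriction.

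Finally I would set $\alpha:=w(a,a)$ and $\beta:=w(a,b)$, so that $w(b,a)=-\beta$, $w(b,b)=-1$, with $\alpha^{2n}=\beta^2=1$ inherited from the order constraints. Expanding the bicharacter multiplicatively,
\begin{align*}
w(a^ib^j,a^kb^l)&=w(a,a)^{ik}w(a,b)^{il}w(b,a)^{jk}w(b,b)^{jl}\\
&=\alpha^{ik}\beta^{il}(-\beta)^{jk}(-1)^{jl}=\alpha^{ik}\beta^{il+jk}(-1)^{j(k+l)},
\end{align*}
and since $\beta^2=1$ one may replace $\beta^{il+jk}$ by $\beta^{il-jk}$, recovering exactly the asserted form.

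The computation is essentially mechanical, so I do not anticipate a serious obstacle; the only points requiring care are verifying that the compatibility condition genuinely forces $w(b,b)=-1$ and the relation $w(b,a)=-w(a,b)$ (rather than leaving these scalars free), and the exponent bookkeeping needed to match the stated closed form, where the freedom to rewrite $\beta^{jk}$ as $\beta^{-jk}$ modulo the order $2$ of $\beta$ is what bridges my intermediate expression and the formula as written.
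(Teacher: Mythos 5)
Your proposal is correct and follows essentially the same route as the paper: invoke Proposition \ref{pro2.3.2} to reduce to a bicharacter $w$ with the compatibility condition, evaluate that condition on generator pairs (the paper records it as $w(ab,ab)=w(a,a)$ and $w(ab,b)=-w(a,b)$, which are exactly your $(a,a)$ and $(a,b)$ equations), set $\alpha=w(a,a)$, $\beta=w(a,b)$, and expand multiplicatively. The only difference is that you spell out the steps the paper compresses into ``it is not hard to see,'' namely deducing $w(b,b)=-1$ and $w(b,a)=-\beta$ and doing the exponent bookkeeping with $\beta^2=1$.
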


\begin{proof}
Assume that $R=\sum_{g,h \in G}w(g,h)e_{g} \otimes e_{h}$ is a trivial quasitriangular structure on $K(8n,\sigma,\tau)$, then $w$ is a bicharacter on $G$ and satisfies the following equations by Proposition \ref{pro2.3.2}
\begin{align}
&w(a,a)^{2n}=1, \;\;\;\;\;\;\;\;\;\;\;\;w(a,b)^2=1.\label{equ.4.3}\\
&w(ab,ab)=w(a,a), \;\;\;\;w(ab,b)=-w(a,b).\notag
\end{align}
If we let $w(a,a):=\alpha$, $w(a,b):=\beta$, then it is not hard to see that the above series of equations \eqref{equ.4.3} hold if and only if $w(a^ib^j,a^kb^l)=\alpha^{ik}\beta^{il-jk}(-1)^{j(k+l)}$. This implies our desired result.
\end{proof}

\subsection{Nontrivial form}\label{sec4.3}
In this subsection, we will give all nontrivial universal $\mathcal{R}$-matrices of $K(8n,\sigma,\tau)$. By Corollary \ref{coro2.2.1} and Proposition \ref{pro2.3.1}, if there is a non-trivial quasitriangular structure $R$ on $\Bbbk^G\#_{\sigma,\tau}\Bbbk \mathbb{Z}_{2}$, then we have $|S|=|T|$ and $R$ has the following form
\begin{gather}
              R=\sum\limits_{s_1,s_2 \in S}w^1(s_1,s_2)e_{s_1} \otimes e_{s_2}+ \sum\limits_{s \in S, t \in T}w^2(s,t)e_{s}x \otimes e_{t}+
        \sum\limits_{t \in T,s \in S}w^3(t,s)e_{t} \otimes \label{r}\\
         e_{s}x+\sum\limits_{t_1,t_2 \in T}w^4(t_1,t_2)e_{t_1}x \otimes e_{t_2}x. \notag
\end{gather}

Observe that if we let $S=\{s_1,\cdots, s_m\}$ and $T=\{t_1,\cdots ,t_m\}$, then the functions $w^i(1\leq i \leq 4)$ can be viewed as 4 matrices, which are
$(w^1(s_i,s_j))_{1\leq i,j \leq m}$, $(w^2(t_i,s_j))_{1\leq i,j \leq m}$, $(w^3(s_i,t_j))_{1\leq i,j \leq m}$, $(w^4(t_i,t_j))_{1\leq i,j \leq m}$. So when we say $w^i(1\leq i \leq 4)$ we mean that they are matrices in the following content.

To construct all non-trivial quasitriangular structures on $K(8n,\sigma,\tau)$, we introduce the following notations to simplify the calculation.

\begin{notation}
\emph{Let $K(8n,\sigma,\tau)$ as before, then we introduce symbols $P_i,\;\lambda_{2i,j},\lambda_{2i+1,j}$ and the function $h$ as follows
\begin{itemize}
             \item[(i)] $ P_i:=\left\{
\begin{array}{lll}
\prod_{k=1}^{i-1}\tau(a,a^k)  & {i\geq 2,\;i\in \mathbb{N}}\\
1     & {i=0\text{\;or\;}i=1}
\end{array} \right.$;
              \item[(ii)] $\lambda_{2i,j}:=P_{2i}^{-1}\sigma^i(a^j),\;\lambda_{2i+1,j}:=P_{2i+1}^{-1}\sigma^i(a^j)$ $,\;i,j\in \mathbb{N}$;
              \item[(iii)] $h(t_1,t_2):=\frac{\tau(t_1,t_2)}{\tau(t_2\triangleleft x, t_1\triangleleft x)}$ $\text{for\;} t_1,t_2\in T$;
            \end{itemize}}
\end{notation}%
%

We use these notations to construct universal $\mathcal{R}$-matrices of $K(8n,\sigma,\tau)$ as follows. Let $\alpha,\beta \in \Bbbk$ such that $$(\alpha\beta)^n\lambda_{2n,1}=1,\;\;\frac{\beta^2}{\alpha^2}
=\frac{\tau(b,b)}{\tau(b,a)^2},$$  and $S_{j,0}:=\lambda_{2j+1,1}\alpha^{j+1}\beta^j,\;S_{j,1}:=h(a,a^{2j+1}b)\lambda_{2j+1,1}\alpha^j\beta^{j+1}$ for $j\in \mathbb{N}$. Now we can construct    $$R_{\alpha,\beta}$$
 in the form of \eqref{r} through letting
\begin{itemize}
\item[(i)] $w^1$ be given by \\\\
$ \left\{
\begin{array}{l}
w^1(a^{2i},a^{2j})=w^1(a^{2i}b,a^{2j})
=(\lambda_{2j,1})^{2i}(\alpha\beta)^{2ij}[\sigma(a^{2j})]^i \\
w^1(a^{2i},a^{2j}b)=-w^1(a^{2i}b,a^{2j}b)
=(\lambda_{2j,1})^{2i}(\alpha\beta)^{2ij}[\sigma(a^{2j})]^i
\end{array}\right.$,\\

\item[(ii)] $w^2$ be given by  \\\\
$ \left\{
\begin{array}{l}
w^2(a^{2i},a^{2j+1})=w^2(a^{2i},a^{2j+1}b)=\lambda_{2i,2j+1}[S_{j,0}S_{j,1}]^{i}\\
w^2(a^{2i}b,a^{2j+1})=-w^2(a^{2i}b,a^{2j+1}b)=\frac{\tau(b,a)\beta}{\tau(b,a^{2i})\alpha}
\lambda_{2i,2j+1}[S_{j,0}S_{j,1}]^{i}
\end{array} \right.$,\\

\item[(iii)] $w^3$ be given by \\\\
$ \left\{
\begin{array}{l}
w^3(a^{2i+1},a^{2j})=w^3(a^{2i+1}b,a^{2j})=\lambda_{2j,2i+1}[S_{i,0}S_{i,1}]^{j}\\
w^3(a^{2i+1},a^{2j}b)=-w^3(a^{2i+1}b,a^{2j}b)=-\frac{\tau(b,a)\beta}{\tau(b,a^{2j})\alpha}
\lambda_{2j,2i+1}[S_{i,0}S_{i,1}]^{j}
\end{array} \right.$,\\

 \item[(iv)] $w^4$ be given by \\\\
$ \left\{
\begin{array}{l}
w^4(a^{2i+1},a^{2j+1})=\lambda_{2i+1,2j+1}S_{j,0}^{i+1}S_{j,1}^{i} \\
w^4(a^{2i+1},a^{2j+1}b)=\lambda_{2i+1,2j+1}S_{j,0}^{i}S_{j,1}^{i+1} \\
w^4(a^{2i+1}b,a^{2j+1})=h(a^{2i+1}b,a^{2j+1})\lambda_{2i+1,2j+1}S_{j,0}^{i}S_{j,1}^{i+1}\\
w^4(a^{2i+1}b,a^{2j+1}b)=h(a^{2i+1}b,a^{2j+1}b)\lambda_{2i+1,2j+1}S_{j,0}^{i+1}S_{j,1}^{i}
\end{array} \right.$,
            \end{itemize}
for $0 \leq i,j \leq (n-1)$. Then we have the following theorem.
\begin{theorem}\label{pro4.2}
The set of elements $\{R_{\alpha,\beta}\;|\;\alpha,\beta \in \Bbbk \;,(\alpha\beta)^n\lambda_{2n,1}=1\; \text{and}\; \frac{\beta^2}{\alpha^2}=\frac{\tau(b,b)}{\tau(b,a)^2}\}$ gives all non-trivial quasitriangular structures on $K(8n,\sigma,\tau)$.
\end{theorem}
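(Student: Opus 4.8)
The plan is to prove the statement in two directions: first that each $R_{\alpha,\beta}$ whose parameters satisfy the two stated constraints really is a non-trivial quasitriangular structure, and conversely that an arbitrary non-trivial quasitriangular structure on $K(8n,\sigma,\tau)$ coincides with $R_{\alpha,\beta}$ for a unique admissible pair $(\alpha,\beta)$. Throughout I use the explicit description of $S$ and $T$ for this Hopf algebra: since $a^i b^j\triangleleft x=a^ib^{i+j}$, we have $S=\{a^{2i}b^j\}$ and $T=\{a^{2i+1}b^j\}$ with $|S|=|T|=2n$, so by Proposition \ref{pro2.2.1} and Proposition \ref{pro2.3.1} every non-trivial $R$ has the shape \eqref{r}, and its coefficients $w^1,w^2,w^3,w^4$ may be regarded as $2n\times 2n$ matrices.

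The engine of the whole argument is Lemma \ref{lem2.2.1}: the two axioms $(\Delta\otimes\id)(R)=R_{13}R_{23}$ and $(\id\otimes\Delta)(R)=R_{13}R_{12}$ are equivalent to $l$ being an algebra homomorphism and $r$ an algebra anti-homomorphism $H^*\to H$. First I would record the explicit values, for $s\in S$ and $t\in T$,
$$l(E_s)=\sum_{s'\in S}w^1(s,s')e_{s'},\quad l(E_t)=\sum_{s\in S}w^3(t,s)e_sx,\quad l(X_s)=\sum_{t\in T}w^2(s,t)e_t,\quad l(X_t)=\sum_{t'\in T}w^4(t,t')e_{t'}x,$$
together with the symmetric formulas for $r$. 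Feeding these into the multiplicativity relations $l(E_g)l(E_h)=l(E_{gh})$ and, via Lemma \ref{lem2.2.2}, $l(X_g)l(X_h)=\tau(g,h)l(X_{gh})$ (and the cross relations $l(E_g)l(X_h)=0$) converts the bialgebra axioms into a coupled system of recursions among the four matrices. Because $G=\langle a,b\rangle$ with $a\in T$, $b\in S$ and $a\triangleleft x=ab$, iterating these recursions along the powers of $a$ forces the accumulated cocycle products $P_i$, the normalisations $\lambda_{k,j}$, the twist $h$, and the ``eigenvalues'' $S_{j,0},S_{j,1}$ to appear; these notations are precisely the closed forms of the products generated by the recursion. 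Setting $\alpha:=w^4(a,a)$ and reading $\beta$ off $w^4(a,ab)$, and using that non-triviality forces $w^4(t_1,t_2)\neq0$ for all $t_1,t_2\in T$ (as in the proof of Proposition \ref{pro2.2.1}), one gets $\alpha,\beta\in\Bbbk^\times$.

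With the two free scalars fixed, I would solve the recursions explicitly. The relations coming from $l$ and $r$ determine $w^1,w^4$ and then $w^2,w^3$; the last quasitriangular axiom $\Delta^{\mathrm{op}}(x)R=R\Delta(x)$ supplies the remaining links between the blocks (the $\pm$ signs and the coefficients $\tau(b,a)\beta/(\tau(b,a^{2i})\alpha)$), while the condition $\Delta(e_g)R=R\Delta(e_g)$ is automatic thanks to the vanishing already imposed in Lemma \ref{lem2.2.4}. The two closure relations $a^{2n}=1$ and $b^2=1$ enter as consistency conditions on the iterated eigenvalues and yield precisely $(\alpha\beta)^n\lambda_{2n,1}=1$ and $\beta^2/\alpha^2=\tau(b,b)/\tau(b,a)^2$. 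Carrying the solution out, splitting according to the parity of exponents (which is the origin of the four sub-cases in $w^4$), reproduces the displayed formulas (i)--(iv); this simultaneously shows that every admissible $R_{\alpha,\beta}$ satisfies all the axioms, with invertibility clear since the $w^4$-block has no zero entries, and that every non-trivial $R$ equals some $R_{\alpha,\beta}$.

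I expect the main obstacle to be the simultaneous resolution of the coupled recursions together with the axiom $\Delta^{\mathrm{op}}(x)R=R\Delta(x)$: this is the step where all four matrices interact, where the cocycle factors must be tracked through the substitution $a\triangleleft x=ab$, and where one must verify that the two scalar constraints are both necessary (for consistency of the iteration around $a^{2n}=1$ and $b^2=1$) and sufficient (so that the candidate formulas genuinely close up). The careful bookkeeping of $\tau$ and $\sigma$ through these iterations, rather than any conceptual difficulty, is what makes this the heart of the proof.
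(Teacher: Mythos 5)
Your uniqueness half is essentially the paper's: after invoking Propositions \ref{pro2.2.1} and \ref{pro2.3.1} to put a non-trivial $R$ in the form \eqref{r}, you set $\alpha=w^4(a,a)$, $\beta=w^4(a,ab)$, extract the two scalar constraints from the closure relations $a^{2n}=1$, $b^2=1$ (this is Lemma \ref{all.1}), compute the whole $w^4$-block by iterating the $l$- and $r$-relations and the axiom $\Delta^{op}(x)R=R\Delta(x)$ (Lemma \ref{all.2}), and use the fact that a non-trivial $R$ is determined by $w^4$ (Lemma \ref{lem3.2}); that is exactly Proposition \ref{addpro}. The genuine gap is in the existence half. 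You assert that carrying out the solution of the recursions "simultaneously shows that every admissible $R_{\alpha,\beta}$ satisfies all the axioms." It does not: the recursions you solve are \emph{consequences} of the axioms, extracted along one particular chain of products ($l(X_a)^k$, $l(X_{a^{2i}})l(X_b)$, and so on). Solving that subsystem produces the candidate formulas; it does not show that the candidates satisfy the full system, which by Lemmas \ref{lem2.2.1} and \ref{lem2.2.2} consists of $l(E_g)l(E_h)=l(E_{gh})$, $l(X_g)l(X_h)=\tau(g,h)l(X_{gh})$, $l(E_g)l(X_h)=l(X_h)l(E_g)=0$, together with the corresponding \emph{anti}-multiplicativity of $r$, for \emph{all} pairs $g,h\in G$. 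Nothing in your plan verifies, say, $l(X_{a^3b})l(X_{a^5b})=\tau(a^3b,a^5b)\,l(X_{a^{8}})$, which lies off the chain you iterate.

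Closing precisely this gap is what occupies most of the paper's Section 4.3 (Proposition \ref{pro4.1}), and it requires two ideas absent from your proposal. First, the presentation $H^*\cong A_G/I_G$ of the dual as an explicit quotient of a free algebra (Lemma \ref{lemm4.1}): this is the device that reduces multiplicativity of $l$ on all of $H^*$ to the finitely many generator relations checked in Lemma \ref{lemm4.5} (themselves resting on the identities of Lemmas \ref{lem4.1} and \ref{lemm4.4}), yielding Lemma \ref{lemm4.6}. Second, the symmetry identities $w^4(t_1,t_2)=w^4(t_2,t_1)$, $w^1(s_1,s_2)=w^1(s_2,s_1)$ and $w^3(t,s)=w^2(s,t\triangleleft x)$ (Lemma \ref{lemm4.7}): anti-multiplicativity of $r$ is not formally symmetric to multiplicativity of $l$, and these identities are what let Lemma \ref{lemm4.8} be deduced from the $l$-side results instead of being re-verified from scratch. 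Without these tools (or, alternatively, a brute-force plug-back of the closed formulas into every product of basis elements, which you would at least have to outline), the sufficiency direction remains unproven; describing it as "careful bookkeeping ... rather than any conceptual difficulty" mislocates where the real work in this theorem lies.
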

\begin{remark} \emph{(1) Note the set $S':=\{(\alpha,\beta)|\alpha, \beta\in \k, (\alpha\beta)^n\lambda_{2n,1}=1\;\text{and}\;  \frac{\beta^2}{\alpha^2}=\frac{\tau(b,b)}{\tau(b,a)^2}\}$ is not empty. Actually, we can show that $|S'|=4n.$ This means that we have $4n$-number of non-trivial quasitriangular structures on a $K(8n,\sigma,\tau)$.}

\emph{(2) The general idea of the proof of Theorem \ref{pro4.2} is: }

\emph{Part 1:} we need to show that $R_{\alpha,\beta}$ is a universal $\mathcal{R}$-matrix of $K(8n,\sigma,\tau)$. \emph{This is given in Proposition \ref{pro4.1}. To show Proposition \ref{pro4.1}, we need Lemma \ref{lem3.1} which gives an equivalent description of a universal $\mathcal{R}$-matrix. Then we use Lemmas \ref{lemm4.2},\ref{lemm4.3},\ref{lemm4.6} and \ref{lemm4.8}  to verify this equivalent description.
  The following diagram illustrates the relations between these lemmas and Proposition \ref{pro4.1}:
$$\text{Proposition}\; \ref{pro4.1} \Leftarrow \text{Lemma}\; \ref{lem3.1} \Leftarrow \begin{cases}
    \text{Lemma}\;\ref{lemm4.2}\\
    \text{Lemma}\;\ref{lemm4.3}\\
    \text{Lemma}\;\ref{lemm4.6} \Leftarrow \left\{
    \begin{array}{l}
        \text{Lemma}\;\ref{lemm4.5} \Leftarrow \text{Lemma}\;\ref{lemm4.4}\\
        \text{Lemma}\;\ref{lemm4.1}
	\end{array}
	\right.\\
    \text{Lemma}\;\ref{lemm4.8}&
\end{cases}
$$
Part 2: } we show that if $R$ is a universal $\mathcal{R}$-matrix of $K(8n,\sigma,\tau)$, then $R=R_{\alpha,\beta}$ for some $\alpha,\beta\in \k$ satisfying $(\alpha\beta)^n\lambda_{2n,1}=1,\;\frac{\beta^2}{\alpha^2}
=\frac{\tau(b,b)}{\tau(b,a)^2}.$ \emph{This is the Proposition \ref{addpro}. The basic observation to prove Proposition \ref{addpro} is Lemma \ref{lem3.2} which states that $R$ is \emph{essentially} determined by the fourth matrix $w^4$. And we use Lemma \ref{all.1} and Lemma \ref{all.2} to compute the $w^4$ of $R$. }
\end{remark}

The following content of this subsection is designed to prove Theorem \ref{pro4.2} and we start with the properties of the notations we introduced.
\begin{lemma}\label{lem4.1}
If we use the notation $P_i,\;\lambda_{2i,j},\;\lambda_{2i+1,j},\;h$ as above, then the following equations hold:
\begin{gather}
\label{4.b1} \frac{P_{2j}^{2i}}{P_{2i}^{2j}}=\frac{\sigma(a^{2j})^{i}}{\sigma(a^{2i})^{j}},\; i,j\geq 0,\\
\label{4.b3} P_{2i+1}^{2j}\sigma(a)^j\sigma(a^{2j})^i=P_{2j}^{2i}\sigma(a^{2i+1})^j
[\frac{\tau(b,a)}{\tau(b,a^{2i+1})}]^j,\;i,j\geq 0,\\
\label{4.b4} P_{2i+1}^{2j} \frac{\sigma(a^{2j+1})^i}{\sigma(a)^i}[\frac{\tau(b,a)}{\tau(b,a^{2j+1})}]^i=
P_{2j+1}^{2i} \frac{\sigma(a^{2i+1})^j}{\sigma(a)^j}[\frac{\tau(b,a)}{\tau(b,a^{2i+1})}]^j,\;i,j\geq 0,\\
\label{4.b5}
h(t_1,t_2)h(t_1\triangleleft x, t_2\triangleleft x)=1,\;h(t_1,t_2)=h(t_2,t_1),\;t_1,t_2\in T,\\
\label{4.b6}
h(a,a^{2i+1}b)=\frac{\tau(b,a)}{\tau(b,a^{2i+1})},\;i\geq 0.
\end{gather}
Moreover if there are $\alpha,\beta \in \Bbbk$ such that $(\alpha\beta)^n \lambda_{2n,1}=1$ and $\frac{\beta^2}{\alpha^2}=\frac{\tau(b,b)}{\tau(b,a)^2}$, then we have $[\frac{\tau(b,a)\beta}{\tau(b,a^{2i})\alpha}]^2=\frac{\sigma(a^{2i})}{\sigma(a^{2i}b)},\; i\geq 0$.
\end{lemma}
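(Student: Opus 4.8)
The entire lemma should reduce to two computational ``engines'' plus careful bookkeeping. The first engine is an identity inside the twisted group algebra spanned by the $X_g$: Lemma~\ref{lem2.2.2} gives $X_gX_h=\tau(g,h)X_{gh}$, so an immediate induction yields $X_a^{\,i}=P_iX_{a^i}$, and comparing $X_a^{\,p}X_a^{\,q}=X_a^{\,p+q}$ produces
$$\tau(a^p,a^q)=\frac{P_{p+q}}{P_pP_q},\qquad p,q\ge 0 .$$
The second engine is the defining compatibility $\tau(g,h)\tau(g\triangleleft x,h\triangleleft x)=\sigma(gh)\sigma(g)^{-1}\sigma(h)^{-1}$ specialized to powers of $a$. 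The structural point I will lean on is that $a^p\triangleleft x=a^pb^p$, so $\triangleleft x$ fixes $a^p$ exactly when $p$ is even; in particular even powers of $a$ lie in $S$ and odd ones in $T$. I will also use that $\eta(a^p,a^q)=\eta(a,a)^{pq}=1$, hence $\tau(a^p,a^q)=\tau(a^q,a^p)$, and that $\eta(a,b)=-1$ in this subsection, hence $\tau(a^k,b)=(-1)^k\tau(b,a^k)$.

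First I would derive two ``master formulas''. Feeding the compatibility the two even powers $g=a^{2i},h=a^{2j}$, both fixed by $\triangleleft x$, gives $\tau(a^{2i},a^{2j})^2=\sigma(a^{2(i+j)})\sigma(a^{2i})^{-1}\sigma(a^{2j})^{-1}$; with the first engine this says $m\mapsto P_{2m}^{2}/\sigma(a^{2m})$ is multiplicative, so
$$P_{2m}^{2}=\sigma(a^{2m})\,C^{m},\qquad C:=P_2^{2}/\sigma(a^2),$$
which is exactly \eqref{4.b1} after substitution and cancellation. For the odd indices I would feed the mixed pair $g=a^{2i},h=a$: expanding $\tau(a^{2i},ab)=\tau(a,b)^{-1}\tau(a^{2i},a)\tau(a^{2i+1},b)$ by the cocycle and applying the two $\eta$-rules collapses the compatibility to the odd master formula $P_{2i+1}^{2}=C^{\,i}\sigma(a^{2i+1})\sigma(a)^{-1}\tau(b,a)\tau(b,a^{2i+1})^{-1}$. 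Substituting both master formulas into \eqref{4.b3} and \eqref{4.b4} and cancelling the common $C^{ij}$ should make the two sides coincide termwise. I expect this odd master formula, with its cocycle expansion and sign-tracking through $\eta(a,b)=-1$, to be the most error-prone step and hence the main obstacle.

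The two $h$-identities are softer. For \eqref{4.b5} I would use $(t\triangleleft x)\triangleleft x=t$ to write $h(t_1\triangleleft x,t_2\triangleleft x)=\tau(t_1\triangleleft x,t_2\triangleleft x)\tau(t_2,t_1)^{-1}$; then applying the compatibility to $(t_1,t_2)$ and to $(t_2,t_1)$ turns the product $h(t_1,t_2)h(t_1\triangleleft x,t_2\triangleleft x)$ into $\bigl[\sigma(t_1t_2)\sigma(t_1)^{-1}\sigma(t_2)^{-1}\bigr]\bigl[\sigma(t_2t_1)\sigma(t_2)^{-1}\sigma(t_1)^{-1}\bigr]^{-1}=1$ by commutativity, and reading the same equality differently gives the symmetry $h(t_1,t_2)=h(t_2,t_1)$. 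For \eqref{4.b6}, since $(a^{2i+1}b)\triangleleft x=a^{2i+1}$ and $a\triangleleft x=ab$, I have $h(a,a^{2i+1}b)=\tau(a,a^{2i+1}b)\tau(a^{2i+1},ab)^{-1}$; expanding both $\tau$'s down to generators, the factors $\tau(a^{2i+2},b)$ and $\tau(a,a^{2i+1})=\tau(a^{2i+1},a)$ cancel, leaving $\tau(a,b)\tau(a^{2i+1},b)^{-1}$, which equals $\tau(b,a)\tau(b,a^{2i+1})^{-1}$ once the sign rule cancels the two $(-1)$'s.

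Finally, for the ``moreover'' identity I would rewrite its left side, via $\beta^2/\alpha^2=\tau(b,b)/\tau(b,a)^2$, as $\tau(b,b)\tau(b,a^{2i})^{-2}$, and its right side, via the compatibility at $(a^{2i},b)$ giving $\tau(a^{2i},b)^2=\sigma(a^{2i}b)\sigma(a^{2i})^{-1}\sigma(b)^{-1}$ together with $\tau(a^{2i},b)=\tau(b,a^{2i})$ (as $\eta(a^{2i},b)=1$), as $\sigma(b)^{-1}\tau(b,a^{2i})^{-2}$. The identity therefore collapses to the single scalar relation $\sigma(b)\tau(b,b)=1$. This is genuinely forced, not merely up to a sign: the compatibility at $(a,b)$ reads $\tau(a,b)\tau(ab,b)=\sigma(ab)\sigma(a)^{-1}\sigma(b)^{-1}$, the cocycle identity gives $\tau(ab,b)=\tau(a,b)^{-1}\tau(b,b)$ so the left side is exactly $\tau(b,b)$, and the invariance $\sigma(ab)=\sigma(a\triangleleft x)=\sigma(a)$ then yields $\tau(b,b)=\sigma(b)^{-1}$, completing the proof.
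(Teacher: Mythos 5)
Your proposal is correct. For \eqref{4.b5}, \eqref{4.b6} and the final identity your argument coincides with the paper's: the paper likewise reduces the last claim to $\sigma(b)^{-1}=\tau(b,b)$, proved by exactly your compatibility-at-$(a,b)$ computation, and proves \eqref{4.b5} by the same quotient-of-compatibilities trick. For \eqref{4.b1}--\eqref{4.b4}, however, you take a genuinely different route. The paper proves \eqref{4.b1} by a double induction on $(i,j)$ and then checks \eqref{4.b3} and \eqref{4.b4} by lengthy term-by-term manipulations combining \eqref{4.b1}, the compatibility relation $\sigma(a^{2i+1})\sigma(a)^{-1}\sigma(a^{2i})^{-1}=\tau(a,a^{2i})\tau(ab,a^{2i})$, and further cocycle identities. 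You instead extract from Lemma \ref{lem2.2.2} the coboundary formula $\tau(a^p,a^q)=P_{p+q}/(P_p P_q)$ (via $X_a^i=P_i X_{a^i}$ and associativity of the twisted group algebra spanned by the $X_g$), and feed it into the compatibility at the pairs $(a^{2i},a^{2j})$ and $(a^{2i},a)$ to obtain the closed forms $P_{2m}^2=\sigma(a^{2m})C^m$ and $P_{2i+1}^2=C^i\sigma(a^{2i+1})\sigma(a)^{-1}\tau(b,a)\tau(b,a^{2i+1})^{-1}$ with $C=P_2^2/\sigma(a^2)$. Then \eqref{4.b1} is immediate, \eqref{4.b3} is the $j$-th power of the odd master formula after inserting the even one, and both sides of \eqref{4.b4} collapse to a single expression visibly symmetric under $i\leftrightarrow j$. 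Your sign bookkeeping is justified because $\eta$ is a bicharacter with $\eta(a,a)=1$ and $\eta(a,b)=-1$, so indeed $\tau(a^p,a^q)=\tau(a^q,a^p)$ and $\tau(a^k,b)=(-1)^k\tau(b,a^k)$. What your route buys is a shorter, induction-free argument with reusable closed formulas for the $P_k^2$; what the paper's route buys is that it stays entirely inside the cocycle calculus (no appeal to the dual algebra), at the cost of a more delicate induction and substantially longer computations.
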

\begin{proof}
We use induction to prove \eqref{4.b1}. If $i,j\in \{0,1\}$, then $\frac{P_{2j}^{2i}}{P_{2i}^{2j}}=\frac{\sigma(a^{2j})^{i}}{\sigma(a^{2i})^{j}}=1$. Assume that \eqref{4.b1} hold for $(k,l)\leq (i,j)$ (we mean that $k\leq i$ and $l\leq j$). We consider that case $(i,j+1)$ at first. By
\begin{align*}
\frac{P_{2j+2}^{2i}}{P_{2i}^{2j+2}}&=\frac{P_{2j}^{2i}[\tau(a,a^{2j})\tau(a,a^{2j+1})]^{2i}}{P_{2i}^{2j+2}}\\
                &=\frac{P_{2j}^{2i}}{P_{2i}^{2j}} \frac{[\tau(a^2,a^{2j})\tau(a,a)]^{2i}}{P_{2i}^{2}}\;\;(\tau \;\text{is a 2-cocycle})\\
                &=\frac{P_{2j}^{2i}}{P_{2i}^{2j}} \frac{P_2^{2i}}{P_{2i}^2} \tau(a^2,a^{2j})^{2i}
\end{align*}
and
\begin{align*}
\frac{\sigma(a^{2j+2})^i} {\sigma(a^{2i})^{j+1}}&=\frac{[\sigma(a^{2j}) \sigma(a^2) \tau(a^2,a^{2j})^2]^{i}}{\sigma(a^{2i})^j\sigma(a^{2i})}\\
                &=\frac{P_{2j}^{2i}}{P_{2i}^{2j}} \frac{\sigma(a^2)^i}{\sigma(a^{2i})} \tau(a^2,a^{2j})^{2i}\;\;(\frac{P_{2j}^{2i}}{P_{2i}^{2j}}=\frac{\sigma(a^{2j})^{i}}{\sigma(a^{2i})^{j}} \text{\;by induction})\\
                &=\frac{P_{2j}^{2i}}{P_{2i}^{2j}} \frac{P_2^{2i}}{P_{2i}^2}
                 \tau(a^2,a^{2j})^{2i}, \;\;(\frac{P_{2}^{2i}}{P_{2i}^{2}}=\frac{\sigma(a^{2})^{i}}{\sigma(a^{2i})} \text{\;by induction})
\end{align*}
we have $\frac{P_{2j+2}^{2i}}{P_{2i}^{2j+2}}=\frac{\sigma(a^{2j+2})^i} {\sigma(a^{2i})^{j+1}}.$ Since the equation \eqref{4.b1} is symmetric for $i,j$, we know that it also holds for the case $(i+1,j)$.

We turn to the equation \eqref{4.b3}.  By definition,
\begin{align*}
P_{2i+1}^{2j}\sigma(a)^j\sigma(a^{2j})^i&=[P_{2i}^{2j}\tau(a,a^{2i})^{2j}]
\sigma(a)^j\sigma(a^{2j})^i.
\end{align*}
Since
\begin{align*}
P_{2j}^{2i} \sigma(a^{2i+1})^j
[\frac{\tau(b,a)}{\tau(b,a^{2i+1})}]^j&=P_{2i}^{2j} \frac{\sigma(a^{2j})^i}{\sigma(a^{2i})^j}
\sigma(a^{2i+1})^j [\frac{\tau(b,a)}{\tau(b,a^{2i+1})}]^j\\
&=P_{2i}^{2j} \sigma(a^{2j})^i [\frac{\sigma(a^{2i+1})}{\sigma(a^{2i})}]^j [\frac{\tau(b,a)}{\tau(b,a^{2i+1})}]^j\\
&=P_{2i}^{2j} \sigma(a^{2j})^i [\sigma(a) \tau(a,a^{2i}) \tau(ab,a^{2i})]^j [\frac{\tau(b,a)}{\tau(b,a^{2i+1})}]^j\\
&=[P_{2i}^{2j}\tau(a,a^{2i})^{2j}]\sigma(a)^j\sigma(a^{2j})^i [\frac{\tau(ba,a^{2i}) \tau(b,a)}{\tau(b,a^{2i+1})\tau(a,a^{2i})}]^j\\
&=[P_{2i}^{2j}\tau(a,a^{2i})^{2j}]\sigma(a)^j\sigma(a^{2j})^i,
\end{align*}
the equation \eqref{4.b3} holds.

For the equation \eqref{4.b4}, direct computations show that
\begin{align*}
P_{2i+1}^{2j} \frac{\sigma(a^{2j+1})^i}{\sigma(a)^i} [\frac{\tau(b,a)}{\tau(b,a^{2j+1})}]^i&=
[\tau(a,a^{2i})^{2j}P_{2i}^{2j}] \frac{\sigma(a^{2j+1})^i}{\sigma(a)^i} [\frac{\tau(b,a)}{\tau(b,a^{2j+1})}]^i\\
&=\frac{\tau(a,a^{2i})^{2j}}{\sigma(a)^i} P_{2i}^{2j} \sigma(a^{2j+1})^i [\frac{\tau(b,a)}{\tau(b,a^{2j+1})}]^i\\
&=\frac{\tau(a,a^{2i})^{2j}}{\sigma(a)^i}[P_{2j+1} \sigma(a)^i \sigma(a^{2i})^j]\;\;(\text{by}\; \eqref{4.b3} )\\
&=\tau(a,a^{2i})^{2j} P_{2j+1}^{2i} \sigma(a^{2i})^j\\
&=\tau(a,a^{2i})^{2j} [P_{2j}^{2i} \tau(a,a^{2j})^{2i}] \sigma(a^{2i})^j\\
&=[\tau(a,a^{2i})^{2j}\tau(a,a^{2j})^{2i}] [P_{2j}^{2i}\sigma(a^{2i})^j]
\end{align*}
and
\begin{align*}
P_{2j+1}^{2i} \frac{\sigma(a^{2i+1})^j}{\sigma(a)^j}[\frac{\tau(b,a)}{\tau(b,a^{2i+1})}]^j&=
[\tau(a,a^{2j})^{2i}\tau(a,a^{2i})^{2j}] [P_{2i}^{2j}\sigma(a^{2j})^i]\\
&=[\tau(a,a^{2i})^{2j}\tau(a,a^{2j})^{2i}] [P_{2i}^{2j}\sigma(a^{2j})^i]\\
&=[\tau(a,a^{2i})^{2j}\tau(a,a^{2j})^{2i}] [P_{2j}^{2i}\sigma(a^{2i})^j]\;\;(\text{by}\;\eqref{4.b1}).
\end{align*}
Therefore the equation \eqref{4.b4} holds.

To show the equation \eqref{4.b5}, we do the following calculations. By
\begin{align*}
h(t_1,t_2)h(t_1\triangleleft x, t_2\triangleleft x)&=\frac{\tau(t_1,t_2)}{\tau(t_2 \triangleleft x,t_1\triangleleft x)} h(t_1\triangleleft x, t_2\triangleleft x)\\
&=\frac{\tau(t_1,t_2)}{\tau(t_2 \triangleleft x,t_1\triangleleft x)} \frac{\tau(t_1 \triangleleft x,t_2\triangleleft x)}{\tau(t_2,t_1)}\\
&=\frac{\tau(t_1,t_2) \tau(t_1 \triangleleft x,t_2\triangleleft x)}{\tau(t_2,t_1) \tau(t_2 \triangleleft x,t_1\triangleleft x)}\\
&=\frac{\sigma(t_1t_2) \sigma(t_1)^{-1} \sigma(t_2)^{-1}}{\sigma(t_2t_1) \sigma(t_2)^{-1} \sigma(t_1)^{-1}}\\
&=1,
\end{align*}
we have $h(t_1,t_2)h(t_1\triangleleft x, t_2\triangleleft x)=1$. Since
\begin{align*}
h(t_1\triangleleft x, t_2\triangleleft x)&=\frac{\tau(t_1 \triangleleft x,t_2\triangleleft x)}{\tau(t_2,t_1)}\\
&=[\frac{\tau(t_2,t_1)}{\tau(t_1 \triangleleft x,t_2\triangleleft x)}]^{-1}\\
&=h(t_2,t_1)^{-1}
\end{align*}
and $h(t_1,t_2)h(t_1\triangleleft x, t_2\triangleleft x)=1$, we have $h(t_1,t_2)=h(t_2,t_1)$. Therefore the equation \eqref{4.b5} holds.

For the last equation \eqref{4.b6}, we have
\begin{align*}
h(a,a^{2i+1}b)&=\frac{\tau(a,a^{2i+1}b)}{\tau(a^{2i+1},ab)}
=\frac{\tau(a,a^{2i+1}b)}{\tau(a^{2i+1},ba)}=\frac{\tau(a,a^{2i+1}b) \tau(b,a)}{\tau(a^{2i+1},ba) \tau(b,a)}\\
&=\frac{\tau(a,a^{2i+1}b) \tau(b,a)}{\tau(a^{2i+1}b,a) \tau(a^{2i+1},b)}=\eta(a,a^{2i+1}b) \frac{\tau(b,a)}{\tau(a^{2i+1},b)} \\
&=-\frac{\tau(b,a)}{\tau(a^{2i+1},b)}\;\; (\text{by}\; \eta(a,a^{2i+1}b)=-1)\\
&=\frac{\tau(b,a)}{\tau(b,a^{2i+1})} \;\; (\text{by}\; \eta(b,a^{2i+1})=-1).
\end{align*}
This means that we get the equation \eqref{4.b6}.

Furthermore, if there are $\alpha,\beta \in \Bbbk$ such that $(\alpha\beta)^n \lambda_{2n,1}=1$ and $\frac{\beta^2}{\alpha^2}=\frac{\tau(b,b)}{\tau(b,a)^2}$, then we will show that $\frac{\tau(b,a)\beta}{\tau(b,a^{2i})\alpha}=\frac{\sigma(a^{2i})}{\sigma(a^{2i}b)},\; i\geq 0$.
Firstly, we claim that $\sigma(b)^{-1}=\tau(b,b)$. In fact, by $\sigma(ab)(\sigma(a)\sigma(b))^{-1}=\tau(a,b)\tau(ab,b)$ and $\sigma(a)=\sigma(a\triangleleft x)=\sigma(ab)$, we have $\sigma(b)^{-1}=\tau(a,b)\tau(ab,b)$. Thanks to $\tau$ is a 2-cocycle, we have $\tau(a,b)\tau(ab,b)=\tau(b,b)\tau(a,1)=\tau(b,b)$ and thus $\sigma(b)^{-1}=\tau(b,b)$. Secondly, since
\begin{align*}
[\frac{\tau(b,a)\beta}{\tau(b,a^{2i})\alpha}]^2&=[\frac{\tau(b,a)}{\tau(b,a^{2i})}]^2 \frac{\tau(b,b)}{\tau(b,a)^2}=\frac{\tau(b,b)}{\tau(b,a^{2i})^2}
\end{align*}
and
\begin{align*}
\frac{\sigma(a^{2i})}{\sigma(a^{2i}b)}&=\frac{\sigma(a^{2i})}{\sigma(a^{2i}) \sigma(b) \tau(b,a^{2i})^2}=\frac{1}{\sigma(b) \tau(b,a^{2i})^{2}}
\end{align*}
 together with $\sigma(b)^{-1}=\tau(b,b)$, we know that $[\frac{\tau(b,a)\beta}{\tau(b,a^{2i})\alpha}]^2=\frac{\sigma(a^{2i})}{\sigma(a^{2i}b)}$.
\end{proof}

The following Lemma  is used to prove Proposition \ref{pro4.1}.
\begin{lemma}\label{lem3.1}
Denote the dual of $K(8n,\sigma,\tau)$ by $H^*$, then $R$ is a universal $\mathcal{R}$-matrix of $K(8n,\sigma,\tau)$ if and only if the following equations hold
\begin{gather}
\label{e3.11} \tau(s_1,s_2)=\tau(s_2,s_1),\; s_1,s_2\in S,\\
\label{e3.12} w^2(s,t\triangleleft x)=w^2(s,t)\eta(s,t),\; s \in S,t\in T,\\
\label{e3.13} w^3(t\triangleleft x,s)=w^3(t,s)\eta(t,s),\;s \in S,t\in T,\\
\label{e3.14} \tau(t_2,t_1)w^4(t_1\triangleleft x,t_2\triangleleft x)=\tau(t_1\triangleleft x,t_2\triangleleft x)w^4(t_1,t_2),\;t_1,t_2 \in T,\\
\label{e3.15} l(f_1)l(f_2)=l(f_1f_2),\;r(f_1)r(f_2)=r(f_2f_1),\;f_1,f_2\in H^*.
\end{gather}
\end{lemma}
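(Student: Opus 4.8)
The plan is to separate the three defining axioms of a universal $\mathcal{R}$-matrix and treat them independently. The two comultiplication axioms $(\Delta\otimes\id)(R)=R_{13}R_{23}$ and $(\id\otimes\Delta)(R)=R_{13}R_{12}$ are, by Lemma \ref{lem2.2.1}, exactly equivalent to \eqref{e3.15}, so these two axioms contribute precisely that last condition and nothing more. It then remains to analyze the quasi-cocommutativity axiom $\Delta^{op}(h)R=R\Delta(h)$. Since $\Delta$ and $\Delta^{op}$ are algebra maps and $K(8n,\sigma,\tau)$ is generated by $\{e_g\}_{g\in G}$ together with $x$, it suffices to impose this identity for $h=e_g$ and for $h=x$. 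Because $G$ is abelian, $\Delta^{op}(e_g)=\Delta(e_g)$, and a direct check shows that every $R$ of the form \eqref{r} already commutes with each $\Delta(e_g)$: the four summands are supported on $S\times S$, $S\times T$, $T\times S$, $T\times T$, on which the shifts by $\triangleleft x$ act trivially in the $S$-slots and, for the $w^4$-block, preserve products because $t\triangleleft x=tb$ with $b^2=1$ (Proposition \ref{pro2.3.1}(ii)). Hence $h=e_g$ imposes no new condition, and everything reduces to the single identity $\Delta^{op}(x)R=R\Delta(x)$.

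The heart of the matter is to expand this one identity. Writing $\Delta(x)=\sum_{g,h}\tau(g,h)e_gx\otimes e_hx$ and $\Delta^{op}(x)=\sum_{g,h}\tau(h,g)e_gx\otimes e_hx$, I would multiply these against the four blocks of $R$ using $xe_g=e_{g\triangleleft x}x$ and $x^2=\sum_g\sigma(g)e_g$, which yield the rules $e_gx\cdot e_hx=\delta_{g,h\triangleleft x}\sigma(g)e_g$ and $e_gx\cdot e_h=\delta_{g,h\triangleleft x}e_gx$. The decisive structural observation is that multiplication by the purely ``$x\otimes x$''-type elements $\Delta(x)$ and $\Delta^{op}(x)$ flips the $x$-parity in each tensor slot coherently, so that the $w^1,w^2,w^3,w^4$ blocks of $R$ are sent to the four distinct parity types $e\otimes e$, $e\otimes ex$, $ex\otimes e$, $ex\otimes ex$ on \emph{both} sides. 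Consequently $\Delta^{op}(x)R=R\Delta(x)$ decouples into four independent families of scalar equations, one per type.

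Matching coefficients type by type then produces the stated conditions almost verbatim. The $e\otimes ex$ type (the $w^2$ block) gives $\tau(t,s)w^2(s,t\triangleleft x)=\tau(s,t)w^2(s,t)$, i.e. \eqref{e3.12}; the $ex\otimes e$ type (the $w^3$ block) gives \eqref{e3.13}; and the $e\otimes e$ type (the $w^4$ block) gives, after cancelling the nonzero factor $\sigma(t_1)\sigma(t_2)$, exactly \eqref{e3.14}. The $ex\otimes ex$ type (the $w^1$ block) gives only the weaker relation $\big(\tau(s_2,s_1)-\tau(s_1,s_2)\big)w^1(s_1,s_2)=0$ for $s_1,s_2\in S$. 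For the direction ``equations $\Rightarrow R$ is an $\mathcal{R}$-matrix'' this is harmless, since \eqref{e3.11} makes this relation hold identically; combined with \eqref{e3.15} (the two comultiplication axioms) and with invertibility --- which follows once $(\varepsilon\otimes\id)(R)=1=(\id\otimes\varepsilon)(R)$ is checked, via the standard identity $R^{-1}=(\mathcal{S}\otimes\id)(R)$ --- we conclude that $R$ is a universal $\mathcal{R}$-matrix.

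For the converse direction I must upgrade $\big(\tau(s_2,s_1)-\tau(s_1,s_2)\big)w^1(s_1,s_2)=0$ to the clean condition \eqref{e3.11}, and this is the one genuinely nontrivial point rather than pure bookkeeping. The key is that $w^1$ never vanishes: using \eqref{e3.15} together with Lemma \ref{lem2.2.2} (namely $E_gE_h=E_{gh}$) one computes $l(E_s)=\sum_{s'\in S}w^1(s,s')e_{s'}$, so $l(E_{s_1})l(E_{s_2})=l(E_{s_1s_2})$ forces $w^1(\cdot,s')$ to be multiplicative on $S$, and symmetrically in the second variable; thus $w^1$ is a bicharacter on the finite group $S$. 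Since an $\mathcal{R}$-matrix satisfies $(\varepsilon\otimes\id)(R)=1$, reading off the $e_{s'}$-coefficient gives $w^1(1,s')=1$, so the character $w^1(\cdot,s')$ is nowhere zero. With $w^1(s_1,s_2)\neq 0$ the $ex\otimes ex$ relation is equivalent to $\tau(s_1,s_2)=\tau(s_2,s_1)$, i.e. \eqref{e3.11}, finishing the forward direction. The main obstacle is therefore twofold: organizing the expansion of $\Delta^{op}(x)R=R\Delta(x)$ so that the four parity types separate cleanly (keeping exact track of the $\sigma$- and $\tau$-weights and of the $\triangleleft x$-shifts), and the nonvanishing-of-$w^1$ argument needed to extract \eqref{e3.11}.
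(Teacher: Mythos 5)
Your proof is correct and takes essentially the same route as the paper's: condition \eqref{e3.15} is identified with the two comultiplication axioms via Lemma \ref{lem2.2.1}, and conditions \eqref{e3.11}--\eqref{e3.14} are obtained by expanding $\Delta^{op}(x)R=R\Delta(x)$ and matching the four blocks of an $R$ of the form \eqref{r}, which is exactly the paper's computation. The extra points you supply --- the reduction of quasi-cocommutativity to the generators $e_g$ and $x$ (with $h=e_g$ imposing nothing new on an $R$ of the form \eqref{r}), the nonvanishing of $w^1$ needed to pass from $\bigl(\tau(s_2,s_1)-\tau(s_1,s_2)\bigr)w^1(s_1,s_2)=0$ to \eqref{e3.11}, and the remark that invertibility in the converse direction rests on the counit normalization of $R$ --- are steps the paper leaves implicit, so your write-up is if anything more complete than the original proof.
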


\begin{proof}
On the one hand, we have the following equation
\begin{align*}
\Delta^{op}(x)R&=[\sum\limits_{g,h \in G}\tau(h,g)e_g \otimes e_h](x \otimes x)R\\
                &=[\sum_{s_1,s_2 \in S}\tau(s_2,s_1)w^1(s_1,s_2)e_{s_1} \otimes e_{s_2}+ \sum_{s \in S, t \in T}\tau(t,s)w^2(s,t\triangleleft x)e_{s}x \otimes e_{t}+ \\
        &\sum_{t \in T, s \in S}\tau(s,t)w^3(t\triangleleft x,s)e_{t} \otimes e_{s}x+\\
        &\sum\limits_{t_1,t_2 \in T}\tau(t_2,t_1)w^4(t_1\triangleleft x,t_2 \triangleleft x)e_{t_1}x \otimes e_{t_2}x](x\otimes x),
\end{align*}
On the other hand, the following equation hold
\begin{align*}
R\Delta(x)&=R[\sum\limits_{g,h \in G}\tau(g,h)e_g \otimes e_h](x \otimes x)\\
                &=[\sum\limits_{s_1,s_2 \in S}\tau(s_1,s_2)w^1(s_1,s_2)e_{s_1} \otimes e_{s_2}+ \sum\limits_{s \in S, t \in T}\tau(s,t)w^2(s,t)e_{s}x \otimes e_{t}+\\
        &\sum\limits_{t \in T,  s \in S}\tau(t,s)w^3(t,s)e_{t} \otimes e_{s}x+\\
         &\sum\limits_{t_1,t_2 \in T}\tau(t_1\triangleleft x,t_2\triangleleft x)w^4(t_1,t_2)e_{t_1}x \otimes e_{t_2}x](x\otimes x).
\end{align*}
Therefore, $\Delta^{op}(x)R=R\Delta(x)$ holds if and only if equations \eqref{e3.11}-\eqref{e3.14} hold. The last equation is just Lemma \ref{lem2.2.1}.
\end{proof}

We use the following Lemmas \ref{lemm4.2} and \ref{lemm4.3} to check that $R_{\alpha,\beta}$ satisfies the first four equalities of Lemma \ref{lem3.1}.
\begin{lemma}\label{lemm4.2}
Let $K(8n,\sigma,\tau)$ as above, then $\tau(s_1,s_2)=\tau(s_2,s_1)$ for all $s_1,s_2\in S$.
\end{lemma}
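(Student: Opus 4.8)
The plan is to recast the assertion in terms of the bicharacter $\eta(g,h)=\tau(g,h)\tau(h,g)^{-1}$ introduced at the beginning of this section. Indeed, $\tau(s_1,s_2)=\tau(s_2,s_1)$ holds for all $s_1,s_2\in S$ if and only if $\eta(s_1,s_2)=1$ for all $s_1,s_2\in S$; so it suffices to show that $\eta$ is trivial on $S\times S$. Since $\tau$ is a $2$-cocycle on the abelian group $G$, the map $\eta$ is a bicharacter on $G$, and this is what will let me reduce the whole computation to a generating set of $S$.

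First I would describe $S$ explicitly. For $g=a^ib^j$ we have $g\triangleleft x=(ab)^ib^j=a^ib^{i+j}$, so $g\in S$ exactly when $i$ is even; hence $S=\{a^{2k}b^j\}$ is the subgroup generated by $a^2$ and $b$. Because $\eta$ is a bicharacter, for $s_1=a^{2k_1}b^{j_1}$ and $s_2=a^{2k_2}b^{j_2}$ one has
\[
\eta(s_1,s_2)=\eta(a^2,a^2)^{k_1k_2}\,\eta(a^2,b)^{k_1j_2}\,\eta(b,a^2)^{j_1k_2}\,\eta(b,b)^{j_1j_2},
\]
so it is enough to evaluate $\eta$ on the three pairs $(a^2,a^2)$, $(a^2,b)$ and $(b,b)$ coming from the generators.

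The two diagonal values are automatically trivial, since $\eta(g,g)=\tau(g,g)\tau(g,g)^{-1}=1$ for every $g$; thus $\eta(a^2,a^2)=\eta(b,b)=1$. For the remaining pair I would use that $\eta$ is a bicharacter together with $b^2=1$: $\eta(a^2,b)=\eta(a,b)^2=\eta(a,b^2)=\eta(a,1)=1$, which is exactly the relation $\eta(a,b)^2=1$ recorded in the analysis of $\eta$ above; the same gives $\eta(b,a^2)=\eta(a^2,b)^{-1}=1$. Consequently every factor in the displayed product equals $1$, so $\eta(s_1,s_2)=1$ for all $s_1,s_2\in S$, which is the desired symmetry of $\tau$ on $S$.

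I do not expect a serious obstacle here: the statement is essentially a bookkeeping consequence of the bicharacter property. The only point requiring any care is the identification of $S$ as $\langle a^2,b\rangle$ and the observation that, after reducing to generators, the single nontrivial value $\eta(a^2,b)$ is killed by $b^2=1$. Note in particular that the argument is insensitive to whether $\eta(a,b)=1$ or $\eta(a,b)=-1$, so the conclusion holds in both cases treated in this section and does not rely on the standing assumption $\eta(a,b)=-1$.
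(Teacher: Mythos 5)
Your proposal is correct and follows essentially the same route as the paper: both arguments pass to the bicharacter $\eta(g,h)=\tau(g,h)\tau(h,g)^{-1}$, identify $S=\{a^{2i}b^j\}$, expand $\eta$ on generators, and kill the off-diagonal factor via $\eta(a,b)^2=\eta(a,b^2)=1$ (the paper states this as $\eta(a,b)^2=\eta(a,1)=1$ and $\eta(b,a)^2=\eta(1,a)=1$). Your added remarks (the explicit computation showing $S=\langle a^2,b\rangle$, the triviality of the diagonal values, and the independence from the sign of $\eta(a,b)$) are all consistent with, and implicit in, the paper's argument.
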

\begin{proof}
Recall the $\eta$ was defined by $\eta(g,h)=\tau(g,h) \tau(h,g)^{-1}$ and $\eta$ is a bicharacter on $G$, thus $\eta(a,b)^2=\eta(a,1)=1$ and $\eta(b,a)^2=\eta(1,a)=1$. Since $S=\{a^{2i}b^j\;|\;i,j \geq 0\}$ and $\eta(a^{2i}b^j,a^{2k}b^l)=\eta(a,b)^{2il} \eta(b,a)^{2jk}=1$, we have $\tau(s_1,s_2)=\tau(s_2,s_1)$ for all $s_1,s_2\in S$.
\end{proof}

\begin{lemma}\label{lemm4.3}
The $w^i (1\leq i \leq4)$ of $R_{\alpha,\beta}$ satisfy the following equations
\begin{gather}
\label{w2} w^2(s,t\triangleleft x)=w^2(s,t)\eta(s,t),\; s \in S,t\in T,\\
\label{w3} w^3(t\triangleleft x,s)=w^3(t,s)\eta(t,s),\;s \in S,t\in T,\\
\label{w4} w^4(t_1,t_2)=h(t_1,t_2)w^4(t_1 \triangleleft x,t_2 \triangleleft x),\;t_1,t_2 \in T.
\end{gather}
\end{lemma}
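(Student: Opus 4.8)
The plan is to verify the three identities \eqref{w2}, \eqref{w3}, \eqref{w4} by a direct case analysis straight from the explicit formulas defining $w^2,w^3,w^4$, the only nonformal inputs being the standing assumption $\eta(a,b)=-1$ and the identity $h(t_1,t_2)h(t_1\triangleleft x,t_2\triangleleft x)=1$ from \eqref{4.b5}. The first thing I would record is the action of $\triangleleft x$: it fixes every element of $S$ and interchanges $a^{2j+1}\leftrightarrow a^{2j+1}b$ within $T$. I would also precompute the relevant values of the bicharacter $\eta$. Writing $s=a^{2i}b^m$ and $t=a^{2j+1}b^l$, multiplicativity together with $\eta(a,a)=\eta(b,b)=1$ and $\eta(a,b)=\eta(b,a)=-1$ gives $\eta(s,t)=(-1)^{m(2j+1)}=(-1)^m$, since $2j+1$ is odd; that is, $\eta(s,t)=1$ when $s=a^{2i}$ and $\eta(s,t)=-1$ when $s=a^{2i}b$, irrespective of $t$. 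The analogous computation for \eqref{w3} shows that $\eta(t,s)$ equals $-1$ precisely when the $S$-argument $s$ carries a factor of $b$.

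For \eqref{w2} I would read off the definition of $w^2$ and match these signs. The two entries $w^2(a^{2i},a^{2j+1})$ and $w^2(a^{2i},a^{2j+1}b)$ are equal, which is exactly the relation $w^2(s,t\triangleleft x)=w^2(s,t)\eta(s,t)$ in the case $\eta(s,t)=1$; the two entries $w^2(a^{2i}b,a^{2j+1})$ and $w^2(a^{2i}b,a^{2j+1}b)$ are negatives of one another, which matches the case $\eta(s,t)=-1$. This settles \eqref{w2}, and \eqref{w3} follows by the identical argument: the $b$-free entries of $w^3$ are constant along each $\triangleleft x$-orbit while the $b$-twisted entries change sign, in agreement with the $\eta$-values just computed.

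The one identity with genuine content is \eqref{w4}. Here I would use that $\triangleleft x$ acts on $T\times T$ by $(a^{2i+1}b^p,a^{2j+1}b^q)\mapsto(a^{2i+1}b^{p+1},a^{2j+1}b^{q+1})$, grouping the four defining entries of $w^4$ into two $\triangleleft x$-orbits. When the first argument already carries a $b$ (the entries $w^4(a^{2i+1}b,a^{2j+1})$ and $w^4(a^{2i+1}b,a^{2j+1}b)$), the factor $h(t_1,t_2)$ is built into the definition relative to the value at $(t_1\triangleleft x,t_2\triangleleft x)$, so \eqref{w4} holds by inspection. In the remaining two cases, where $t_1=a^{2i+1}$, substituting the definitions reduces the claim to $h(t_1,t_2)\,h(t_1\triangleleft x,t_2\triangleleft x)=1$, which is precisely \eqref{4.b5}. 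I expect the main obstacle to be purely organizational: keeping the four entries of $w^4$ and their $\triangleleft x$-images correctly paired so that the built-in $h$-factors, together with the symmetry and inversion relations of \eqref{4.b5}, deliver exactly the stated identity and not its inverse; once the orbits are laid out, no further computation is needed.
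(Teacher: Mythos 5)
Your proposal is correct and follows essentially the same route as the paper: both verify \eqref{w2}--\eqref{w4} by direct comparison with the defining formulas of $w^2,w^3,w^4$, using the sign values of the bicharacter $\eta$ (with $\eta(a,b)=\eta(b,a)=-1$) and the identity $h(t_1,t_2)h(t_1\triangleleft x,t_2\triangleleft x)=1$ from \eqref{4.b5}. The only difference is organizational: the paper first halves the case count by observing $\eta(s,t\triangleleft x)=\eta(s,t)^{-1}$, $\eta(t\triangleleft x,s)=\eta(t,s)^{-1}$ and $h(t_1\triangleleft x,t_2\triangleleft x)=h(t_1,t_2)^{-1}$ and then checks representative cases, whereas you check all cases directly, which amounts to the same computation.
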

\begin{proof}
To simplify the calculation, we analyze $\eta$ furthermore. Recall that $\eta$ is bicharacter on $G$ and we have assumed $\eta(a,b)=\eta(b,a)=-1$ in this subsection. Since
\begin{align*}
\eta(s,t)\eta(s,t\triangleleft x)&=\frac{\tau(s,t)}{\tau(t,s)}\eta(s,t\triangleleft x)=\frac{\tau(s,t)}{\tau(t,s)} \frac{\tau(s,t\triangleleft x)}{\tau(t\triangleleft x,s)}\\
&=\frac{\tau(s,t) \tau(s\triangleleft x,t\triangleleft x)}{\tau(t,s) \tau(t\triangleleft x,s \triangleleft x)}=\frac{\sigma(st) \sigma(s)^{-1} \sigma(t)^{-1}}{\sigma(ts) \sigma(t)^{-1} \sigma(s)^{-1}}\\
&=1,
\end{align*}
we know that $\eta(s,t\triangleleft x)=\eta(s,t)^{-1}$.
This observation can help us to simply the proof. Indeed, we find that if $w^2(s,t\triangleleft x)=w^2(s,t)\eta(s,t)$ then we have $w^2(s,t)=w^2(s,t\triangleleft x)\eta(s,t\triangleleft x)$ automatically. By $S=\{a^{2i}b^j\;|\;i,j\geq 0\}$ and $T=\{a^{2i+1}b^j\;|\;i,j\geq 0\}$, this discussion tells us that to show the equation \eqref{w2} we only need to show the following two special cases:
\begin{align}
\label{w2.1} &w^2(a^{2i},a^{2j+1}\triangleleft x)=w^2(a^{2i},a^{2j+1})\eta(a^{2i},a^{2j+1}),\; i,j\geq 0,\\
\label{w2.2} &w^2(a^{2i}b,a^{2j+1}\triangleleft x)=w^2(a^{2i}b,a^{2j+1})\eta(a^{2i}b,a^{2j+1}),\; i,j\geq 0.
\end{align} Using the same arguments (we have $\eta(t\triangleleft x,s)=\eta(t,s)^{-1}$ similarly and $h(t_1\triangleleft x, t_2\triangleleft x)=h(t_1,t_2)^{-1}$ by \eqref{4.b5}), to show the equations \eqref{w3} and \eqref{w4} we only need to show the following equations
\begin{align}
\label{w3.1} &w^3(a^{2i+1}\triangleleft x,a^{2j})=w^3(a^{2i+1},a^{2j})\eta(a^{2i+1},a^{2j}),\;i,j\geq 0,\\
\label{w3.2} &w^3(a^{2i+1}\triangleleft x,a^{2j}b)=w^3(a^{2i+1},a^{2j}b)\eta(a^{2i+1},a^{2j}b),\;i,j\geq 0,\\
\label{w4.1} &w^4(a^{2i+1}b,a^{2j+1})=h(a^{2i+1}b,a^{2j+1})w^4((a^{2i+1}b) \triangleleft x,a^{2j+1} \triangleleft x),\;i,j\geq 0,\\
\label{w4.2} &w^4(a^{2i+1}b,a^{2j+1}b)=h(a^{2i+1}b,a^{2j+1}b)w^4((a^{2i+1}b) \triangleleft x,(a^{2j+1}b) \triangleleft x),\;i,j\geq 0.
\end{align}

We will check them one by one. Since
\begin{align*}
&w^2(a^{2i},a^{2j+1}\triangleleft x)=
w^2(a^{2i},a^{2j+1}b)
=\lambda_{2i,2j+1}[S_{j,0}S_{j,1}]^{i}\\
&w^2(a^{2i},a^{2j+1})\eta(a^{2i},a^{2j+1})=w^2(a^{2i},a^{2j+1})
=\lambda_{2i,2j+1}[S_{j,0}S_{j,1}]^{i},
\end{align*}
we have $w^2(a^{2i},a^{2j+1}\triangleleft x)=w^2(a^{2i},a^{2j+1})\eta(a^{2i},a^{2j+1})$ and therefore the equation \eqref{w2.1} holds.  Since
\begin{align*}
w^2(a^{2i}b,a^{2j+1}\triangleleft x)&=
w^2(a^{2i}b,a^{2j+1}b)\\
&=\frac{\tau(b,a)\beta}{\tau(b,a^{2i})\alpha}\lambda_{2i,2j+1}[S_{j,0}S_{j,1}]^{i}
\end{align*}
and
\begin{align*}
w^2(a^{2i}b,a^{2j+1})\eta(a^{2i}b,a^{2j+1})&=w^2(a^{2i}b,a^{2j+1})(-1)\\
&=-\frac{\tau(b,a)\beta}{\tau(b,a^{2i})\alpha}\lambda_{2i,2j+1}[S_{j,0}S_{j,1}]^{i}(-1)\\
&=\frac{\tau(b,a)\beta}{\tau(b,a^{2i})\alpha}\lambda_{2i,2j+1}[S_{j,0}S_{j,1}]^{i},
\end{align*}
we have $w^2(a^{2i}b,a^{2j+1}\triangleleft x)=w^2(a^{2i}b,a^{2j+1})\eta(a^{2i}b,a^{2j+1})$ and therefore the equation \eqref{w2.2} holds. Since
\begin{align*}
w^3(a^{2i+1}\triangleleft x,a^{2j})&=
w^3(a^{2i+1}b,a^{2j})\\
&=\lambda_{2j,2i+1}[S_{i,0}S_{i,1}]^{j}
\end{align*}
and
\begin{align*}
w^3(a^{2i+1},a^{2j})\eta(a^{2i+1},a^{2j})&=w^3(a^{2i+1},a^{2j})\\
&=\lambda_{2j,2i+1}[S_{i,0}S_{i,1}]^{j},
\end{align*}
we have $w^3(a^{2i+1}\triangleleft x,a^{2j})=w^3(a^{2i+1},a^{2j})\eta(a^{2i+1},a^{2j})$ and therefore the equation \eqref{w3.1} holds. Since
\begin{align*}
w^3(a^{2i+1}\triangleleft x,a^{2j}b)&=
w^3(a^{2i+1}b,a^{2j}b)\\
&=\frac{\tau(b,a)\beta}{\tau(b,a^{2j})\alpha} \lambda_{2j,2i+1}[S_{i,0}S_{i,1}]^{j}
\end{align*}
and
\begin{align*}
w^3(a^{2i+1},a^{2j}b)\eta(a^{2i+1},a^{2j}b)&=w^3(a^{2i+1},a^{2j}b) (-1)\\
&=\frac{-\tau(b,a)\beta}{\tau(b,a^{2j})\alpha} \lambda_{2j,2i+1}[S_{i,0}S_{i,1}]^{j} (-1)\\
&=\frac{\tau(b,a)\beta}{\tau(b,a^{2j})\alpha} \lambda_{2j,2i+1}[S_{i,0}S_{i,1}]^{j},
\end{align*}
we have $w^3(a^{2i+1}\triangleleft x,a^{2j}b)=w^3(a^{2i+1},a^{2j}b)\eta(a^{2i+1},a^{2j}b)$ and therefore the equation \eqref{w3.2} holds. Since
\begin{align*}
w^4(a^{2i+1}b,a^{2j+1})&=h(a^{2i+1}b,a^{2j+1})\lambda_{2i+1,2j+1}S_{j,0}^{i}S_{j,1}^{i+1}\\
&=h(a^{2i+1}b,a^{2j+1}) w^4(a^{2i+1},a^{2j+1}b)
\end{align*}
and
\begin{align*}
w^4(a^{2i+1}b,a^{2j+1}b)&=h(a^{2i+1}b,a^{2j+1}b)\lambda_{2i+1,2j+1}S_{j,0}^{i+1}S_{j,1}^{i}\\
&=h(a^{2i+1}b,a^{2j+1}b) w^4(a^{2i+1},a^{2j+1}),
\end{align*}
the equation \eqref{w4.1} and the equation \eqref{w4.2} hold.
\end{proof}

The following Lemma \ref{lemm4.4} is completely prepared to prove Lemma \ref{lemm4.5}.
\begin{lemma}\label{lemm4.4}
Let $R_{\alpha,\beta}$ as above, then
\begin{gather}
\label{w.1} [w^4(a,t)w^4(a,t\triangleleft x)\sigma(t)]^n=P_{2n},\;w^2(b,t)^2=\tau(b,b),\\
\label{w.2} w^4(a,t)w^2(b,t\triangleleft x)=\tau(a,b)w^4(ab,t),\;w^4(a,t)w^2(b,t)=\tau(b,a)w^4(ab,t),\\
\label{w.3} [w^4(a,t) w^4(a,t\triangleleft x)\sigma(t)]^i=P_{2i}w^2(a^{2i},t),\\
\label{w.4} w^2(a^{2i},t) w^2(b,t)=\tau(a^{2i},b) w^2(a^{2i}b,t),\\
\label{w.5} w^4(a,t)^{i+1} w^4(a,t\triangleleft x)^i \sigma(t)^i=P_{2i+1} w^4(a^{2i+1},t),\\
\label{w.6} w^4(a^{2i+1},t) w^2(b,t \triangleleft x)=\tau(a^{2i+1},b) w^4(a^{2i+1}b,t),\\
\label{w.7} w^3(a,s)^{2n}\sigma(s)^n=1,\;w^1(b,s)^2=1,\\
\label{w.8} w^1(b,s) w^3(a,s)=w^3(ab,s),\\
\label{w.9} w^3(a,s)^{2i} \sigma(s)^i=w^1(a^{2i},s),\;w^1(a^{2i},s) w^1(b,s)=w^1(a^{2i}b,s)\\
\label{w.10} w^3(a,s)^{2i+1}\sigma(s)^i=w^3(a^{2i+1},s),\;w^3(a^{2i+1},s) w^1(b,s)=w^3(a^{2i+1}b,s).
\end{gather}
\end{lemma}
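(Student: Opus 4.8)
The plan is to verify \eqref{w.1}--\eqref{w.10} by substituting the explicit formulas defining $w^1,w^2,w^3,w^4$ for $R_{\alpha,\beta}$ and simplifying. Since $S=\{a^{2i}b^j\}$ and $T=\{a^{2i+1}b^j\}$, each identity breaks into a few subcases according to whether the $T$-element is $a^{2j+1}$ or $a^{2j+1}b$ (equivalently whether the $S$-element is $a^{2j}$ or $a^{2j}b$). In every subcase I would first record the base values produced by the definitions, the crucial ones being $w^4(a,a^{2j+1})=S_{j,0}$ and $w^4(a,a^{2j+1}b)=S_{j,1}$ (both coming from $\lambda_{1,2j+1}=1$) together with $w^3(a,a^{2j})=\lambda_{2j,1}(\alpha\beta)^j$ (using $h(a,ab)=1$ from \eqref{4.b6}). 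The toolkit for the simplification consists of: the property $\sigma(g\triangleleft x)=\sigma(g)$, the $2$-cocycle identity for $\tau$, the standing hypothesis $\eta(a,b)=\eta(b,a)=-1$, the identities \eqref{4.b1}--\eqref{4.b6} of Lemma \ref{lem4.1}, and the two defining constraints on $\alpha,\beta$ (the second one entering through its consequence $[\tau(b,a)\beta/(\tau(b,a^{2i})\alpha)]^2=\sigma(a^{2i})/\sigma(a^{2i}b)$, also established in Lemma \ref{lem4.1}).

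I would organize the work into three groups. The first group consists of the pure $a$-power relations \eqref{w.3}, \eqref{w.5}, and the first equalities of \eqref{w.9}, \eqref{w.10}. Here the closed forms for $w^2,w^3,w^4$ are already written with the factors $\lambda_{*}=P_{*}^{-1}\sigma(\cdot)^{*}$ and with powers such as $[S_{j,0}S_{j,1}]^i$ and $S_{j,0}^{i+1}S_{j,1}^i$, so after substitution each side reduces to the same monomial in $S_{j,0},S_{j,1},\sigma$ once the $P$-factors cancel against the $\lambda$'s. For instance \eqref{w.3} with $t=a^{2j+1}$ reads $[S_{j,0}S_{j,1}\sigma(a^{2j+1})]^i=P_{2i}\lambda_{2i,2j+1}[S_{j,0}S_{j,1}]^i$, which is an identity because $P_{2i}\lambda_{2i,2j+1}=\sigma(a^{2j+1})^i$; the companion subcase $t=a^{2j+1}b$ needs only the extra input $\sigma(a^{2j+1}b)=\sigma(a^{2j+1})$ coming from $\sigma(g\triangleleft x)=\sigma(g)$.

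The second group consists of the $b$-shift relations \eqref{w.2}, \eqref{w.4}, \eqref{w.6}, \eqref{w.8}, the second equalities of \eqref{w.9}, \eqref{w.10}, and the relation $w^2(b,t)^2=\tau(b,b)$ in \eqref{w.1}. Each compares a value at $g$ with the value at $gb$ and so produces the prefactor $\tau(b,a)\beta/(\tau(b,a^{2i})\alpha)$ and the sign $\eta(a,b)=-1$. For the multiplicative relations one recombines the $\tau(\cdot,b)$ factors through the cocycle identity, while $w^2(b,t)^2=\tau(b,b)$ is simply the constraint $\beta^2/\alpha^2=\tau(b,b)/\tau(b,a)^2$ once one computes $w^2(b,t)=\pm\,\tau(b,a)\beta/\alpha$ (the sign being irrelevant after squaring).

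The main obstacle will be the two $n$-th power identities: $[w^4(a,t)w^4(a,t\triangleleft x)\sigma(t)]^n=P_{2n}$ in \eqref{w.1} and $w^3(a,s)^{2n}\sigma(s)^n=1$ in \eqref{w.7}. After substitution the left-hand side of the first (for $t=a^{2j+1}$) becomes $[\,h(a,a^{2j+1}b)P_{2j+1}^{-2}\sigma(a)^{2j}\sigma(a^{2j+1})(\alpha\beta)^{2j+1}\,]^n$, which is manifestly $j$-dependent and must be shown to collapse to the $j$-free value $P_{2n}$. This is exactly where the content of Lemma \ref{lem4.1} is used: the constraint $(\alpha\beta)^n\lambda_{2n,1}=1$ rewrites $(\alpha\beta)^n$ as $P_{2n}\sigma(a)^{-n}$, \eqref{4.b6} evaluates $h(a,a^{2j+1}b)$, and \eqref{4.b3} (respectively \eqref{4.b1} for the computation of \eqref{w.7}) is precisely the relation that trades the leftover powers of $P_{2j+1}$ and $\sigma(a^{2j+1})$ for powers of $P_{2n}$; the very last residual factor is a power of $\sigma(a^{2n})$, which disappears since $a^{2n}=1$ forces $\sigma(a^{2n})=\sigma(1)=1$. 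Keeping this collapse uniform across the $b$ and non-$b$ subcases is the only delicate bookkeeping; the remaining identities are routine substitutions, and together they feed into Lemma \ref{lemm4.5}.
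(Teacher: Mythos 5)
Your plan is the paper's proof in outline: substitute the defining formulas for $w^1,\dots,w^4$, split each identity into the $a^{2j+1}$ versus $a^{2j+1}b$ (respectively $a^{2j}$ versus $a^{2j}b$) subcases, and close the computations with the $2$-cocycle identity, $\eta(a,b)=-1$, $\sigma(g\triangleleft x)=\sigma(g)$, the two constraints on $\alpha,\beta$, and Lemma \ref{lem4.1}. In particular your treatment of the two $n$-th power identities \eqref{w.1} and \eqref{w.7} --- rewrite $(\alpha\beta)^n=P_{2n}\sigma(a)^{-n}$ from the constraint, evaluate $h(a,a^{2j+1}b)$ by \eqref{4.b6}, and use \eqref{4.b3} (respectively \eqref{4.b1}) together with $\sigma(a^{2n})=\sigma(1)=1$ to collapse the $j$-dependence to $P_{2n}$ --- is exactly how the paper closes those two identities.

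One claim in your grouping is wrong as stated, though repairable with tools you already list. The first identity of \eqref{w.10} is not a pure ``$P$-against-$\lambda$'' cancellation: its left-hand side is a power of $w^3(a,a^{2j})=\lambda_{2j,1}(\alpha\beta)^j$, a monomial in $\alpha\beta=S_{0,0}S_{0,1}$, whereas the right-hand side $\lambda_{2j,2i+1}[S_{i,0}S_{i,1}]^j$ is a monomial in $S_{i,0}S_{i,1}$; matching them forces you to expand $S_{i,0}S_{i,1}=h(a,a^{2i+1}b)P_{2i+1}^{-2}\sigma(a)^{2i}(\alpha\beta)^{2i+1}$ and then invoke \eqref{4.b3} together with \eqref{4.b6} --- the very trading mechanism you reserve for \eqref{w.1} and \eqref{w.7}, and also how the paper proves \eqref{w.10}. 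Similarly, the $a^{2j}b$ subcases of the first identities in \eqref{w.9} and \eqref{w.10} require the ``Moreover'' identity $[\tau(b,a)\beta/(\tau(b,a^{2j})\alpha)]^2=\sigma(a^{2j})/\sigma(a^{2j}b)$ of Lemma \ref{lem4.1}, because $\sigma(a^{2j})$ and $\sigma(a^{2j}b)$ are unrelated in general: both $a^{2j}$ and $a^{2j}b$ are fixed by $\triangleleft x$, so $\sigma$-invariance gives no information there. Since both tools sit in your declared toolkit, the plan goes through once these identities are moved out of the ``routine cancellation'' group; as literally written, that step would fail for \eqref{w.10}.
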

\begin{proof}
We will show  \eqref{w.1} at first. Since $T=\{a^{2i+1},a^{2i+1}b\;|\;i\geq 0\}$, we need to show the following equations:
\begin{gather*}
[w^4(a,a^{2j+1}) w^4(a,a^{2j+1}b) \sigma(a^{2j+1})]^n=P_{2n},\\
[w^4(a,a^{2j+1}b) w^4(a,a^{2j+1}) \sigma(a^{2j+1}b)]^n=P_{2n}.
\end{gather*}
Since $\sigma(a^{2j+1})=\sigma(a^{2j+1}\triangleleft x)$ by the definition of $\sigma$ and $\sigma(a^{2j+1}\triangleleft x)=\sigma(a^{2j+1}b)$, we have $\sigma(a^{2j+1})=\sigma(a^{2j+1}b)$. This implies that we only need to show the first equation. Since
\begin{align*}
[w^4(a,a^{2j+1}) w^4(a,a^{2j+1}b) \sigma(a^{2j+1})]^n&=[S_{j,0} S_{j,1} \sigma(a^{2j+1})]^{n},
\end{align*}
\begin{align*}
[S_{j,0} S_{j,1} \sigma(a^{2j+1})]^{n}&=[(\lambda_{2j+1,1}\alpha^{j+1}\beta^j) S_{j,1} \sigma(a^{2j+1})]^{n}\\
&=[(\lambda_{2j+1,1}\alpha^{j+1}\beta^j) (h(a,a^{2j+1}b)\lambda_{2j+1,1}\alpha^j\beta^{j+1}) \sigma(a^{2j+1})]^{n}\\
&=h(a,a^{2j+1}b)^n \sigma(a^{2j+1})^{n} \lambda_{2j+1,1}^{2n} (\alpha \beta)^{(2j+1)n}
\end{align*}
and
\begin{align*}
\lambda_{2j+1,1}^{2n} (\alpha \beta)^{(2j+1)n} &=\lambda_{2j+1,1}^{2n} [(\alpha \beta)^{n}]^{2j+1}=\lambda_{2j+1,1}^{2n} [\lambda_{2n,1}^{-1}]^{2j+1} \\
&=[P_{2j+1}^{-1} \sigma(a)^j]^{2n} [\lambda_{2n,1}]^{-2j-1}=[P_{2j+1}^{-2n} \sigma(a)^{2jn}] [\lambda_{2n,1}]^{-2j-1}\\
&=[P_{2j+1}^{-2n} \sigma(a)^{2jn}] [P_{2n}^{-1} \sigma(a)^n]^{-2j-1}=P_{2j+1}^{-2n} P_{2n}^{2j+1} \sigma(a)^{-n},
\end{align*}
we have
\begin{align*}
[w^4(a,a^{2j+1}) w^4(a,a^{2j+1}b) \sigma(a^{2j+1})]^n&=h(a,a^{2j+1}b)^n \sigma(a^{2j+1})^{n} P_{2j+1}^{-2n} P_{2n}^{2j+1} \sigma(a)^{-n}.
\end{align*}
By the equation \eqref{4.b3}, we have
\begin{align}
\label{t1} P_{2k+1}^{2l}\sigma(a)^l \sigma(a^{2l})^k=P_{2l}^{2k}\sigma(a^{2k+1})^l
[\frac{\tau(b,a)}{\tau(b,a^{2k+1})}]^l.
\end{align}
Let $k=j$ and $l=n$, then the equation \eqref{t1} above becomes (by noting that $a^{2n}=1$)
\begin{align}
P_{2j+1}^{2n}\sigma(a)^n=P_{2n}^{2j}\sigma(a^{2j+1})^n
[\frac{\tau(b,a)}{\tau(b,a^{2j+1})}]^n.
\end{align}
Therefore we have
\begin{align*}
[w^4(a,a^{2j+1}) w^4(a,a^{2j+1}b) \sigma(a^{2j+1})]^n&=h(a,a^{2j+1}b)^n P_{2j+1}^{-2n} P_{2n}^{2j} \sigma(a^{2j+1})^n \sigma(a)^{-n} P_{2n}\\
&=h(a,a^{2j+1}b)^n [\frac{\tau(b,a^{2j+1})}{\tau(b,a)}]^n P_{2n}\\
&=P_{2n}
\end{align*}
where for the last equality we use the equation \eqref{4.b6}. Since
\begin{align*}
[w^2(b,a^{2j+1})]^2&=[-\tau(b,a)\frac{\beta}{\alpha}]^2=\tau(b,a)^2 [\frac{\beta}{\alpha}]^2\\
&=\tau(b,a)^2 \frac{\tau(b,b)}{\tau(b,a)^2}=\tau(b,b),
\end{align*}
\begin{align*}
[w^2(b,a^{2j+1}b)]^2&=[\tau(b,a)\frac{\beta}{\alpha}]^2=\tau(b,a)^2 [\frac{\beta}{\alpha}]^2\\
&=\tau(b,a)^2 \frac{\tau(b,b)}{\tau(b,a)^2}=\tau(b,b)
\end{align*}
and $T=\{a^{2i+1},a^{2i+1}b\;|\;i\geq 0\}$, we have $w^2(b,t)^2=\tau(b,b)$ by noting that $T=\{a^{2i+1},a^{2i+1}b\;|\;i\geq 0\}$. Therefore, we get the equations \eqref{w.1}.

Now let us show the equations \eqref{w.2}. Since
\begin{align*}
\tau(a,b) w^4(ab,a^{2j+1})&=\tau(a,b) [h(ab,a^{2j+1})S_{j,1}]\\
&=\tau(a,b) h(ab,a^{2j+1}) [h(a,a^{2j+1}b) \lambda_{2j+1,1} \alpha^j \beta^{j+1}]\\
&=\tau(a,b) h(ab,a^{2j+1}) [h(a,a^{2j+1}b) \frac{\beta}{\alpha} S_{j,0}]\\
&=\tau(a,b) [h(ab,a^{2j+1}) h(a,a^{2j+1}b)] \frac{\beta}{\alpha} S_{j,0}\\
&=\tau(a,b) [h(ab,a^{2j+1}) h((ab)\triangleleft x,a^{2j+1}\triangleleft x)] \frac{\beta}{\alpha} S_{j,0} \\
&=\tau(a,b) \frac{\beta}{\alpha} S_{j,0} \;\;(\text{by}\;\eqref{4.b5})\\
&=-\tau(b,a) \frac{\beta}{\alpha} S_{j,0}\;\;(\text{by} \;\eta(a,b)=-1)\\
&=(S_{j,0}) (-\tau(b,a) \frac{\beta}{\alpha})\\
&=w^4(a,a^{2j+1}) w^2(b,a^{2j+1}b),
\end{align*}
we have
\begin{align}
\label{w4.1.1} w^4(a,a^{2j+1}) w^2(b,a^{2j+1}b)=\tau(a,b) w^4(ab,a^{2j+1}).
\end{align}
Since
\begin{align*}
w^4(a,a^{2j+1}b) w^2(b,a^{2j+1})&=S_{j,1} w^2(b,a^{2j+1})=S_{j,1}[\tau(b,a)\frac{\beta}{\alpha}] \\
&=[h(a,a^{2j+1}b) \lambda_{2j+1,1} \alpha^j \beta^{j+1}] [\tau(b,a)\frac{\beta}{\alpha}]\\
&=[h(a,a^{2j+1}b) \frac{\beta}{\alpha} S_{j,0}] [\tau(b,a)\frac{\beta}{\alpha}]\\
&=\tau(b,a) h(a,a^{2j+1}b) (\frac{\beta}{\alpha})^2 S_{j,0}\\
&=\tau(b,a) \frac{\tau(b,a)}{\tau(b,a^{2j+1})} (\frac{\beta}{\alpha})^2 S_{j,0}\\
&=\tau(b,a) \frac{\tau(b,a)}{\tau(b,a^{2j+1})} \frac{\tau(b,b)}{\tau(b,a)^2} S_{j,0}\\
&=\frac{\tau(b,b)}{\tau(b,a^{2j+1})}  S_{j,0}
\end{align*}
and
\begin{align*}
\tau(a,b) w^4(ab,a^{2j+1}b)&=\tau(a,b)[h(ab,a^{2j+1}b) S_{j,0}]\\
&=\tau(a,b) \frac{\tau(ab,a^{2j+1}b)}{\tau(a^{2j+1},a)} S_{j,0}  \\
&=\frac{\tau(a,b) \tau(ab,a^{2j+1}b)}{\tau(a^{2j+1},a)} S_{j,0}\\
&=\frac{\tau(b,a^{2j+1}b) \tau(a,a^{2j+1})}{\tau(a^{2j+1},a)} S_{j,0}\\
&=\tau(b,a^{2j+1}b) S_{j,0}\\
&=\tau(b,ba^{2j+1}) S_{j,0}\\
&=\frac{\tau(b,ba^{2j+1}) \tau(b,a^{2j+1})}{\tau(b,a^{2j+1})} S_{j,0}\\
&=\frac{\tau(b,b)}{\tau(b,a^{2j+1})} S_{j,0},
\end{align*}
we have
\begin{align}
\label{w4.1.2} w^4(a,a^{2j+1}b) w^2(b,a^{2j+1})=\tau(a,b) w^4(ab,a^{2j+1}b).
\end{align}
 By equations \eqref{w4.1.1} and \eqref{w4.1.2}, we have $w^4(a,t)w^4(b,t\triangleleft x)=\tau(a,b) w^4(ab,t)$ for $t\in T=\{a^{2i+1},a^{2i+1}b\;|\;i\geq 0\}$. Since
\begin{align*}
w^2(b,a^{2j+1}) w^4(a,a^{2j+1})&=(\tau(b,a) \frac{\beta}{\alpha}) w^4(a,a^{2j+1})\\
&=(-w^2(b,a^{2j+1}b))w^4(a,a^{2j+1}) \\
&=-w^4(a,a^{2j+1}) w^2(b,a^{2j+1}b)\\
&=-\tau(a,b) w^4(ab,a^{2j+1})\;\;(\text{by }\;\eqref{w4.1.1})\\
&=\tau(b,a) w^4(ab,a^{2j+1})
\end{align*}
and
\begin{align*}
w^2(b,a^{2j+1}b) w^4(a,a^{2j+1}b)&=(-\tau(b,a) \frac{\beta}{\alpha}) w^4(a,a^{2j+1}b)\\
&=(-w^2(b,a^{2j+1}))w^4(a,a^{2j+1}b) \\
&=-w^4(a,a^{2j+1}b) w^2(b,a^{2j+1})\\
&=-\tau(a,b) w^4(ab,a^{2j+1}b)\;\;(\text{by }\;\eqref{w4.1.2})\\
&=\tau(b,a) w^4(ab,a^{2j+1}b),
\end{align*}
we have $w^2(b,t)w^4(a,t)=\tau(b,a)w^4(ab,t)$ and thus the equations \eqref{w.2} hold.

Next, we will show  \eqref{w.3}. Since
\begin{align*}
[w^4(a,a^{2j+1}) w^4(a,a^{2j+1}b) \sigma(a^{2j+1})]^i&=[S_{j,0} S_{j,1} \sigma(a^{2j+1})]^i
\end{align*}
and
\begin{align*}
P_{2i} w^2(a^{2i}, a^{2j+1})&=P_{2i}[\lambda_{2i,2j+1} (S_{j,0} S_{j,1})^i]\\
&=P_{2i} \lambda_{2i,2j+1} (S_{j,0} S_{j,1})^i\\
&=P_{2i} (P_{2i}^{-1} \sigma(a^{2j+1})^i) (S_{j,0} S_{j,1})^i\\
&=\sigma(a^{2j+1})^i (S_{j,0} S_{j,1})^i\\
&=[S_{j,0} S_{j,1} \sigma(a^{2j+1})]^i,
\end{align*}
we have
\begin{align}
\label{w4.2.1} [w^4(a,a^{2j+1}) w^4(a,a^{2j+1}b) \sigma(a^{2j+1})]^i=P_{2i} w^2(a^{2i},a^{2j+1}).
\end{align}
 Similarly, by
\begin{align*}
[w^4(a,a^{2j+1}b) w^4(a,a^{2j+1}) \sigma(a^{2j+1}b)]^i&=[w^4(a,a^{2j+1}b) w^4(a,a^{2j+1}) \sigma(a^{2j+1})]^i\\
&=P_{2i} w^2(a^{2i},a^{2j+1})\;\;(\text{by }\;\eqref{w4.2.1})\\
&=P_{2i} w^2(a^{2i},a^{2j+1}b),
\end{align*}
we have $[w^4(a,a^{2j+1}b) w^4(a,a^{2j+1}) \sigma(a^{2j+1}b)]^i=P_{2i} w^2(a^{2i},a^{2j+1}b)$. This means that we get the equation \eqref{w.3}.

We turn to the proof of the equation \eqref{w.4}. Since
\begin{align*}
w^2(a^{2i},a^{2j+1}) w^2(b,a^{2j+1})&=[\lambda_{2i,2j+1} (S_{j,0}S_{j,1})^i] w^2(b,a^{2j+1})\\
&=[\lambda_{2i,2j+1} (S_{j,0}S_{j,1})^i] [\tau(b,a)\frac{\beta}{\alpha}]\\
&=\tau(b,a) \frac{\beta}{\alpha} \lambda_{2i,2j+1} (S_{j,0}S_{j,1})^i
\end{align*}
and
\begin{align*}
\tau(a^{2i},b) w^2(a^{2i}b,a^{2j+1})&=\tau(a^{2i},b)[\frac{\tau(b,a)}{\tau(b,a^{2i})} \frac{\beta}{\alpha} \lambda_{2i,2j+1} (S_{j,0}S_{j,1})^i]\\
&=\frac{\tau(a^{2i},b)}{\tau(b,a^{2i})} [\tau(b,a) \frac{\beta}{\alpha} \lambda_{2i,2j+1} (S_{j,0}S_{j,1})^i]\\
&=\eta(a^{2i},b)[\tau(b,a) \frac{\beta}{\alpha} \lambda_{2i,2j+1} (S_{j,0}S_{j,1})^i]\\
&=\tau(b,a) \frac{\beta}{\alpha} \lambda_{2i,2j+1} (S_{j,0}S_{j,1})^i,
\end{align*}
we have
\begin{align}
\label{w4.3.1} w^2(a^{2i},a^{2j+1}) w^2(b,a^{2j+1})=\tau(a^{2i},b) w^2(a^{2i}b,a^{2j+1}).
\end{align}
Since
\begin{align*}
w^2(a^{2i},a^{2j+1}b) w^2(b,a^{2j+1}b)&=w^2(a^{2i},a^{2j+1}) w^2(b,a^{2j+1}b)\\
&=w^2(a^{2i},a^{2j+1}) [-w^2(b,a^{2j+1})]\\
&=-w^2(a^{2i},a^{2j+1}) w^2(b,a^{2j+1})\\
&=-\tau(a^{2i},b) w^2(a^{2i}b,a^{2j+1})\;\;(\text{by }\;\eqref{w4.3.1})\\
&=\tau(a^{2i},b) [-w^2(a^{2i}b,a^{2j+1})]\\
&=\tau(a^{2i},b) w^2(a^{2i}b,a^{2j+1}b),
\end{align*}
we have $w^2(a^{2i},a^{2j+1}b) w^2(b,a^{2j+1}b)=\tau(a^{2i},b) w^2(a^{2i}b,a^{2j+1}b)$. Therefore the equation \eqref{w.4} is proved.

We will show the equation \eqref{w.5}. Since
\begin{align*}
w^4(a,a^{2j+1})^{i+1} w^4(a,a^{2j+1}b)^i \sigma(a^{2j+1})^i&=S_{j,0}^{i+1} S_{j,1}^i \sigma(a^{2j+1})^i\\
&=\sigma(a^{2j+1})^i S_{j,0}^{i+1} S_{j,1}^i
\end{align*}
and
\begin{align*}
P_{2i+1} w^4(a^{2i+1},a^{2j+1})&=P_{2i+1} [\lambda_{2i+1,2j+1} S_{j,0}^{i+1} S_{j,1}^i]\\
&=P_{2i+1} [P_{2i+1}^{-1} \sigma(a^{2j+1})^i] S_{j,0}^{i+1} S_{j,1}^i\\
&=\sigma(a^{2j+1})^i S_{j,0}^{i+1} S_{j,1}^i,
\end{align*}
we have
\begin{align}
\label{w4.4.1} w^4(a,a^{2j+1})^{i+1} w^4(a,a^{2j+1}b)^i \sigma(a^{2j+1})^i=P_{2i+1}w^4(a^{2i+1},a^{2j+1}).
\end{align}
Since
\begin{align*}
w^4(a,a^{2j+1}b)^{i+1} w^4(a,a^{2j+1})^i \sigma(a^{2j+1}b)^i&=S_{j,1}^{i+1} S_{j,0}^i \sigma(a^{2j+1}b)^i\\
&=\sigma(a^{2j+1})^i S_{j,1}^{i+1} S_{j,0}^i
\end{align*}
and
\begin{align*}
P_{2i+1} w^4(a^{2i+1},a^{2j+1}b)&=P_{2i+1} [\lambda_{2i+1,2j+1} S_{j,0}^{i} S_{j,1}^{i+1}]\\
&=P_{2i+1} [P_{2i+1}^{-1} \sigma(a^{2j+1})^i] S_{j,0}^{i} S_{j,1}^{i+1}\\
&=\sigma(a^{2j+1})^i  S_{j,1}^{i+1} S_{j,0}^{i},
\end{align*}
we have
\begin{align}
\label{w4.4.2} w^4(a,a^{2j+1}b)^{i+1} w^4(a,a^{2j+1})^i \sigma(a^{2j+1}b)^i=P_{2i+1}w^4(a^{2i+1},a^{2j+1}b).
\end{align}
 Combining the equation \eqref{w4.4.1} together with the equation \eqref{w4.4.2}, we get the equation \eqref{w.5}.

Now let us go to the proof of the equation \eqref{w.6}. Directly we have
\begin{align*}
w^4(a^{2i+1},a^{2j+1}) w^2(b,a^{2j+1}b)&=[\lambda_{2i+1,2j+1} S_{j,0}^{i+1} S_{j,1}^i] w^2(b,a^{2j+1}b)\\
&=[\lambda_{2i+1,2j+1} S_{j,0}^{i+1} S_{j,1}^i] [-\tau(b,a)\frac{\beta}{\alpha}]\\
&=[\lambda_{2i+1,2j+1} S_{j,0}^{i+1} S_{j,1}^i] [\tau(a,b)\frac{\beta}{\alpha}]\\
&=\tau(a,b)\frac{\beta}{\alpha} [\lambda_{2i+1,2j+1} S_{j,0}^{i+1} S_{j,1}^i]
\end{align*}
and
\begin{align*}
\tau(a^{2i+1},b) w^4(a^{2i+1}b,a^{2j+1})&=\tau(a^{2i+1},b) [h(a^{2i+1}b,a^{2j+1}) \lambda_{2i+1,2j+1} S_{j,0}^{i} S_{j,1}^{i+1}]\\
&=\tau(a^{2i+1},b) [h(a^{2i+1}b,a^{2j+1}) \lambda_{2i+1,2j+1} S_{j,0}^{i+1} S_{j,1}^{i} \frac{S_{j,1}}{S_{j,0}}]\\
&=[\tau(a^{2i+1},b) h(a^{2i+1}b,a^{2j+1}) \frac{S_{j,1}}{S_{j,0}}] [\lambda_{2i+1,2j+1} S_{j,0}^{i+1} S_{j,1}^{i}].
\end{align*}
To prove $w^4(a,a^{2j+1}) w^2(b,a^{2j+1}b)=\tau(a^{2i+1},b) w^4(a^{2i+1}b,a^{2j+1})$, we need only to show that $\tau(a,b)\frac{\beta}{\alpha}=\tau(a^{2i+1},b) h(a^{2i+1}b,a^{2j+1}) \frac{S_{j,1}}{S_{j,0}}$. In fact, by
\begin{align*}
\frac{S_{j,1}}{S_{j,0}}&=\frac{h(a,a^{2j+1}b) \lambda_{2j+1,1} \alpha^j \beta^{j+1}}{\lambda_{2j+1,1} \alpha^{j+1} \beta^{j}}\\
&=h(a,a^{2j+1}b) \frac{\beta}{\alpha},
\end{align*}
we have
\begin{align*}
\tau(a^{2i+1},b) h(a^{2i+1}b,a^{2j+1}) \frac{S_{j,1}}{S_{j,0}}&=\tau(a^{2i+1},b) h(a^{2i+1}b,a^{2j+1}) h(a,a^{2j+1}b) \frac{\beta}{\alpha}\\
&=\tau(a^{2i+1},b) \frac{\tau(a^{2i+1}b,a^{2j+1})}{\tau(a^{2j+1}b,a^{2i+1})} h(a,a^{2j+1}b) \frac{\beta}{\alpha}\\
&=\frac{\tau(a^{2i+1},b) \tau(a^{2i+1}b,a^{2j+1})}{\tau(a^{2j+1}b,a^{2i+1})} h(a,a^{2j+1}b) \frac{\beta}{\alpha}\\
&=\frac{\tau(a^{2i+1},ba^{2j+1})) \tau(b,a^{2j+1})}{\tau(a^{2j+1}b,a^{2i+1})} h(a,a^{2j+1}b) \frac{\beta}{\alpha}\\
&=\eta(a^{2i+1},ba^{2j+1}) \tau(b,a^{2j+1}) h(a,a^{2j+1}b) \frac{\beta}{\alpha}\\
&=-\tau(b,a^{2j+1}) h(a,a^{2j+1}b) \frac{\beta}{\alpha}\\
&=-\tau(b,a^{2j+1}) \frac{\tau(a,a^{2j+1}b)}{\tau(a^{2j+1},ab)} \frac{\beta}{\alpha}\\
&=-\frac{\tau(b,a^{2j+1}) \tau(a,a^{2j+1}b)}{\tau(a^{2j+1},ab)} \frac{\beta}{\alpha}\\
&=-\frac{\tau(ab,a^{2j+1}) \tau(a,b)}{\tau(a^{2j+1},ab)} \frac{\beta}{\alpha}\\
&=-\eta(ab,a^{2j+1}) \tau(a,b)\frac{\beta}{\alpha}\\
&=\tau(a,b)\frac{\beta}{\alpha}.
\end{align*}
Therefore, we get
\begin{align}
\label{light1} w^4(a^{2i+1},a^{2j+1}) w^2(b,a^{2j+1}b)&=\tau(a^{2i+1},b) w^4(a^{2i+1}b,a^{2j+1}).
\end{align}
Now,
\begin{align*}
w^4(a^{2i+1},a^{2j+1}b) w^2(b,a^{2j+1})&=[h(a^{2i+1},a^{2j+1}b) \lambda_{2i+1,2j+1} S_{j,0}^{i+1} S_{j,1}^i] w^2(b,a^{2j+1}b)\\
&=h(a^{2i+1},a^{2j+1}b) \frac{S_{j,1}}{S_{j,0}} w^4(a^{2i+1},a^{2j+1}) w^2(b,a^{2j+1}b)\\
&=h(a^{2i+1},a^{2j+1}b)\frac{S_{j,1}}{S_{j,0}} w^4(a^{2i+1},a^{2j+1}b) [-\tau(b,a) \frac{\beta}{\alpha}]\\
&=h(a^{2i+1},a^{2j+1}b) \frac{S_{j,1}}{S_{j,0}} w^4(a^{2i+1},a^{2j+1}b) [-w^2(b,a^{2j+1})]\\
&=-h(a^{2i+1},a^{2j+1}b) \frac{S_{j,1}}{S_{j,0}} w^4(a^{2i+1},a^{2j+1}b) w^2(b,a^{2j+1})\\
&=-h(a^{2i+1},a^{2j+1}b) \frac{S_{j,1}}{S_{j,0}} \tau(a^{2j+1},b) w^4(a^{2i+1}b,a^{2j+1})\\
&=\tau(a^{2i+1},b)[-h(a^{2i+1},a^{2j+1}b) \frac{S_{j,1}}{S_{j,0}} w^4(a^{2i+1}b,a^{2j+1})]\\
\end{align*}
and
\begin{align*}
\tau(a^{2i+1},b) w^4(a^{2i+1}b,a^{2j+1}b)&=\tau(a^{2i+1},b) [h(a^{2i+1}b,a^{2j+1}b) \lambda_{2i+1,2j+1} S_{j,0}^{i+1} S_{j,1}^{i}]\\
&=\tau(a^{2i+1},b) [h(a^{2i+1}b,a^{2j+1}b) w^4(a^{2i+1},a^{2j+1})].
\end{align*}
Therefore similarly to get our desired equation we need only to prove the following
\begin{align}
\label{light2} -h(a^{2i+1},a^{2j+1}b) \frac{S_{j,1}}{S_{j,0}} w^4(a^{2i+1}b,a^{2j+1})=h(a^{2i+1}b,a^{2j+1}b) w^4(a^{2i+1},a^{2j+1}).
\end{align}
As a matter of fact, by
\begin{align*}
h(a^{2i+1},a^{2j+1}b)w^4(a^{2i+1}b,a^{2j+1})&=h(a^{2i+1},a^{2j+1}b) h(a^{2i+1}b,a^{2j+1}) \lambda_{2i+1,2j+1} S_{j,0}^i S_{j,1}^{i+1}\\
&=\lambda_{2i+1,2j+1} S_{j,0}^i S_{j,1}^{i+1}\;\;(\text{by}\;\eqref{4.b5} )\\
&=\frac{S_{j,1}}{S_{j,0}} w^4(a^{2i+1}b,a^{2j+1}),
\end{align*}
we have
\begin{align*}
-h(a^{2i+1},a^{2j+1}b) \frac{S_{j,1}}{S_{j,0}} w^4(a^{2i+1}b,a^{2j+1})&=-\frac{S_{j,1}}{S_{j,0}} [h(a^{2i+1},a^{2j+1}b) w^4(a^{2i+1}b,a^{2j+1})]\\
&=-\frac{S_{j,1}}{S_{j,0}} \frac{S_{j,1}}{S_{j,0}} w^4(a^{2i+1}b,a^{2j+1})\\
&=-[\frac{S_{j,1}}{S_{j,0}}]^2 w^4(a^{2i+1}b,a^{2j+1})\\
&=-[h(a,a^{2j+1}b) \frac{\beta}{\alpha}]^2 w^4(a^{2i+1}b,a^{2j+1})\\
&=-h(a,a^{2j+1}b)^2 \frac{\tau(b,b)}{\tau(b,a)^2} w^4(a^{2i+1}b,a^{2j+1})\\
&=-[\frac{\tau(b,a)}{\tau(b,a^{2j+1})}]^2 \frac{\tau(b,b)}{\tau(b,a)^2} w^4(a^{2i+1}b,a^{2j+1})\\
&=-\frac{\tau(b,b)}{\tau(b,a^{2j+1})^2} w^4(a^{2i+1}b,a^{2j+1}).
\end{align*}
Moreover, since
\begin{align*}
h(a^{2i+1}b,a^{2j+1}b) w^4(a^{2i+1},a^{2j+1})&=\frac{\tau(a^{2i+1}b,a^{2j+1}b)}{\tau(a^{2j+1},a^{2i+1})} w^4(a^{2i+1},a^{2j+1})\\
&=\frac{\tau(a^{2i+1}b,ba^{2j+1})}{\tau(a^{2j+1},a^{2i+1})} w^4(a^{2i+1},a^{2j+1})\\
&=\frac{\tau(a^{2i+1}b,ba^{2j+1}) \tau(b,a^{2j+1})}{\tau(a^{2j+1},a^{2i+1}) \tau(b,a^{2j+1})} w^4(a^{2i+1},a^{2j+1})\\
&=\frac{\tau(a^{2i+1},a^{2j+1}) \tau(a^{2i+1}b,b)}{\tau(a^{2j+1},a^{2i+1}) \tau(b,a^{2j+1})} w^4(a^{2i+1},a^{2j+1})\\
&=\eta(a^{2i+1},a^{2j+1}) \frac{\tau(a^{2i+1}b,b)}{\tau(b,a^{2j+1})} w^4(a^{2i+1},a^{2j+1})\\
&=\frac{\tau(a^{2i+1}b,b)}{\tau(b,a^{2j+1})} w^4(a^{2i+1},a^{2j+1})\\
&=\frac{\tau(a^{2i+1}b,b) \tau(a^{2j+1},b)}{\tau(b,a^{2j+1}) \tau(a^{2j+1},b)} w^4(a^{2i+1},a^{2j+1})\\
&=\frac{\tau(b,b)}{\tau(b,a^{2j+1}) \tau(a^{2j+1},b)} w^4(a^{2i+1},a^{2j+1})\\
&=\frac{\tau(b,b)}{\tau(b,a^{2j+1}) (-\tau(b,a^{2j+1}))} w^4(a^{2i+1},a^{2j+1})\\
&=-\frac{\tau(b,b)}{\tau(b,a^{2j+1})^2} w^4(a^{2i+1},a^{2j+1}),
\end{align*}
we have
\begin{align*}
-h(a^{2i+1},a^{2j+1}b) \frac{S_{j,1}}{S_{j,0}} w^4(a^{2i+1}b,a^{2j+1})=h(a^{2i+1}b,a^{2j+1}b) w^4(a^{2i+1},a^{2j+1}).
\end{align*}
Therefore we have proved the following equation
\begin{align}
\label{light3} w^4(a^{2i+1},a^{2j+1}b) w^2(b,a^{2j+1})&=\tau(a^{2i+1},b) w^4(a^{2i+1}b,a^{2j+1}b).
\end{align}
Combining the equation \eqref{light1} together with the equation \eqref{light3}, the equation \eqref{w.6} is proved.

We will show the equation \eqref{w.7}. Since
\begin{align*}
[w^3(a,a^{2i})]^{2n}&=[\lambda_{2i,1}(S_{0,0} S_{0,1})^i]^{2n}=[\lambda_{2i,1}(\alpha \beta)^i]^{2n}\\
&=\lambda_{2i,1}^{2n} (\alpha \beta)^{2 i n}=\lambda_{2i,1}^{2n} [(\alpha \beta)^{n}]^{2i}\\
&=\lambda_{2i,1}^{2n} [\lambda_{2n,1}^{-1}]^{2i}=\lambda_{2i,1}^{2n} [P_{2n} \sigma(a)^{-n}]^{2i}\\
&=\lambda_{2i,1}^{2n} P_{2n}^{2i} \sigma(a)^{-2 i n}=(P_{2i}^{-1} \sigma(a)^i)^{2n} P_{2n}^{2i} \sigma(a)^{-2 i n}\\
&=P_{2i}^{-2n} \sigma(a)^{2 i n} P_{2n}^{2i} \sigma(a)^{-2 i n}=P_{2i}^{-2n} P_{2n}^{2i}\\
&=\frac{\sigma(a^{2n})^i}{\sigma(a^{2i})^n} \;\;(\text{by}\;\eqref{4.b1})\\
&=\frac{1}{\sigma(a^{2i})^n},
\end{align*}
we have $[w^3(a,a^{2i})]^{2n}=\frac{1}{\sigma(a^{2i})^n}$ and therefore
\begin{align}
\label{light4}[w^3(a,a^{2i})]^{2n} \sigma(a^{2i})^n=1.
\end{align}
Since
\begin{align*}
[w^3(a,a^{2i}b)]^{2n}&=[\frac{-\tau(b,a)\beta }{\tau(b,a^{2i}) \alpha}\lambda_{2i,1}(S_{0,0} S_{0,1})^i]^{2n}\\
&=[\frac{-\tau(b,a)\beta }{\tau(b,a^{2i}) \alpha}]^{2n} [\lambda_{2i,1}(S_{0,0} S_{0,1})^i]^{2n}\\
&=[\frac{-\tau(b,a)}{\tau(b,a^{2i})}]^{2n} [\frac{\tau(b,b)}{\tau(b,a)^2}]^{n} [\lambda_{2i,1}(S_{0,0} S_{0,1})^i]^{2n}\\
&=[\frac{\tau(b,b)}{\tau(b,a^{2i})^2}]^{n} [\lambda_{2i,1}(S_{0,0} S_{0,1})^i]^{2n}\\
&=[\frac{\tau(b,b)}{\tau(b,a^{2i})^2}]^{n} w^3(a,a^{2i})^{2n},
\end{align*}
we have
\begin{align*}
[w^3(a,a^{2i}b)]^{2n} \sigma(a^{2i}b)^n&=[\frac{\tau(b,b)}{\tau(b,a^{2i})^2}]^{n} w^3(a,a^{2i})^{2n} \sigma(a^{2i}b)^n\\
&=[\frac{\tau(b,b)}{\tau(b,a^{2i})^2}]^{n} \sigma(a^{2i})^{-n} \sigma(a^{2i}b)^n\;\;(\text{by}\;\eqref{light4})\\
&=\tau(b,b)^n [\frac{\sigma(a^{2i}b)}{\sigma(a^{2i}) \tau(b,a^{2i})^2}]^n\\
&=\tau(b,b)^n \sigma(b)^n\\
&=[\tau(b,b) \sigma(b)]^n.
\end{align*}
To show that $[w^3(a,a^{2i}b)]^{2n} \sigma(a^{2i}b)^n=1$, we need only to prove that $\tau(b,b) \sigma(b)=1$. Since $\sigma(ab)\sigma(a)^{-1}\sigma(b)^{-1}=\tau(a,b)\tau(ab,b)$ and $\sigma(a)=\sigma(a\triangleleft x)=\sigma(ab)$, we have $\sigma(b)^{-1}=\tau(a,b)\tau(ab,b)$. Thanks to $\tau$ being a 2-cocycle, we have $\tau(a,b)\tau(ab,b)=\tau(b,b)\tau(a,1)=\tau(b,b)$ and thus
\begin{align}
\label{sig}\sigma(b)^{-1}=\tau(b,b).
\end{align}
Therefore
\begin{align}
\label{light5}[w^3(a,a^{2i}b)]^{2n} \sigma(a^{2i}b)^n=1.
\end{align}
 Combining the equation \eqref{light4} and the equation \eqref{light5} we get the first part of \eqref{w.7} by nothing that $S=\{a^{2i},a^{2i}b\;|\;i\geq 0\}$. Since by definition $w^1(b,a^{2i})=1$ and $w^1(b,a^{2i}b)=-1$, we get the rest of \eqref{w.7}.

 The proof the equation \eqref{w.8} is easy. Since by definition $w^1(b,a^{2i})=1$, $w^1(b,a^{2i}b)=-1$, we have
 $w^3(a,a^{2i}) w^1(b,a^{2i})=w^3(a,a^{2i}) $
\begin{align*}
&w^3(a,a^{2i}) w^1(b,a^{2i})=w^3(a,a^{2i})= w^3(ab,a^{2i}),\\
&w^3(a,a^{2i}b) w^1(b,a^{2i}b)=-w^3(a,a^{2i})(-1)=w^3(ab,a^{2i}b).
\end{align*}
Therefore, we get the equation \eqref{w.8} by $S=\{a^{2i},a^{2i}b\;|\;i\geq 0\}$.

Now let's prove the equation \eqref{w.9}. Since
\begin{align*}
w^3(a,a^{2j})^{2i} \sigma(a^{2j})^i&=[\lambda_{2j,1} (S_{0,0} S_{0,1})^j]^{2i} \sigma(a^{2j})^i=[\lambda_{2j,1} (\alpha\beta)^j]^{2i} \sigma(a^{2j})^i\\
&=\lambda_{2j,1}^{2i} (\alpha\beta)^{2ij} \sigma(a^{2j})^i=w^1(a^{2i},a^{2j}),
\end{align*}
we have
\begin{align}
\label{light7} w^3(a,a^{2j})^{2i} \sigma(a^{2j})^i=w^1(a^{2i},a^{2j}).
\end{align}
Since
\begin{align*}
w^3(a,a^{2j}b)^{2i} \sigma(a^{2j}b)^i&=[-\frac{\tau(b,a)\beta}{\tau(b,a^{2j})\alpha}\lambda_{2j,1} (S_{0,0} S_{0,1})^j]^{2i} \sigma(a^{2j}b)^i\\
&=[(-\frac{\tau(b,a) \beta}{\tau(b,a^{2j}) \alpha}) w^3(a,a^{2j})]^{2i} \sigma(a^{2j}b)^i\\
&=[-\frac{\tau(b,a) \beta}{\tau(b,a^{2j}) \alpha}]^{2i} w^3(a,a^{2j})^{2i} \sigma(a^{2j}b)^i\\
&=[-\frac{\tau(b,a)\beta}{\tau(b,a^{2j})\alpha}]^{2i} w^3(a,a^{2j})^{2i} \sigma(a^{2j})^i \frac{\sigma(a^{2j}b)^i}{\sigma(a^{2j})^i}\\
&=[-\frac{\tau(b,a) \beta}{\tau(b,a^{2j}) \alpha}]^{2i} w^1(a^{2i},a^{2j}) \frac{\sigma(a^{2j}b)^i}{\sigma(a^{2j})^i}\\
&=[-\frac{\tau(b,a) \beta}{\tau(b,a^{2j}) \alpha}]^{2i} \frac{\sigma(a^{2j}b)^i}{\sigma(a^{2j})^i} w^1(a^{2i},a^{2j})\\
&=[-\frac{\tau(b,a) \beta}{\tau(b,a^{2j}) \alpha}]^{2i} \frac{\sigma(a^{2j}b)^i}{\sigma(a^{2j})^i} w^1(a^{2i},a^{2j}b),
\end{align*}
we have $w^3(a,a^{2j}b)^{2i} \sigma(a^{2j}b)^i=[-\frac{\tau(b,a) \beta}{\tau(b,a^{2j}) \alpha}]^{2i} \frac{\sigma(a^{2j}b)^i}{\sigma(a^{2j})^i} w^1(a^{2i},a^{2j}b)$, and thus we need only to
prove $[-\frac{\tau(b,a) \beta}{\tau(b,a^{2j}) \alpha}]^{2i} \frac{\sigma(a^{2j}b)^i}{\sigma(a^{2j})^i}=1$ if we want to show that $w^3(a,a^{2j}b)^{2i} \sigma(a^{2j}b)^i=w^1(a^{2i},a^{2j}b)$. In fact, we have
\begin{align*}
[\frac{\tau(b,a) \beta}{\tau(b,a^{2j}) \alpha}]^2 \frac{\sigma(a^{2j}b)}{\sigma(a^{2j})}&=\frac{\tau(b,a)^2}{\tau(b,a^{2j})^2} \frac{\tau(b,b)}{\tau(b,a)^2} \frac{\sigma(a^{2j}b)}{\sigma(a^{2j})}\\
&=\frac{\tau(b,b)}{\tau(b,a^{2j})^2} \frac{\sigma(a^{2j}b)}{\sigma(a^{2j})}\\
&=\tau(b,b) \frac{\sigma(a^{2j}b)}{\sigma(a^{2j}) \tau(b,a^{2j})^2}\\
&=\tau(b,b) \sigma(b)\\
&=1,\;\;(\text{by }\;\eqref{sig}).
\end{align*}
Therefore
\begin{align}
\label{light6} w^3(a,a^{2j}b)^{2i} \sigma(a^{2j}b)^i=w^1(a^{2i},a^{2j}b).
\end{align}
 Combining the equation \eqref{light7} and the equation \eqref{light6}, we get that
\begin{align}
\label{light8}w^3(a,s)^{2i} \sigma(s)^i=w^1(a^{2i},s)
\end{align}
for $s\in S.$ By
\begin{align*}
&w^1(a^{2i},a^{2j}) w^1(b,a^{2j})=w^1(a^{2i},a^{2j})=w^1(a^{2i},a^{2j})=w^1(a^{2i}b,a^{2j}),\\
&w^1(a^{2i},a^{2j}b) w^1(b,a^{2j}b)=-w^1(a^{2i},a^{2j}b)=w^1(a^{2i}b,a^{2j}b),
\end{align*}
 we know that
\begin{align}
\label{light9}w^1(a^{2i},s) w^1(b,s)=w^1(a^{2i}b,s).
\end{align}
Since the equation \eqref{light8} and the equation \eqref{light9} hold, the proof of  \eqref{w.9} is done.

Now we turn to the proof of the last equation \eqref{w.10} hold. Direct computations show that
\begin{align*}
w^3(a,a^{2j})^{2i+1} \sigma(a^{2j})^i&=[\lambda_{2j,1} (S_{0,0} S_{0,1})^j]^{2i+1} \sigma(a^{2j})^i\\
&=[P_{2j}^{-1} \sigma(a)^j (S_{0,0} S_{0,1})^j]^{2i+1} \sigma(a^{2j})^i\\
&=[P_{2j}^{-1} \sigma(a)^j (\alpha \beta)^j]^{2i+1} \sigma(a^{2j})^i\\
&=P_{2j}^{-(2i+1)} \sigma(a)^{j(2i+1)} (\alpha \beta)^{j(2i+1)} \sigma(a^{2j})^i\\
&=[P_{2j}^{-(2i+1)} \sigma(a)^{j} \sigma(a^{2j})^i] [\sigma(a)^{2ij}(\alpha \beta)^{j(2i+1)}].
\end{align*}
Since
\begin{align*}
w^3(a^{2i+1},a^{2j})&=\lambda_{2j,2i+1}[S_{i,0}S_{i,1}]^j\\
&=P_{2j}^{-1} \sigma(a^{2i+1})^j [S_{i,0}S_{i,1}]^j\\
\end{align*}
and
\begin{align*}
S_{i,0}S_{i,1}&=(\lambda_{2i+1,1} \alpha^{i+1} \beta^{i}) S_{i,1}\\
&=(\lambda_{2i+1,1} \alpha^{i+1} \beta^{i}) (h(a,a^{2i+1}b) \lambda_{2i+1,1} \alpha^{i} \beta^{i+1})\\
&=h(a,a^{2i+1}b) \lambda_{2i+1,1}^2 \alpha^{2i+1} \beta^{2i+1}\\
&=h(a,a^{2i+1}b) [P_{2i+1}^{-1} \sigma(a)^i]^2 \alpha^{2i+1} \beta^{2i+1}\\
&=h(a,a^{2i+1}b) P_{2i+1}^{-2} \sigma(a)^{2i} \alpha^{2i+1} \beta^{2i+1},
\end{align*}
we have
\begin{align*}
w^3(a^{2i+1},a^{2j})=P_{2j}^{-1} \sigma(a^{2i+1})^j h(a,a^{2i+1}b)^j P_{2i+1}^{-2j} [\sigma(a)^{2ij}(\alpha \beta)^{j(2i+1)}].
\end{align*}
Since $P_{2i+1}^{2j}\sigma(a)^j\sigma(a^{2j})^i=P_{2j}^{2i}\sigma(a^{2i+1})^j
[\frac{\tau(b,a)}{\tau(b,a^{2i+1})}]^j$  and $h(a,a^{2i+1}b)=\frac{\tau(b,a)}{\tau(b,a^{2i+1})}$ by Lemma \ref{lem4.1}, we have $P_{2j}^{-(2i+1)} \sigma(a)^{j} \sigma(a^{2j})^i=P_{2j}^{-1} \sigma(a^{2i+1})^j h(a,a^{2i+1}b)^j P_{2i+1}^{-2j}$ and therefore
\begin{align}
\label{light10} w^3(a,a^{2j})^{2i+1} \sigma(a^{2j})^i=w^3(a^{2i+1},a^{2j}).
\end{align}
Since
\begin{align*}
w^3(a,a^{2j}b)&=\frac{-\tau(b,a)\beta}{\tau(b,a^{2j}) \alpha} \lambda_{2j,1} (S_{0,0} S_{0,1})^j=\frac{-\tau(b,a)\beta}{\tau(b,a^{2j})\alpha} w^3(a,a^{2j})
\end{align*}
and
\begin{align*}
\sigma(a^{2j}b)&=\sigma(a^{2j}) \frac{\sigma(a^{2j}b)}{\sigma(a^{2j})},
\end{align*}
we have
\begin{align*}
w^3(a,a^{2j}b)^{2i+1} \sigma(a^{2j}b)^i&=[\frac{-\tau(b,a)\beta}{\tau(b,a^{2j}) \alpha} w^3(a,a^{2j})]^{2i+1} [\sigma(a^{2j}) \frac{\sigma(a^{2j}b)}{\sigma(a^{2j})}]^{i}\\
&=(\frac{-\tau(b,a)\beta}{\tau(b,a^{2j}) \alpha})^{2i+1} [w^3(a,a^{2j})^{2i+1} \sigma(a^{2j})^i] [\frac{\sigma(a^{2j}b)}{\sigma(a^{2j})}]^{i}\\
&=(\frac{-\tau(b,a)\beta}{\tau(b,a^{2j}) \alpha})^{2i+1} w^3(a^{2i+1},a^{2j}) [\frac{\sigma(a^{2j}b)}{\sigma(a^{2j})}]^{i}\\
&=(\frac{-\tau(b,a)\beta}{\tau(b,a^{2j}) \alpha})^{2i} [\frac{-\tau(b,a)\beta}{\tau(b,a^{2j}) \alpha} w^3(a^{2i+1},a^{2j})] [\frac{\sigma(a^{2j}b)}{\sigma(a^{2j})}]^{i}\\
&=(\frac{-\tau(b,a)\beta}{\tau(b,a^{2j}) \alpha})^{2i} w^3(a^{2i+1},a^{2j}b) [\frac{\sigma(a^{2j}b)}{\sigma(a^{2j})}]^{i}\\
&=(\frac{-\tau(b,a)\beta}{\tau(b,a^{2j}) \alpha})^{2i}  [\frac{\sigma(a^{2j}b)}{\sigma(a^{2j})}]^{i} w^3(a^{2i+1},a^{2j}b)\\
&=w^3(a^{2i+1},a^{2j}b)\;(\text{by} \;[\frac{\tau(b,a)\beta}{\tau(b,a^{2j})\alpha}]^2=
\frac{\sigma(a^{2j})}{\sigma(a^{2j}b)}\;\text{in Lemma}\; \ref{lem4.1}).
\end{align*}
Therefore we have
\begin{align}
\label{light11} w^3(a,a^{2j}b)^{2i+1} \sigma(a^{2j}b)^i=w^3(a^{2i+1},a^{2j}b).
\end{align}
By \eqref{light10}, \eqref{light11} and $S=\{a^{2i},a^{2i}b\;|\;i\geq 0\}$,  we have
\begin{align}
\label{light12} w^3(a,s)^{2i+1} \sigma(s)^i=w^3(a^{2i+1},s).
\end{align}
Since
\begin{align*}
&w^3(a^{2i+1},a^{2j}) w^1(b,a^{2j})=w^3(a^{2i+1},a^{2j})=w^3(a^{2i+1}b,a^{2j}),\\
&w^3(a^{2i+1},a^{2j}b) w^1(b,a^{2j}b)=-w^3(a^{2i+1},a^{2j}b)=w^3(a^{2i+1}b,a^{2j}b)
\end{align*}
and $S=\{a^{2i},a^{2i}b\;|\;i\geq 0\}$, we have
\begin{align}
\label{light13} w^3(a^{2i+1},s) w^1(b,s)=w^3(a^{2i+1}b,s).
\end{align}
 Combining the equation \eqref{light12} together with the equation \eqref{light13}, we prove the last equation \eqref{w.10}.
\end{proof}

The following Lemma \ref{lemm4.5} and Lemma \ref{lemm4.1} are prepared to prove Lemma \ref{lemm4.6}.

\begin{lemma}\label{lemm4.5}
Let $R_{\alpha,\beta}$ as above, then
\begin{gather}
\label{el.1} l(X_a)^{2n}=P_{2n}l(X_1),\;l(X_b)^2=\tau(b,b)l(X_1),\\
\label{el.2} l(X_a)l(X_b)=\tau(a,b)l(X_{ab}),\;l(X_b)l(X_a)=\tau(b,a)l(X_{ab}),\\
\label{el.3} l(X_a)^{2i}=P_{2i}l(X_{a^{2i}}),\; l(X_{a^{2i}})l(X_b)=\tau(a^{2i},b)l(X_{a^{2i}b}),\\
\label{el.4} l(X_a)^{2i+1}=P_{2i+1}l(X_{a^{2i+1}}),\; l(X_{a^{2i+1}})l(X_b)=\tau(a^{2i+1},b)l(X_{a^{2i+1}b}),\\
\label{el.5} l(X_a)l(X_1)=l(X_a),\;l(X_b)l(X_1)=l(X_b),\\
\label{el.6} l(E_a)^{2n}=l(E_1),\;l(E_b)^2=l(E_1),\\
\label{el.7} l(E_a)l(E_b)=l(E_{ab}),\; l(E_b)l(E_a)=l(E_{ab}),\\
\label{el.8} l(E_a)^{2i}=l(E_{a^{2i}}),\; l(E_{a^{2i}})l(E_b)=l(E_{a^{2i}b}),\\
\label{el.9} l(E_a)^{2i+1}=l(E_{a^{2i+1}}),\; l(E_{a^{2i+1}})l(E_b)=l(E_{a^{2i+1}b}),\\
\label{el.10} l(E_a)l(E_1)=l(E_a),\;l(E_b)l(E_1)=l(E_b)\\
\label{el.11} l(X_1)l(E_1)=0,\;l(E_1)l(X_1)=0.
\end{gather}
\end{lemma}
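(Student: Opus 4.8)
The plan is to reduce every identity in the list to the pointwise identities of Lemma~\ref{lemm4.4} by first writing down explicit formulas for the elements $l(X_g)$ and $l(E_g)$. Applying $X_g\otimes\id$ and $E_g\otimes\id$ to $R_{\alpha,\beta}$ and using that $X_g$ annihilates every $e_h$ while $E_g$ annihilates every $e_hx$, one obtains, for $g\in S$,
\begin{equation*}
l(E_g)=\sum_{s\in S}w^1(g,s)e_s,\qquad l(X_g)=\sum_{t\in T}w^2(g,t)e_t,
\end{equation*}
and for $g\in T$,
\begin{equation*}
l(E_g)=\sum_{s\in S}w^3(g,s)e_sx,\qquad l(X_g)=\sum_{t\in T}w^4(g,t)e_tx.
\end{equation*}
Since the construction gives $w^1(1,s)=w^2(1,t)=1$, this specializes to $l(E_1)=\sum_{s\in S}e_s$ and $l(X_1)=\sum_{t\in T}e_t$.

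Next I would record the multiplication rules in $K(8n,\sigma,\tau)$ that govern these products, namely $e_pe_q=\delta_{p,q}e_p$, $e_p(e_qx)=\delta_{p,q}e_px$, $(e_px)e_q=\delta_{p,q\triangleleft x}e_px$ and $(e_px)(e_qx)=\delta_{p,q\triangleleft x}\sigma(p)e_p$, together with the crucial fact that $s\triangleleft x=s$ for $s\in S$ while $t\triangleleft x\in T\setminus\{t\}$ for $t\in T$. The key observation is that with these rules every product $l(f_1)l(f_2)$ of the above elements is again \emph{diagonal}, supported on single components $e_t$ or $e_tx$; hence each asserted identity holds if and only if the corresponding coefficients agree, and each such coefficient comparison is exactly one of the pointwise identities of Lemma~\ref{lemm4.4}.

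I would then go through the groups one by one. On the $X$-side, squaring $l(X_a)$ (with $a\in T$) produces the coefficient $w^4(a,t)w^4(a,t\triangleleft x)\sigma(t)$, so the first halves of \eqref{el.1} and \eqref{el.3} follow from \eqref{w.1} and \eqref{w.3}, the second half of \eqref{el.1} and all of \eqref{el.2} from \eqref{w.1}--\eqref{w.2}, and the products $l(X_{a^{2i}})l(X_b)$, $l(X_{a^{2i+1}})l(X_b)$ in \eqref{el.3}--\eqref{el.4} from \eqref{w.4} and \eqref{w.6}. For the odd powers in \eqref{el.4} I would compute $l(X_a)^{2i+1}=(l(X_a)^2)^il(X_a)$, whose coefficient is $w^4(a,t)^{i+1}w^4(a,t\triangleleft x)^i\sigma(t)^i$, and then invoke \eqref{w.5}. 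The $E$-side is entirely parallel but simpler: because $s\triangleleft x=s$, squaring $l(E_a)$ gives the coefficient $w^3(a,s)^2\sigma(s)$ with no $\triangleleft x$-shift, so \eqref{el.6}--\eqref{el.9} follow respectively from \eqref{w.7}--\eqref{w.10}. Finally \eqref{el.5} and \eqref{el.10} are immediate from $l(X_1)=\sum_t e_t$, $l(E_1)=\sum_s e_s$ and the multiplication rules (no input from Lemma~\ref{lemm4.4} is needed), and \eqref{el.11} holds because $S\cap T=\emptyset$ forces $e_te_s=0$.

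The main obstacle here is not conceptual but bookkeeping: in each of the roughly twenty coefficient comparisons one must correctly track whether the relevant group elements lie in $S$ or in $T$, so as to apply $t\triangleleft x\ne t$ (producing the $w^4(a,t\triangleleft x)$ factors and the $\sigma$-weights coming from $x^2$) as opposed to $s\triangleleft x=s$ (which collapses the $E$-side squares). The most error-prone point is keeping the $\sigma(t)$ factors and the $t$ versus $t\triangleleft x$ arguments aligned when multiplying two $e_tx$-type terms; once the four multiplication rules above are in place, however, each identity reduces to a one-line coefficient match against Lemma~\ref{lemm4.4}.
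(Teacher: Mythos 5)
Your proposal is correct and takes essentially the same route as the paper's proof: expand $l(X_g)$, $l(E_g)$ explicitly, multiply using the algebra relations of $K(8n,\sigma,\tau)$ (in particular $(e_px)(e_qx)=\delta_{p,q\triangleleft x}\sigma(p)e_p$), and match coefficients against the pointwise identities \eqref{w.1}--\eqref{w.10} of Lemma~\ref{lemm4.4}. Your assignment of identities (\eqref{el.1},\eqref{el.3} from \eqref{w.1},\eqref{w.3}; \eqref{el.2} from \eqref{w.2}; \eqref{el.4} from \eqref{w.5},\eqref{w.6}; \eqref{el.6}--\eqref{el.9} from \eqref{w.7}--\eqref{w.10}; and \eqref{el.5}, \eqref{el.10}, \eqref{el.11} handled directly from $l(X_1)=\sum_{t\in T}e_t$, $l(E_1)=\sum_{s\in S}e_s$) coincides exactly with the paper's computation.
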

\begin{proof}
We will show the equation \eqref{el.1}. Since
\begin{align*}
l(X_a)^{2n}&=[\sum\limits_{t \in T}w^4(a,t)e_t x]^{2n}=[(\sum\limits_{t \in T}w^4(a,t)e_t x)^2]^{n}\\
&=[\sum\limits_{t \in T}w^4(a,t)w^4(a,t\triangleleft x)e_t x^2]^{n}=[\sum\limits_{t \in T}w^4(a,t)w^4(a,t\triangleleft x)\sigma(t)e_t]^{n}\\
&=\sum\limits_{t \in T}[w^4(a,t)w^4(a,t\triangleleft x)\sigma(t)]^n e_t\\
&=\sum\limits_{t \in T} P_{2n}e_t\;\;(\text{by}\;\eqref{w.1})
\end{align*}
and
\begin{align*}
P_{2n}l(X_1)&=P_{2n}\sum\limits_{t \in T}w^2(1,t)e_t=\sum\limits_{t \in T}P_{2n}e_t,
\end{align*}
we have $l(X_a)^{2n}=P_{2n}l(X_1)$. Since
\begin{align*}
l(X_b)^{2}&=[\sum\limits_{t \in T}w^2(b,t)e_t]^{2}=\sum\limits_{t \in T}w^2(b,t)^2e_t\\
&=\sum\limits_{t \in T}\tau(b,b)e_t \;\;(\text{by}\;\eqref{w.1})
\end{align*}
and
\begin{align*}
\tau(b,b)l(X_1)&=\tau(b,b)\sum\limits_{t \in T}w^2(1,t)e_t=\sum\limits_{t \in T}\tau(b,b)e_t,
\end{align*}
 we have $l(X_b)^{2}=\tau(b,b)l(X_1)$. This means that the equation \eqref{el.1} hold.

Next, we want to show the equation \eqref{el.2}. Since
\begin{align*}
l(X_a)l(X_b)&=(\sum\limits_{t \in T}w^4(a,t)e_t x)l(X_b)=(\sum\limits_{t \in T}w^4(a,t)e_t x) (\sum\limits_{t \in T}w^2(b,t)e_t)\\
&=\sum\limits_{t \in T}w^4(a,t) w^2(b,t\triangleleft x)e_t x\\
&=\sum\limits_{t \in T}\tau(a,b) w^4(ab,t)e_t x \;\;(\text{by}\;\eqref{w.2})
\end{align*}
and
\begin{align*}
\tau(a,b)l(X_{ab})&=\tau(a,b) \sum\limits_{t \in T}w^4(ab,t)e_t x=\sum\limits_{t \in T}\tau(a,b)w^4(ab,t)e_t x,
\end{align*}
we have $l(X_a)l(X_b)=\tau(a,b)l(X_{ab})$. By
\begin{align*}
l(X_b)l(X_a)&=(\sum\limits_{t \in T}w^2(b,t)e_t)l(X_a)=(\sum\limits_{t \in T}w^2(b,t)e_t)(\sum\limits_{t \in T}w^4(a,t)e_t x)\\
&=\sum\limits_{t \in T}w^2(b,t) w^4(a,t)e_t x\\
&=\sum\limits_{t \in T}\tau(b,a) w^4(ab,t)e_t x  \;\;(\text{by}\;\eqref{w.2})
\end{align*}
and
\begin{align*}
\tau(a,b)l(X_{ab})&=\tau(a,b) \sum\limits_{t \in T}w^4(ab,t)e_t x\\&=\sum\limits_{t \in T}\tau(b,a) w^4(ab,t)e_t x \;\;(\text{by}\;\eqref{w.2}),
\end{align*}
 we have $l(X_b)l(X_a)=\tau(b,a)l(X_{ab})$ and the equation \eqref{el.2} hold.

We turn to the proof of the equation \eqref{el.3}. Due to
\begin{align*}
l(X_a)^{2i}&=[\sum\limits_{t \in T}w^4(a,t)e_t x]^{2i}\\
&=[(\sum\limits_{t \in T}w^4(a,t)e_t x)^2]^{i}=[\sum\limits_{t \in T}w^4(a,t) w^4(a,t\triangleleft x) e_t x^2]^{i}\\
&=[\sum\limits_{t \in T}w^4(a,t) w^4(a,t\triangleleft x) e_t \sigma(t)]^{i}=\sum\limits_{t \in T}[w^4(a,t) w^4(a,t\triangleleft x)\sigma(t)]^i e_t\\
&=\sum\limits_{t \in T} P_{2i}w^2(a^{2i},t)e_t \;\;(\text{by}\;\eqref{w.3})
\end{align*}
and
\begin{align*}
P_{2i}l(X_{a^{2i}})&=P_{2i}\sum\limits_{t \in T} w^2(a^{2i},t)e_t=\sum\limits_{t \in T} P_{2i}w^2(a^{2i},t)e_t,
\end{align*}
we have $l(X_a)^{2i}=P_{2i}l(X_{a^{2i}})$. Since
\begin{align*}
l(X_{a^{2i}})l(X_b)&=(\sum\limits_{t \in T}w^2(a^{2i},t)e_t) l(X_b)=(\sum\limits_{t \in T}w^2(a^{2i},t)e_t) (\sum\limits_{t \in T}w^2(b,t)e_t)\\
&=\sum\limits_{t \in T}w^2(a^{2i},t) w^2(b,t) e_t\\
&=\sum\limits_{t \in T}\tau(a^{2i},b) w^2(a^{2i}b,t) e_t \;\;(\text{by}\;\eqref{w.4})
\end{align*}
and
\begin{align*}
\tau(a^{2i},b) l(X_{a^{2i}b})&=\tau(a^{2i},b) \sum\limits_{t \in T}w^2(a^{2i}b,t) e_t\\
&=\sum\limits_{t \in T}\tau(a^{2i},b) w^2(a^{2i}b,t) e_t,
\end{align*}
we have $l(X_{a^{2i}})l(X_b)=\tau(a^{2i},b) l(X_{a^{2i}b})$ and thus \eqref{el.3} holds.

For the equation \eqref{el.4}, we find that
\begin{align*}
l(X_a)^{2i+1}&=[\sum\limits_{t \in T}w^4(a,t)e_t x]^{2i+1}=[(\sum\limits_{t \in T}w^4(a,t)e_t x)^2]^{i} [\sum\limits_{t \in T}w^4(a,t)e_t x]\\
&=[(\sum\limits_{t \in T}w^4(a,t) w^4(a,t\triangleleft x) e_t x^2)]^{i} [\sum\limits_{t \in T}w^4(a,t)e_t x]\\
&=[\sum\limits_{t \in T}w^4(a,t) w^4(a,t\triangleleft x) \sigma(t) e_t]^{i} [\sum\limits_{t \in T}w^4(a,t)e_t x]\\
&=[\sum\limits_{t \in T}w^4(a,t)^i w^4(a,t\triangleleft x)^i \sigma(t)^i e_t] [\sum\limits_{t \in T}w^4(a,t)e_t x]\\
&=\sum\limits_{t \in T}w^4(a,t)^{i+1} w^4(a,t\triangleleft x)^i \sigma(t)^i e_t x\\
&=\sum\limits_{t \in T}P_{2i+1} w^4(a^{2i+1},t)e_t x \;\;(\text{by}\;\eqref{w.5})
\end{align*}
and
\begin{align*}
P_{2i+1}l(X_{a^{2i+1}})&=P_{2i+1}\sum\limits_{t \in T} w^4(a^{2i+1},t)e_t x=\sum\limits_{t \in T} P_{2i+1} w^4(a^{2i+1},t)e_t x.
\end{align*}
Therefore, $l(X_a)^{2i+1}=P_{2i+1}l(X_{a^{2i+1}})$. By
\begin{align*}
l(X_{a^{2i+1}})l(X_b)&=(\sum\limits_{t \in T}w^4(a^{2i+1},t)e_t x) l(X_b)=(\sum\limits_{t \in T}w^4(a^{2i+1},t)e_t x) (\sum\limits_{t \in T}w^2(b,t)e_t)\\
&=\sum\limits_{t \in T}w^4(a^{2i+1},t) w^2(b,t \triangleleft x) e_t x\\
&=\sum\limits_{t \in T}\tau(a^{2i+1},b) w^4(a^{2i+1}b,t) e_t \;\;(\text{by}\;\eqref{w.6})
\end{align*}
and
\begin{align*}
\tau(a^{2i+1},b) l(X_{a^{2i+1}b})&=\tau(a^{2i+1},b) \sum\limits_{t \in T}w^4(a^{2i+1}b,t) e_t\\
&=\sum\limits_{t \in T}\tau(a^{2i+1},b) w^4(a^{2i+1}b,t) e_t,
\end{align*}
we have $l(X_{a^{2i+1}})l(X_b)=\tau(a^{2i+1},b) l(X_{a^{2i+1}b})$ and the equation \eqref{el.4} holds.

Now we prove the equation \eqref{el.5}. From
\begin{align*}
l(X_a)l(X_1)&=(\sum\limits_{t \in T}w^4(a,t)e_t x) l(X_1)=(\sum\limits_{t \in T}w^4(a,t)e_t x) (\sum\limits_{t \in T}w^2(1,t)e_t)\\
&=(\sum\limits_{t \in T}w^4(a,t)e_t x) (\sum\limits_{t \in T}e_t)=\sum\limits_{t \in T}w^4(a,t)e_t x\\
&=l(X_a),
\end{align*}
we have $l(X_a)l(X_1)=l(X_a)$. Due to
\begin{align*}
l(X_b)l(X_1)&=(\sum\limits_{t \in T}w^2(b,t)e_t) l(X_1)=(\sum\limits_{t \in T}w^2(b,t)e_t) (\sum\limits_{t \in T}w^2(1,t)e_t)\\
&=(\sum\limits_{t \in T}w^2(b,t)e_t)=l(X_b),
\end{align*}
we have $l(X_b)l(X_1)=l(X_b)$. Thus we have the equation \eqref{el.5}.

Now we consider the equation \eqref{el.6}. Based on
\begin{align*}
l(E_a)^{2n}&=(\sum\limits_{s \in S}w^3(a,s)e_s x)^{2n}=[(\sum\limits_{s \in S}w^3(a,s)e_s x)^2]^{n}\\
&=[\sum\limits_{s \in S}w^3(a,s)^2 e_s x^2]^{n}=[\sum\limits_{s \in S}w^3(a,s)^2 \sigma(s) e_s]^{n}\\
&=\sum\limits_{s \in S}w^3(a,s)^{2n} \sigma(s)^n e_s\\
&=\sum\limits_{s \in S}e_s \;\;(\text{by}\;\eqref{w.7})
\end{align*}
and
\begin{align*}
l(E_1)&=\sum\limits_{s \in S} w^1(1,s)e_s=\sum\limits_{s \in S} e_s,
\end{align*}
we get that $l(E_{a})^{2n}=l(E_1)$. Owning to
\begin{align*}
l(E_b)^{2}&=[\sum\limits_{s \in S}w^1(b,s)e_s]^{2}=\sum\limits_{s \in S}w^1(b,s)^2e_s\\
&=\sum\limits_{s \in S}e_s \;\;(\text{by}\;\eqref{w.7})
\end{align*}
and
\begin{align*}
l(E_1)&=\sum\limits_{s \in S} w^1(1,s)e_s=\sum\limits_{s \in S} e_s,
\end{align*}
we have $l(E_{b})^{2}=l(E_1)$ and thus the equation \eqref{el.6} holds.

For the equation \eqref{el.7}, we find that
\begin{align*}
l(E_a)l(E_b)&=(\sum\limits_{s \in S}w^3(a,s)e_s x) l(E_b)=(\sum\limits_{s \in S}w^3(a,s)e_s x) (\sum\limits_{s \in S}w^1(b,s)e_s)\\
&=\sum\limits_{s \in S}w^3(a,s) w^1(b,s)e_s x\\
&=\sum\limits_{s \in S}w^3(ab,s) e_s x \;\;(\text{by}\;\eqref{w.8})\\
&=l(E_{ab}),
\end{align*}
and
\begin{align*}
l(E_b)l(E_a)&=(\sum\limits_{s \in S}w^1(b,s)e_s) l(E_a)=(\sum\limits_{s \in S}w^1(b,s)e_s) (\sum\limits_{s \in S}w^3(a,s)e_s x)\\
&=\sum\limits_{s \in S}w^1(b,s) w^3(a,s) e_s x\\
&=\sum\limits_{s \in S}w^3(ab,s) e_s x \;\;(\text{by}\;\eqref{w.8})\\
&=l(E_{ab}).
\end{align*}
We get the equation \eqref{el.7}.

For the equation \eqref{el.8}, direct computations show that
\begin{align*}
l(E_a)^{2i}&=(\sum\limits_{s \in S}w^3(a,s)e_s x)^{2i}=[(\sum\limits_{s \in S}w^3(a,s)e_s x)^2]^{i}\\
&=[\sum\limits_{s \in S}w^3(a,s)^2 e_s x^2]^{i}=[\sum\limits_{s \in S}w^3(a,s)^2 \sigma(s) e_s]^{i}\\
&=\sum\limits_{s \in S}w^3(a,s)^{2i} \sigma(s)^i e_s\\
&=\sum\limits_{s \in S}w^1(a^{2i},s)e_s \;\;(\text{by}\;\eqref{w.9})\\
&=l(E_{a^{2i}}),
\end{align*}
and
\begin{align*}
l(E_{a^{2i}})l(E_b)&=(\sum\limits_{s \in S}w^1(a^{2i},s)e_s x) l(E_b)=(\sum\limits_{s \in S}w^1(a^{2i},s)e_s) (\sum\limits_{s \in S}w^1(b,s)e_s)\\
&=\sum\limits_{s \in S}w^1(a^{2i},s) w^1(b,s)e_s\\
&=\sum\limits_{s \in S}w^1(a^{2i}b,s)e_s \;\;(\text{by}\;\eqref{w.9})\\
&=l(E_{a^{2i}b}).
\end{align*}
Therefore the equation \eqref{el.8} holds.

Now we are going to prove the equation \eqref{el.9}. By
\begin{align*}
l(E_a)^{2i+1}&=(\sum\limits_{s \in S}w^3(a,s)e_s x)^{2i+1}=[(\sum\limits_{s \in S}w^3(a,s)e_s x)^2]^{i} (\sum\limits_{s \in S}w^3(a,s)e_s x)\\
&=[\sum\limits_{s \in S}w^3(a,s)^2 e_s x^2]^{i} (\sum\limits_{s \in S}w^3(a,s)e_s x)\\
&=[\sum\limits_{s \in S}w^3(a,s)^{2}\sigma(s) e_s]^{i} (\sum\limits_{s \in S}w^3(a,s)e_s x)\\
&=[\sum\limits_{s \in S}w^3(a,s)^{2i}\sigma(s)^i e_s] (\sum\limits_{s \in S}w^3(a,s)e_s x)\\
&=\sum\limits_{s \in S}w^3(a,s)^{2i+1}\sigma(s)^i e_s x\\
&=\sum\limits_{s \in S}w^3(a^{2i+1},s) e_s x  \;\;(\text{by}\;\eqref{w.10}\; \text{of}\; \text{Lemma}\; \ref{lemm4.4})\\
&=l(E_{a^{2i+1}}),
\end{align*}
and
\begin{align*}
l(E_{a^{2i+1}})l(E_b)&=(\sum\limits_{s \in S}w^3(a^{2i+1},s)e_s x) l(E_b)\\
&=(\sum\limits_{s \in S}w^3(a^{2i+1},s)e_s x) (\sum\limits_{s \in S}w^1(b,s)e_s)\\
&=\sum\limits_{s \in S}w^3(a^{2i+1},s) w^1(b,s)e_s x\\
&=\sum\limits_{s \in S}w^3(a^{2i+1}b,s)e_s x \;\;(\text{by}\;\eqref{w.10}\; \text{of}\; \text{Lemma}\; \ref{lemm4.4})\\
&=l(E_{a^{2i+1}b}),
\end{align*}
we get the desired equation \eqref{el.9}.

For the equation \eqref{el.10}, we have
\begin{align*}
l(E_a)l(E_1)&=(\sum\limits_{s \in S}w^3(a,s)e_s x) l(E_1)=(\sum\limits_{s \in S}w^3(a,s)e_s x) (\sum\limits_{s \in S}w^1(1,s)e_s)\\
&=(\sum\limits_{s \in S}w^3(a,s)e_s x) (\sum\limits_{s \in S}e_s)=\sum\limits_{s \in S}w^3(a,s)e_s x\\
&=l(E_a)
\end{align*}
and
\begin{align*}
l(E_b)l(E_1)&=(\sum\limits_{s \in S}w^3(a,s)e_s x) l(E_1)=(\sum\limits_{s \in S}w^1(b,s)e_s x) (\sum\limits_{s \in S}w^1(1,s)e_s)\\
&=(\sum\limits_{s \in S}w^1(b,s)e_s x) (\sum\limits_{s \in S}e_s)=\sum\limits_{s \in S}w^1(b,s)e_s x\\
&=l(E_b).
\end{align*}
This implies the equation \eqref{el.10}.

For the last equation \eqref{el.11}, we find that
\begin{align*}
l(X_1)l(E_1)&=(\sum\limits_{s \in S}w^2(1,s)e_t) l(E_1)\\
&=(\sum\limits_{s \in S}w^2(1,s)e_t) (\sum\limits_{s \in S}w^1(1,s)e_s)\\
&=0
\end{align*}
and
\begin{align*}
l(E_1)l(X_1)&=(\sum\limits_{s \in S}w^1(1,s)e_s) l(X_1)\\
&=(\sum\limits_{s \in S}w^1(1,s)e_s) (\sum\limits_{s \in S}w^2(1,s)e_t)\\
&=0.
\end{align*}
This implies that the equation \eqref{el.11} holds.
\end{proof}

Let $K(8n,\sigma,\tau)$ as before, we associate a free object with it as follows. Recall that the data $G$ of $K(8n,\sigma,\tau)$ is $\langle a,b|a^{2n}=b^2=1,ab=ba\rangle$. We define $A_G$ as a
free $\Bbbk$ algebra generated by set $\{x_1,x_a,x_b,e_1,e_a,e_b\}$, and let $I_G$ be the ideal generated by $\{x_a^{2n}-\Pi_{i=1}^{2n-1}\tau(a,a^i)x_1,x_b^2-\tau(b,b)x_1,x_bx_a-\eta(a,b)x_ax_b,x_ax_1-x_a,x_bx_1-x_b,e_a^{2n}-e_1,
e_b^2-e_1,e_be_a-e_ae_b,e_ae_1-e_a,e_be_1-e_b,x_1e_1,e_1x_1\}$. Then we have the following lemma.
\begin{lemma}\label{lemm4.1}
Denote the dual Hopf algebra of $K(8n,\sigma,\tau)$ by $H^*$, then $H^*\cong A_G/I_G$ as an algebra.
\end{lemma}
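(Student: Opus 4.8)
The plan is to exhibit the obvious candidate map on generators and then reduce the whole claim to a dimension count. Using the universal property of the free algebra $A_G$, I would define an algebra homomorphism $\phi\colon A_G\to H^*$ by
$$x_1\mapsto X_1,\quad x_a\mapsto X_a,\quad x_b\mapsto X_b,\quad e_1\mapsto E_1,\quad e_a\mapsto E_a,\quad e_b\mapsto E_b,$$
where $\{E_g,X_g\}_{g\in G}$ is the dual basis of Lemma \ref{lem2.2.2}. The first task is to verify that $\phi$ annihilates every generator of $I_G$, so that it descends to $\bar\phi\colon A_G/I_G\to H^*$. This is a direct application of the multiplication rules $E_gE_h=E_{gh}$, $E_gX_h=X_hE_g=0$, $X_gX_h=\tau(g,h)X_{gh}$ of Lemma \ref{lem2.2.2}, together with $\tau$ being unital and the fact (recorded in the analysis of $\eta$ at the start of this section) that $\eta(a,b)^2=1$. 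Indeed $X_bX_a=\tau(b,a)X_{ab}=\eta(a,b)^{-1}\tau(a,b)X_{ab}=\eta(a,b)X_aX_b$ handles the twisted commutation relation, the telescoping identity $X_a^{2n}=\big(\prod_{i=1}^{2n-1}\tau(a,a^i)\big)X_{a^{2n}}=\big(\prod_{i=1}^{2n-1}\tau(a,a^i)\big)X_1$ handles the $x_a^{2n}$ relation, and the remaining relations ($X_b^2=\tau(b,b)X_1$, $X_aX_1=X_a$, $E_a^{2n}=E_1$, $E_bE_a=E_aE_b$, $X_1E_1=E_1X_1=0$, etc.) are immediate.

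Next I would check that $\bar\phi$ is surjective: since $E_a^iE_b^j=E_{a^ib^j}$ and $X_a^iX_b^j$ is a nonzero scalar multiple of $X_{a^ib^j}$, every basis vector $E_g$ and $X_g$ of $H^*$ lies in the image. As $\dim_{\Bbbk}H^*=\dim_{\Bbbk}K(8n,\sigma,\tau)=8n$, it then suffices to prove the bound $\dim_{\Bbbk}(A_G/I_G)\le 8n$, for surjectivity plus equality of dimensions forces $\bar\phi$ to be an isomorphism. To obtain the bound I would exhibit the explicit spanning set consisting of the images of
$$\{x_a^ix_b^j\mid 0\le i\le 2n-1,\ 0\le j\le 1\}\quad\text{and}\quad\{e_a^ie_b^j\mid 0\le i\le 2n-1,\ 0\le j\le 1\},$$
of total size $8n$. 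The relations $x_a^{2n}=\big(\prod_{i}\tau(a,a^i)\big)x_1$, $x_b^2=\tau(b,b)x_1$ and $x_bx_a=\eta(a,b)x_ax_b$ reduce any word in $x_1,x_a,x_b$ to a scalar times one of the listed $x$-monomials; here one uses that $x_1$ is a two-sided unit for the subalgebra $U$ generated by $x_1,x_a,x_b$ (a consequence of $x_ax_1=x_a$, $x_bx_1=x_b$ and the two power relations). The parallel statement holds for the subalgebra $V$ generated by $e_1,e_a,e_b$.

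The heart of the argument — and the step I expect to be the main obstacle — is to show that every \emph{mixed} word vanishes, so that $U+V$ already spans. This is exactly where the orthogonality relations $x_1e_1=e_1x_1=0$ are decisive: since $x_1$ is a two-sided unit for $U$ and $e_1$ for $V$, any $u\in U$, $v\in V$ satisfy $uv=(ux_1)(e_1v)=u(x_1e_1)v=0$ and likewise $vu=0$, so $UV=VU=0$. Consequently, after grouping a word in the six generators into maximal blocks of the same type, any word containing both an $x$-letter and an $e$-letter collapses to $0$, and $A_G/I_G$ is spanned by $U+V$ of dimension at most $4n+4n=8n$. (With the natural convention that the identity of $A_G/I_G$ is $x_1+e_1$, this matches $1_{H^*}=E_1+X_1$; under $\bar\phi$ the factors $U$ and $V$ go isomorphically onto the twisted group algebra spanned by the $X_g$ and the group algebra spanned by the $E_g$, respectively.) This yields $\dim_{\Bbbk}(A_G/I_G)\le 8n$ and completes the proof.
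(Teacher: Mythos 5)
Your proof is correct and follows essentially the same route as the paper's: define the map on the six generators, verify the relations via Lemma \ref{lem2.2.2}, prove surjectivity, and then force injectivity from the dimension bound $\dim(A_G/I_G)\le 8n$, obtained---exactly as in the paper---by using $x_1e_1=e_1x_1=0$ together with the (one-sided) unit relations to annihilate all mixed words. If anything, you are slightly more careful than the paper on the two minor points it glosses over: that $A_G$ must be read as a free \emph{non-unital} algebra (with $x_1+e_1$ becoming the identity of the quotient, matching $1_{H^*}=E_1+X_1$), and that promoting $x_ax_1=x_a$, $x_bx_1=x_b$ to two-sided unit statements uses the power and commutation relations.
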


\begin{proof}
Since Lemma \ref{lem2.2.2}, we have
\begin{gather*}
X_a^{2n}=\Pi_{i=1}^{2n-1}\tau(a,a^i)X_1,\;X_b^2=\tau(b,b)X_1,\\
X_bX_a=\eta(a,b)X_aX_b,\;X_a X_1=X_a,\;X_bX_1=X_b,\\
E_a^{2n}=E_1,\;E_b^2=E_1,\;E_bE_a=E_aE_b,\\
E_a E_1=E_a,\;E_b E_1=E_b,\;X_1 E_1=0,\;E_1X_1=0.
\end{gather*}
then we can define an algebra map $\pi:A_G/I_G\rightarrow H^*$ by setting
\begin{gather*}
\pi(x_1)=X_1,\; \pi(x_a)=X_a,\; \pi(x_b)=X_b,\; \pi(e_1)=E_1,\; \pi(e_a)=E_a,\; \pi(e_b)=E_b,
\end{gather*}
by using the definition of $A_G/I_G$, we will show that $\pi$ is bijective.

Firstly we show that $\pi$ is surjective. Since definition of $H^*$, it is linear spanned by $\{X_g,E_g\;|\;g\in G\}$. By Lemma \ref{lem2.2.2}, $\{X_g,E_g\;|\;g\in G\}$ are contained in the linear space spanned by $\{X_a^iX_b^j,E_a^iE_b^j\;|\;i,j\in \mathbb{N}\}$. Therefore $H^*$ is generated by $\{X_a,X_b,E_a,E_b\}$ as an algebra. Since $\pi$ is an algebra map and $\{X_a,X_b,E_a,E_b\}\subseteq \textrm{Im}\pi$, we know $\pi$ is surjective.

Secondly we show that $\pi$ is injective. Note that $x_g x_1=x_1x_g=x_g$ and $e_g e_1=e_1e_g=e_g$ for all $g\in G$, we have $e_gx_h=(e_g e_1)(x_1 x_h)=e_g(e_1 x_1)x_h=0$ and $x_he_g=( x_h x_1)(e_1 e_g )=x_h (x_1 e_1) e_g =0$. Then we can see that $A_G/I_G$ is linear spanned by $\{X_a^iX_b^j,E_a^iE_b^j\;|\;1\leq i \leq 2n, 1\leq j \leq 2\}$ and as a result we know that $\text{dim}(A_G/I_G)\leq 8n$. Since $\text{dim}(H^*)=8n$, we have $\text{dim}(A_G/I_G)\leq \text{dim}(H^*)$. But we have proved that $\pi$ is surjective and thus $\text{dim}(A_G/I_G)\geq \text{dim}(H^*)$. Then we have $\text{dim}(A_G/I_G)= \text{dim}(H^*)$. Since $\pi$ is surjective, we know that $\pi$  is injective.
\end{proof}

We use the following Lemmas \ref{lemm4.6} and  \ref{lemm4.8} to show that $R_{\alpha,\beta}$ satisfies the last equation of Lemma \ref{lem3.1}.
\begin{lemma}\label{lemm4.6}
Let $R_{\alpha,\beta}$ as above, then $l(f_1)l(f_2)=l(f_1 f_2)$ for $f_1,f_2\in H^*$ where $H^*$ is the dual of $K(8n,\sigma,\tau)$.
\end{lemma}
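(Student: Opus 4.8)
The plan is to leverage the algebra presentation $H^*\cong A_G/I_G$ from Lemma \ref{lemm4.1}, where we abbreviate $H:=K(8n,\sigma,\tau)$. Since $l\colon H^*\to H$ is automatically $\Bbbk$-linear, it is enough to build an algebra homomorphism $\Phi\colon H^*\to H$ and to check that it coincides with $l$ on a basis of $H^*$; multiplicativity of $l$ then follows at once, because a linear map agreeing with an algebra map on a basis is that algebra map. First I would introduce the unique $\Bbbk$-algebra map $\psi\colon A_G\to H$ out of the free algebra determined by $x_1\mapsto l(X_1)$, $x_a\mapsto l(X_a)$, $x_b\mapsto l(X_b)$, $e_1\mapsto l(E_1)$, $e_a\mapsto l(E_a)$, $e_b\mapsto l(E_b)$.

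The first key step is to verify that $\psi$ kills the ideal $I_G$, so that it descends to an algebra homomorphism $\Phi\colon H^*\cong A_G/I_G\to H$. This is exactly the purpose of the identities collected in Lemma \ref{lemm4.5}. Recalling from the notation that $P_{2n}=\prod_{i=1}^{2n-1}\tau(a,a^i)$ and that $\eta(a,b)=\tau(a,b)\tau(b,a)^{-1}$, the generators of $I_G$ are matched one by one: \eqref{el.1} handles $x_a^{2n}-P_{2n}x_1$ and $x_b^2-\tau(b,b)x_1$; \eqref{el.2} handles $x_bx_a-\eta(a,b)x_ax_b$; \eqref{el.5} handles $x_ax_1-x_a$ and $x_bx_1-x_b$; \eqref{el.6} and \eqref{el.7} handle $e_a^{2n}-e_1$, $e_b^2-e_1$ and $e_be_a-e_ae_b$; \eqref{el.10} handles $e_ae_1-e_a$ and $e_be_1-e_b$; and \eqref{el.11} handles $x_1e_1$ and $e_1x_1$. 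Thus $\psi(I_G)=0$ and $\Phi$ is well defined.

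The second key step is to confirm $\Phi=l$ on the linear basis $\{X_g,E_g\}_{g\in G}$ of $H^*$. Using the multiplication rules of Lemma \ref{lem2.2.2} I would write each basis vector as a scalar multiple of a product of generators, for instance $X_{a^{2i}}=P_{2i}^{-1}X_a^{2i}$ and $X_{a^{2i}b}=\tau(a^{2i},b)^{-1}P_{2i}^{-1}X_a^{2i}X_b$, with the analogous expressions for odd powers and for the $E_g$. Applying the homomorphism $\Phi$ turns these into the corresponding products of $l(X_a),l(X_b),l(E_a),l(E_b)$, and then the identities \eqref{el.3}, \eqref{el.4}, \eqref{el.8}, \eqref{el.9} collapse these products back to $l(X_g)$ and $l(E_g)$ with precisely the matching scalars; for example \eqref{el.3} yields $\Phi(X_{a^{2i}})=P_{2i}^{-1}l(X_a)^{2i}=l(X_{a^{2i}})$ and $\Phi(X_{a^{2i}b})=\tau(a^{2i},b)^{-1}P_{2i}^{-1}l(X_a)^{2i}l(X_b)=l(X_{a^{2i}b})$. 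Since $\Phi$ and $l$ are linear and agree on the whole basis, $l=\Phi$ is an algebra homomorphism, as desired.

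I do not expect a genuine obstacle inside this lemma: the substantive work has been front-loaded into Lemmas \ref{lemm4.5} and \ref{lemm4.1}, the former carrying all the delicate $\tau$- and $\sigma$-bookkeeping (ultimately resting on Lemma \ref{lem4.1}) and the latter supplying the presentation that reduces everything to the six generators. The only point requiring care is ensuring that the scalar factors $P_i$ and $\tau(a^{i},b)$ introduced when reducing the basis via Lemma \ref{lem2.2.2} cancel exactly against those produced by Lemma \ref{lemm4.5}; this cancellation is automatic since both families of scalars are assembled from the same cocycle $\tau$.
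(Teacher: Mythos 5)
Your proof is correct and takes essentially the same route as the paper: the paper's proof likewise combines the presentation of Lemma \ref{lemm4.1} with the identities of Lemma \ref{lemm4.5} to obtain an algebra map $\pi\colon H^*\to H$ sending $X_1,X_a,X_b,E_1,E_a,E_b$ to their images under $l$, and then uses \eqref{el.3}, \eqref{el.4}, \eqref{el.8}, \eqref{el.9} to check that $\pi$ agrees with $l$ on all of $H^*$. Your write-up only differs in making explicit the verification that the ideal $I_G$ is annihilated (matching each generator of $I_G$ with the corresponding equation of Lemma \ref{lemm4.5}), a step the paper asserts without detail.
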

\begin{proof}
Due to Lemmas \ref{lemm4.5} and \ref{lemm4.1}, the following map is an algebra map:
 \begin{align*}\pi\colon H^*\rightarrow H,\;\;&X_1\mapsto l(X_1),\;X_a\mapsto l(X_a),\; X_b\mapsto l(X_b),\; \\
 & E_1\mapsto l(E_1),\; E_a\mapsto l(E_a),\; E_b\mapsto l(E_b).\end{align*}
To show that $l(f_1)l(f_2)=l(f_1 f_2)$ for $f_1,f_2\in H^*$, we only need to show that $\pi(f)=l(f)$ for all $f\in H^*$. By Lemma \ref{lemm4.5}, we know that
\begin{gather*}
l(X_a)^{2i}=P_{2i}l(X_{a^{2i}}),\; l(X_{a^{2i}})l(X_b)=\tau(a^{2i},b)l(X_{a^{2i}b}),\\
l(X_a)^{2i+1}=P_{2i+1}l(X_{a^{2i+1}}),\; l(X_{a^{2i+1}})l(X_b)=\tau(a^{2i+1},b)l(X_{a^{2i+1}b}),\\
l(E_a)^{2i}=l(E_{a^{2i}}),\; l(E_{a^{2i}})l(E_b)=l(E_{a^{2i}b}),\\
l(E_a)^{2i+1}=l(E_{a^{2i+1}}),\; l(E_{a^{2i+1}})l(E_b)=l(E_{a^{2i+1}b}).
\end{gather*}
Due to $\pi$ is an algebra map, we have
\begin{gather*}
\pi(X_a)^{2i}=P_{2i}\pi(X_{a^{2i}}),\; \pi(X_{a^{2i}})\pi(X_b)=\tau(a^{2i},b)\pi(X_{a^{2i}b}),\\
\pi(X_a)^{2i+1}=P_{2i+1}\pi(X_{a^{2i+1}}),\; \pi(X_{a^{2i+1}})\pi(X_b)=\tau(a^{2i+1},b)\pi(X_{a^{2i+1}b}),\\
\pi(E_a)^{2i}=\pi(E_{a^{2i}}),\; \pi(E_{a^{2i}})\pi(E_b)=\pi(E_{a^{2i}b}),\\
\pi(E_a)^{2i+1}=\pi(E_{a^{2i+1}}),\; \pi(E_{a^{2i+1}})\pi(E_b)=\pi(E_{a^{2i+1}b})
\end{gather*}
But we have the following equations by the definition of $\pi$
\begin{gather*}
\pi(x_1)=l(X_1),\; \pi(x_a)=l(X_a),\; \pi(x_b)=l(X_b),\\
\pi(e_1)=l(E_1),\; \pi(e_a)=l(E_a),\; \pi(e_b)=l(E_b).
\end{gather*}
Therefore we have $\pi(f)=l(f)$ for all $f\in H^*$ and we have completed the proof.
\end{proof}

In order to get  another side version of above lemma, we need the following observation.
\begin{lemma}\label{lemm4.7}
Let $R_{\alpha,\beta}$ as above, then
\begin{itemize}
\item[(i)] $l(X_t)=r(X_t)$,\;$t\in T$;
\item[(ii)] $l(E_t)=r(E_{t\triangleleft x})$\;$t\in T$;
\item[(iii)] $l(E_s)=r(E_s)$\;$s\in S$.
\end{itemize}
\end{lemma}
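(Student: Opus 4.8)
The plan is to compute the elements $l(f)$ and $r(f)$ explicitly for $f$ ranging over the dual basis $\{E_g,X_g\}_{g\in G}$ of Lemma \ref{lem2.2.2}, by feeding the four summands of $R_{\alpha,\beta}$ (written as in \eqref{r}) into the pairings $l(f)=(f\otimes\id)(R_{\alpha,\beta})$ and $r(f)=(\id\otimes f)(R_{\alpha,\beta})$. Because $E_g$ (resp. $X_g$) is supported on the $e_h$ (resp. $e_h x$) part of the basis, in each of the six evaluations we need exactly one of the four summands survives, so the whole lemma collapses to three coefficient identities among $w^1,w^2,w^3,w^4$.

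Carrying out these evaluations, one finds for $t\in T$ and $s\in S$ that $l(X_t)=\sum_{t'\in T}w^4(t,t')e_{t'}x$ while $r(X_t)=\sum_{t'\in T}w^4(t',t)e_{t'}x$; that $l(E_t)=\sum_{s\in S}w^3(t,s)e_s x$ while $r(E_{t\triangleleft x})=\sum_{s\in S}w^2(s,t\triangleleft x)e_s x$ (here $t\triangleleft x\in T$, so the formula for $r(E_g)$ with $g\in T$ applies); and that $l(E_s)=\sum_{s'\in S}w^1(s,s')e_{s'}$ while $r(E_s)=\sum_{s'\in S}w^1(s',s)e_{s'}$. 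Comparing coefficients, the three assertions (i), (ii), (iii) become exactly: (i) the symmetry $w^4(t,t')=w^4(t',t)$ for $t,t'\in T$; (ii) the matching $w^3(t,s)=w^2(s,t\triangleleft x)$ for $s\in S$, $t\in T$; (iii) the symmetry $w^1(s,s')=w^1(s',s)$ for $s,s'\in S$.

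It then remains to verify these three identities from the explicit definition of $R_{\alpha,\beta}$. Identity (iii) is the easiest: writing $s=a^{2i}b^p$ and $s'=a^{2j}b^q$, the $(\alpha\beta)^{2ij}$-factors already match and, after the signs attached to the $b$-components cancel, the claim reduces to $\frac{P_{2j}^{2i}}{P_{2i}^{2j}}=\frac{\sigma(a^{2j})^i}{\sigma(a^{2i})^j}$, which is precisely \eqref{4.b1}. Identity (ii) is essentially bookkeeping: using $a^{2i+1}\triangleleft x=a^{2i+1}b$ and $(a^{2i+1}b)\triangleleft x=a^{2i+1}$, I would check in the four cases that each piecewise value defining $w^3(t,s)$ is carried onto the corresponding piecewise value of $w^2(s,t\triangleleft x)$, the signs in the two definitions being arranged precisely so that they agree after the shift $t\mapsto t\triangleleft x$.

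The main obstacle is identity (i), the symmetry of $w^4$, since here the $b$-components interact with the function $h$. I would first treat the base case $w^4(a^{2i+1},a^{2j+1})=w^4(a^{2j+1},a^{2i+1})$: expanding $\lambda_{2i+1,2j+1}$, $S_{j,0}$, $S_{j,1}$ and cancelling the manifestly symmetric powers of $\alpha,\beta$, the claim becomes $\frac{P_{2i+1}^{2j}}{P_{2j+1}^{2i}}=\frac{\sigma(a^{2i+1})^j}{\sigma(a^{2j+1})^i}\sigma(a)^{i-j}\frac{h(a,a^{2i+1}b)^j}{h(a,a^{2j+1}b)^i}$, which is exactly \eqref{4.b4} once one rewrites $\frac{\tau(b,a)}{\tau(b,a^{2k+1})}=h(a,a^{2k+1}b)$ via \eqref{4.b6}. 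The three remaining $b$-cases I would then deduce from this base case without further serious computation: applying $w^4(t_1,t_2)=h(t_1,t_2)w^4(t_1\triangleleft x,t_2\triangleleft x)$ from \eqref{w4} together with $h(t_1,t_2)=h(t_2,t_1)$ from \eqref{4.b5} propagates symmetry from $(t_1,t_2)$ to $(t_1\triangleleft x,t_2\triangleleft x)$, which settles the ``both $b$'' case, while the genuinely mixed case $w^4(a^{2i+1},a^{2j+1}b)=w^4(a^{2j+1}b,a^{2i+1})$ reduces, through the identity $S_{j,1}/S_{j,0}=h(a,a^{2j+1}b)\beta/\alpha$ already exploited in Lemma \ref{lemm4.4}, to an identity among $h$-values furnished by \eqref{4.b5} and \eqref{4.b6}. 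Reading the three verified coefficient identities back through the first two paragraphs yields (i), (ii) and (iii).
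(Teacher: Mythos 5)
Your reduction of the lemma to the three coefficient identities $w^4(t,t')=w^4(t',t)$, $w^3(t,s)=w^2(s,t\triangleleft x)$ and $w^1(s,s')=w^1(s',s)$ is exactly the paper's route, and most of your verification plan coincides with the paper's: (iii) via \eqref{4.b1} after the sign bookkeeping you describe, (ii) by direct inspection of the piecewise definitions, the base case of (i) via \eqref{4.b4} combined with \eqref{4.b6}, and the ``both $b$'' case by propagating the base case through $w^4(t_1,t_2)=h(t_1,t_2)w^4(t_1\triangleleft x,t_2\triangleleft x)$ and the symmetry of $h$.

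The one genuine gap is the mixed case of (i). Your reduction there is correct: using $S_{k,1}/S_{k,0}=h(a,a^{2k+1}b)\beta/\alpha$ and the base case, the equality $w^4(a^{2i+1},a^{2j+1}b)=w^4(a^{2j+1}b,a^{2i+1})$ is equivalent to
\[
h(a,a^{2j+1}b)=h(a^{2j+1}b,a^{2i+1})\,h(a,a^{2i+1}b),
\]
i.e., by \eqref{4.b6}, to $h(a^{2j+1}b,a^{2i+1})=\tau(b,a^{2i+1})/\tau(b,a^{2j+1})$. But this identity is \emph{not} furnished by \eqref{4.b5} and \eqref{4.b6}: those facts give only the symmetry and involutivity of $h$ together with its values $h(a,a^{2k+1}b)$ whose first argument is $a$, and they place no constraint on $h(a^{2j+1}b,a^{2i+1})$ when both exponents exceed $1$. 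Establishing the displayed identity requires expanding the definition $h(t_1,t_2)=\tau(t_1,t_2)/\tau(t_2\triangleleft x,t_1\triangleleft x)$, invoking the $2$-cocycle identity for $\tau$, and using the bicharacter values $\eta(b,a^{2i+1})=\eta(a^{2j+1}b,a^{2i+1})=-1$ coming from the standing hypothesis $\eta(a,b)=-1$; this is precisely the multi-line cocycle computation the paper carries out at this point in its proof of Lemma \ref{lemm4.7}. So the step is true and provable with tools you already have, but your proposal replaces a genuine computation with a citation that does not cover it.
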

\begin{proof}
We will show (i) at first. By definition, $l(X_t)=\sum_{t'\in T} w^4(t,t')e_{t'}x$ and $r(X_t)=\sum_{t'\in T} w^4(t',t)e_{t'}x$. To show (i), we need to prove that $w^4(t,t')=w^4(t',t)$. Direct computations show that
\begin{align*}
w^4(a^{2i+1},a^{2j+1})&=\lambda_{2i+1,2j+1} S_{j,0}^{i+1} S_{j,1}^i=\lambda_{2i+1,2j+1} [\lambda_{2j+1,1} \alpha^{j+1} \beta^j]^{i+1} S_{j,1}^i\\
&=\lambda_{2i+1,2j+1} [\lambda_{2j+1,1} \alpha^{j+1} \beta^j]^{i+1} [h(a,a^{2j+1}b)\lambda_{2j+1,1} \alpha^{j} \beta^{j+1}]^i\\
&=\lambda_{2i+1,2j+1} \lambda_{2j+1,1}^{2i+1} h(a,a^{2j+1}b)^i \alpha^{(i+1)(j+1)+ij} \beta^{j(i+1)+i(j+1)}\\
&=P_{2i+1}^{-1} \sigma(a^{2j+1})^i \lambda_{2j+1,1}^{2i+1} h(a,a^{2j+1}b)^i \alpha^{(i+1)(j+1)+ij} \beta^{j(i+1)+i(j+1)}\\
&=P_{2i+1}^{-1} \sigma(a^{2j+1})^i P_{2j+1}^{-(2i+1)} \sigma(a)^{j(i+1)} h(a,a^{2j+1}b)^i \alpha^{2ij+i+j+1} \beta^{2ij+i+j}
\end{align*}
and similarly
\begin{align*}
w^4(a^{2j+1},a^{2i+1})&=P_{2j+1}^{-1} \sigma(a^{2i+1})^j P_{2i+1}^{-(2j+1)} \sigma(a)^{i(j+1)} h(a,a^{2i+1}b)^j \alpha^{2ij+i+j+1} \beta^{2ij+i+j}.
\end{align*}
By \eqref{4.b4} in Lemma \ref{lem4.1}, we find that
\begin{align}
\label{wt1} w^4(a^{2i+1},a^{2j+1})=w^4(a^{2j+1},a^{2i+1}).
\end{align}
To show (i), we also need consider other cases. By
\begin{align*}
w^4(a^{2i+1},a^{2j+1}b)&=\lambda_{2i+1,2j+1} S_{j,0}^i S_{j,1}^{i+1}=\lambda_{2i+1,2j+1} S_{j,0}^{i+1} S_{j,1}^{i} \frac{S_{j,1}}{S_{j,0}}\\
&=w^4(a^{2i+1},a^{2j+1}) \frac{S_{j,1}}{S_{j,0}}=\frac{S_{j,1}}{S_{j,0}} w^4(a^{2i+1},a^{2j+1})
\end{align*}
and
\begin{align*}
w^4(a^{2j+1}b,a^{2i+1})&=h(a^{2j+1}b,a^{2i+1})\lambda_{2j+1,2i+1} S_{i,0}^j S_{i,1}^{j+1}\\
&=h(a^{2j+1}b,a^{2i+1})[\lambda_{2j+1,2i+1} S_{i,0}^{j+1} S_{i,1}^{j}] \frac{S_{i,1}}{S_{i,0}}\\
&=h(a^{2j+1}b,a^{2i+1}) w^4(a^{2j+1},a^{2i+1}) \frac{S_{i,1}}{S_{i,0}}\\
&=h(a^{2j+1}b,a^{2i+1}) \frac{S_{i,1}}{S_{i,0}} w^4(a^{2j+1},a^{2i+1})
\end{align*}
and
\begin{align*}
w^4(a^{2i+1},a^{2j+1})=w^4(a^{2j+1},a^{2i+1}),
\end{align*}
 to show that $w^4(a^{2i+1},a^{2j+1}b)=w^4(a^{2j+1}b,a^{2i+1})$ we just need to prove that
\begin{align*}
 \frac{S_{j,1}}{S_{j,0}}=h(a^{2j+1}b,a^{2i+1}) \frac{S_{i,1}}{S_{i,0}}.
\end{align*}
In fact,
\begin{align*}
h(a^{2j+1}b,a^{2i+1}) \frac{S_{i,1}}{S_{i,0}}&=h(a^{2j+1}b,a^{2i+1}) \frac{h(a,a^{2i+1}b) \lambda_{2i+1,1} \alpha^i \beta^{i+1}}{\lambda_{2i+1,1} \alpha^{i+1} \beta^{i}}\\
&=h(a^{2j+1}b,a^{2i+1}) h(a,a^{2i+1}b) \frac{\beta}{\alpha}\\
&=h(a^{2j+1}b,a^{2i+1}) \frac{\tau(b,a)}{\tau(b,a^{2i+1})} \frac{\beta}{\alpha}\;\;(\text{by Lemma}\;\ref{lem4.1})\\
&=\frac{\tau(a^{2j+1}b,a^{2i+1})}{\tau(a^{2i+1}b,a^{2j+1})} \frac{\tau(b,a)}{\tau(b,a^{2i+1})} \frac{\beta}{\alpha}\\
&=\frac{\tau(a^{2j+1}b,a^{2i+1}) \tau(b,a)}{\tau(a^{2i+1}b,a^{2j+1}) (-\tau(a^{2i+1},b))} \frac{\beta}{\alpha}\\
&=\frac{\tau(a^{2j+1}b,a^{2i+1}) \tau(b,a)}{\tau(a^{2i+1},ba^{2j+1}) (-\tau(b,a^{2j+1}))} \frac{\beta}{\alpha}\\
&=\frac{\tau(a^{2j+1}b,a^{2i+1}) \tau(b,a)}{\tau(a^{2i+1},ba^{2j+1}) (-\tau(b,a^{2j+1}))} \frac{\beta}{\alpha}\\
&=\eta(a^{2j+1}b,a^{2i+1}) \frac{\tau(b,a)}{-\tau(b,a^{2j+1})} \frac{\beta}{\alpha}\\
&=- \frac{\tau(b,a)}{-\tau(b,a^{2j+1})} \frac{\beta}{\alpha}=\frac{\tau(b,a)}{\tau(b,a^{2j+1})} \frac{\beta}{\alpha}\\
&=h(a,a^{2j+1}b) \frac{\beta}{\alpha}\;\;(\text{by}\;\eqref{4.b6}),\\
\end{align*}
and
\begin{align*}
\frac{S_{j,1}}{S_{j,0}}&=\frac{h(a,a^{2j+1}b) \lambda_{2j+1,1} \alpha^j \beta^{j+1}}{\lambda_{2j+1,1} \alpha^{j+1} \beta^{j}}\\
&=h(a,a^{2j+1}b) \frac{\beta}{\alpha}.
\end{align*}
Therefore we have
\begin{align}
\label{wt2} w^4(a^{2i+1},a^{2j+1}b)=w^4(a^{2j+1}b,a^{2i+1}).
\end{align}
Since
\begin{align*}
w^4(a^{2i+1}b,a^{2j+1}b)&=h(a^{2i+1}b,a^{2j+1}b) \lambda_{2i+1,2j+1} S_{j,0}^{i+1} S_{j,1}^i\\
&=h(a^{2i+1}b,a^{2j+1}b) w^4(a^{2i+1},a^{2j+1}),
\end{align*}
\begin{align*}
w^4(a^{2j+1}b,a^{2i+1}b)&=h(a^{2j+1}b,a^{2i+1}b) \lambda_{2j+1,2i+1} S_{i,0}^{j+1} S_{i,1}^j\\
&=h(a^{2j+1}b,a^{2i+1}b) w^4(a^{2j+1},a^{2i+1})\\
&=h(a^{2j+1}b,a^{2i+1}b) w^4(a^{2i+1},a^{2j+1})\;\;(\text{by}\;\eqref{wt1})
\end{align*}
and
\begin{align*}
h(a^{2i+1}b,a^{2j+1}b)&=h(a^{2j+1}b,a^{2i+1}b)\;\;(\text{by}\;\eqref{4.b5}),
\end{align*}
we have
\begin{align}
\label{wt3} w^4(a^{2i+1}b,a^{2j+1}b)=w^4(a^{2j+1}b,a^{2i+1}b).
\end{align}
Combining the equations \eqref{wt1},\eqref{wt2},\eqref{wt3} and $T=\{a^{2i+1},a^{2i+1}b\;|\;i\geq 0\}$, we know that $w^4(t,t')=w^4(t',t)$ for all $t,t'\in T$. Thus (i) is proved.

To show (ii). By $l(E_t)=\sum_{s\in S} w^3(t,s)e_s x$ and $r(E_{t\triangleleft x})=\sum_{s\in S} w^2(s,t \triangleleft x)e_s x$,  we need to show that $w^3(t,s)=w^2(s,t\triangleleft x)$ for $s\in S,t\in T.$  By definition,
\begin{align*}
w^3(a^{2i+1},a^{2j})&=\lambda_{2j,2i+1} [S_{i,0}S_{i,1}]^j\\
&=w^2(a^{2j},a^{2i+1}b)
\end{align*}
and
\begin{align*}
w^3(a^{2i+1}b,a^{2j})&=\lambda_{2j,2i+1} [S_{i,0}S_{i,1}]^j\\
&=w^2(a^{2j},a^{2i+1})
\end{align*}
and
\begin{align*}
w^3(a^{2i+1},a^{2j}b)&=-\frac{\tau(b,a) \beta}{\tau(b,a^{2j}) \alpha}\lambda_{2j,2i+1} [S_{i,0}S_{i,1}]^j\\
&=w^2(a^{2j}b,a^{2i+1}b)
\end{align*}
and
\begin{align*}
w^3(a^{2i+1}b,a^{2j}b)&=\frac{\tau(b,a) \beta}{\tau(b,a^{2j}) \alpha}\lambda_{2j,2i+1} [S_{i,0}S_{i,1}]^j\\
&=w^2(a^{2j}b,a^{2i+1}).
\end{align*}
 By $S=\{a^{2i},a^{2i}b\;|\;i\geq 0\}$ and $T=\{a^{2i+1},a^{2i+1}b\;|\;i\geq 0\}$,  $w^3(t,s)=w^2(s,t\triangleleft x)$ and (ii) is proved.

 At last, let us show (iii). Similarly, by $l(E_s)=\sum_{s'\in S} w^1(s,s')e_{s'}$ and $r(E_s)=\sum_{s'\in S} w^1(s',s)e_{s'}$, to show (iii) we need to show that $w^1(s,s')=w^1(s',s)$ for $s,s' \in S.$  Since
\begin{align*}
w^1(a^{2i},a^{2j})&=(\lambda_{2j,1})^{2i}(\alpha\beta)^{2ij}[\sigma(a^{2j})]^i\\
&=[P_{2j}^{-1} \sigma(a)^j]^{2i}(\alpha\beta)^{2ij}[\sigma(a^{2j})]^i\\
&=P_{2j}^{-2i} \sigma(a^{2j})^i \sigma(a)^{2ij}(\alpha\beta)^{2ij},
\end{align*}
\begin{align*}
w^1(a^{2j},a^{2i})&=P_{2i}^{-2j} \sigma(a^{2i})^j \sigma(a)^{2ij}(\alpha\beta)^{2ij}
\end{align*}
and
\begin{align*}
\frac{P_{2j}^{2i}}{P_{2i}^{2j}}=\frac{\sigma(a^{2j})^i}{\sigma(a^{2i})^j},\;\;(\text{by Lemma}\;\ref{lem4.1})
\end{align*}
we have
\begin{align}
\label{ws.1} w^1(a^{2i},a^{2j})=w^1(a^{2j},a^{2i}).
\end{align}
By the definition of $w^1$, we have $w^1(a^{2i}b,a^{2j})=w^1(a^{2i},a^{2j})$ and $w^1(a^{2j},a^{2i}b)=w^1(a^{2j},a^{2i})$. Because $w^1(a^{2i},a^{2j})=w^1(a^{2j},a^{2i})$, we know that
\begin{align}
\label{ws.2} w^1(a^{2i}b,a^{2j})=w^1(a^{2j},a^{2i}b).
\end{align}
Similarly, owing to the definition of $w^1$, we have $w^1(a^{2i}b,a^{2j}b)=-w^1(a^{2i},a^{2j})$ and $w^1(a^{2j}b,a^{2i}b)=-w^1(a^{2j},a^{2i})$. Using $w^1(a^{2i},a^{2j})=w^1(a^{2j},a^{2i})$ again, we can get that
\begin{align}
\label{ws.3} w^1(a^{2i}b,a^{2j}b)=w^1(a^{2j}b,a^{2i}b).
\end{align}
Combining these equations \eqref{ws.1},\eqref{ws.2} and \eqref{ws.3}, we obtain that $l(E_s)=r(E_s)$ and thus (iii) has been proved.
\end{proof}

Based on this observation, we have

\begin{lemma}\label{lemm4.8}
Let $R_{\alpha,\beta}$ as above, then $r(f_1)r(f_2)=r(f_2 f_1)$ for $f_1,f_2\in H^*$ where $H^*$ is the dual of $K(8n,\sigma,\tau)$.
\end{lemma}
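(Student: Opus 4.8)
The plan is to prove that $r\colon H^*\to H$ is an algebra \emph{anti}-homomorphism, which is precisely the assertion $r(f_1)r(f_2)=r(f_2f_1)$. This parallels Lemma \ref{lemm4.6}, where $l$ was shown to be an algebra homomorphism, except that here I would exploit Lemma \ref{lemm4.7} to transfer almost everything back to statements about $l$ that are already established. By Lemma \ref{lemm4.1} we have $H^*\cong A_G/I_G$, so $H^*$ is generated by $X_a,X_b,E_a,E_b$ together with $X_1,E_1$, and the opposite algebra $(H^*)^{op}$ admits the same presentation: each defining relation of $I_G$ is a power of a single generator, a commutation $X_bX_a=\eta(a,b)X_aX_b$ with $\eta(a,b)=\pm1$ (so $\eta(a,b)^{-1}=\eta(a,b)$), a one-sided unit relation, or one of $X_1E_1=E_1X_1=0$, and each is stable under reversing products. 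Thus it suffices to verify that $r(X_a),r(X_b),r(E_a),r(E_b),r(X_1),r(E_1)$ satisfy these relations with the order of every product reversed; the presentation then forces $r$ to be anti-multiplicative on all of $H^*$.

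First I would rewrite the relevant $r$-values using Lemma \ref{lemm4.7}. Since $a\in T$, $b\in S$, $1\in S$, and $(ab)\triangleleft x=a$, Lemma \ref{lemm4.7} yields $r(X_a)=l(X_a)$, $r(E_a)=l(E_{ab})$, $r(E_b)=l(E_b)$, $r(E_1)=l(E_1)$, and $r(X_1)=l(X_1)=\sum_{t\in T}e_t$ (using $w^2(1,t)=1$). After these substitutions every relation involving only $X_a,E_a,E_b,X_1,E_1$ collapses to a relation inside $H^*$ and is settled by the multiplicativity of $l$ from Lemma \ref{lemm4.6}: for example $r(E_a)^{2n}=l(E_{ab})^{2n}=l(E_{(ab)^{2n}})=l(E_1)=r(E_1)$, then $r(E_a)r(E_b)=l(E_{ab})l(E_b)=l(E_a)=l(E_b)l(E_{ab})=r(E_b)r(E_a)$, and $r(X_a)^{2n}=l(X_a)^{2n}=P_{2n}l(X_1)=P_{2n}r(X_1)$ by \eqref{el.1}, while the one-sided unit relations and the zero relations follow from the explicit forms $r(X_1)=\sum_{t}e_t$, $r(E_1)=\sum_{s}e_s$ together with $S\cap T=\emptyset$.

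The genuine obstacle is the generator $X_b$ with $b\in S$, which is \emph{not} covered by the three cases of Lemma \ref{lemm4.7}; here I would work directly from $r(X_b)=\sum_{t\in T}w^3(t,b)e_t$. For the square relation, combining $w^3(t,b)=w^2(b,t\triangleleft x)$ (the identity underlying Lemma \ref{lemm4.7}(ii)) with $w^2(b,t)^2=\tau(b,b)$ from \eqref{w.1} gives $r(X_b)^2=\sum_t w^3(t,b)^2e_t=\tau(b,b)\sum_t e_t=\tau(b,b)\,r(X_1)$. For the commutation relation, expanding $r(X_a)r(X_b)$ and $r(X_b)r(X_a)$ via $xe_t=e_{t\triangleleft x}x$ reduces the desired identity $r(X_a)r(X_b)=\eta(a,b)r(X_b)r(X_a)$ to the pointwise equality $w^3(t\triangleleft x,b)=\eta(a,b)\,w^3(t,b)$ for $t\in T$; this follows from \eqref{w3}, which gives $w^3(t\triangleleft x,b)=\eta(t,b)w^3(t,b)$, together with the elementary observation that $\eta(t,b)=\eta(a,b)$ for all $t\in T$ (writing $t=a^{2i+1}b^j$ and using $\eta(a,b)^2=\eta(b,b)=1$). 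The remaining $X_b$-relations, namely $r(X_1)r(X_b)=r(X_b)$ and $r(E_1)r(X_1)=r(X_1)r(E_1)=0$, are immediate from the explicit forms of $r(X_1)$ and $r(E_1)$. Assembling these verifications and invoking the presentation of $(H^*)^{op}$ completes the argument.
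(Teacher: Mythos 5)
Your generator-level computations are individually correct (the transfers $r(X_a)=l(X_a)$, $r(E_a)=l(E_{ab})$, $r(E_b)=l(E_b)$ via Lemma \ref{lemm4.7}, and the direct treatment of $X_b$ using \eqref{w3} together with $\eta(t,b)=\eta(a,b)$ for $t\in T$), but the logical frame has a genuine gap. From the fact that $r(X_a),r(X_b),r(E_a),r(E_b),r(X_1),r(E_1)$ satisfy the reversed defining relations, the presentation of $(H^*)^{op}$ only gives you the \emph{existence} of an algebra map $\pi'\colon (H^*)^{op}\to H$ agreeing with $r$ on those six generators. Since $r$ is merely a linear map, not a priori anti-multiplicative, agreement on generators does not force $\pi'=r$, and without $\pi'=r$ you cannot conclude $r(f_1)r(f_2)=r(f_2f_1)$. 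The claim ``the presentation then forces $r$ to be anti-multiplicative'' is false in this generality: on $\Bbbk[x]/(x^3)$ the linear map with $r(1)=1$, $r(x)=0$, $r(x^2)=1$ satisfies the generator relation $r(x)^3=0$ yet is not multiplicative. The paper is careful about exactly this point in the parallel Lemma \ref{lemm4.6}: after building $\pi$ from the presentation, it invokes \eqref{el.3}, \eqref{el.4}, \eqref{el.8}, \eqref{el.9} of Lemma \ref{lemm4.5}, which express $l$ on \emph{every} basis element $X_{a^ib^j}$, $E_{a^ib^j}$ through products of generator values, precisely in order to conclude $\pi(f)=l(f)$ for all $f\in H^*$.

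To close the gap you would need the $r$-analogues of those relations, e.g.\ $r(X_a)^{2i}=P_{2i}\,r(X_{a^{2i}})$, $r(X_b)r(X_{a^{2i}})=\tau(a^{2i},b)\,r(X_{a^{2i}b})$, and the $E$-versions. The $E$-identities and the odd-power $X$-identities do follow from Lemmas \ref{lemm4.6} and \ref{lemm4.7} in the way you use them, but the identities involving $X_s$ for $s\in S$ other than $1$ and $b$ (i.e.\ $X_{a^{2i}}$, $X_{a^{2i}b}$) are not reachable from Lemma \ref{lemm4.7} alone---these are exactly the elements you yourself flagged as problematic in the case of $X_b$---and require a further computation relating $r(X_s)$ to $l(X_s)$, for instance deriving $w^3(t,s)=\eta(s,t)\,w^2(s,t)$ from Lemma \ref{lemm4.7}(ii) and \eqref{w2}, whence $r(X_{a^{2i}})=l(X_{a^{2i}})$ and $r(X_{a^{2i}b})=-l(X_{a^{2i}b})$. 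The paper avoids the issue altogether by verifying $r(f_1)r(f_2)=r(f_2f_1)$ directly for all pairs of basis elements (its equations \eqref{re1}--\eqref{re5}), which suffices by bilinearity; your proposal, as written, verifies it only on six generators, and that does not suffice.
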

\begin{proof}
Since $H^*=\langle X_t,X_s,E_t,E_s\;|\;s\in S,t\in T\rangle$ as linear space, we have to show the following equations:
\begin{gather}
\label{re1} r(X_{t_1})r(X_{t_2})=r(X_{t_2} X_{t_1}),\;r(X_t)r(X_s)=r(X_{s}X_{t}),\\
\label{re2} r(X_s)r(X_t)=r(X_{t}X_{s}),\;r(X_{s_1})r(X_{s_2})=r(X_{s_2}X_{s_1}),\\
\label{re3} r(E_{t_1})r(E_{t_2})=r(E_{t_2} E_{t_1}),\;r(E_t)r(E_s)=r(E_{s}E_{t}),\\
\label{re4} r(E_s)r(E_t)=r(E_{t}E_s),\;r(E_{s_1})r(E_{s_2})=r(E_{s_2} E_{s_2}),\\
\label{re5} r(X_g)r(E_h)=0,\;r(E_h)r(X_g)=0
\end{gather}
where $s,s_1,s_2\in S$ and $t,t_1,t_2\in T$ and $g,h\in G$. To show that $r(X_{t_1})r(X_{t_2})=r(X_{t_2} X_{t_1})$, we need to prove the following equation:
\begin{align}
\label{rw.1} w^4(t',t_1) w^4(t'\triangleleft x,t_2) \sigma(t)=\tau(t_2,t_1) w^3(t',t_1 t_2).
\end{align}
Since Lemma \ref{lemm4.6}, we have $l(X_{t_2}) l(X_{t_1})=\tau(t_2,t_1)l(X_{t_1 t_2})$. Because $l(X_{t_2}) l(X_{t_1})=\tau(t_2,t_1)l(X_{t_1 t_2})$,
\begin{align*}
l(X_{t_2})l(X_{t_1})&=(\sum_{t'\in T} w^4(t_2,t')e_{t'}x) (\sum_{t'\in T} w^4(t_1,t')e_{t'}x)\\
&=\sum_{t'\in T} w^4(t_2,t') w^4(t_1,t' \triangleleft x) e_{t'}x^2\\
&=\sum_{t'\in T} w^4(t_2,t') w^4(t_1,t' \triangleleft x) \sigma(t') e_{t'}\\
&=\sum_{t'\in T} w^4(t',t_2) w^4(t' \triangleleft x,t_1) \sigma(t') e_{t'}\;\;(\text{by (i) of Lemma}\;\ref{lemm4.7})\\
&=\sum_{t'\in T} w^4(t' \triangleleft x,t_1) w^4(t',t_2) \sigma(t') e_{t'},
\end{align*}
and
\begin{align*}
\tau(t_2,t_1)l(X_{t_1 t_2})&=\tau(t_2,t_1) (\sum_{t'\in T} w^2(t_1 t_2,t')e_{t'})\\
&=\tau(t_2,t_1) (\sum_{t'\in T} w^3(t'\triangleleft x,t_1 t_2)e_{t'})\\
&= \sum_{t'\in T}\tau(t_2,t_1) w^3(t'\triangleleft x,t_1 t_2)e_{t'},
\end{align*}
we have $w^4(t' \triangleleft x,t_1) w^4(t',t_2) \sigma(t')=\tau(t_2,t_1) w^3(t'\triangleleft x,t_1 t_2)$ for all $t'\in T$. Since $(t'\triangleleft x)\in T$ if $t'\in T$, we know that
$w^4(t',t_1) w^4(t'\triangleleft x,t_2) \sigma(t'\triangleleft x)=\tau(t_2,t_1) w^3(t',t_1 t_2)$, but $\sigma(t'\triangleleft x)=\sigma(t')$ by definition of $\sigma$, so we have
\begin{align*}
w^4(t',t_1) w^4(t'\triangleleft x,t_2) \sigma(t')=\tau(t_2,t_1) w^3(t',t_1 t_2).
\end{align*}
Since
\begin{align*}
r(X_{t_1})r(X_{t_2})&=(\sum_{t'\in T} w^4(t',t_1)e_{t'}x) (\sum_{t'\in T} w^4(t',t_2)e_{t'}x)\\
&=\sum_{t'\in T} w^4(t',t_1) w^4(t' \triangleleft x,t_2) e_{t'}x^2\\
&=\sum_{t'\in T} w^4(t',t_1) w^4(t' \triangleleft x,t_2) \sigma(t') e_{t'}\\
&=\sum_{t'\in T} \tau(t_2,t_1) w^3(t',t_1 t_2) e_{t'}\;\;(\text{by }\eqref{rw.1})\\
&=\tau(t_2,t_1) \sum_{t'\in T} w^3(t',t_1 t_2) e_{t'}\\
&=\tau(t_2,t_1)r(X_{t_1 t_2})\\
&=r(X_{t_2} X_{t_1}),
\end{align*}
we have $r(X_{t_1})r(X_{t_2})=r(X_{t_2} X_{t_1})$. Then we will show that $r(X_t)r(X_s)=r(X_{s}X_{t})$. Since
\begin{align*}
r(X_t)r(X_s)&=(\sum_{t'\in T} w^4(t',t)e_{t'}x) (\sum_{t'\in T} w^3(t',s)e_{t'})
=\sum_{t'\in T} w^4(t',t) w^3(t'\triangleleft x,s)e_{t'}x\\
&=\sum_{t'\in T} w^4(t,t') w^3(t'\triangleleft x,s)e_{t'}x
=\sum_{t'\in T} w^4(t,t') w^2(s,t)e_{t'}x\\
&=(\sum_{t'\in T} w^2(s,t)e_{t'}) (\sum_{t'\in T} w^4(t,t')e_{t'}x)\\
&=l(X_s)l(X_t)
=\tau(s,t)l(X_{st}) \;\;(\text{by Lemma}\;\ref{lemm4.6})\\
&=\tau(s,t)r(X_{st})\;\;(\text{by Lemma}\;\ref{lemm4.7})\\
&=r(X_{s} X_{t}),
\end{align*}
we have $r(X_t)r(X_s)=r(X_{s}X_{t})$ and therefore the equation \eqref{re1} holds.

Then we will show that $r(X_s)r(X_t)=r(X_{t}X_{s})$ and $r(X_{s_1})r(X_{s_2})=r(X_{s_2}X_{s_1})$.
Since
\begin{align*}
r(X_s)r(X_t)&=(\sum_{t'\in T} w^3(t',s)e_{t'}) (\sum_{t'\in T} w^4(t',t)e_{t'}x)\\
&=\sum_{t'\in T} w^3(t',s) w^4(t',t) e_{t'}x
=\sum_{t'\in T} w^2(s,t'\triangleleft x)w^4(t',t) e_{t'}x\\
&=\sum_{t'\in T} w^2(s,t'\triangleleft x)w^4(t,t') e_{t'}x
=\sum_{t'\in T} w^4(t,t')w^2(s,t'\triangleleft x) e_{t'}x\\
&=(\sum_{t'\in T} w^4(t,t') e_{t'}x) (\sum_{t'\in T}w^2(s,t') e_{t'})
=l(X_t)l(X_s)\\
&=\tau(t,s)l(X_{st}) \;\;(\text{by Lemma}\;\ref{lemm4.6})\\
&=\tau(t,s)r(X_{st})\;\;(\text{by Lemma}\;\ref{lemm4.7})\\
&=r(X_{t} X_{s}),
\end{align*}
we have $r(X_s)r(X_t)=r(X_{t}X_{s})$. Since $l(X_{s_2}) l(X_{s_1})=\tau(s_2,s_1)l(X_{s_1 s_2})$,
\begin{align*}
l(X_{s_2}) l(X_{s_1})&=(\sum_{t'\in T} w^2(s_2,t')e_{t'}) (\sum_{t'\in T} w^2(s_1,t')e_{t'})\\
&=\sum_{t'\in T} w^2(s_2,t') w^2(s_1,t') e_{t'},
\end{align*}
and
\begin{align*}
\tau(s_2,s_1)l(X_{s_1 s_2})&=\tau(s_2,s_1)(\sum_{t'\in T} w^2(s_1 s_2,t')e_{t'})\\
&=\sum_{t'\in T} \tau(s_2,s_1) w^2(s_1 s_2,t')e_{t'},
\end{align*}
we have
\begin{align}
\label{x1} w^2(s_2,t') w^2(s_1,t')=\tau(s_2,s_1) w^2(s_1 s_2,t').
\end{align}
Since
\begin{align*}
r(X_{s_1}) r(X_{s_2})&=(\sum_{t'\in T} w^3(t',s_1)e_{t'}) (\sum_{t'\in T} w^3(t',s_2)e_{t'})=\sum_{t'\in T} w^3(t',s_1) w^3(t',s_2) e_{t'}\\
&=\sum_{t'\in T} w^2(s_1,t'\triangleleft x) w^2(s_2,t'\triangleleft x) e_{t'} \;\;(\text{by (ii) of Lemma}\;\ref{lemm4.7})\\
&=\sum_{t'\in T} \tau(s_2,s_1) w^2(s_1 s_2,t'\triangleleft x) e_{t'}\;\;(\text{by }\;\eqref{x1})\\
&=\sum_{t'\in T} \tau(s_2,s_1) w^3(t',s_1 s_2,) e_{t'} \;\;(\text{by (ii) of Lemma}\;\ref{lemm4.7})\\
&=\tau(s_2,s_1)r(X_{s_1 s_2})=r(X_{s_2} X_{s_1}),
\end{align*}
we have $r(X_{s_1}) r(X_{s_2})=r(X_{s_2} X_{s_1})$ and therefore the equation \eqref{re2} has been proved.

Then we will show that $r(E_{t_1})r(E_{t_2})=r(E_{t_2} E_{t_1})$ and $r(E_t)r(E_s)=r(E_{s}E_{t})$. Since
\begin{align*}
r(E_{t_1})r(E_{t_2})&=l(E_{t_1 \triangleleft x}) r(E_{t_2})=l(E_{t_1 \triangleleft x}) l(E_{t_2 \triangleleft x}) \;\;(\text{by Lemma}\;\ref{lemm4.7})\\
&=l(E_{t_1 \triangleleft x} E_{t_2 \triangleleft x}) \;\;(\text{by Lemma}\;\ref{lemm4.6})\\
&=l(E_{(t_1t_2)\triangleleft x})=l(E_{t_1t_2})\\
&=r(E_{t_1t_2})\;\;(\text{by (iii) of Lemma}\;\ref{lemm4.7})
\end{align*}
and
\begin{align*}
r(E_{t})r(E_{s})&=l(E_{t \triangleleft x}) r(E_s)=l(E_{t \triangleleft x}) l(E_s) \;\;(\text{by Lemma}\;\ref{lemm4.7})\\
&=l(E_{t \triangleleft x} E_s) \;\;(\text{by Lemma}\;\ref{lemm4.6})\\
&=l(E_{(ts)\triangleleft x})=r(E_{ts}) \;\;(\text{by Lemma}\;\ref{lemm4.7})\\
&=r(E_{s} E_{t}),
\end{align*}
we have $r(E_{t_1})r(E_{t_2})=r(E_{t_2} E_{t_1})$ and $r(E_t)r(E_s)=r(E_{s}E_{t})$. Therefore the equation \eqref{re3} holds.

Then we will show that $r(E_s)r(E_t)=r(E_{t}E_s)$ and $r(E_{s_1})r(E_{s_2})=r(E_{s_2} E_{s_2})$. Since
\begin{align*}
r(E_{s})r(E_{t})&=r(E_s) l(E_{t \triangleleft x})\\
&=l(E_s)l(E_{t \triangleleft x}) \;\;(\text{by Lemma}\;\ref{lemm4.7})\\
&=l(E_s E_{t \triangleleft x} ) \;\;(\text{by Lemma}\;\ref{lemm4.6})\\
&=l(E_{(st)\triangleleft x})=r(E_{st}) \;\;(\text{by Lemma}\;\ref{lemm4.7})\\
&=r(E_{t} E_{s}),
\end{align*}
and
\begin{align*}
r(E_{s_1}) r(E_{s_2})&=l(E_{s_1}) l(E_{s_2})\;\;(\text{by Lemma}\;\ref{lemm4.7})\\
&=l(E_{s_1} E_{s_2}) \;\;(\text{by Lemma}\;\ref{lemm4.6})\\
&=l(E_{s_1 s_2})=r(E_{s_1 s_2})=r(E_{s_2} E_{s_1}),
\end{align*}
we have $r(E_s)r(E_t)=r(E_{t}E_s)$ and $r(E_{s_1})r(E_{s_2})=r(E_{s_2} E_{s_2})$ and hence the equation \eqref{re4} holds.

Then we will show that $r(X_g)r(E_h)=0$ and $r(E_h)r(X_g)=0$ for $g,h\in G$. Since
\begin{gather*}
r(X_t)=\sum_{t'\in T} w^4(t',t)e_{t'}x,\;\;\;\;
r(X_s)=\sum_{t'\in T} w^3(t',s)e_{t'},\\
r(E_t)=\sum_{s' \in S} w^2(s',t)e_{s'}x,\;\;\;\;
r(E_s)=\sum_{s'\in S} w^1(s',s)e_{s'},
\end{gather*}
we know that $r(X_g)r(E_h)=0$ and $r(E_h)r(X_g)=0$ for $g,h\in G$ and therefore the equation \eqref{re5} holds.
\end{proof}
With the above preparation, we can prove that
\begin{proposition} \label{pro4.1} The element
$R_{\alpha,\beta}$ is a universal $\mathcal{R}$-matrix of $K(8n,\sigma,\tau)$.
\end{proposition}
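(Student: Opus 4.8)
The plan is to reduce the claim to the equivalent criterion furnished by Lemma~\ref{lem3.1} and then read off each of its five defining conditions from the preparatory lemmas. Recall that Lemma~\ref{lem3.1} asserts that an element of the form \eqref{r} is a universal $\mathcal{R}$-matrix of $K(8n,\sigma,\tau)$ if and only if the equations \eqref{e3.11}--\eqref{e3.15} all hold; the first four encode the relation $\Delta^{op}(x)R=R\Delta(x)$ and the last encodes the two braiding axioms via Lemma~\ref{lem2.2.1}. Thus the whole proof becomes an assembly: I only need to confirm that the explicit scalars $w^1,w^2,w^3,w^4$ defining $R_{\alpha,\beta}$ satisfy each of these five equations.

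First I would clear the four conditions arising from $\Delta^{op}(x)R=R\Delta(x)$. Condition \eqref{e3.11}, $\tau(s_1,s_2)=\tau(s_2,s_1)$ on $S$, is exactly Lemma~\ref{lemm4.2}. Conditions \eqref{e3.12} and \eqref{e3.13} are literally the identities \eqref{w2} and \eqref{w3} verified in Lemma~\ref{lemm4.3}. For \eqref{e3.14} I would start from the identity \eqref{w4} of Lemma~\ref{lemm4.3}; unwinding $h(t_1,t_2)=\tau(t_1,t_2)/\tau(t_2\triangleleft x,t_1\triangleleft x)$ rewrites \eqref{w4} as $\tau(t_2\triangleleft x,t_1\triangleleft x)\,w^4(t_1,t_2)=\tau(t_1,t_2)\,w^4(t_1\triangleleft x,t_2\triangleleft x)$, and replacing $(t_1,t_2)$ by $(t_1\triangleleft x,t_2\triangleleft x)$---legitimate since $\triangleleft x$ is an involution of $T$---turns this into precisely \eqref{e3.14}. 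Hence all four come for free from Lemmas~\ref{lemm4.2} and \ref{lemm4.3}.

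It then remains to establish the multiplicativity condition \eqref{e3.15}, namely $l(f_1)l(f_2)=l(f_1f_2)$ and $r(f_1)r(f_2)=r(f_2f_1)$ for all $f_1,f_2\in H^*$. I would split it into its two halves: the first is exactly Lemma~\ref{lemm4.6} and the second is exactly Lemma~\ref{lemm4.8}. With \eqref{e3.11}--\eqref{e3.14} and \eqref{e3.15} now all verified, Lemma~\ref{lem3.1} applies and $R_{\alpha,\beta}$ is a universal $\mathcal{R}$-matrix of $K(8n,\sigma,\tau)$, completing the argument.

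The genuinely hard part, and the reason the proof was distributed across so many lemmas, is \eqref{e3.15}. The crucial device is Lemma~\ref{lemm4.1}, which presents $H^*$ as the quotient $A_G/I_G$ of a free algebra by the explicit relations among $X_a,X_b,E_a,E_b$; to prove $f\mapsto l(f)$ is an algebra map it then suffices to check that $l(X_a),l(X_b),l(E_a),l(E_b)$ obey those same defining relations, which is exactly Lemma~\ref{lemm4.5}. Verifying Lemma~\ref{lemm4.5} is where the full stock of scalar identities for $P_i,\lambda_{\ast,\ast},h,S_{j,0},S_{j,1}$ (collected in Lemmas~\ref{lem4.1} and \ref{lemm4.4}) is spent. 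Finally, the $r$-side (Lemma~\ref{lemm4.8}) is not an independent computation: the symmetries $l(X_t)=r(X_t)$, $l(E_t)=r(E_{t\triangleleft x})$, $l(E_s)=r(E_s)$ of Lemma~\ref{lemm4.7} let one transport the already-proved $l$-multiplicativity to the $r$-side, so I would expect to spend essentially all the effort on the $l$-side via this free-presentation trick.
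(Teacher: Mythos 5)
Your proposal is correct and takes essentially the same route as the paper's own proof: both reduce the claim to the five conditions of Lemma \ref{lem3.1}, discharge \eqref{e3.11}--\eqref{e3.14} via Lemmas \ref{lemm4.2} and \ref{lemm4.3}, discharge \eqref{e3.15} via Lemmas \ref{lemm4.6} and \ref{lemm4.8}, and conclude. Your explicit rewriting of \eqref{w4} into \eqref{e3.14} through the substitution $(t_1,t_2)\mapsto(t_1\triangleleft x,\,t_2\triangleleft x)$ is a small translation the paper leaves implicit, but otherwise the assembly is identical.
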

\begin{proof}
By Lemmas \ref{lemm4.2} and \ref{lemm4.3}, we have
\begin{align*}
&\tau(s_1,s_2)=\tau(s_2,s_1),\; s_1,s_2\in S,\\
&w^2(s,t\triangleleft x)=w^2(s,t)\eta(s,t),\; s \in S,t\in T,\\
&w^3(t\triangleleft x,s)=w^3(t,s)\eta(t,s),\;s \in S,t\in T,\\
&\tau(t_2,t_1)w^4(t_1\triangleleft x,t_2\triangleleft x)=\tau(t_1\triangleleft x,t_2\triangleleft x)w^4(t_1,t_2),\;t_1,t_2 \in T.
\end{align*}
By Lemmas \ref{lemm4.6} and  \ref{lemm4.8}, we have
\begin{gather*}
l(f_1)l(f_2)=l(f_1f_2),\;r(f_1)r(f_2)=r(f_2f_1),\;f_1,f_2\in H^*.
\end{gather*}
 Due to Lemma \ref{lem3.1},  $R_{\alpha,\beta}$ is a universal $\mathcal{R}$-matrix of $K(8n,\sigma,\tau)$.
\end{proof}

Now we turn to the proof of Proposition \ref{addpro}. Let $R$ be a universal $\mathcal{R}$-matrix of $\Bbbk^G\#_{\sigma,\tau}\Bbbk \mathbb{Z}_{2}$, and let $H_l=\{l(f)\;|\;f\in H^*\}$ and $H_r=\{r(f)\;|\;f\in H^*\}$ where $H^*$ is the dual of $\Bbbk^G\#_{\sigma,\tau}\Bbbk \mathbb{Z}_{2}$. Note that both $H_l$ and $H_r$ are subalgebras of $H^\ast.$
\begin{lemma}\label{lemm3.2} We have
$H_l=\langle l(X_t),l(E_t)\;|\;t\in T\rangle $ and $H_r=\langle r(X_t),r(E_t)\;|\;t\in T\rangle $ as algebras.
\end{lemma}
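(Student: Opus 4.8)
The plan is to reduce the statement to a purely algebraic description of the dual $H^*$ and then transport that description through the maps $l$ and $r$. The starting point is that, because $R$ is a universal $\mathcal{R}$-matrix, the braiding conditions $(\Delta\otimes\id)(R)=R_{13}R_{23}$ and $(\id\otimes\Delta)(R)=R_{13}R_{12}$ hold, so by Lemma \ref{lem2.2.1} we have $l(f_1)l(f_2)=l(f_1f_2)$ and $r(f_1)r(f_2)=r(f_2f_1)$ for all $f_1,f_2\in H^*$. In other words $l\colon H^*\to H$ is an algebra homomorphism and $r\colon H^*\to H$ is an algebra anti-homomorphism, and $H_l=l(H^*)$, $H_r=r(H^*)$ are precisely their images. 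Thus it suffices to exhibit a set of algebra generators of $H^*$ contained in $\{X_t,E_t\mid t\in T\}$ and then push it forward through $l$ and $r$.

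The heart of the argument is the claim that $H^*=\langle X_t,E_t\mid t\in T\rangle$ as an algebra, a fact which does not refer to $R$ at all. Since $\{X_g,E_g\mid g\in G\}$ is a linear basis of $H^*$ and the generators indexed by $T$ are already on our list, it is enough to write each $X_s$ and $E_s$ with $s\in S$ as a product of elements indexed by $T$. By Lemma \ref{lem2.2.3} we may choose $t_1,t_2\in T$ with $t_1t_2=s$ (this also uses that $T\neq\emptyset$, which is built into that lemma), and then Lemma \ref{lem2.2.2} gives $X_{t_1}X_{t_2}=\tau(t_1,t_2)X_{s}$ and $E_{t_1}E_{t_2}=E_{s}$. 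Hence $X_s=\tau(t_1,t_2)^{-1}X_{t_1}X_{t_2}$ and $E_s=E_{t_1}E_{t_2}$ both lie in $\langle X_t,E_t\mid t\in T\rangle$, which proves the claim.

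Finally I would apply $l$ and $r$ to this equality. Applying the algebra homomorphism $l$ gives at once $H_l=l(H^*)=\langle l(X_t),l(E_t)\mid t\in T\rangle$. For $H_r$ one argues the same way, the only extra point being that $r$ is an anti-homomorphism: the image under $r$ of a monomial in the $X_{t},E_{t}$ is the product of the $r$-images taken in reverse order, which still lies in the subalgebra generated by $\{r(X_t),r(E_t)\mid t\in T\}$, while conversely that subalgebra is contained in $H_r$; hence $H_r=\langle r(X_t),r(E_t)\mid t\in T\rangle$. There is no serious obstacle here — the whole content is the decomposition $S\subseteq TT$ of Lemma \ref{lem2.2.3}, which lets every $S$-labelled dual generator be absorbed into a product of two $T$-labelled ones, combined with the elementary fact that an algebra (anti-)homomorphism carries a generating set to a generating set; the only place needing a little care is the reversal-of-order bookkeeping for the anti-homomorphism $r$.
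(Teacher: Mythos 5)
Your proposal is correct and takes essentially the same approach as the paper: establish $H^*=\langle X_t,E_t\;|\;t\in T\rangle$ as an algebra from Lemma \ref{lem2.2.2} together with the fact that every element of $S$ is a product of two elements of $T$, and then transport this generating set through the (anti-)algebra maps $l$ and $r$. The paper's own proof is simply a terser version of yours (it invokes $S=TT$ where the inclusion $S\subseteq TT$ of Lemma \ref{lem2.2.3} already suffices, and it leaves implicit the order-reversal bookkeeping for the anti-homomorphism $r$ that you spell out).
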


\begin{proof}
By Lemma \ref{lem2.2.2} and $S=TT$, we know that $H^*=\langle X_t,E_t\;|\;t\in T\rangle $ as an algebra. Define two maps
\begin{align*}&\pi:H^*\rightarrow H_l,\;\;\;\; f\mapsto l(f)\\
 &\pi':H^*\rightarrow H_r,\;\;\;\; f\mapsto r(f).\end{align*}
 It is not hard to see that both of them are surjective algebra maps.
\end{proof}

Now let $R_w$ and $R_v$ be two non-trivial universal $\mathcal{R}$-matrices of $\Bbbk^G\#_{\sigma,\tau}\Bbbk \mathbb{Z}_{2}$ which are denoted by the following way:
\begin{itemize}
              \item[] $R_w=\sum\limits_{s_1,s_2 \in S}w^1(s_1,s_2)e_{s_1} \otimes e_{s_2}+ \sum\limits_{s \in S, t \in T}w^2(s,t)e_{s}x \otimes e_{t}+
        \sum\limits_{t \in T,s \in S}w^3(t,s)e_{t} \otimes e_{s}x+
         \sum\limits_{t_1,t_2 \in T}w^4(t_1,t_2)e_{t_1}x \otimes e_{t_2}x$
\end{itemize}
and
\begin{itemize}
              \item[] $R_v=\sum\limits_{s_1,s_2 \in S}v^1(s_1,s_2)e_{s_1} \otimes e_{s_2}+ \sum\limits_{s \in S, t \in T}v^2(s,t)e_{s}x \otimes e_{t}+
        \sum\limits_{t \in T,s \in S}v^3(t,s)e_{t} \otimes e_{s}x+
         \sum\limits_{t_1,t_2 \in T}v^4(t_1,t_2)e_{t_1}x \otimes e_{t_2}x$.
\end{itemize}
We find that
\begin{lemma}\label{lem3.2}
 Given $R_w,\;R_v$ as above, then $R_w=R_v$ if and only if $w^4(t_1,t_2)=v^4(t_1,t_2)$ for $t_1,\;t_2\in T$.
\end{lemma}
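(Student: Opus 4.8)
The plan is to dispatch the ``only if'' direction immediately and concentrate on the converse. If $R_w=R_v$, then comparing the coefficients of the linearly independent basis vectors $e_{t_1}x\otimes e_{t_2}x$ of $H\otimes H$ forces $w^4(t_1,t_2)=v^4(t_1,t_2)$ for all $t_1,t_2\in T$, so there is nothing to do there.

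For the converse the strategy is to show that the remaining three matrices $w^1,w^2,w^3$ are \emph{completely determined} by $w^4$ together with the fixed structure data $\sigma,\tau,\triangleleft$; once this is established, $w^4=v^4$ forces $w^i=v^i$ for $i=1,2,3$ and hence $R_w=R_v$. The key observation is a split between ``$T$-indexed'' and ``$S$-indexed'' functionals. A direct computation (identical to the ones in the proof of Lemma \ref{lem2.2.5}) shows that for $t\in T$ the functionals $l(X_t),r(X_t)$ see only $w^4$,
\[ l(X_t)=\sum_{t'\in T}w^4(t,t')\,e_{t'}x,\qquad r(X_t)=\sum_{t'\in T}w^4(t',t)\,e_{t'}x, \]
whereas for $s\in S$ the functionals $l(X_s),r(X_s),l(E_s)$ carry the other matrices,
\[ l(X_s)=\sum_{t'\in T}w^2(s,t')\,e_{t'},\qquad r(X_s)=\sum_{t'\in T}w^3(t',s)\,e_{t'},\qquad l(E_s)=\sum_{s'\in S}w^1(s,s')\,e_{s'}. \]
Since $R_w$ is a universal $\mathcal{R}$-matrix, Lemma \ref{lem2.2.1} gives that $l$ is multiplicative and $r$ is anti-multiplicative, and by Proposition \ref{pro2.3.1} (the non-trivial case) we have $S=TT$.

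I would then recover the three matrices in turn. Fix $s\in S$ and, using $S=TT$, write $s=t_1t_2$ with $t_1,t_2\in T$. Expanding $l(X_{t_1})l(X_{t_2})=\tau(t_1,t_2)l(X_{t_1t_2})$ (here $X_{t_1}X_{t_2}=\tau(t_1,t_2)X_{t_1t_2}$ by Lemma \ref{lem2.2.2}) and comparing coefficients expresses $w^2(s,t')$ through $w^4$; expanding $r(X_{t_1})r(X_{t_2})=\tau(t_2,t_1)r(X_{t_1t_2})$ expresses $w^3(t',s)$ through $w^4$; and expanding $l(E_{t_1})l(E_{t_2})=l(E_{t_1t_2})$ (using $E_{t_1}E_{t_2}=E_{t_1t_2}$ and $s\triangleleft x=s$, $x^2=\sum_g\sigma(g)e_g$) gives $w^1(s,s')=w^3(t_1,s')w^3(t_2,s')\sigma(s')$, which is in turn an expression in $w^4$. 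Applying these three identities simultaneously to $R_w$ and to $R_v$ and invoking $w^4=v^4$ yields $w^2=v^2$, $w^3=v^3$ and $w^1=v^1$, hence $R_w=R_v$.

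The computations are routine multiplications in $H^*$ using $xe_g=e_{g\triangleleft x}x$, $x^2=\sum_g\sigma(g)e_g$ and $s\triangleleft x=s$ for $s\in S$, so I would not grind through them. The only conceptual point — and the place where a reader might want detail — is that each $s\in S$ must be presented as a product $t_1t_2$ of two elements of $T$ in order to express the $S$-indexed data (carrying $w^1,w^2,w^3$) through the $T$-indexed data (carrying only $w^4$); this is exactly what $S=TT$ from Proposition \ref{pro2.3.1} provides, so no genuine obstruction arises. Consistency of the resulting formulas across different factorizations $s=t_1t_2$ is automatic, since $R_w$ and $R_v$ are assumed from the outset to be genuine universal $\mathcal{R}$-matrices.
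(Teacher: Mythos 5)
Your proposal is correct, and it rests on the same two pillars as the paper's own proof --- the (anti-)multiplicativity of $l$ and $r$ supplied by Lemma \ref{lem2.2.1}, and the ability to factor every $s\in S$ as $s=t_1t_2$ with $t_1,t_2\in T$ --- but it is organized differently. The paper first reduces the whole comparison to algebra generators: since $H^*=\langle X_t,E_t\;|\;t\in T\rangle$ as an algebra (Lemma \ref{lem2.2.2} together with $S=TT$) and $l_w,l_v$ are algebra maps, $R_w=R_v$ is equivalent to $w^4=v^4$ together with $w^3=v^3$; it then recovers $w^3$ from $w^4$ by a single computation with $r$, namely $w^3(t',s)=\tau(t,s)\,w^4(t',st)/w^4(t',t)$, a quotient formula that implicitly uses the non-vanishing of $w^4$ (which holds for non-trivial $R$-matrices by the proof of Proposition \ref{pro2.2.1}). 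You skip the generation argument and instead recover all three matrices explicitly: $w^2(s,\cdot)$ from $l(X_{t_1})l(X_{t_2})=\tau(t_1,t_2)l(X_s)$, $w^3(\cdot,s)$ from $r(X_{t_1})r(X_{t_2})=\tau(t_2,t_1)r(X_s)$ with both factors taken in $T$, and $w^1$ from $l(E_{t_1})l(E_{t_2})=l(E_s)$, which is the same identity the paper exploits in the proof of Proposition \ref{pro2.3.1}. What each buys: the paper's route is shorter, needing only one recovery formula; yours is marginally more robust, since your three formulas are division-free (no non-vanishing hypothesis on $w^4$ is invoked) and they determine $w^1,w^2$ without appealing to how $H^*$ is generated. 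One small citation repair: the factorization $s=t_1t_2$ only requires $S\subseteq TT$, which is exactly Lemma \ref{lem2.2.3}; the equality $S=TT$ is established inside the proof of Proposition \ref{pro2.3.1} rather than in its statement, so citing the lemma is cleaner.
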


\begin{proof}
Let $l_w:H^*\rightarrow H$ defined by $l_w(f)=(f \otimes \id)(R_w)$ and let $l_v:H^*\rightarrow H$ defined by $l_v(f)=(f \otimes \id)(R_v)$, then $R_w=R_v$ if and only if $l_w=l_v$. By Lemma \ref{lem2.2.2} and $S=TT$, we get that $H^*=\langle X_t,E_t\;|\;t\in T\rangle $ as an algebra. This implies that $l_w=l_v$ if and only if $l_w(X_t)=l_v(X_t)$ and $l_w(E_t)=l_v(E_t)$ for $t\in T$. Since
\begin{gather*}
l_w(X_t)=\sum_{t\in T}w^4(t,t')e_{t'}x,\; l_v(X_t)=\sum_{t\in T}v^4(t,t')e_{t'}x,\;t\in T,\\
l_w(E_t)=\sum_{t\in T}w^3(t,s)e_{s}x,\; l_v(E_t)=\sum_{t\in T}v^3(t,s)e_{s}x,\;t\in T,
\end{gather*}
we know that $l_w=l_v$ if and only if $w^4(t,t')=v^4(t,t')$ and $w^3(t,s)=v^3(t,s)$ for $s\in S,t,t'\in T$. To complete the proof, we will show that if $w^4(t,t')=v^4(t,t')$ for all $t,t'\in T$ then $w^3(t,s)=v^3(t,s)$ for all $s\in S,t\in T$. Let $r_w:(H^*)^{op}\rightarrow H$ defined by $r_w(f)=( \id \otimes f )(R_w)$ and let $r_v:(H^*)^{op}\rightarrow H$ defined by $r_v(f)=(\id \otimes f)(R_v)$, then $r_w$ and $r_v$ are algebra maps by Lemma \ref{lem2.2.1}, and therefore we have
\begin{gather*}
r_w(X_s)r_w(X_t)=\tau(t,s)r_w(X_{st}),\; r_v(X_s)r_v(X_t)=\tau(t,s)r_v(X_{st}),\;s\in S,\; t\in T.
\end{gather*}
Since
\begin{align*}
r_w(X_s)r_w(X_t)&=(\sum\limits_{t' \in T}w^3(t',s)e_{t'})(\sum\limits_{t' \in T}w^4(t',t)e_{t'}x)\\
                &=\sum\limits_{t' \in T}w^3(t',s)w^4(t',t)e_{t'}x , \\
r_v(X_s)r_v(X_t)&=(\sum\limits_{t' \in T}v^3(t',s)e_{t'})(\sum\limits_{t' \in T}v^4(t',t)e_{t'}x)\\
                &=\sum\limits_{t' \in T}v^3(t',s)v^4(t',t)e_{t'}x  \\
\end{align*}
and
\begin{align*}
\tau(t,s)r_w(X_{st})&=\tau(t,s)(\sum\limits_{t' \in T}w^4(t',ts)e_{t'}x)\\
                &=\sum\limits_{t' \in T}\tau(t,s)w^4(t',st)e_{t'}x, \\
\tau(t,s)r_v(X_{st})&=\tau(t,s)(\sum\limits_{t' \in T}v^4(t',ts)e_{t'}x)\\
                &=\sum\limits_{t' \in T}\tau(t,s)v^4(t',st)e_{t'}x,
\end{align*}
we have
\begin{gather*}
w^3(t',s)=\tau(t,s)\frac{w^4(t',st)}{w^4(t',t)},\; s\in S,\; t,t'\in T,\\
v^3(t',s)=\tau(t,s)\frac{v^4(t',st)}{v^4(t',t)},\; s\in S,\; t,t'\in T.
\end{gather*}
As a result we know that if $w^4(t,t')=v^4(t,t')$ for all $t,t'\in T$ then $w^3(t,s)=w^3(t,s)$ for all $s\in S,t\in T$.
\end{proof}

The following Lemma \ref{all.1} and Lemma \ref{all.2} are used to compute the fourth matrix $w^4$ of $R$.
\begin{lemma}\label{all.1}
Assume that $R$ is a universal $\mathcal{R}$-matrix of $K(8n,\sigma,\tau)$. If we let $w^4(a,a)=\alpha$ and $w^4(a,ab)=\beta$, then $(\alpha\beta)^n \lambda_{2n,1}=1$ and $\frac{\beta^2}{\alpha^2}=\frac{\tau(b,b)}{\tau(b,a)^2}$.
\end{lemma}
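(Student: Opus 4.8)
The plan is to exploit that, for a universal $\mathcal{R}$-matrix $R$, both $l$ and $r$ are algebra maps (Lemma \ref{lem2.2.1}), and to feed into them the relations $X_a^{2n}=P_{2n}X_1$, $X_b^2=\tau(b,b)X_1$ and $X_aX_b=\tau(a,b)X_{ab}$ that hold in $H^*$ by Lemma \ref{lem2.2.2} (equivalently Lemma \ref{lemm4.1}). Writing $R$ in the non-trivial form \eqref{r} and using $X_g(e_h)=0$, $X_g(e_hx)=\delta_{g,h}$ together with $S=\{a^{2i}b^j\}$, $T=\{a^{2i+1}b^j\}$, I first record $l(X_a)=\sum_{t\in T}w^4(a,t)e_tx$ and $l(X_1)=\sum_{t\in T}w^2(1,t)e_t$ on the left, and $r(X_a)=\sum_{t\in T}w^4(t,a)e_tx$, $r(X_b)=\sum_{t\in T}w^3(t,b)e_t$, $r(X_1)=\sum_{t\in T}w^3(t,1)e_t$ on the right. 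The normalisations $w^2(1,t)=w^3(t,1)=1$ come from the counit identities $(\varepsilon\otimes\id)(R)=1=(\id\otimes\varepsilon)(R)$: since $\varepsilon_{H}=E_1+X_1$ in $H^*$, comparing with $1_H=\sum_{g\in G}e_g$ gives $l(X_1)=r(X_1)=\sum_{t\in T}e_t$.

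For the first relation I apply $l$ to $X_a^{2n}=P_{2n}X_1$. Using $xe_g=e_{g\triangleleft x}x$ and $x^2=\sum_g\sigma(g)e_g$ one gets $(e_tx)(e_{t'}x)=\delta_{t,t'\triangleleft x}\sigma(t)e_t$, whence $l(X_a)^{2n}=\sum_{t\in T}[w^4(a,t)w^4(a,t\triangleleft x)\sigma(t)]^n e_t$, while $P_{2n}l(X_1)=P_{2n}\sum_{t\in T}e_t$. Reading off the coefficient of $e_a$ (so $t=a$, $t\triangleleft x=ab$) yields $(\alpha\beta\,\sigma(a))^n=P_{2n}$. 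Since $\lambda_{2n,1}=P_{2n}^{-1}\sigma(a)^n$, multiplying through gives $(\alpha\beta)^n\lambda_{2n,1}=1$.

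For the second relation I work on the right. Applying $r$ (with $r(f_1)r(f_2)=r(f_2f_1)$) to $X_aX_b=\tau(a,b)X_{ab}$, that is computing $r(X_b)r(X_a)=\tau(a,b)r(X_{ab})$ and comparing coefficients of $e_tx$, gives $w^3(t,b)w^4(t,a)=\tau(a,b)w^4(t,ab)$ for all $t\in T$; at $t=a$ this reads $w^3(a,b)\,\alpha=\tau(a,b)\,\beta$. Applying $r$ to $X_b^2=\tau(b,b)X_1$ gives $w^3(t,b)^2=\tau(b,b)$, so $w^3(a,b)^2=\tau(b,b)$. Combining, $\frac{\beta^2}{\alpha^2}=\frac{w^3(a,b)^2}{\tau(a,b)^2}=\frac{\tau(b,b)}{\tau(a,b)^2}=\frac{\tau(b,b)}{\tau(b,a)^2}$, where the last equality uses $\tau(a,b)^2=\tau(b,a)^2$, a consequence of $\eta(a,b)=\tau(a,b)\tau(b,a)^{-1}=-1$.

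The main care needed is bookkeeping rather than conceptual: multiplying correctly in $H$ via $xe_g=e_{g\triangleleft x}x$ and $x^2=\sum_g\sigma(g)e_g$, and supplying the counit normalisation $w^2(1,t)=w^3(t,1)=1$. The one genuine choice is to derive the second identity through $r$ rather than $l$: the right-hand relation $w^3(t,b)w^4(t,a)=\tau(a,b)w^4(t,ab)$ keeps the \emph{first} argument of $w^4$ free, so specialising at $t=a$ matches $\alpha=w^4(a,a)$ and $\beta=w^4(a,ab)$ exactly, whereas the left-hand version would produce $w^4(ab,a)$ and force a symmetry of $w^4$ that is not yet available for a general $R$. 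Everything else is a focused specialisation (to $t=a$) of the computations already carried out in Lemmas \ref{lemm4.4}--\ref{lemm4.5}, run in the reverse direction.
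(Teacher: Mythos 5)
Your proposal is correct, and for the first identity it coincides with the paper's proof verbatim: apply $l$ to $X_a^{2n}=P_{2n}X_1$, expand $l(X_a)^{2n}=\sum_{t\in T}\left[w^4(a,t)w^4(a,t\triangleleft x)\sigma(t)\right]^n e_t$, and read off the coefficient of $e_a$. Where you genuinely diverge is the second identity. The paper stays on the left: from $l(X_b)l(X_a)=\tau(b,a)l(X_{ab})$ at $t=a$ it obtains $w^2(b,a)\,\alpha=\tau(b,a)\,w^4(ab,a)$, and must then convert $w^4(ab,a)$ into $\beta=w^4(a,ab)$; it does so with equation \eqref{e3.14} (the $w^4$-component of $\Delta^{op}(x)R=R\Delta(x)$), which gives $w^4(ab,a)=h(ab,a)w^4(a,ab)$ with $h(ab,a)=\tau(ab,a)/\tau(ab,a)=1$, and finishes with $l(X_b)^2=\tau(b,b)l(X_1)$. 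Your right-handed variant $r(X_b)r(X_a)=\tau(a,b)r(X_{ab})$, $r(X_b)^2=\tau(b,b)r(X_1)$ keeps the first slot of $w^4$ free, so specialising at $t=a$ lands directly on $\alpha$ and $\beta$ and \eqref{e3.14} is never invoked; the price is the final substitution $\tau(a,b)^2=\tau(b,a)^2$, which is harmless since $\eta(a,b)=-1$ is the standing hypothesis of this subsection (and in general $\eta(a,b)^2=\eta(a,b^2)=1$). The two routes are of equal strength, each trading one auxiliary fact for another. One small correction to your closing remark: the left-handed route does not require any symmetry $w^4(t_1,t_2)=w^4(t_2,t_1)$ of a general $R$ (that symmetry is only established later, for $R_{\alpha,\beta}$, in Lemma \ref{lemm4.7}); what it needs is precisely \eqref{e3.14}, which Lemma \ref{lem3.1} makes available for an arbitrary universal $\mathcal{R}$-matrix, and that is exactly how the paper proceeds. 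Finally, your explicit derivation of the normalisation $w^2(1,t)=w^3(t,1)=1$ from $(\varepsilon\otimes\id)(R)=(\id\otimes\varepsilon)(R)=1$ fills in a step the paper uses silently when it replaces $w^2(1,t)$ by $1$ in $P_{2n}l(X_1)$.
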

\begin{proof}
Since $l(X_a)^{2n}=P_{2n}l(X_1)$ and
\begin{align*}
l(X_a)^{2n}&=[\sum_{t\in T} w^4(a,t)e_t x]^{2n}=[(\sum_{t\in T} w^4(a,t)e_t x)^2]^n\\
&=[\sum_{t\in T} w^4(a,t) w^4(a,t\triangleleft x)e_t x^2]^n=[\sum_{t\in T} w^4(a,t) w^4(a,t\triangleleft x) \sigma(t) e_t]^n\\
&=\sum_{t\in T} w^4(a,t)^n w^4(a,t\triangleleft x)^n \sigma(t)^n e_t,
\end{align*}
\begin{align*}
P_{2n}l(X_1)&=P_{2n} \sum_{t\in T} w^2(1,t)e_t=P_{2n} \sum_{t\in T} e_t\\
&=\sum_{t\in T}P_{2n}e_t,
\end{align*}
 we have $$w^4(a,t)^n w^4(a,t\triangleleft x)^n \sigma(t)^n=P_{2n}.$$ Take $t=a$, we get that $w^4(a,a)^n w^4(a,ab)^n \sigma(a)^n=P_{2n}$. Since $w^4(a,a)=\alpha,$ $w^4(a,ab)=\beta$ and $\lambda_{2n,1}=P_{2n}^{-1} \sigma(a)^n$, we have $(\alpha\beta)^n \lambda_{2n,1}=1$.

 Since $l(X_b)l(X_a)=\tau(b,a) l(X_{ab})$,
\begin{align*}
l(X_b)l(X_a)&=(\sum_{t\in T} w^2(b,t)e_t)(\sum_{t\in T} w^4(a,t)e_t x)\\
&=\sum_{t\in T}w^2(b,t) w^4(a,t)e_t x
\end{align*}
and
\begin{align*}
\tau(b,a) l(X_{ab})&=\tau(b,a) \sum_{t\in T} w^4(ab,t)e_t x\\
&=\sum_{t\in T} \tau(b,a) w^4(ab,t)e_t x,
\end{align*}
 we must have $w^2(b,t) w^4(a,t)=\tau(b,a) w^4(ab,t)$ for $t\in T$. Through letting $t=a$, then the equality $w^2(b,t) w^4(a,t)=\tau(b,a) w^4(ab,t)$ becomes
\begin{align}
\label{temp1} w^2(b,a)w^4(a,a)=\tau(b,a) w^4(ab,a).
\end{align}
We claim $w^4(ab,a)=\beta$. By the equation \eqref{e3.14}, we have $w^4(t_1,t_2)=h(t_1,t_2) w^4(t_1 \triangleleft x, t_2 \triangleleft x)$. Through letting $t_1=a,t_2=ab$, the equality $w^4(t_1,t_2)=h(t_1,t_2) w^4(t_1 \triangleleft x, t_2 \triangleleft x)$ becomes $w^4(ab,a)=h(ab,a)w^4(a,ab)$. By $h(ab,a)=\frac{\tau(ab,a)}{\tau(ab,a)}=1$, we have $w^4(ab,a)=w^4(a,ab)=\beta$. Due to the equation $w^2(b,a)w^4(a,a)=\tau(b,a) w^4(ab,a)$ and $w^4(ab,a)=\beta$, we know that
\begin{align}
\label{temp2} w^2(b,a)=\tau(b,a) \frac{\beta}{\alpha}.
\end{align}
Since $l(X_b)^2=\tau(b,b)l(X_1)$ and
\begin{align*}
&l(X_b)^2=(\sum_{t\in T} w^2(b,t)e_t)^2=\sum_{t\in T}w^2(b,t)^2 e_t,\\
&\tau(b,b)l(X_1)=\tau(b,b) \sum_{t\in T} w^2(1,t)e_t=\sum_{t\in T}\tau(b,b) e_t,
\end{align*}
 we have $w^2(b,t)^2=\tau(b,b)$. Taking $t=a$, $w^2(b,t)^2=\tau(b,b)$ becomes
$w^2(b,a)^2=\tau(b,b)$. Since  $w^2(b,a)^2=\tau(b,b)$ and $w^2(b,a)=\tau(b,a) \frac{\beta}{\alpha}$, we have $\frac{\beta^2}{\alpha^2}=\frac{\tau(b,b)}{\tau(b,a)^2}$.
\end{proof}

\begin{lemma}\label{all.2}
Assume that $R$ is a universal $\mathcal{R}$-matrix of $K(8n,\sigma,\tau)$. If we let $w^4(a,a)=\alpha$ and $w^4(a,ab)=\beta$, then $w^4$ of $R$ must have the following expression: \\[1.5mm]
$ \left\{
\begin{array}{l}
w^4(a^{2i+1},a^{2j+1})=\lambda_{2i+1,2j+1}S_{j,0}^{i+1}S_{j,1}^{i}\\
w^4(a^{2i+1},a^{2j+1}b)=\lambda_{2i+1,2j+1}S_{j,0}^{i}S_{j,1}^{i+1}\\
w^4(a^{2i+1}b,a^{2j+1})=h(a^{2i+1}b,a^{2j+1})\lambda_{2i+1,2j+1}S_{j,0}^{i}S_{j,1}^{i+1}\\
w^4(a^{2i+1}b,a^{2j+1}b)=h(a^{2i+1}b,a^{2j+1}b)\lambda_{2i+1,2j+1}S_{j,0}^{i+1}S_{j,1}^{i}
\end{array} \right.$
for $0\leq i,j \leq (n-1)$.
\end{lemma}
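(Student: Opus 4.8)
The plan is to exploit that, because $R$ is a universal $\mathcal{R}$-matrix, $l$ and $r$ are respectively an algebra homomorphism and an algebra anti-homomorphism $H^\ast\to H$ (Lemma \ref{lem2.2.1}), so the whole matrix $w^4$ is forced by its two seed entries $w^4(a,a)=\alpha$ and $w^4(a,ab)=\beta$. The point is that, for $t\in T$, one has $l(X_t)=\sum_{t'\in T}w^4(t,t')e_{t'}x$ and $r(X_t)=\sum_{t'\in T}w^4(t',t)e_{t'}x$, so $l(X_t)$ records the $t$-th row of $w^4$ and $r(X_t)$ its $t$-th column. Since $T=\{a^{2i+1},a^{2i+1}b\mid i\ge 0\}$ and $X_a^{k}=P_k X_{a^k}$ in $H^\ast$ (Lemma \ref{lem2.2.2}, Lemma \ref{lemm4.1}), every row and column can be reached from the single generator $X_a$, and the $b$-twist is controlled by \eqref{e3.14}, which is equivalent to $w^4(t_1,t_2)=h(t_1,t_2)w^4(t_1\triangleleft x,t_2\triangleleft x)$ (because $\tau(t_1,t_2)\tau(t_1\triangleleft x,t_2\triangleleft x)=\tau(t_2,t_1)\tau(t_2\triangleleft x,t_1\triangleleft x)$ when $G$ is abelian, by the $\sigma$-$\tau$ compatibility).

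First I would determine the $a$-row. Starting from $r(X_a)^{2j+1}=P_{2j+1}r(X_{a^{2j+1}})$ and the product rule $e_tx\cdot e_{t'}x=\delta_{t',t\triangleleft x}\,\sigma(t)e_t$, I expand $r(X_a)^{2j+1}$ and match coefficients to get the column formula
$$w^4(t,a^{2j+1})=P_{2j+1}^{-1}\,w^4(t,a)^{j+1}\,w^4(t\triangleleft x,a)^{j}\,\sigma(t)^{j}.$$
Using \eqref{e3.14} to identify $w^4(ab,a)=h(ab,a)w^4(a,ab)=\beta$ (here $h(ab,a)=1$), evaluation at $t=a$ and $t=ab$ gives $w^4(a,a^{2j+1})=\lambda_{2j+1,1}\alpha^{j+1}\beta^{j}=S_{j,0}$ and $w^4(ab,a^{2j+1})=\lambda_{2j+1,1}\alpha^{j}\beta^{j+1}$; one further use of \eqref{e3.14} in the form $w^4(a,a^{2j+1}b)=h(a,a^{2j+1}b)\,w^4(ab,a^{2j+1})$ yields $w^4(a,a^{2j+1}b)=S_{j,1}$.

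Next I would propagate along rows. Multiplicativity of $l$ gives $l(X_a)^{2i+1}=P_{2i+1}l(X_{a^{2i+1}})$, and the same explicit computation produces the row formula
$$w^4(a^{2i+1},t)=P_{2i+1}^{-1}\,w^4(a,t)^{i+1}\,w^4(a,t\triangleleft x)^{i}\,\sigma(t)^{i}.$$
Substituting $t=a^{2j+1}$ (with $(a^{2j+1})\triangleleft x=a^{2j+1}b$) and $t=a^{2j+1}b$ (with $(a^{2j+1}b)\triangleleft x=a^{2j+1}$ and $\sigma(a^{2j+1}b)=\sigma(a^{2j+1})$), and feeding in the $a$-row values, I obtain the first two asserted formulas, since $P_{2i+1}^{-1}\sigma(a^{2j+1})^{i}=\lambda_{2i+1,2j+1}$. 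The two $b$-shifted rows then follow from \eqref{e3.14} once more: as $(a^{2i+1}b)\triangleleft x=a^{2i+1}$, we have $w^4(a^{2i+1}b,t_2)=h(a^{2i+1}b,t_2)\,w^4(a^{2i+1},t_2\triangleleft x)$, and taking $t_2=a^{2j+1}$ and $t_2=a^{2j+1}b$ gives the remaining two cases.

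The hard part is not conceptual but the consistent bookkeeping. Establishing the two power formulas requires evaluating $l(X_a)^{2i+1}$ and $r(X_a)^{2j+1}$ in $H$ carefully via $x^2=\sum_g\sigma(g)e_g$ and the orthogonal idempotents, and then threading the $b$-twist through \eqref{e3.14}: one must check the small identities $h(ab,a)=1$, $w^4(ab,a)=\beta$, $(ab)\triangleleft x=a$, $(a^{2i+1}b)\triangleleft x=a^{2i+1}$, $\sigma(a^{2j+1}b)=\sigma(a^{2j+1})$, and recognize $P_{2i+1}^{-1}\sigma(a^{j'})^{i}=\lambda_{2i+1,j'}$, so that the four cases reassemble into exactly the stated expressions in $S_{j,0}$ and $S_{j,1}$. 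Lemma \ref{all.1} is what guarantees the seeds $\alpha,\beta$ are admissible, so the symbols $S_{j,0},S_{j,1}$ produced here are the legitimate ones.
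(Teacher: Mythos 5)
Your proposal is correct and follows essentially the same route as the paper's own proof: the paper likewise extracts the seed values $w^4(a,a^{2j+1})=S_{j,0}$, $w^4(ab,a^{2j+1})=\lambda_{2j+1,1}\alpha^{j}\beta^{j+1}$ and $w^4(a,a^{2j+1}b)=S_{j,1}$ from $r(X_a)^{2j+1}=P_{2j+1}r(X_{a^{2j+1}})$ together with \eqref{e3.14}, then obtains the first two formulas from $l(X_a)^{2i+1}=P_{2i+1}l(X_{a^{2i+1}})$, and finally the two $b$-shifted rows from \eqref{e3.14} in the form $w^4(t_1,t_2)=h(t_1,t_2)w^4(t_1\triangleleft x,t_2\triangleleft x)$. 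Your bookkeeping of the small identities ($h(ab,a)=1$, $(a^{2i+1}b)\triangleleft x=a^{2i+1}$, $\sigma(a^{2j+1}b)=\sigma(a^{2j+1})$, $P_{2i+1}^{-1}\sigma(a^{2j+1})^{i}=\lambda_{2i+1,2j+1}$) matches the paper's computation step for step.
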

\begin{proof}
Since $r(X_a)^{2i+1}=P_{2i+1}r(X_{a^{2i+1}})$ and
\begin{align*}
r(X_a)^{2i+1}&=[\sum_{t\in T} w^4(t,a)e_t x]^{2i+1}\\
&=[\sum_{t\in T} w^4(t,a)e_t x]^{2i}[\sum_{t\in T} w^4(t,a)e_t x]\\
&=[(\sum_{t\in T} w^4(t,a)e_t x)^2]^{i} [\sum_{t\in T} w^4(t,a)e_t x]\\
&=[\sum_{t\in T} w^4(t,a) w^4(t\triangleleft x,a)e_t x^2]^{i} [\sum_{t\in T} w^4(t,a)e_t x]\\
&=[\sum_{t\in T} w^4(t,a) w^4(t\triangleleft x,a) \sigma(t)e_t]^{i} [\sum_{t\in T} w^4(t,a)e_t x]\\
&=[\sum_{t\in T} w^4(t,a)^i w^4(t\triangleleft x,a)^i \sigma(t)^i e_t] [\sum_{t\in T} w^4(t,a)e_t x]\\
&=\sum_{t\in T} w^4(t,a)^{i+1} w^4(t\triangleleft x,a)^i \sigma(t)^i e_t x,
\end{align*}
\begin{align*}
P_{2i+1}r(X_{a^{2i+1}})&=P_{2i+1} \sum_{t\in T} w^4(t,a^{2i+1})e_t x\\
&=\sum_{t\in T}P_{2i+1} w^4(t,a^{2i+1})e_t x,
\end{align*}
we have $w^4(t,a)^{i+1} w^4(t\triangleleft x,a)^i \sigma(t)^i=P_{2i+1} w^4(t,a^{2i+1})$. Taking $t=a$ in this equation, we get that $\alpha^{i+1} \beta^i \sigma(a)^i=P_{2i+1} w^4(a,a^{2i+1})$. Since by definition $\lambda_{2i+1,1}=P_{2i+1}^{-1} \sigma(a)^i$, we know
\begin{align}
\label{temp3} w^4(a,a^{2i+1})=S_{i,0}.
\end{align}
Similarly, if we let $t=ab$, then $w^4(t,a)^{i+1} w^4(t\triangleleft x,a)^i \sigma(t)^i=P_{2i+1} w^4(t,a^{2i+1})$ becomes $\beta^{i+1} \alpha^i \sigma(a)^i=P_{2i+1} w^4(ab,a^{2i+1})$ and so we have $w^4(ab,a^{2i+1})=\lambda_{2i+1,1} \alpha^i \beta^{i+1}$. Since $w^4(t_1,t_2)=h(t_1,t_2)w^4(t_1\triangleleft x,t_2\triangleleft x)$ by Lemma \ref{lem3.1},  let $t_1=a$ and $t_2=a^{2i+1}b$ we have $w^4(a,a^{2i+1}b)=h(a,a^{2i+1}b)w^4(ab,a^{2i+1})$. Because $w^4(ab,a^{2i+1})=\lambda_{2i+1,1} \alpha^i \beta^{i+1}$, we find that
\begin{align}
\label{temp4} w^4(a,a^{2i+1}b)=S_{i,1}.
\end{align}
Since $l(X_a)^{2i+1}=P_{2i+1}l(X_{a^{2i+1}})$,
\begin{align*}
l(X_a)^{2i+1}&=[\sum_{t\in T} w^4(a,t)e_t x]^{2i+1}\\
&=[\sum_{t\in T} w^4(a,t)e_t x]^{2i}[\sum_{t\in T} w^4(a,t)e_t x]\\
&=[(\sum_{t\in T} w^4(a,t)e_t x)^2]^{i} [\sum_{t\in T} w^4(a,t)e_t x]\\
&=[\sum_{t\in T} w^4(a,t) w^4(a,t\triangleleft x)e_t x^2]^{i} [\sum_{t\in T} w^4(a,t)e_t x]\\
&=[\sum_{t\in T} w^4(a,t) w^4(a,t\triangleleft x) \sigma(t)e_t]^{i} [\sum_{t\in T} w^4(a,t)e_t x]\\
&=[\sum_{t\in T} w^4(a,t)^i w^4(a,t\triangleleft x)^i \sigma(t)^i e_t] [\sum_{t\in T} w^4(a,t)e_t x]\\
&=\sum_{t\in T} w^4(a,t)^{i+1} w^4(a,t\triangleleft x)^i \sigma(t)^i e_t x
\end{align*}
and
\begin{align*}
P_{2i+1}l(X_{a^{2i+1}})&=P_{2i+1} \sum_{t\in T} w^4(a^{2i+1},t)e_t x\\
&=\sum_{t\in T}P_{2i+1} w^4(a^{2i+1},t)e_t x,
\end{align*}
we have $w^4(a,t)^{i+1} w^4(a,t\triangleleft x)^i \sigma(t)^i=P_{2i+1} w^4(a^{2i+1},t)$. Now let $t=a^{2j+1}$, then $w^4(a,t)^{i+1} w^4(a,t\triangleleft x)^i \sigma(t)^i=P_{2i+1} w^4(a^{2i+1},t)$ becomes
\begin{align*}
w^4(a,a^{2j+1})^{i+1} w^4(a,a^{2j+1}b)^i \sigma(a^{2j+1})^i=P_{2i+1} w^4(a^{2i+1},a^{2j+1}).
\end{align*}
Since $w^4(a,a^{2j+1})=S_{j,0}$ by \eqref{temp3}, $w^4(a,a^{2j+1}b)=S_{j,1}$ by \eqref{temp4} and $\lambda_{2i+1,2j+1}=P_{2i+1}^{-1} \sigma(a^{2j+1})^i$, we know that
\begin{align}
\label{temp5} w^4(a^{2i+1},a^{2j+1})=\lambda_{2i+1,2j+1} S_{j,0}^{i+1} S_{j,1}^i.
\end{align}
Similarly, if we let $t=a^{2j+1}b$, then $w^4(a,t)^{i+1} w^4(a,t\triangleleft x)^i \sigma(t)^i=P_{2i+1} w^4(a^{2i+1},t)$ becomes
\begin{align*}
w^4(a,a^{2j+1}b)^{i+1} w^4(a,a^{2j+1})^i \sigma(a^{2j+1}b)^i=P_{2i+1} w^4(a^{2i+1},a^{2j+1}b).
\end{align*}
Since $w^4(a,a^{2j+1})=S_{j,0}$ by equation \eqref{temp3}, $w^4(a,a^{2j+1}b)=S_{j,1}$ by equation \eqref{temp4}, $\lambda_{2i+1,2j+1}=P_{2i+1}^{-1} \sigma(a^{2j+1})^i$ and $\sigma(a^{2j+1}b)=\sigma(a^{2j+1})$, we know that
\begin{align}
\label{temp6} w^4(a^{2i+1},a^{2j+1}b)=\lambda_{2i+1,2j+1} S_{j,0}^{i} S_{j,1}^{i+1}.
\end{align}
 Due to $w^4(t_1,t_2)=h(t_1,t_2)w^4(t_1\triangleleft x,t_2\triangleleft x)$ by \eqref{e3.14}, this implies that if we let $t_1=a^{2i+1}b$ and $t_2=a^{2j+1}$ then $w^4(a^{2i+1}b,a^{2j+1})=h(a^{2i+1}b,a^{2j+1})w^4(a^{2i+1},a^{2j+1}b)$. Therefore the equation \eqref{temp6} implies that
\begin{align}
\label{temp7} w^4(a^{2i+1}b,a^{2j+1})=h(a^{2i+1}b,a^{2j+1}) \lambda_{2i+1,2j+1} S_{j,0}^{i} S_{j,1}^{i+1}.
\end{align}
 Moreover, if we let $t_1=a^{2i+1}b$ and $t_2=a^{2j+1}b$, then $w^4(t_1,t_2)=h(t_1,t_2)w^4(t_1\triangleleft x, t_2\triangleleft x)$ gives
\begin{align*}
w^4(a^{2i+1}b,a^{2j+1}b)=h(a^{2i+1}b,a^{2j+1}b)w^4(a^{2i+1},a^{2j+1}).
\end{align*}
So the equation \eqref{temp5} tells us that
\begin{align}
\label{temp8} w^4(a^{2i+1}b,a^{2j+1}b)=h(a^{2i+1}b,a^{2j+1}b) \lambda_{2i+1,2j+1} S_{j,0}^{i+1} S_{j,1}^{i}.
\end{align}
 Putting the equations \eqref{temp5},\eqref{temp6},\eqref{temp7},\eqref{temp8} together, we get what we want.
\end{proof}
Using Lemma \ref{lem3.2}-Lemma \ref{all.2}, we can prove that
\begin{proposition} \label{addpro}
If $R$ is a universal $\mathcal{R}$-matrix of $K(8n,\sigma,\tau)$, then $R=R_{\alpha,\beta}$ for some $\alpha,\beta \in \Bbbk$ such that $(\alpha\beta)^n\lambda_{2n,1}=1$ and $\frac{\beta^2}{\alpha^2}=\frac{\tau(b,b)}{\tau(b,a)^2}$.
\end{proposition}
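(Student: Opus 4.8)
The plan is to reduce the statement to the three lemmas just established, so that the proof itself becomes a short bookkeeping argument. By Proposition~\ref{pro2.2.1} every universal $\mathcal{R}$-matrix of $K(8n,\sigma,\tau)$ is either trivial (Case~1, already listed in Proposition~\ref{trivial}) or non-trivial (Case~2); since the target element $R_{\alpha,\beta}$ is non-trivial, I work with a non-trivial $R$ written in the form \eqref{r} and record its four coefficient matrices $w^1,w^2,w^3,w^4$. The first step is to read off the two scalars $\alpha:=w^4(a,a)$ and $\beta:=w^4(a,ab)$ from the fourth matrix of $R$, evaluated at the elements $a,ab\in T$.

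Next I would feed these two values into Lemma~\ref{all.1}, which forces $(\alpha\beta)^n\lambda_{2n,1}=1$ and $\frac{\beta^2}{\alpha^2}=\frac{\tau(b,b)}{\tau(b,a)^2}$. Thus $(\alpha,\beta)$ lies in the admissible parameter set, so $R_{\alpha,\beta}$ is a well-defined element, and by Proposition~\ref{pro4.1} it is itself a non-trivial universal $\mathcal{R}$-matrix of $K(8n,\sigma,\tau)$. At this point $R$ and $R_{\alpha,\beta}$ are two non-trivial universal $\mathcal{R}$-matrices, hence eligible inputs for the comparison Lemma~\ref{lem3.2}.

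Then I would invoke Lemma~\ref{all.2}: knowing only that $w^4(a,a)=\alpha$ and $w^4(a,ab)=\beta$, that lemma pins down every entry of the fourth matrix of $R$ and shows it is exactly the expression defining the fourth matrix of $R_{\alpha,\beta}$. Hence $R$ and $R_{\alpha,\beta}$ have identical fourth matrices. Finally Lemma~\ref{lem3.2}, which asserts that two non-trivial universal $\mathcal{R}$-matrices coincide as soon as their $w^4$ agree, yields $R=R_{\alpha,\beta}$, completing the proof.

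The genuine content sits inside the cited lemmas rather than in this assembly, and the step I expect to be the real obstacle---were Lemma~\ref{all.2} and Lemma~\ref{lem3.2} not already available---is showing that the single pair of values $w^4(a,a),w^4(a,ab)$ determines the entire matrix $w^4$, and that $w^4$ in turn determines all of $R$. The first propagation rests on the algebra-map identities $l(X_a)^{2i+1}=P_{2i+1}l(X_{a^{2i+1}})$ together with the braiding-compatibility relation $w^4(t_1,t_2)=h(t_1,t_2)w^4(t_1\triangleleft x,t_2\triangleleft x)$ from \eqref{e3.14}; the second is the reduction principle of Lemma~\ref{lem3.2}, which recovers $w^1,w^2,w^3$ from $w^4$ through the relation $w^3(t',s)=\tau(t,s)\,w^4(t',st)/w^4(t',t)$ and its analogues, thereby letting us bypass any direct comparison of those three matrices.
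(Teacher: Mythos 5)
Your proposal is correct and follows essentially the same route as the paper's own proof: set $\alpha=w^4(a,a)$, $\beta=w^4(a,ab)$, invoke Lemma~\ref{all.1} for the constraints, Lemma~\ref{all.2} to identify the fourth matrices, and Lemma~\ref{lem3.2} to conclude $R=R_{\alpha,\beta}$. Your explicit appeals to Proposition~\ref{pro2.2.1} (to reduce to the non-trivial case) and Proposition~\ref{pro4.1} (to ensure $R_{\alpha,\beta}$ is itself a universal $\mathcal{R}$-matrix, so Lemma~\ref{lem3.2} applies) are only making precise what the paper leaves implicit.
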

\begin{proof}
Assume that $R=R_{w}$ is a non-trivial universal $\mathcal{R}$-matrix of $K(8n,\sigma,\tau)$.  If we take $w^4(a,a)=\alpha$ and $w^4(a,ab)=\beta$, then $(\alpha\beta)^n \lambda_{2n,1}=1$ and $\frac{\beta^2}{\alpha^2}=\frac{\tau(b,b)}{\tau(b,a)^2}$ by Lemma \ref{all.1}. Therefore we can define a $R_{\alpha,\beta}$ as before. By Lemma \ref{all.2}, we know that $w^4$ of $R_{w}$ is the
same with the fourth matrix of $R_{\alpha,\beta}$.  Owing to Lemma \ref{lem3.2}, we get that $R_{w}=R_{\alpha,\beta}$.

\end{proof}

%
\subsection{The case $\eta(a,b)=1$}\label{sec4.4}
We assume that $\eta(a,b)=1$ in this subsection, and we will give all universal $\mathcal{R}$-matrices of $K(8n,\sigma,\tau)$ in this subsection. Due to the proof is almost the same as the case $\eta(a,b)=-1$, we only give the results here without proofs.

\begin{proposition}
Assume that $R$ is a trivial quasitriangular structure on $K(8n,\sigma,\tau)$, then $$R=\sum_{1\leq i,k \leq n,\;1\leq j,l \leq 2}\alpha^{ik}\beta^{il-jk}e_{a^ib^j} \otimes e_{a^kb^l}$$ for some $\alpha, \beta\in \Bbbk$ satisfying $\alpha^{2n}=\beta^2=1.$
\end{proposition}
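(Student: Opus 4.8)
The plan is to mirror the proof of Proposition \ref{trivial} (the trivial case with $\eta(a,b)=-1$), the only substantive change being the value of $\eta(a,b)$. First I would invoke Proposition \ref{pro2.3.2}: since $R$ is a trivial quasitriangular structure, it can be written as $R=\sum_{g,h\in G}w(g,h)e_g\otimes e_h$ for a bicharacter $w$ on $G$ satisfying $w(g\triangleleft x,h\triangleleft x)=w(g,h)\eta(g,h)$ for all $g,h\in G$. Because $G=\langle a\rangle\times\langle b\rangle$ is generated by $a$ and $b$ and $w$ is a bicharacter, $w$ is completely determined by the four scalars $w(a,a),\,w(a,b),\,w(b,a),\,w(b,b)$.

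Next I would extract the constraints on these four scalars. From $a^{2n}=1$ and $b^2=1$ together with multiplicativity one gets $w(a,a)^{2n}=w(a^{2n},a)=1$ and $w(a,b)^2=w(a,b^2)=1$. Writing out the relation $w(g\triangleleft x,h\triangleleft x)=w(g,h)\eta(g,h)$ at the generator pairs (recalling $a\triangleleft x=ab$ and $b\triangleleft x=b$) and expanding the left-hand sides with the bicharacter property yields, from the pair $(a,a)$, the identity $w(a,b)w(b,a)w(b,b)=1$ (using $\eta(a,a)=1$), and from the pair $(a,b)$ the identity $w(b,b)=\eta(a,b)$ (here one divides by $w(a,b)\in\Bbbk^\times$). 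Setting $\alpha:=w(a,a)$ and $\beta:=w(a,b)$, and now imposing $\eta(a,b)=1$, these force $w(b,b)=1$ and $w(b,a)=\beta^{-1}$.

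Finally, multiplicativity in each argument gives
\[
w(a^ib^j,a^kb^l)=w(a,a)^{ik}w(a,b)^{il}w(b,a)^{jk}w(b,b)^{jl}=\alpha^{ik}\beta^{il-jk},
\]
which is the asserted formula, subject to $\alpha^{2n}=\beta^2=1$. Conversely I would check that for any $\alpha,\beta$ with $\alpha^{2n}=\beta^2=1$ the displayed expression is a genuine bicharacter — in particular that it is well defined modulo the relations $a^{2n}=1$ and $b^2=1$ (using $\alpha^{2n}=1$ and $\beta^2=1$, whence $\beta^{2n}=1$) — and that it satisfies condition (ii) of Proposition \ref{pro2.3.2}, so that every such pair does yield a trivial quasitriangular structure.

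I do not expect a genuine obstacle here: the argument is line-for-line parallel to Proposition \ref{trivial}, the single difference being that the generator pair $(a,b)$ now contributes $w(b,b)=\eta(a,b)=+1$ rather than $-1$, which is precisely what removes the sign factor $(-1)^{j(k+l)}$ appearing in the $\eta(a,b)=-1$ formula. The only mildly careful points are the exponent bookkeeping in the multiplicative expansion and the verification of well-definedness modulo the defining relations of $G$.
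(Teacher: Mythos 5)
Your proof is correct and takes essentially the same approach as the paper: the paper justifies this proposition by saying it is proved exactly like Proposition \ref{trivial}, whose proof likewise invokes Proposition \ref{pro2.3.2}, reduces the bicharacter $w$ to its values on the generator pairs of $G$, and solves the resulting constraints, the only change being $\eta(a,b)=1$ in place of $-1$. Your extra checks (the converse direction and well-definedness modulo $a^{2n}=b^2=1$) simply make explicit what the paper's ``if and only if'' statement leaves implicit.
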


To give all non-trivial quasitriangular structures on $K(8n,\sigma,\tau)$, we just need to change the $R_{\alpha,\beta}$ of Proposition \ref{pro4.1} a little. That is to say if we take $\alpha,\beta \in \Bbbk$ such that $(\alpha\beta)^n\lambda_{2n,1}=1$ and $\frac{\beta^2}{\alpha^2}=\frac{\tau(b,b)}{\tau(b,a)^2}$ and let $S_{j,0}=\lambda_{2j+1,1}\alpha^{j+1}\beta^j$, $S_{j,1}=h(a,a^{2j+1}b)\lambda_{2j+1,1}\alpha^j\beta^{j+1}$ for $j\in \mathbb{N}$, then we can construct    $$R'_{\alpha,\beta}$$
 in the form of \eqref{r} through letting
\begin{itemize}
\item[(i)] $w^1$ be given by \\\\
$ \left\{
\begin{array}{l}
w^1(a^{2i},a^{2j})=w^1(a^{2i}b,a^{2j})=(\lambda_{2j,1})^{2i}(\alpha\beta)^{2ij}[\sigma(a^{2j})]^i\\
w^1(a^{2i},a^{2j}b)=w^1(a^{2i}b,a^{2j}b)=(\lambda_{2j,1})^{2i}(\alpha\beta)^{2ij}[\sigma(a^{2j})]^i
\end{array} \right.$,\\

\item[(ii)] $w^2$ be given by  \\\\
$ \left\{
\begin{array}{l}
w^2(a^{2i},a^{2j+1})=w^2(a^{2i},a^{2j+1}b)=\lambda_{2i,2j+1}[S_{j,0}S_{j,1}]^{i}\\
w^2(a^{2i}b,a^{2j+1})=w^2(a^{2i}b,a^{2j+1}b)=\frac{\tau(b,a)\beta}{\tau(b,a^{2i})\alpha}
\lambda_{2i,2j+1}[S_{j,0}S_{j,1}]^{i}
\end{array} \right.$,\\

\item[(iii)] $w^3$ be given by \\\\
$ \left\{
\begin{array}{l}
w^3(a^{2i+1},a^{2j})=w^3(a^{2i+1}b,a^{2j})=\lambda_{2j,2i+1}[S_{i,0}S_{i,1}]^{j}\\
w^3(a^{2i+1},a^{2j}b)=w^3(a^{2i+1}b,a^{2j}b)=\frac{\tau(b,a)\beta}{\tau(b,a^{2j})\alpha}
\lambda_{2j,2i+1}[S_{i,0}S_{i,1}]^{j}
\end{array} \right.$,\\

\item[(iv)] $w^4$ be given by \\\\
$ \left\{
\begin{array}{l}
w^4(a^{2i+1},a^{2j+1})=\lambda_{2i+1,2j+1}S_{j,0}^{i+1}S_{j,1}^{i}\\
w^4(a^{2i+1},a^{2j+1}b)=\lambda_{2i+1,2j+1}S_{j,0}^{i}S_{j,1}^{i+1}\\
w^4(a^{2i+1}b,a^{2j+1})=h(a^{2i+1}b,a^{2j+1})\lambda_{2i+1,2j+1}S_{j,0}^{i}S_{j,1}^{i+1}\\
w^4(a^{2i+1}b,a^{2j+1}b)=h(a^{2i+1}b,a^{2j+1}b)\lambda_{2i+1,2j+1}S_{j,0}^{i+1}S_{j,1}^{i}
\end{array} \right.$,\\
 \end{itemize}
for $0\leq i,j \leq (n-1)$.
\begin{proposition}\label{propo4.4}  The set of elements $\{R'_{\alpha,\beta}\;|\;\alpha,\beta \in \Bbbk ,(\alpha\beta)^n\lambda_{2n,1}=1\; \text{and}\; \frac{\beta^2}{\alpha^2}=\frac{\tau(b,b)}{\tau(b,a)^2}\}$ gives all non-trivial quasitriangular structures on $K(8n,\sigma,\tau)$.
\end{proposition}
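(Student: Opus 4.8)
The plan is to mirror, step for step, the argument that established Theorem~\ref{pro4.2}, since the only structural difference between $R'_{\alpha,\beta}$ and $R_{\alpha,\beta}$ is the sign convention forced by the value of $\eta(a,b)$. As in that case, the proof splits into two halves: first showing that each $R'_{\alpha,\beta}$ with $(\alpha\beta)^n\lambda_{2n,1}=1$ and $\beta^2/\alpha^2=\tau(b,b)/\tau(b,a)^2$ is a genuine universal $\mathcal{R}$-matrix (the analogue of Proposition~\ref{pro4.1}), and then showing conversely that every non-trivial universal $\mathcal{R}$-matrix coincides with some $R'_{\alpha,\beta}$ (the analogue of Proposition~\ref{addpro}). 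Throughout, I would reuse the equivalent characterization of a universal $\mathcal{R}$-matrix given in Lemma~\ref{lem3.1}, which reduces the task to verifying equations \eqref{e3.11}--\eqref{e3.15}.

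For the existence half, I would first observe that Lemma~\ref{lem4.1} is insensitive to the sign of $\eta(a,b)$: the identities \eqref{4.b1}--\eqref{4.b5} make no use of it, and in \eqref{4.b6} the two occurrences of $\eta$ cancel, so that $h(a,a^{2i+1}b)=\tau(b,a)/\tau(b,a^{2i+1})$ holds verbatim; likewise the relation $\sigma(b)^{-1}=\tau(b,b)$ of \eqref{sig} is unaffected. The genuine change appears in the sign-tracking lemma (the analogue of Lemma~\ref{lemm4.3}): the computation $\eta(s,t\triangleleft x)=\eta(s,t)^{-1}$ still holds, but now $\eta(a^{2i}b,a^{2j+1})=\eta(b,a)^{2j+1}=+1$ in place of $-1$. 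Consequently the sign relations $w^2(a^{2i}b,a^{2j+1}b)=+w^2(a^{2i}b,a^{2j+1})$, $w^3(a^{2i+1},a^{2j}b)=+w^3(a^{2i+1}b,a^{2j}b)$, and $w^1(a^{2i},a^{2j}b)=+w^1(a^{2i}b,a^{2j}b)$ built into the definition of $R'_{\alpha,\beta}$ are exactly what is needed to satisfy \eqref{e3.12}--\eqref{e3.14}. With these identities in hand, the multiplicative computations establishing that $l$ and $r$ are (anti)homomorphisms (the analogues of Lemmas~\ref{lemm4.4}--\ref{lemm4.8}) proceed identically; every line of the old proof that invoked ``$\eta(a,b)=-1$'' is replaced by the corresponding line with ``$\eta(a,b)=1$'', which simply flips the signs that the modified definition of $R'_{\alpha,\beta}$ has already absorbed.

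For the uniqueness half, the key observation of Lemma~\ref{lem3.2} — that a non-trivial $R$ is determined by its fourth block $w^4$ — is a purely algebraic statement independent of $\eta(a,b)$, so it transfers directly. I would then repeat Lemmas~\ref{all.1} and~\ref{all.2}: setting $w^4(a,a)=\alpha$ and $w^4(a,ab)=\beta$, the relations $l(X_a)^{2n}=P_{2n}l(X_1)$ and $l(X_b)^2=\tau(b,b)l(X_1)$ force $(\alpha\beta)^n\lambda_{2n,1}=1$ and $\beta^2/\alpha^2=\tau(b,b)/\tau(b,a)^2$, while the expansions of $r(X_a)^{2i+1}$ and $l(X_a)^{2i+1}$ together with \eqref{e3.14} recover the stated formula for $w^4$. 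Since this $w^4$ agrees with the fourth block of $R'_{\alpha,\beta}$, Lemma~\ref{lem3.2} yields $R=R'_{\alpha,\beta}$.

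The main obstacle I anticipate is not conceptual but clerical: carefully propagating the sign change through the analogue of Lemma~\ref{lemm4.4}, where several intermediate computations (for instance the steps leading to \eqref{w.2} and \eqref{w.6}) explicitly invoke $\eta(a,b)=-1$. One must check, case by case, that each sign that previously arose is now absent and is correspondingly compensated by the removal of the minus signs in the new definitions of $w^1,w^2,w^3$. The fact that $h(a,a^{2i+1}b)=\tau(b,a)/\tau(b,a^{2i+1})$ is sign-stable is precisely what makes this compensation exact and keeps $w^4$ — and hence the final list of constraints on $(\alpha,\beta)$ — literally unchanged from the $\eta(a,b)=-1$ case.
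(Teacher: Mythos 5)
Your proposal is correct and coincides with the paper's intended argument: the paper gives no proof of Proposition \ref{propo4.4} at all, stating only that the proof is ``almost the same as the case $\eta(a,b)=-1$'', and your step-by-step adaptation of the proof of Theorem \ref{pro4.2} (existence via the analogues of Lemmas \ref{lem3.1}--\ref{lemm4.8}, uniqueness via the analogues of Lemmas \ref{lem3.2}, \ref{all.1} and \ref{all.2}) is exactly that omitted argument. Your key observation --- that in \eqref{4.b6} the two occurrences of $\eta$ cancel, so $h(a,a^{2i+1}b)=\tau(b,a)/\tau(b,a^{2i+1})$ and hence the $w^4$-block and the constraints $(\alpha\beta)^n\lambda_{2n,1}=1$, $\beta^2/\alpha^2=\tau(b,b)/\tau(b,a)^2$ are literally unchanged, while the dropped minus signs in $w^1,w^2,w^3$ absorb the flip of $\eta(a,b)$ in \eqref{e3.12}--\eqref{e3.13} --- is precisely what justifies the paper's claim that the two cases run in parallel.
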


\subsection{A class of minimal quasitriangular Hopf algebras}
In this subsection, we firstly identify all minimal quasitriangular Hopf algebras among $K(8n,\sigma,\tau)$ for the case $\eta(a,b)=-1$ and we get a very simple criterion for $K(8n,\sigma,\tau)$ to be minimal quasitriangular Hopf algebras. Secondly we study minimal quasitriangular structures on $K(8n,\sigma,\tau)$ for the case $\eta(a,b)=1$ and we found that $K(8n,\sigma,\tau)$ has no minimal quasitriangular structure on it. Furthermore, if $K(8n,\sigma,\tau)$ is minimal for the case $\eta(a,b)=-1$, then we will write down all its minimal quasitriangular structures on it. And using these results we can prove that $K(8n,\zeta)$ are minimal if $n\geq 4$ is even. Based on these results, we get a class of minimal quasitriangular Hopf algebras $K(8n,\zeta)$ ($n\geq 4$ is even). As a direct consequence, we can easily get that $K_8$ is minimal and give all universal $\mathcal{R}$-matrices on it.

Since we also feel interested in minimal quasitriangular Hopf algebras, we give a sufficient condition for $\Bbbk^G\#_{\sigma,\tau}\Bbbk \mathbb{Z}_{2}$ to be a minimal quasitriangular Hopf algebra.
\begin{lemma}\label{lem3.3}
Let $\Bbbk^G\#_{\sigma,\tau}\Bbbk \mathbb{Z}_{2}$ as above and let $R$ as above \ref{r}, and if $R$ is a universal $\mathcal{R}$-matrix on $\Bbbk^G\#_{\sigma,\tau}\Bbbk \mathbb{Z}_{2}$ satisfying
\begin{itemize}
  \item[(i)] the linear subspace spanned by $\{l(X_t)\;|\;t \in T\}$ equals to the linear subspace spanned by $\{r(X_t)\;|\;t \in T\}$,
  \item[(ii)] the linear subspace spanned by $\{l(E_t)\;|\;t \in T\}$ equals to the linear subspace spanned by $\{r(E_t)\;|\;t \in T\}$,
\end{itemize}
 then $(\Bbbk^G\#_{\sigma,\tau}\Bbbk \mathbb{Z}_{2},R)$ is minimal if and only if $w^i(1\leq i\leq 4)$ are four non-degenerated matrices.
\end{lemma}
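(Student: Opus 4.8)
The plan is to make the maps $l,r\colon H^\ast\to H$ completely explicit on the dual basis $\{E_g,X_g\}_{g\in G}$ of Lemma \ref{lem2.2.2} and to read off the condition $H=H_lH_r$ block by block. Since $R$ is a non-trivial structure of the shape \eqref{r}, Proposition \ref{pro2.3.1} gives $|S|=|T|=:m$, $S\leq G$ a subgroup and $TT=S$, so $\dim H=4m$ and $H=\langle e_s\rangle_{s\in S}\oplus\langle e_t\rangle_{t\in T}\oplus\langle e_sx\rangle_{s\in S}\oplus\langle e_tx\rangle_{t\in T}$ splits into four coordinate blocks. A direct computation from \eqref{r} gives, for $s\in S$, $t\in T$,
\[
\begin{aligned}
l(E_s)&=\textstyle\sum_{s'}w^1(s,s')e_{s'}, & l(X_s)&=\textstyle\sum_{t'}w^2(s,t')e_{t'},\\
l(E_t)&=\textstyle\sum_{s'}w^3(t,s')e_{s'}x, & l(X_t)&=\textstyle\sum_{t'}w^4(t,t')e_{t'}x,
\end{aligned}
\]
and the analogous formulas for $r$ with the two arguments of each $w^i$ transposed. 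Hence $H_l$ is a direct sum of four pieces, one in each coordinate block, of dimensions $\operatorname{rank}w^1,\dots,\operatorname{rank}w^4$, and the same holds for $H_r$. From $(\varepsilon\otimes\id)(R)=(\id\otimes\varepsilon)(R)=1$ I record the normalizations $w^1(1,\cdot)=w^1(\cdot,1)=w^2(1,\cdot)=w^3(\cdot,1)\equiv1$, so that $\sum_s e_s\in H_l\cap H_r$ and $\sum_t e_t\in H_l\cap H_r$.

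For the \emph{if} direction, if all $w^i$ are non-degenerate then $\dim H_l=\operatorname{rank}w^1+\cdots+\operatorname{rank}w^4=4m=\dim H$, so $H_l=H$; since $1=r(\varepsilon)\in H_r$ we get $H=H_l\subseteq H_lH_r$, i.e. $R$ is minimal. This direction uses neither (i) nor (ii).

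The substance is the \emph{only if} direction, and the key observation is that, because $l$ is an algebra homomorphism and $r$ an algebra anti-homomorphism (Lemma \ref{lem2.2.1}) while $\{E_g\}$ and $\{X_g\}$ span the two ideals $\Bbbk G$ and $\Bbbk_\tau G$ of $H^\ast$ (Lemma \ref{lem2.2.2}), the blocks assemble into subalgebras $l(\langle E_g\rangle)=L_1\oplus L_3$, $l(\langle X_g\rangle)=L_2\oplus L_4$ of $H$, and likewise $R_1\oplus R_3$, $R_2\oplus R_4$ for $r$; here $L_i$ is the image block and $\operatorname{rank}w^i$ its dimension. Multiplying out $H_lH_r$ with the rules $e_ge_h=\delta_{g,h}e_g$, $(e_gx)e_h=\delta_{g\triangleleft x,h}e_gx$, $(e_gx)(e_hx)=\delta_{g\triangleleft x,h}\sigma(g)e_g$, the hypotheses (i),(ii) give $L_4=R_4=:W_4$ and $L_3=R_3=:W_3$, and the $e_sx$-component of $H_lH_r$ is exactly $L_1W_3+W_3R_1$. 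Since $L_1W_3\subseteq W_3$ and $W_3R_1\subseteq W_3$ — these being products taken inside the single subalgebras $l(\langle E_g\rangle)$ and $r(\langle E_g\rangle)$, whose $e_sx$-part is $W_3$ — while $\sum_s e_s\in L_1\cap R_1$ acts as identity on $W_3$, this component collapses to precisely $W_3$; symmetrically the $e_tx$-component equals $W_4$. Minimality thus forces $W_3=\langle e_sx\rangle_{s}$ and $W_4=\langle e_tx\rangle_{t}$, that is $\operatorname{rank}w^3=\operatorname{rank}w^4=m$, and (ii) forces $\operatorname{rank}w^2=\operatorname{rank}w^3=m$ as well.

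It remains to deduce $\operatorname{rank}w^1=m$, which is where I expect the only real friction. Using that $g\mapsto l(E_g)$ is a homomorphism from $\{E_g\}\cong G$, fixing $t_0\in T$ and writing $t=t_0(t_0^{-1}t)$ with $t_0^{-1}t\in S$ yields $w^3(t,\cdot)=w^3(t_0,\cdot)\ast w^1(t_0^{-1}t,\cdot)$ (Hadamard product), while the same homomorphism property makes $w^1$ a bicharacter on $S$, hence nowhere zero (indeed $l(E_s)l(E_{s^{-1}})=\sum_{s'}e_{s'}$ gives $w^1(s,s')w^1(s^{-1},s')=1$). Since $\operatorname{rank}w^3=m$ forbids a zero column and $w^3(t,\cdot)=0\iff w^3(t_0,\cdot)=0$ coordinatewise, $w^3(t_0,\cdot)$ is nowhere zero, so Hadamard multiplication by it is a linear isomorphism carrying $L_1=\operatorname{span}_{s'}w^1(s',\cdot)$ onto $L_3=W_3$; thus $\operatorname{rank}w^1=\operatorname{rank}w^3=m$, completing the proof. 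The main obstacles are therefore (a) organizing $H_lH_r$ into the four blocks and proving the collapse of the two $x$-blocks to $W_3$ and $W_4$ — which is exactly the role of hypotheses (i),(ii) — and (b) extracting the Hadamard relation between $w^3$ and $w^1$ from the group-algebra structure and verifying the non-vanishing that turns it into an isomorphism.
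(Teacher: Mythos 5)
Your proof is correct, but your ``only if'' half takes a genuinely different route from the paper's. The paper runs both implications through a single reduction: by Lemma \ref{lemm3.2} (whose content is that $S=TT$ makes the $S$-indexed generators redundant), $H_l$ and $H_r$ are generated as algebras by $\{l(X_t),l(E_t)\}_{t\in T}$ and $\{r(X_t),r(E_t)\}_{t\in T}$ alone, so hypotheses (i) and (ii) give $H_l=H_r$ at once, hence $H_lH_r=H_l$; minimality then becomes $H_l=H$, i.e.\ bijectivity of $l$, which is read off blockwise as simultaneous non-degeneracy of the four matrices $w^i$. You never form $H_l=H_r$: you apply (i),(ii) only inside the two $x$-blocks of $H_lH_r$, show those blocks collapse to $W_3$ and $W_4$ so that minimality forces $\operatorname{rank}w^3=\operatorname{rank}w^4=m$, transfer full rank to $w^2$ through (ii), and then need a separate device --- the factorization $w^3(t,\cdot)=w^3(t_0,\cdot)\ast w^1(t_0^{-1}t,\cdot)$ together with the nowhere-vanishing of the bicharacter $w^1$ --- to force $\operatorname{rank}w^1=m$. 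That Hadamard identity is in substance a hands-on version of what Lemma \ref{lemm3.2} encodes, so you re-prove by direct computation the one piece of the paper's machinery you skip. The paper's route is shorter and yields $H_l=H_r$ as a byproduct; yours localizes exactly which hypothesis forces which rank and needs no appeal to Lemma \ref{lemm3.2}. One slip of prose (not of substance): the $r$-formulas are not ``the $w^i$ with arguments transposed''; correctly, $r(X_s)=\sum_{t'}w^3(t',s)e_{t'}$ and $r(E_t)=\sum_{s'}w^2(s',t)e_{s'}x$, so $w^2$ and $w^3$ trade places on the mixed elements. Your subsequent deduction that (ii) forces $\operatorname{rank}w^2=\operatorname{rank}w^3$ relies on precisely the correct version, so nothing breaks, but the displayed description of $r$ should be restated accordingly.
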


\begin{proof}
Firstly, we will show that $H_l=H_r$ and $H_lH_r=H_l$. By Lemma \ref{lemm3.2},  $H_l=\langle l(X_t),l(E_t)\;|\;t\in T\rangle $  and $H_r=\langle r(X_t),r(E_t)\;|\;t\in T\rangle $. By conditions (i) and (ii), we have $H_l=H_r$ and therefore $H_lH_r=H_l$.

Secondly, we will show that $H_l=H$ if and only if $w^i(1\leq i\leq 4)$ are non-degenerated matrices. If $H_l=H$, then $l:H^*\rightarrow H$ is bijective where $l(f)=(f \otimes \id)(R)$. Since $\{X_s,E_s,X_t,E_t\;|s\in S,t\in T\}$ is a base of $H^*$ by definition, we get that $\{l(X_s),l(E_s),l(X_t),l(E_t)\;|s\in S,t\in T\}$ is a base of $H$. But by definition,
\begin{align*}
l(E_s)&=\sum_{s'\in T}w^1(s,s')e_{s'},\;\;\;\;
l(X_s)=\sum_{t\in T}w^2(s,t)e_{t},\\
l(E_t)&=\sum_{t\in T}w^3(t,s)e_{s}x,\;\;\;\;
l(X_t)=\sum_{t\in T}w^4(t,t')e_{t'}x.
\end{align*}
Therefore $w^i(1\leq i\leq 4)$ are non-degenerated matrices. Now if $w^i(1\leq i\leq 4)$ are non-degenerated matrices, then $\{l(X_s),l(E_s),l(X_t),l(E_t)\;|s\in S,t\in T\}$ is a base of $H$ and thus $l:H^*\rightarrow H_l$ is bijective. As a result we know that $\text{dim}(H_l)=\text{dim}(H)$ which implies that $H_l=H$.
\end{proof}

Using Lemma \ref{lem3.3} and Lemma \ref{lemm4.7}, we can get that
\begin{corollary} \label{coro4.1}
Let $R_{\alpha,\beta}$ as above in Proposition \ref{pro4.1}, then $R_{\alpha,\beta}$ is a minimal quasitriangular structure on $K(8n,\sigma,\tau)$ if and only if its $w^i(1\leq i\leq4)$ are non-degenerated matrices.
\end{corollary}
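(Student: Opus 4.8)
The plan is to deduce the corollary directly from Lemma \ref{lem3.3}. Since $K(8n,\sigma,\tau)$ is a particular instance of $\Bbbk^G\#_{\sigma,\tau}\Bbbk\mathbb{Z}_2$ and $R_{\alpha,\beta}$ is a universal $\mathcal{R}$-matrix by Proposition \ref{pro4.1}, the only thing I need to establish is that $R_{\alpha,\beta}$ satisfies the two hypotheses (i) and (ii) of Lemma \ref{lem3.3}. Once these are verified, the asserted equivalence between minimality of $(K(8n,\sigma,\tau),R_{\alpha,\beta})$ and non-degeneracy of the four matrices $w^i$ is exactly the conclusion of that lemma.

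First I would verify hypothesis (i). By part (i) of Lemma \ref{lemm4.7} we have $l(X_t)=r(X_t)$ for every $t\in T$, so the two families $\{l(X_t)\mid t\in T\}$ and $\{r(X_t)\mid t\in T\}$ coincide term by term; in particular they span the same linear subspace, which is precisely condition (i).

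Next I would treat hypothesis (ii) using part (ii) of Lemma \ref{lemm4.7}, namely $l(E_t)=r(E_{t\triangleleft x})$ for $t\in T$. The key observation is that $\triangleleft x$ restricts to an involutive bijection of $T$ onto itself: indeed $\triangleleft$ is a group homomorphism $\mathbb{Z}_2\to\Aut(G)$, so $(t\triangleleft x)\triangleleft x=t$, and if $t\in T$ then $t\triangleleft x\neq t$ forces $t\triangleleft x\in T$ as well. Consequently, as $t$ ranges over $T$ the element $t\triangleleft x$ also ranges over all of $T$, so $\{l(E_t)\mid t\in T\}=\{r(E_{t\triangleleft x})\mid t\in T\}=\{r(E_{t'})\mid t'\in T\}$. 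Hence the spans of $\{l(E_t)\mid t\in T\}$ and $\{r(E_t)\mid t\in T\}$ agree, giving condition (ii).

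With both hypotheses in hand, Lemma \ref{lem3.3} applies to $R_{\alpha,\beta}$ and yields that $(K(8n,\sigma,\tau),R_{\alpha,\beta})$ is minimal if and only if $w^1,w^2,w^3,w^4$ are non-degenerate, which is the assertion. There is essentially no hard step here: the whole argument is a matter of checking that the symmetry relations recorded in Lemma \ref{lemm4.7} let us satisfy the span conditions required by Lemma \ref{lem3.3}. The only point deserving a moment's care is the bijectivity of $\triangleleft x$ on $T$, which underlies the reindexing needed for hypothesis (ii); everything else is a direct citation of the preceding results.
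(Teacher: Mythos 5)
Your proof is correct and takes essentially the same route as the paper: both arguments verify the two hypotheses of Lemma \ref{lem3.3} by invoking the identities $l(X_t)=r(X_t)$ and $l(E_t)=r(E_{t\triangleleft x})$ from Lemma \ref{lemm4.7}, and then apply Lemma \ref{lem3.3} to conclude. Your write-up is in fact slightly more careful than the paper's, since you make explicit the reindexing point (that $\triangleleft x$ is an involutive bijection of $T$ onto itself) needed to pass from $\{r(E_{t\triangleleft x})\mid t\in T\}$ to $\{r(E_{t'})\mid t'\in T\}$, which the paper leaves implicit.
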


\begin{proof}
 By Lemma \ref{lemm4.7}, we have $l(X_t)=r(X_t)$ and $l(E_t)=r(E_{t\triangleleft x})$ for $t\in T$. Therefore $R_{\alpha,\beta}$ satisfies that the condition of Lemma \ref{lem3.3}. Now using Lemma \ref{lem3.3} again and we get the result.
\end{proof}
We are not very satisfied with this conclusion due to the absence of an easy criteria for the non-degeneracies of these matrices $w^{i}\;(1\leq i\leq 4)$. Although at the first glance these matrices seem quite complicated, we still have:

\begin{lemma}\label{min}
The matrices $w^i(1\leq i\leq4)$ of $R_{\alpha,\beta}$ are non-degenerated if and only if $(\sigma(a) \alpha \beta)^2 \sigma(a^2) \tau(a,a)^{-2}$ is a primitive $n$th root of 1.
\end{lemma}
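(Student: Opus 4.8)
The plan is to recast the non-degeneracy of all four $w^i$ as the bijectivity of the algebra map $l\colon H^*\to H$, $f\mapsto l(f)$, and then to reduce that to a single Vandermonde condition. First I would use Lemma \ref{lemm4.1} to write $H^*\cong A_G/I_G$; since $X_1,E_1$ are central orthogonal idempotents of $H^*$ with $X_1+E_1=\varepsilon$ and $X_1E_1=0$ (Lemma \ref{lem2.2.2}), the dual splits as a product of algebras $H^*=H^*_X\times H^*_E$ with $H^*_X=\langle X_a,X_b\rangle$ and $H^*_E=\langle E_a,E_b\rangle$, each of dimension $4n$. As $l$ is a unital algebra map (Lemma \ref{lemm4.6}) sending $X_1,E_1$ to the idempotents $1_T:=\sum_{t\in T}e_t$ and $1_S:=\sum_{s\in S}e_s$, it restricts to algebra maps $l\colon H^*_X\to B_T$ and $l\colon H^*_E\to B_S$, where $B_T:=1_TH1_T=\bigoplus_{t\in T}(\Bbbk e_t\oplus\Bbbk e_tx)$ and $B_S:=1_SH1_S$ are $4n$-dimensional. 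Reading off the defining matrices as in the proof of Lemma \ref{lem3.3}, the images $l(E_s),l(E_t)$ are recorded by $w^1,w^3$ and $l(X_s),l(X_t)$ by $w^2,w^4$; hence all four $w^i$ are non-degenerate exactly when both restrictions are bijective. Moreover Lemma \ref{lemm4.7}(ii) gives $w^3(t,s)=w^2(s,t\triangleleft x)$, so $w^3$ is a row-permutation of $(w^2)^{\mathrm T}$ and $w^2$ is non-degenerate iff $w^3$ is; this will let me transfer the conclusion from one side to the other.

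Next I would analyze the $E$-side. Put $u:=l(E_a)$ and $v:=l(E_b)$ in $B_S$. By \eqref{el.6}--\eqref{el.8} these satisfy $u^{2n}=1_S$, $v^2=1_S$, $uv=vu$, and $\{l(E_s):s\in S\}=\{u^{2i},u^{2i}v\}$, $\{l(E_t):t\in T\}=\{u^{2i+1},u^{2i+1}v\}$. Here $u^2=\sum_{s\in S}D'_s\,e_s$ is diagonal with $D'_s=w^3(a,s)^2\sigma(s)$, a value depending only on the pair $\{a^{2i},a^{2i}b\}$; write $D'_i$ for it. Using $v=\sum_i(e_{a^{2i}}-e_{a^{2i}b})$ one gets $u^{2i}=\sum_k (D'_k)^i(e_{a^{2k}}+e_{a^{2k}b})$ and $u^{2i}v=\sum_k (D'_k)^i(e_{a^{2k}}-e_{a^{2k}b})$, so in the basis $\{e_{a^{2k}}\pm e_{a^{2k}b}\}$ the span decouples into two blocks, each governed by the $n\times n$ Vandermonde matrix $\bigl((D'_k)^i\bigr)_{0\le i,k\le n-1}$. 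Since $u$ is invertible, left multiplication by $u$ carries this diagonal span bijectively onto the $e_sx$-part. Thus $w^1$ and $w^3$ are non-degenerate, and $l|_{H^*_E}$ is bijective, precisely when the $D'_i$ are pairwise distinct.

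It then remains to identify the $D'_i$. A short computation gives $D'_0=w^3(a,1)^2=1$, and the ratio $D'_{i+1}/D'_i$ reduces to $q:=(\sigma(a)\alpha\beta)^2\sigma(a^2)\tau(a,a)^{-2}$: using only that $\tau$ is a $2$-cocycle and that $\eta(a,a)=1$ (whence $\tau(a^k,a)=\tau(a,a^k)$), the cocycle identity $\tau(a^{2i},a)\tau(a^{2i+1},a)=\tau(a,a)\tau(a^{2i},a^2)$ supplies exactly the relation $\tau(a^{2i},a^2)\tau(a,a)=\tau(a,a^{2i})\tau(a,a^{2i+1})$ needed to collapse all the $P$- and $\sigma$-factors. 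Hence $D'_i=q^i$. Equation \eqref{w.7} forces $(D'_i)^n=1$, so automatically $q^n=1$, and $q^0,\dots,q^{n-1}$ are pairwise distinct iff $q$ is a primitive $n$th root of $1$. Therefore $w^1,w^3$ are non-degenerate iff $q$ is a primitive $n$th root of unity.

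Finally I would run the parallel $X$-side analysis in $B_T$ with $u=l(X_a)$, $v=l(X_b)$, where \eqref{el.1}--\eqref{el.4} give $u^{2n}=P_{2n}1_T$, $v^2=\tau(b,b)1_T$, $uv=-vu$; the anticommutation does not affect the diagonal computation, and $u^2$ is diagonal with pair-values $D_j=S_{j,0}S_{j,1}\sigma(a^{2j+1})$, so by the same Clifford-to-Vandermonde reduction $w^2$ and $w^4$ are non-degenerate iff the $D_j$ are distinct, in particular $w^2$ non-degenerate iff $w^4$ non-degenerate. Combining the chain $w^1\Leftrightarrow w^3\Leftrightarrow w^2\Leftrightarrow w^4$ (the middle equivalence from the $w^2,w^3$ link above, the outer two from the two Vandermonde reductions) with the $E$-side identification $D'_i=q^i$ yields: all four $w^i$ are non-degenerate iff $q$ is a primitive $n$th root of $1$, as claimed. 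I expect the main obstacle to be the cocycle bookkeeping that produces $D'_i=q^i$ cleanly; the conceptual reduction of the opaque matrices $w^i$ to a Vandermonde determinant via the generalized Clifford structure of $B_S$ and $B_T$ is the step that makes the whole statement tractable, and the $w^2\leftrightarrow w^3$ symmetry is what lets me avoid repeating the ratio computation on the $X$-side.
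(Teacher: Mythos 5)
Your proof is correct, and although it rests on the same two pillars as the paper's proof --- splitting each $2n\times 2n$ matrix into two copies of an $n\times n$ block indexed by the pairs $\{a^{2k},a^{2k}b\}$ (resp. $\{a^{2k+1},a^{2k+1}b\}$), and recognizing that block as a Vandermonde matrix --- the way you link the four matrices is genuinely different and noticeably leaner. The paper treats $w^1$, $w^2$ and $w^4$ by three separate computations: $A^1$ via the bicharacter property of $w^1$, and $A^2$, $A^4$ via explicit evaluation of the ratios $\alpha_j/\alpha_i$ and $\beta_j/\beta_i$, which requires the identities \eqref{4.b3} and \eqref{4.b6} of Lemma \ref{lem4.1}; only $w^3$ is handled structurally, through $w^3(t,s)=w^2(s,t\triangleleft x)$ from Lemma \ref{lemm4.7}. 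You instead perform a single Vandermonde computation (for $w^1$) and transfer non-degeneracy along the chain $w^1\Leftrightarrow w^3\Leftrightarrow w^2\Leftrightarrow w^4$: the outer equivalences come from the observation that $l(E_a)$ and $l(X_a)$ are invertible (by \eqref{el.6} and \eqref{el.1} their $2n$-th powers are $\sum_{s\in S}e_s$ and $P_{2n}\sum_{t\in T}e_t$ with $P_{2n}\neq 0$), so that multiplication by them matches the span of $\{l(E_s)\}$ with that of $\{l(E_t)\}$, and the span of $\{l(X_s)\}$ with that of $\{l(X_t)\}$, while the middle equivalence is the paper's own use of Lemma \ref{lemm4.7}(ii). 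This eliminates the paper's two heaviest calculations; conversely, the paper's route never needs your idempotent decomposition of $H^*$ or any invertibility argument and stays entirely at the level of explicit matrices. Your identification $D'_i=q^i$ via the cocycle identity $\tau(a,a)\tau(a^2,a^{2i})=\tau(a,a^{2i})\tau(a,a^{2i+1})$ (together with $\sigma(a^{2i+2})=\sigma(a^2)\sigma(a^{2i})\tau(a^2,a^{2i})^2$, which comes from the defining compatibility of $\sigma$ and $\tau$) is valid, though you could shortcut it as the paper does: $w^1$ is a bicharacter on $S$, so $D'_i=w^1(a^2,a^{2i})=w^1(a^2,a^2)^i$ at once. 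Two small cautions: the phrase ``left multiplication by $u$ carries this diagonal span bijectively onto the $e_sx$-part'' should read ``onto the span of $\{l(E_t)\}$'' (it fills the whole $e_sx$-part only in the non-degenerate case; what you actually use is that multiplication by an invertible element preserves dimension), and your $X$-side computation of the eigenvalues $D_j$ is redundant, since invertibility of $l(X_a)$ alone already gives $w^2\Leftrightarrow w^4$ and the chain closes without knowing the $D_j$.
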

\begin{proof}
To calculate the matrices $w^{i}\;(1\leq i\leq4)$, we number the elements in sets $S,T$ in the following way
\begin{gather*}
s_j:=a^{2j-2},\;s_{n+j}:=a^{2j-2}b,\;t_j:=a^{2j-1},\;t_{n+j}:=a^{2j-1}b,
\end{gather*}
where $1 \leq i,j \leq n$. For convenience, let us denote four matrices $A^{i}(1\leq i\leq4)$ as follows
\begin{gather*}
A^1:=(w^1(s_i,s_j))_{1\leq i,j \leq n} \quad A^2:=(w^2(s_i,t_j))_{1\leq i,j \leq n}\\ A^3:=(w^3(t_i,s_j))_{1\leq i,j \leq n}\quad A^4:=(w^4(t_i,t_j))_{1\leq i,j \leq n}.
\end{gather*}

Firstly, we determine when $(w^1(s_i,s_j))_{1\leq i,j \leq 2n}$ is non-degenerate. For $1\leq i,j \leq n$, we find that
\begin{align*}
w^1(s_i,s_{n+j})&=w^1(a^{2i-2},a^{2j-2}b)=(\lambda_{2j-2,1})^{2i-2} (\alpha \beta)^{2(i-1)(j-1)}[\sigma(a^{2j-2})]^{i-1}\\
&=w^1(a^{2i-2},a^{2j-2})=w^1(s_i,s_j),\\
w^1(s_{n+i},s_{j})&=w^1(a^{2i-2}b,a^{2j-2})=(\lambda_{2j-2,1})^{2i-2} (\alpha \beta)^{2(i-1)(j-1)}[\sigma(a^{2j-2})]^{i-1}\\
&=w^1(a^{2i-2},a^{2j-2})=w^1(s_i,s_j)
\end{align*}
and
\begin{align*}
w^1(s_{n+i},s_{n+j})&=w^1(a^{2i-2}b,a^{2j-2}b)=-(\lambda_{2j-2,1})^{2i-2} (\alpha \beta)^{2(i-1)(j-1)}[\sigma(a^{2j-2})]^{i-1}\\
&=-w^1(a^{2i-2},a^{2j-2})=w^1(s_i,s_j).
\end{align*}
Therefore we have
\begin{gather*}
(w^1(s_i,s_j))_{1\leq i,j \leq 2n} = \begin{pmatrix} A^1 & A^1 \\ A^1 & -A^1 \end{pmatrix} \sim \begin{pmatrix} A^1 & 0 \\ 0 & A^1 \end{pmatrix}.
\end{gather*} Here ``$\sim$" means that two matrices can be gotten each other through elementary operations. Thus $(w^1(s_i,s_j))_{1\leq i,j \leq 2n}$ is non-degenerate if and only if $A^1$ is non-degenerate. Since $l(E_{s_1}) l(E_{s_2})=l(E_{s_1s_2})$ and $r(E_{s_1}) r(E_{s_2})=r(E_{s_1s_2})$, we get that $w^1$ is bicharacter on $S$. Let $w^1(a^2,a^2)=\gamma$, then we have
\begin{align*}
\gamma&=(\lambda_{2,1})^2 (\alpha \beta)^2 \sigma(a^2)=(P_{2}^{-1} \sigma(a))^2 (\alpha \beta)^2 \sigma(a^2)\\
&=(\tau(a,a)^{-1} \sigma(a))^2 (\alpha \beta)^2 \sigma(a^2)=(\sigma(a) \alpha \beta)^2 \sigma(a^2) \tau(a,a)^{-2}
\end{align*}
and
\begin{align*}
w^1(s_i,s_j)&=w^1(a^{2i-2},a^{2j-2})\\
&=w^1(a^2,a^2)^{(i-1)(j-1)}\\
&=\gamma^{(i-1)(j-1)}.
\end{align*}
Therefore we have $A^1=(\gamma^{(i-1)(j-1)})_{1\leq i,j \leq n}$. So $A^1$ is non-degenerate if and only if $\gamma^m\neq 1$ for $1\leq m \leq (n-1)$. Since $w^1$ is bicharacter on $S$, we have $w^1(a^2,a^2)^n=w^1(a^{2n},a^2)=1$ and thus $\gamma^n=1$. So we get that  $(w^1(s_i,s_j))_{1\leq i,j \leq 2n}$ is non-degenerated if and only if the following condition holds
\begin{gather}
\label{non-deg-w1}(\sigma(a) \alpha \beta)^2 \sigma(a^2) \tau(a,a)^{-2} \;\text{is a primitive nth root of 1.}
\end{gather}
Secondly, we determine when $(w^2(s_i,t_j))_{1\leq i,j \leq 2n}$ is non-degenerate. Assume that $1\leq i,j \leq n$ and let $\delta:=\tau(b,a)\frac{\beta}{\alpha}$, then we have
\begin{align*}
w^2(s_{i},t_{n+j})&=w^2(a^{2i-2},a^{2j-1}b)\\
&=w^2(a^{2i-2},a^{2j-1}) \\
&=w^2(s_{i},t_{j})
\end{align*}
and
\begin{align*}
w^2(s_{i+n},t_{j})&=w^2(a^{2i-2}b,a^{2j-1})\\
&=\frac{\tau(b,a) \beta}{ \tau(b,a^{2i-2}) \alpha} \lambda_{2i-2,2j-1} [S_{j-1,0} S_{j-1,1}]^{i-1}\\
&=\tau(b,a^{2i-2})^{-1} \frac{\tau(b,a) \beta}{\alpha} \lambda_{2i-2,2j-1} [S_{j-1,0} S_{j-1,1}]^{i-1}\\
&=\tau(b,a^{2i-2})^{-1} \delta \lambda_{2i-2,2j-1} [S_{j-1,0} S_{j-1,1}]^{i-1}\\
&=\tau(b,a^{2i-2})^{-1} \delta w^2(a^{2i-2},a^{2j-1})\\
&=\tau(b,a^{2i-2})^{-1} \delta w^2(s_i,t_j).
\end{align*}
Therefore  $w^2(s_{i},t_{n+j})=w^2(s_{i},t_{j})$ and $w^2(s_{i+n},t_{j})=\tau(b,a^{2i-2})^{-1} \delta w^2(s_i,t_j)$. Since
\begin{align*}
w^2(s_{i+n},t_{j+n})&=w^2(a^{2i-2}b,a^{2j-1}b)\\
&=-w^2(a^{2i-2}b,a^{2j-1})\\
&=-w^2(s_{i+n},t_{j})\\
&=-\tau(b,a^{2i-2})^{-1} \delta w^2(s_i,t_j),
\end{align*}
we have $w^2(s_{i+n},t_{j+n})=-\tau(b,a^{2i-2})^{-1} \delta w^2(s_i,t_j)$. Let $B=(b_{ij})_{1\leq i,j \leq n}$ be a $n\times n$ matrix defined by $b_{ij}=\tau(b,a^{2i-2})^{-1} \delta w^2(s_i,t_j)$, then we have
\begin{gather*}
(w^2(s_i,t_j))_{1\leq i,j \leq 2n} = \begin{pmatrix} A^2 & A^2 \\ B & -B \end{pmatrix} \sim \begin{pmatrix} A^2 & A^2 \\ A^2 & -A^2 \end{pmatrix} \sim \begin{pmatrix} A^2 & 0 \\ 0 & A^2 \end{pmatrix}.
\end{gather*}
Therefore $(w^2(s_i,t_j))_{1\leq i,j \leq 2n}$ is non-degenerate if and only if $A^2$ is non-degenerate. Let $\alpha_j:=w^2(a^2,a^{2j-1})$, then we have $\alpha_j=P_{2}^{-1}\sigma(a^{2j-1}) S_{j-1,0} S_{j-1,1}$ by definition. By
\begin{align*}
w^2(s_{i},t_{j})&=w^2(a^{2i-2},a^{2j-1})\\
&=\lambda_{2i-2,2j-1}[S_{j-1,0} S_{j-1,1}]^{i-1}\\
&=P_{2i-2}^{-1} \sigma(a^{2j-1})^{i-1} [S_{j-1,0} S_{j-1,1}]^{i-1}\\
&=P_{2i-2}^{-1} P_{2}^{i-1} P_{2}^{-(i-1)}\sigma(a^{2j-1})^{i-1} [S_{j-1,0} S_{j-1,1}]^{i-1}\\
&=P_{2i-2}^{-1} P_{2}^{i-1} [P_{2}^{-1}\sigma(a^{2j-1}) S_{j-1,0} S_{j-1,1}]^{i-1}\\
&=P_{2i-2}^{-1} P_{2}^{i-1} \alpha_j^{(i-1)},
\end{align*}
we have
\begin{gather*}
A^2= (P_{2i-2}^{-1} P_{2}^{i-1} \alpha_j^{(i-1)})_{1 \leq i,j \leq n} \sim (\alpha_j^{(i-1)})_{1 \leq i,j \leq n}.
\end{gather*}
Therefore $A^2$ is non-degenerated if and only if $\frac{\alpha_j}{\alpha_i}\neq 1$ for $1\leq i < j\leq n$. Direct computations show that
\begin{align*}
\frac{\alpha_j}{\alpha_i}&=\frac{P_{2}^{-1}\sigma(a^{2j-1}) S_{j-1,0} S_{j-1,1}}{P_{2}^{-1}\sigma(a^{2i-1}) S_{i-1,0} S_{i-1,1}}\\
&=\frac{\sigma(a^{2j-1}) S_{j-1,0} S_{j-1,1}}{\sigma(a^{2i-1}) S_{i-1,0} S_{i-1,1}}
\end{align*}
and
\begin{align*}
\frac{S_{j-1,0} S_{j-1,1}}{S_{i-1,0} S_{i-1,1}}&=\frac{(\lambda_{2j-1,1} \alpha^j \beta^{j-1}) S_{j-1,1}}{S_{i-1,0} S_{i-1,1}}\\
&=\frac{(\lambda_{2j-1,1} \alpha^j \beta^{j-1}) (h(a,a^{2j-1}b) \lambda_{2j-1,1} \alpha^{j-1} \beta^{j}) }{S_{i-1,0} S_{i-1,1}}\\
&=\frac{h(a,a^{2j-1}b) (\lambda_{2j-1,1})^2 (\alpha \beta)^{2j-1}}{h(a,a^{2i-1}b) (\lambda_{2i-1,1})^2 (\alpha \beta)^{2i-1}}\\
&=\frac{(\lambda_{2j-1,1})^2}{(\lambda_{2i-1,1})^2} \frac{h(a,a^{2j-1}b)}{h(a,a^{2i-1}b)} (\alpha \beta)^{2(j-i)}\\
&=\frac{P_{2j-1}^{-2} \sigma(a)^{2j}}{P_{2i-1}^{-2} \sigma(a)^{2i}} \frac{h(a,a^{2j-1}b)}{h(a,a^{2i-1}b)} (\alpha \beta)^{2(j-i)}\\
&=\frac{P_{2j-1}^{-2}}{P_{2i-1}^{-2}} \frac{h(a,a^{2j-1}b)}{h(a,a^{2i-1}b)} [\sigma(a) \alpha \beta]^{2(j-i)}.
\end{align*}
This implies that
\begin{align}
\label{change1.1} \frac{\alpha_j}{\alpha_i}&=\frac{\sigma(a^{2j-1})} {\sigma(a^{2i-1})} \frac{P_{2j-1}^{-2}}{P_{2i-1}^{-2}} \frac{h(a,a^{2j-1}b)}{h(a,a^{2i-1}b)} [\sigma(a) \alpha \beta]^{2(j-i)}.
\end{align}
By Lemma \ref{lem4.1},
\begin{align}
\label{change1} P_{2k+1}^{2l}\sigma(a)^l\sigma(a^{2l})^k&=P_{2l}^{2k}\sigma(a^{2k+1})^l
[\frac{\tau(b,a)}{\tau(b,a^{2k+1})}]^l,\;k,l\geq 0,\\
h(a,a^{2k+1}b)&=\frac{\tau(b,a)}{\tau(b,a^{2k+1})}\notag
\end{align}
and thus
\begin{align}
\label{change2} P_{2k+1}^{2l}\sigma(a)^l\sigma(a^{2l})^k=P_{2l}^{2k}\sigma(a^{2k+1})^l
[h(a,a^{2k+1}b)]^l,\;k,l\geq 0.
\end{align}
Taking $k=(j-1)$ and $l=1$, then the equation \eqref{change2} becomes $P_{2j-1}^{2}\sigma(a)\sigma(a^{2})^{j-1}=P_{2}^{2j-2}\sigma(a^{2j-1})
[h(a,a^{2k+1}b)]$ and thus
\begin{align}
\label{change3} \sigma(a^{2j-1}) P_{2j-1}^{-2} h(a,a^{2j-1}b)=P_{2}^{-2j-2} \sigma(a) \sigma(a^2)^{j-1}.
\end{align}
Due to the equations \eqref{change1.1} and \eqref{change3}, we have
\begin{align*}
 \frac{\alpha_j}{\alpha_i}&=\frac{P_{2}^{-2j-2} \sigma(a) \sigma(a^2)^{j-1}} {P_{2}^{-2i-2} \sigma(a) \sigma(a^2)^{i-1}} [\sigma(a) \alpha \beta]^{2(j-i)}\\
&=[(\sigma(a)\alpha\beta)^2 \sigma(a^2) \tau(a,a)^{-2}]^{j-i}.
\end{align*}
Since we have showed that $(w^2(s_i,t_j))_{1\leq i,j \leq 2n}$ is non-degenerate if and only if $A^2$ is non-degenerate and $A^2$ is non-degenerate if and only if $\frac{\alpha_j}{\alpha_i}\neq 1$ for $1\leq i < j\leq n$, we know that $(w^2(s_i,t_j))_{1\leq i,j \leq 2n}$ is non-degenerate if and only if the following condition holds
\begin{gather}
\label{non-deg-w2}[(\sigma(a) \alpha \beta)^2 \sigma(a^2) \tau(a,a)^{-2}]^m \neq 1\;\text{for all} \;1 \leq m \leq (n-1).
\end{gather}

 Now we turn to the consideration of the non-degeneracy of $(w^3(t_i,s_j))_{1\leq i,j \leq 2n}$. Thanks to Lemma \ref{lemm4.7}, we have $l(E_t)=r(E_{t\triangleleft x})$ for all $t\in T$. Since $l(E_t)=\sum_{s\in S}w^3(t,s)e_s x$ and $r(E_{t\triangleleft x})=\sum_{s\in S}w^2(s,t\triangleleft x)e_s x$, we have $w^3(t,s)=w^2(s,t\triangleleft x)$. From this observation,  we have
\begin{gather*}
w^3(t_i,s_j)=w^2(s_j,t_{i+n}),\;w^3(t_{i+n},s_j)=w^2(s_j,t_i),\\
w^3(t_{i},s_{j+n})=w^2(s_{j+n},t_{i+n}),\;w^3(t_{i+n},s_{j+n})=w^2(s_{j+n},t_i),
\end{gather*}
for $1\leq i,j \leq n$. Therefore, $(w^3(t_i,s_j))_{1\leq i,j \leq 2n}$ is non-degenerate if and only if $(w^2(s_i,t_j))_{1\leq i,j \leq 2n}$ is non-degenerate and thus $(w^3(t_i,s_j))_{1\leq i,j \leq 2n}$ is non-degenerate if and only if the condition \eqref{non-deg-w2} holds.

Lastly we will determine when $(w^4(t_i,t_j))_{1\leq i,j \leq 2n}$ is non-degenerate. By Lemma \ref{lemm4.6}, we have $l(X_b)l(X_{a^{2i-1}})=\tau(b,a^{2i-1}) l(X_{a^{2i-1}b})$. Because
\begin{align*}
l(X_b)l(X_{a^{2i-1}})&=(\sum_{t\in T} w^2(b,t)e_t) (\sum_{t\in T} w^4(a^{2i-1},t)e_t x)\\
&=\sum_{t\in T} w^2(b,t)w^4(a^{2i-1},t)e_t x
\end{align*}
and
\begin{align*}
\tau(b,a^{2i-1}) l(X_{a^{2i-1}b})&=\tau(b,a^{2i-1})\sum_{t\in T} w^4(a^{2i-1}b,t)e_t x\\
&=\sum_{t\in T}\tau(b,a^{2i-1}) w^4(a^{2i-1}b,t)e_t x,
\end{align*}
we have $w^2(b,t)w^4(a^{2i-1},t)=\tau(b,a^{2i-1}) w^4(a^{2i-1}b,t)$. Through taking $t=a^{2j-1}$, we get that $w^2(b,a^{2j-1})w^4(a^{2i-1},a^{2j-1})=\tau(b,a^{2i-1}) w^4(a^{2i-1}b,a^{2j-1})$ and therefore
\begin{align}
\label{w4non-deg1} w^4(t_{i+n},t_j)=\tau(b,a^{2i-1})^{-1} \delta w^4(t_i,t_j),\;1 \leq i,j \leq n.
\end{align}
And if we let $t=a^{2j-1}b$, then $w^2(b,t)w^4(a^{2i-1},t)=\tau(b,a^{2i-1}) w^4(a^{2i-1}b,t)$ becomes $w^2(b,a^{2j-1}b)w^4(a^{2i-1},a^{2j-1}b)=\tau(b,a^{2i-1}) w^4(a^{2i-1}b,a^{2j-1}b)$, and hence we have
\begin{align}
\label{w4non-deg2} w^4(t_{i+n},t_{j+n})=-\tau(b,a^{2i-1})^{-1} \delta w^4(t_i,t_{j+n})\;1 \leq i,j \leq n.
\end{align}
Similarly, owing to Lemma \ref{lemm4.6}, we have $r(X_b)r(X_{a^{2i-1}})=\tau(a^{2i-1},b) r(X_{a^{2i-1}b})$. By
\begin{align*}
r(X_b)r(X_{a^{2i-1}})&=(\sum_{t\in T} w^3(t,b)e_t) (\sum_{t\in T} w^4(t,a^{2i-1})e_t x)\\
&=\sum_{t\in T} w^3(t,b)w^4(t,a^{2i-1})e_t x,\\
\tau(a^{2i-1},b) r(X_{a^{2i-1}b})&=\tau(a^{2i-1},b)\sum_{t\in T} w^4(t,a^{2i-1}b)e_t x\\
&=\sum_{t\in T}\tau(a^{2i-1},b) w^4(t,a^{2i-1}b)e_t x,
\end{align*}
we have $w^3(t,b)w^4(t,a^{2i-1})=\tau(a^{2i-1},b) w^4(t,a^{2i-1}b)$. Through taking $t=a^{2j-1}$, we get that $w^3(a^{2j-1},b)w^4(a^{2j-1},a^{2i-1})=\tau(a^{2i-1},b) w^4(a^{2j-1},a^{2i-1}b)$. Note that we already have $\tau(a^{2j-1},b)=-\tau(b,a^{2j-1})$ and $w^3(t_i,b)=-\delta$, so
\begin{align}
\label{w4non-deg3} w^4(t_{i},t_{j+n})=\tau(b,a^{2j-1})^{-1} \delta w^4(t_i,t_j),\;1 \leq i,j \leq n.
\end{align}
From the equations \eqref{w4non-deg1}, \eqref{w4non-deg2} and \eqref{w4non-deg3}, we know that
\begin{gather*}
(w^4(t_i,t_j))_{1\leq i,j \leq 2n} = \begin{pmatrix} A^4 & C \\ D & E \end{pmatrix},
\end{gather*}
where $C,D,E$ are $n\times n$ matrices defined by
\begin{align*}
C:&=(\tau(b,a^{2j-1})^{-1} \delta w^4(t_i,t_j))_{1\leq i,j \leq n}, \\
D:&=(\tau(b,a^{2i-1})^{-1} \delta w^4(t_i,t_{j}))_{1\leq i,j \leq n},\\
E:&=(-\tau(b,a^{2i-1})^{-1} \delta w^4(t_i,t_{j+n}))_{1\leq i,j \leq n}.
\end{align*}
Therefore we have
\begin{gather*}
(w^4(t_i,t_j))_{1\leq i,j \leq 2n} = \begin{pmatrix} A^4 & C \\ D & E \end{pmatrix} \sim \begin{pmatrix} A^4 & C \\ A^4 & -C \end{pmatrix} \sim \begin{pmatrix} A^4 & 0 \\ 0 & C \end{pmatrix} \sim \begin{pmatrix} A^4 & 0 \\ 0 & A^4  \end{pmatrix}.
\end{gather*}
As a result we get that $(w^4(t_i,t_j))_{1\leq i,j \leq 2n}$ is non-degenerate
if and only if $A^4$ is non-degenerate. Since
\begin{align*}
w^4(t_i,t_j)&=w^4(a^{2i-1},a^{2j-1})\\
&=\lambda_{2i-1,2j-1} S_{j-1,0}^i S_{j-1,1}^{i-1}\\
&=P_{2i-1}^{-1} \sigma(a^{2j-1})^{i-1} S_{j-1,0}^i S_{j-1,1}^{i-1},
\end{align*}
we have
\begin{gather*}
A^4=(P_{2i-1}^{-1} \sigma(a^{2j-1})^{i-1} S_{j-1,0}^i S_{j-1,1}^{i-1})_{1 \leq i,j \leq n} \sim (\sigma(a^{2j-1})^{i-1} S_{j-1,0}^{i-1} S_{j-1,1}^{i-1})_{1 \leq i,j \leq n}.
\end{gather*}
Therefore $A^4$ is non-degenerate if and only if $\frac{\beta_j}{\beta_i}\neq 1$ for all $1\leq i<j \leq n$ where $\beta_j=\sigma(a^{2j-1}) S_{j-1,0} S_{j-1,1}$. Recall that $\alpha_j=P_{2}^{-1}\sigma(a^{2j-1}) S_{j-1,0} S_{j-1,1}$. Thus $\frac{\beta_j}{\beta_i}=\frac{\alpha_j}{\alpha_i}$ and we get that $A^4$ is non-degenerate if and only if $\frac{\alpha_j}{\alpha_i} \neq 1$ for all $1\leq i<j \leq n$. But we have proved $\frac{\alpha_j}{\alpha_i}\neq 1$ for all $1\leq i<j \leq n$ if and only if the condition \eqref{non-deg-w2} holds (see the proof for the non-degeneracy of $w^2$). Since conditions \ref{non-deg-w1}, \ref{non-deg-w2}, we get what we want.
\end{proof}
Using the Lemma \ref{min}, we can give a very simple criterion for $K(8n,\sigma,\tau)$ to be a minimal quasitriangular Hopf algebra for the case $\eta(a,b)=-1$.
\begin{theorem}\label{thm4.1}
If $K(8n,\sigma,\tau)$ such that $\eta(a,b)=-1$, then it is minimal if and only if there is a $\omega \in \Bbbk$ such that $\omega^n=P_{2n}$ and $\omega^2 \sigma(a^2)\tau(a,a)^{-2}$ is a primitive $n$th root of 1. Moreover, if $K(8n,\sigma,\tau)$ is minimal, then all minimal quasitriangular structures on it can be given by
 $\{R_{\alpha,\beta}\;|\; \alpha^4=\frac{-\omega^2\tau(a,a)^2}{\sigma(a^2)}, \beta=\frac{\omega}{\sigma(a)\alpha},\;\omega \in \Bbbk \; \text{such that}\; \omega^n=P_{2n} \;\text{and} \; \omega^2 \sigma(a^2) \tau(a,a)^{-2} \text{ is a primitive} \;n\text{th root of}\;1\}.$
\end{theorem}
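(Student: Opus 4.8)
The plan is to reduce the statement to the machinery already assembled in this subsection and then to perform one cocycle computation reconciling the two shapes of $\alpha^4$. First I would observe that $K(8n,\sigma,\tau)$ is a minimal quasitriangular Hopf algebra precisely when it carries at least one minimal quasitriangular structure. By Corollary \ref{coro2.2.1} any such structure is non-trivial, so by Theorem \ref{pro4.2} it equals some $R_{\alpha,\beta}$ with $(\alpha\beta)^n\lambda_{2n,1}=1$ and $\frac{\beta^2}{\alpha^2}=\frac{\tau(b,b)}{\tau(b,a)^2}$. Corollary \ref{coro4.1} says $R_{\alpha,\beta}$ is minimal if and only if its matrices $w^i$ $(1\le i\le4)$ are non-degenerate, and Lemma \ref{min} translates this into the single requirement that $(\sigma(a)\alpha\beta)^2\sigma(a^2)\tau(a,a)^{-2}$ be a primitive $n$th root of $1$. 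Thus the whole question is recast purely in terms of the scalars $\alpha,\beta$.

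The key change of variable is $\omega:=\sigma(a)\alpha\beta$. Since $\lambda_{2n,1}=P_{2n}^{-1}\sigma(a)^n$, the constraint $(\alpha\beta)^n\lambda_{2n,1}=1$ is exactly $\omega^n=P_{2n}$, while the non-degeneracy condition becomes the assertion that $\omega^2\sigma(a^2)\tau(a,a)^{-2}$ is a primitive $n$th root of $1$. For the forward implication I would simply take, from a given minimal $R_{\alpha,\beta}$, the scalar $\omega=\sigma(a)\alpha\beta$; it satisfies both displayed conditions. For the converse, given $\omega$ with $\omega^n=P_{2n}$ and $\omega^2\sigma(a^2)\tau(a,a)^{-2}$ primitive, I would produce a witness pair $(\alpha,\beta)$: using that $\Bbbk$ is algebraically closed, choose $\alpha$ with $\alpha^4=\frac{\omega^2\tau(b,a)^2}{\sigma(a)^2\tau(b,b)}$ (the right-hand side is a nonzero scalar, since $\omega,\tau,\sigma$ take values in $\Bbbk^{\times}$) and set $\beta=\frac{\omega}{\sigma(a)\alpha}$. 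A direct check gives $\sigma(a)\alpha\beta=\omega$ and $\frac{\beta^2}{\alpha^2}=\frac{\omega^2}{\sigma(a)^2\alpha^4}=\frac{\tau(b,b)}{\tau(b,a)^2}$, so $R_{\alpha,\beta}$ is a genuine non-trivial quasitriangular structure which is minimal by the reduction above; hence $K(8n,\sigma,\tau)$ is minimal.

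For the ``moreover'' clause I would note that the minimal quasitriangular structures are exactly the $R_{\alpha,\beta}$ for which $\omega=\sigma(a)\alpha\beta$ satisfies the two conditions, subject to $\frac{\beta^2}{\alpha^2}=\frac{\tau(b,b)}{\tau(b,a)^2}$; solving these as above forces $\beta=\frac{\omega}{\sigma(a)\alpha}$ and $\alpha^4=\frac{\omega^2\tau(b,a)^2}{\sigma(a)^2\tau(b,b)}$. It remains to match this with the stated value $\alpha^4=\frac{-\omega^2\tau(a,a)^2}{\sigma(a^2)}$; after substituting $\sigma(a^2)=\sigma(a)^2\tau(a,a)\tau(ab,ab)$ (from the compatibility $\sigma(gh)\sigma(g)^{-1}\sigma(h)^{-1}=\tau(g,h)\tau(g\triangleleft x,h\triangleleft x)$ with $g=h=a$), this is equivalent to the identity $\tau(b,a)^2\tau(ab,ab)=-\tau(a,a)\tau(b,b)$. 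This is the main technical obstacle, and I expect to clear it with the $2$-cocycle identity: expanding $\tau(ab,ab)$ via the cocycle relation together with $\tau(a,b)\tau(ab,b)=\tau(b,b)$ (already used near \eqref{sig}) yields $\tau(ab,ab)=\frac{\tau(a,a)\tau(b,a)\tau(b,b)}{\tau(a,b)^3}$, whence $\tau(b,a)^2\tau(ab,ab)=\tau(a,a)\tau(b,b)\bigl(\frac{\tau(b,a)}{\tau(a,b)}\bigr)^3$. Since $\eta(a,b)=\tau(a,b)\tau(b,a)^{-1}=-1$ throughout this subsection, the cube equals $-1$, giving the required identity and the two displayed forms of $\alpha^4$, which completes the proof.
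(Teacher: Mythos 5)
Your proof is correct and follows essentially the same route as the paper's: both reduce the question to Lemma \ref{min} via Corollary \ref{coro2.2.1}, Theorem \ref{pro4.2} and Corollary \ref{coro4.1}, and both pivot on the substitution $\omega=\sigma(a)\alpha\beta$, producing the witness pair $(\alpha,\beta)$ with $\alpha^4=\frac{\omega^2\tau(b,a)^2}{\sigma(a)^2\tau(b,b)}$ and $\beta=\frac{\omega}{\sigma(a)\alpha}$ exactly as the paper does. The only difference is that you explicitly establish the cocycle identity $\tau(b,a)^2\tau(ab,ab)=-\tau(a,a)\tau(b,b)$ (valid since $\eta(a,b)=-1$) that reconciles this expression for $\alpha^4$ with the stated form $\frac{-\omega^2\tau(a,a)^2}{\sigma(a^2)}$, a verification the paper compresses into ``it is not hard to see''; your computation of $\tau(ab,ab)=\frac{\tau(a,a)\tau(b,a)\tau(b,b)}{\tau(a,b)^3}$ is correct.
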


\begin{proof}
Firstly, we will show that if there is a $\omega \in \Bbbk$ such that
\begin{align*}
\omega^n=P_{2n}\; \text{and}\; \omega^2 \sigma(a^2)\tau(a,a)^{-2} \;\text{is a primitive} \;n\text{th root of 1},
\end{align*}
then $K(8n,\sigma,\tau)$ is minimal. Since $\Bbbk$ is algebraically closed, we can find a $\alpha\in \Bbbk$ such that $\alpha^4=\frac{w^2 \tau(b,a)^2}{\sigma(a)^2 \tau(b,b)}$. Define $\beta:=\frac{w}{\sigma(a)\alpha}$. Since
\begin{align*}
(\alpha\beta)^n \lambda_{2n,1}&=(\alpha\beta)^n (P_{2n}^{-1} \sigma(a)^n)=(\alpha \beta  \sigma(a))^n P_{2n}^{-1}\\
&=\omega^n P_{2n}^{-1}=1
\end{align*}
and
\begin{align*}
\frac{\beta^2}{\alpha^2}&=\frac{\omega^2}{\sigma(a)^2 \alpha^4} =\frac{\omega^2 \sigma(a)^2 \tau(b,b)}{\sigma(a)^2 \omega^2 \tau(b,a)^2}\\
&=\frac{\tau(b,b)}{\tau(b,a)^2},
\end{align*}
 we can define $R_{\alpha,\beta}$ by using this $\alpha$ and $\beta$. By Proposition \ref{pro4.1}, $R_{\alpha,\beta}$ is a universal $\mathcal{R}$-matrix of $K(8n,\sigma,\tau)$. By $(\sigma(a) \alpha \beta)^2\sigma(a^2) \tau(a,a)^{-2}=\omega^2 \sigma(a^2)\tau(a,a)^{-2}$ and $\omega^2 \sigma(a^2)\tau(a,a)^{-2}$ is a primitive $n$th root of 1,  $R_{\alpha,\beta}$ is a minimal quasitriangular structure on $K(8n,\sigma,\tau)$ by above Lemma \ref{min}.

Conversely, assume that $K(8n,\sigma,\tau)$ is minimal. Then we need to find a $\omega \in \Bbbk$ such that $\omega^n=P_{2n}$ and $\omega^2 \sigma(a^2)\tau(a,a)^{-2}$ is a primitive $n$th root of 1. Now assume that $R_{\alpha,\beta}$ is a minimal quasitriangular structure on $K(8n,\sigma,\tau)$. Define $\omega:=\alpha \beta \sigma(a)$. We will show that this $\omega$ satisfies our requirements.  Since $(\alpha \beta)^n \lambda_{2n,1}=1$ and $\lambda_{2n,1}=P_{2n}^{-1} \sigma(a)^n$, we have $(\alpha \beta \sigma(a))^n=P_{2n}$ and therefore $\omega^n=P_{2n}$. Because $R_{\alpha,\beta}$ is minimal quasitriangular structure, we know $(\sigma(a) \alpha \beta)^2 \sigma(a^2) \tau(a,a)^{-2}$ is a primitive $n$th root of 1 which implies that $\omega^2 \sigma(a^2) \tau(a,a)^{-2}$ is primitive $n$th root of 1. Hence we have showed $\omega$ is a needed one.

 At last, we need to show that if $K(8n,\sigma,\tau)$ is minimal, then all minimal quasitriangular structures on it can be given by
 $\{R_{\alpha,\beta}\;|\; \alpha^4=\frac{-\omega^2\tau(a,a)^2}{\sigma(a^2)}, \beta=\frac{\omega}{\sigma(a)\alpha},\;\omega \in \Bbbk \; \text{such that}\; \omega^n=P_{2n} \;\text{and} \; \omega^2 \sigma(a^2) \tau(a,a)^{-2} \text{ is a primitive} \;n\text{th root of}\;1\}$. For convenience, we denote the set $\{R_{\alpha,\beta}\;|\; \alpha^4=\frac{-\omega^2\tau(a,a)^2}{\sigma(a^2)}, \beta=\frac{\omega}{\sigma(a)\alpha},\;\omega \in \Bbbk \; \text{such that}\; \omega^n=P_{2n} \;\text{and} \; \omega^2 \sigma(a^2) \tau(a,a)^{-2} \text{ is a primitive} \;n\text{th root of}\;1\}$ by $Q$. If $R_{\alpha,\beta}$ is a minimal quasitriangular structure on it, then we can define
 $\omega:=\alpha\beta\sigma(a)$. It is not hard to see that  $\alpha^4=\frac{-\omega^2\tau(a,a)^2}{\sigma(a^2)}$ and $\beta=\frac{\omega}{\sigma(a)\alpha}$. Therefore $R_{\alpha,\beta} \in Q$. If $R_{\alpha,\beta} \in Q$, then we have $(\alpha\beta)^n \lambda_{2n,1}=1$ and $\frac{\beta^2}{\alpha^2}=\frac{\tau(b,b)}{\tau(b,a)^2}$ due to the calculations above. Therefore, $R_{\alpha,\beta}$ is a universal $\mathcal{R}$-matrix of $K(8n,\sigma,\tau)$ by Proposition \ref{pro4.1}. In addition, Lemma \ref{min} implies that $R_{\alpha,\beta}$ is a minimal quasitriangular structure.

\end{proof}
To use Theorem \ref{thm4.1} more conveniently, we give the following corollary.

\begin{corollary} \label{coro4.2}
Let $K(8n,\sigma,\tau)$ as before in Theorem \ref{thm4.1}. If $\tau(a,a^i)=1$ for $i\in \mathbb{N}$, then $K(8n,\sigma,\tau)$ is minimal if and only if there is a $\omega \in \Bbbk$ such that $\omega^n=1$ and $\omega^2 \sigma(a^2)$ is a primitive $n$th root of 1. Moreover, if $K(8n,\sigma,\tau)$ is minimal, then all minimal quasitriangular structures on it can be given by
 $\{R_{\alpha,\beta}\;|\;\alpha^4=\frac{-\omega^2}{\sigma(a^2)}, \beta=\frac{\omega}{\sigma(a)\alpha} ,\; \omega \in \Bbbk \; \text{such that }\omega^n=1 \; \text{and } \; \omega^2 \sigma(a^2)\; \text{is a primitive} \;n\text{th root of}\;1\}$.
\end{corollary}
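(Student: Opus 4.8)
The plan is to obtain this corollary as a direct specialization of Theorem \ref{thm4.1}; under the extra hypothesis $\tau(a,a^i)=1$ for all $i\in\mathbb{N}$, the only task is to see how the cocycle quantities $P_{2n}$ and $\tau(a,a)$ occurring in that theorem collapse.

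First I would record that the hypothesis forces $P_i=1$ for every $i$. By definition $P_i=\prod_{k=1}^{i-1}\tau(a,a^k)$ for $i\geq 2$ and $P_0=P_1=1$; since each factor $\tau(a,a^k)$ equals $1$, every such product is $1$. In particular $P_{2n}=1$. Moreover the case $i=1$ of the hypothesis gives $\tau(a,a)=\tau(a,a^1)=1$, so that $\tau(a,a)^{-2}=1$ and $\tau(a,a)^2=1$.

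Next I would substitute these simplifications into Theorem \ref{thm4.1}. The minimality criterion there requires $\omega\in\Bbbk$ with $\omega^n=P_{2n}$ and $\omega^2\sigma(a^2)\tau(a,a)^{-2}$ a primitive $n$th root of $1$; replacing $P_{2n}$ by $1$ and $\tau(a,a)^{-2}$ by $1$ turns this into the statement that $\omega^n=1$ and $\omega^2\sigma(a^2)$ is a primitive $n$th root of $1$, which is exactly the asserted criterion. For the list of minimal quasitriangular structures, the formula $\alpha^4=\frac{-\omega^2\tau(a,a)^2}{\sigma(a^2)}$ from Theorem \ref{thm4.1} becomes $\alpha^4=\frac{-\omega^2}{\sigma(a^2)}$ once $\tau(a,a)^2=1$ is inserted, while $\beta=\frac{\omega}{\sigma(a)\alpha}$ is unaffected; this reproduces precisely the set displayed in the corollary.

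Since the corollary is nothing more than a specialization of Theorem \ref{thm4.1}, there is no substantial obstacle. The only point needing a moment's attention is to confirm that the hypothesis $\tau(a,a^i)=1$ trivializes every occurrence of $\tau$ that enters the relevant formulas---namely $P_{2n}$ and $\tau(a,a)$---and that no further dependence on the cocycle $\tau$ remains hidden in those expressions. Once $P_i\equiv 1$ is in hand, the remainder is direct substitution into the established theorem.
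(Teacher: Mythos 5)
Your proof is correct and matches the paper's intent exactly: the paper states this corollary without a separate proof, treating it as the immediate specialization of Theorem \ref{thm4.1} obtained by noting that $\tau(a,a^i)=1$ forces $P_i=1$ (hence $P_{2n}=1$) and $\tau(a,a)=1$. Your substitution into both the minimality criterion and the description of the set of minimal quasitriangular structures is precisely that specialization, so nothing further is needed.
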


Using Corollary \ref{coro4.2}, we can give a class of minimal quasitriangular Hopf algebras as follows

\begin{corollary} \label{coro4.3}
Let $K(8n,\zeta)$ be the Hopf algebras given in Example \ref{ex2.1.1}, then we have the following conclusions:
\begin{itemize}
  \item[(i)] if $n$ is even and $n\geq 4$, then $K(8n,\zeta)$ is minimal and all minimal quasitriangular structures on it can be given by
 $\{R_{\alpha,\beta}\;|\;\alpha^4=\frac{\omega^2}{\zeta^2}, \beta=\frac{\omega}{\alpha \zeta} ,\; \omega \in \Bbbk \; \text{such that}\; \omega^n=1 \;\text{and} \; -(\omega \zeta)^2\; \text{ is primitive} \;n\text{th root of}\;1\}$.
 \item[(ii)] if $n$ is odd or $n=2$, then $K(8n,\zeta)$ is not minimal.
\end{itemize}
\end{corollary}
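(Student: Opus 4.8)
The plan is to derive everything from Corollary \ref{coro4.2}, so the first task is to check its hypothesis and read off the constants attached to the data $(G,\triangleleft,\sigma,\tau)$ of $K(8n,\zeta)$. Since $\tau(a^ib^j,a^kb^l)=(-1)^{jk}$, we get $\tau(a,a^i)=(-1)^{0\cdot i}=1$ for all $i$, so the hypothesis $\tau(a,a^i)=1$ of Corollary \ref{coro4.2} holds; in particular $P_{2n}=\prod_{k=1}^{2n-1}\tau(a,a^k)=1$ and $\tau(a,a)=1$. From $\sigma(a^ib^j)=(-1)^{i(i-1)/2}\zeta^i$ I record $\sigma(a)=\zeta$ and $\sigma(a^2)=-\zeta^2$, and from $\eta(a,b)=\tau(a,b)\tau(b,a)^{-1}=1\cdot(-1)^{-1}=-1$ I confirm that we are in the case $\eta(a,b)=-1$ governed by that corollary. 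Feeding these in, $K(8n,\zeta)$ is minimal if and only if there is $\omega\in\Bbbk$ with $\omega^n=1$ and $\omega^2\sigma(a^2)=-(\omega\zeta)^2$ a primitive $n$th root of $1$, which is exactly the condition packaged into the statement.

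The heart of the argument is the purely arithmetic question of when such an $\omega$ exists, and I would carry it out inside the cyclic group $\langle\zeta\rangle=\mu_{2n}$. Because $\zeta$ is a primitive $2n$th root, $\mu_n=\langle\zeta^2\rangle$ and $\zeta^n=-1$; hence every $\omega$ with $\omega^n=1$ is of the form $\omega=\zeta^{2j}$, and $-(\omega\zeta)^2=\zeta^n\zeta^{4j+2}=\zeta^{\,n+4j+2}$. Since $\zeta^t$ has order $2n/\gcd(t,2n)$, the element $\zeta^{\,n+4j+2}$ is a primitive $n$th root of $1$ precisely when $\gcd(n+4j+2,\,2n)=2$. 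Writing $n=2m$ and cancelling the common factor $2$, this reduces to solving $\gcd(m+2j+1,\,2m)=1$ in $j$. I would then analyse solvability through the parity of $m$ (equivalently $n\bmod 4$), using that $m+2j+1$ runs over an entire parity class as $j$ varies, so that one can read off exactly when a value coprime to $2m$ is reachable. This case analysis, controlled by the sign $\zeta^n=-1$, is what determines the dichotomy between the minimal and non-minimal cases, and it is the step where I expect the main (though elementary) difficulty to lie: one must track \emph{primitivity} of the order, not merely the weaker fact that $-(\omega\zeta)^2$ is some $n$th root of unity.

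Part (ii) is then dispatched cleanly from the same reduction. If $n$ is odd, $n+4j+2$ is odd, so $(\zeta^{\,n+4j+2})^n=\zeta^{\,n(n+4j+2)}=\zeta^n=-1\neq 1$; thus $-(\omega\zeta)^2$ is never even an $n$th root of $1$, let alone a primitive one, and no admissible $\omega$ exists. If $n=2$ then $\mu_2=\{\pm1\}$ gives $\omega^2=1$ and $-(\omega\zeta)^2=-\zeta^2$, while $2n=4$ forces $\zeta^2=-1$, whence $-(\omega\zeta)^2=1$, which is not a primitive $2$nd root; so $K(16,\zeta)$ fails to be minimal. Finally, for the explicit list of minimal $\mathcal{R}$-matrices in (i) I would only substitute $\sigma(a)=\zeta$ and $\sigma(a^2)=-\zeta^2$ into the parametrisation of Corollary \ref{coro4.2}: the relation $\alpha^4=-\omega^2/\sigma(a^2)$ becomes $\alpha^4=\omega^2/\zeta^2$ and $\beta=\omega/(\sigma(a)\alpha)$ becomes $\beta=\omega/(\alpha\zeta)$, reproducing the displayed set once the existence of a suitable $\omega$ has been secured by the arithmetic above.
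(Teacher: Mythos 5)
Your reduction is set up exactly along the paper's route: you verify the hypothesis $\tau(a,a^i)=1$ of Corollary \ref{coro4.2}, compute $\sigma(a)=\zeta$, $\sigma(a^2)=-\zeta^2$ and $\eta(a,b)=-1$, and reduce everything to the existence of $\omega$ with $\omega^n=1$ such that $-(\omega\zeta)^2$ is a primitive $n$th root of unity; your treatment of part (ii) is complete and correct. The genuine gap is in part (i): the decisive existence question is never answered. You write that you ``would then analyse solvability through the parity of $m$'' of the condition $\gcd(m+2j+1,2m)=1$ (where $n=2m$ and $\omega=\zeta^{2j}$), but this case analysis --- which you yourself identify as the heart of the argument --- is not carried out, so no admissible $\omega$ is ever produced and minimality is not established for any even $n$.

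Moreover, this is not a gap that can be closed as stated. Completing your own reduction: if $m$ is even (i.e.\ $4\mid n$), then $m+2j+1$ is odd and $j=0$ gives $\gcd(m+1,2m)=1$, so $\omega=1$ works (the paper instead takes $\omega$ with $\omega^n=1$ and $\omega^2=-1$, which also exists exactly when $4\mid n$). But if $m$ is odd (i.e.\ $n\equiv 2\pmod 4$, e.g.\ $n=6$), then $m+2j+1$ is even for every $j$, so $\gcd(m+2j+1,2m)\geq 2$ and no admissible $\omega$ exists; since Corollary \ref{coro4.2} is an equivalence, this shows $K(8n,\zeta)$ is \emph{not} minimal for such $n$. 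In other words, your reduction, once finished, proves (i) only for $4\mid n$ and refutes it for $n\equiv 2\pmod 4$ (for which the parameter set displayed in (i) is empty anyway). The paper's own proof conceals the same obstruction: its assertion that for every even $n\geq 4$ there is $\omega$ with $\omega^n=1$ and $\omega^2=-1$ is false unless $4\mid n$, because $\omega^2=-1$ forces $\omega^n=(-1)^{n/2}$. So the step you postponed is not a formality; it is precisely where the statement, and the paper's argument for it, break down for $n\equiv 2\pmod 4$.
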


\begin{proof}
Firstly, we show (i). By the definition of $K(8n,\zeta)$,  $\sigma(a^2)=-\zeta^2$. If $n$ is even and bigger than $4$, we can find a $\omega\in \Bbbk$ such that $\omega^n=1$ and $\omega^2=-1$. Then we have $\omega^2\sigma(a^2)=\zeta^2$ and thus $\omega^2\sigma(a^2)$ is a primitive $n$th root of 1. By Corollary \ref{coro4.2}, we know that $K(8n,\zeta)$ is minimal and all minimal quasitriangular structures on it can be given by
 $\{R_{\alpha,\beta}\;|\;\alpha^4=\frac{\omega^2}{\zeta^2}, \beta=\frac{\omega}{\alpha \zeta}, \; \omega \in \Bbbk \; \text{such that}\; \omega^n=1 \;\text{and} \; -(\omega \zeta)^2\; \text{ is primitive} \;n\text{th root of}\;1\}$.

Secondly, we show (ii). If $n$ is odd, then we have $(\omega^2\sigma(a^2))^n=[-(\omega\zeta)^2]^n=-1$ for arbitrary $\omega\in \Bbbk$ such that $\omega^n=1$. Hence $\omega^2\sigma(a^2)$ is not a primitive $n$th root of 1. As a result $K(8n,\zeta)$ is not minimal by Corollary \ref{coro4.2}. If $n=2$, then $\sigma(a^2)=1$. Let $\omega\in \Bbbk$ such that $\omega^2=1$. Thus $\omega^2 \sigma(a^2)=1$ which is not a primitive $2$th root of 1. Therefore $K(16,\zeta)$ is not minimal by applying Corollary \ref{coro4.2} again.
\end{proof}

As an application of Theorem \ref{thm4.1}, we use the following example to illustrate our
results.

\begin{example}
\emph{
Let $K_8$ as before in Example \ref{ex2.1.5}, and let $\tilde{a}=a,\tilde{b}=ab$, then we get that $G=\langle \tilde{a},\tilde{b}|\tilde{a}^2=\tilde{b}^2=1,\tilde{a}\tilde{b}=\tilde{b}\tilde{a}\rangle$ and $\tilde{a}\triangleleft x=\tilde{a}\tilde{b},\;\tilde{b}\triangleleft x=\tilde{b}$. This implies that $K_8$ belongs to $K(8n,\sigma,\tau)$ and such that $\eta(\tilde{a},\tilde{b})=-1$. It can be seen that $\sigma(\tilde{a}^2)=1$ and $n=1$. Since $\sigma(\tilde{a}^2)=1$,  $K_8$ is minimal by Corollary \ref{coro4.2}. Moreover, all minimal quasitriangular structures on it can be given by
 $\{R_{\alpha,\beta}\;|\;\alpha,\beta\in \Bbbk\;\text{such that}\; \alpha^4=-1, \alpha\beta=1\}$.
Combined with Proposition \ref{trivial}, we can get that all universal $\mathcal{R}$-matrices on $K_8$ by   $\{R_{\alpha,\beta}\;|\;\alpha,\beta\in \Bbbk\;\text{such that}\; \alpha^4=-1, \alpha\beta=1\}\; \cup \;\{\sum_{1\leq i,j,k,l \leq 2}\gamma^{ik+jl}\delta^{il+jk}(-1)^{jk} e_{a^i b^j} \otimes e_{a^k b^l}\;|\; \gamma,\delta \in \Bbbk \;\text{such that}\; \gamma^2=\delta^2=1\}$. This result is the same as the result in \cite[Lemma 5.4]{W}.}
\end{example}
For completeness, we give the following result for the case $\eta(a,b)=1$.
\begin{proposition}
Let $K(8n,\sigma,\tau)$ as before. If $\eta(a,b)=1$, then $K(8n,\sigma,\tau)$ is not minimal.
\end{proposition}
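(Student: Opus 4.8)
The plan is to reduce the statement to the single observation that, when $\eta(a,b)=1$, the matrix $w^1$ attached to any candidate structure is forced to be degenerate. By Corollary \ref{coro2.2.1} a minimal quasitriangular structure must be non-trivial, and by Proposition \ref{propo4.4} every non-trivial quasitriangular structure on $K(8n,\sigma,\tau)$ in this case is one of the $R'_{\alpha,\beta}$. Hence it suffices to show that no $R'_{\alpha,\beta}$ is minimal.

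First I would record the $\eta(a,b)=1$ analogue of Lemma \ref{lemm4.7}; as noted at the start of Subsection \ref{sec4.4}, its proof is identical to the case $\eta(a,b)=-1$, yielding $l(X_t)=r(X_t)$ and $l(E_t)=r(E_{t\triangleleft x})$ for $t\in T$. Since $t\mapsto t\triangleleft x$ is a bijection of $T$, these identities give exactly conditions (i) and (ii) of Lemma \ref{lem3.3} for $R'_{\alpha,\beta}$. Lemma \ref{lem3.3} then applies and tells us that $(K(8n,\sigma,\tau),R'_{\alpha,\beta})$ is minimal if and only if all four matrices $w^1,w^2,w^3,w^4$ are non-degenerate.

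The decisive step is to examine $w^1$. Numbering $S=\{s_1,\dots,s_{2n}\}$ with $s_j=a^{2j-2}$ and $s_{n+j}=a^{2j-2}b$ for $1\leq j\leq n$, exactly as in the proof of Lemma \ref{min}, the defining formulas for $R'_{\alpha,\beta}$ give $w^1(a^{2i},a^{2j}b)=w^1(a^{2i}b,a^{2j}b)=w^1(a^{2i},a^{2j})$ with \emph{no} sign change; this is precisely the place where the hypothesis $\eta(a,b)=1$ enters, in contrast to the $\eta(a,b)=-1$ case. Writing $A^1=(w^1(a^{2i-2},a^{2j-2}))_{1\leq i,j\leq n}$, this means
\begin{gather*}
(w^1(s_i,s_j))_{1\leq i,j\leq 2n}=\begin{pmatrix} A^1 & A^1 \\ A^1 & A^1 \end{pmatrix},
\end{gather*}
whose $j$-th and $(n+j)$-th columns coincide for every $j$. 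Thus this $2n\times 2n$ matrix is singular regardless of $\alpha,\beta$, i.e. $w^1$ is always degenerate.

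Combining the two, no $R'_{\alpha,\beta}$ meets the non-degeneracy requirement of Lemma \ref{lem3.3}, so none of them is minimal; together with Corollary \ref{coro2.2.1} and Proposition \ref{propo4.4} this shows $K(8n,\sigma,\tau)$ admits no minimal quasitriangular structure, as claimed. I expect the only genuinely non-routine point to be the first step, namely verifying that the structural identities $l(X_t)=r(X_t)$ and $l(E_t)=r(E_{t\triangleleft x})$ (hence the hypotheses of Lemma \ref{lem3.3}) transfer verbatim from the $\eta(a,b)=-1$ setting; once this is granted, the degeneracy of $w^1$ is an immediate consequence of the explicit formulas.
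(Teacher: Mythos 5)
Your proposal is correct and follows essentially the same route as the paper: reduce via Corollary \ref{coro2.2.1} and Proposition \ref{propo4.4} to the structures $R'_{\alpha,\beta}$, invoke the $\eta(a,b)=1$ analogue of Lemma \ref{lemm4.7} so that Lemma \ref{lem3.3} applies, and then observe that the $2n\times 2n$ matrix $(w^1(s_i,s_j))$ has the block form $\begin{pmatrix} A^1 & A^1 \\ A^1 & A^1 \end{pmatrix}$ and is therefore degenerate. Your write-up is in fact slightly more careful than the paper's, which leaves the reduction to the $R'_{\alpha,\beta}$ implicit and contains a typo asserting $w^1$ is ``non-degenerated'' where degeneracy is meant.
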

\begin{proof}
Let $R'_{\alpha,\beta}$ as before in Proposition \ref{propo4.4}, then we will show that $R'_{\alpha,\beta}$ is not a minimal quasitriangular structure on $K(8n,\sigma,\tau)$ and therefore we complete the proof. Similar to the proof of Lemma \ref{lemm4.7}, we know that $l(X_t)=r(X_t)$ and $l(E_t)=r(E_{t\triangleleft x})$ for $t\in T$ in this case. And hence $R'_{\alpha,\beta}$ such that the condition of Lemma \ref{lem3.3}. We claim that the $w^1$ of $R'_{\alpha,\beta}$ is not a non-degenerated matrix. To show this, let us denote a matrix $A^{1}$ as follows
\begin{gather*}
A^1:=(w^1(s_i,s_j))_{1\leq i,j \leq n},
\end{gather*}
where $s_i=a^{2i-2},$ $s_j=a^{2j-2}$. Assume $1\leq i,j \leq n$ and let $s_{n+i}=a^{2i-2}b$, then we have
\begin{align*}
w^1(s_i,s_{n+j})&=w^1(a^{2i-2},a^{2j-2}b)\\
&=(\lambda_{2j-2,1})^{2i-2} (\alpha \beta)^{2(i-1)(j-1)}[\sigma(a^{2j-2})]^{i-1}\\
&=w^1(a^{2i-2},a^{2j-2})\\
&=w^1(s_i,s_j)
\end{align*}
and
\begin{align*}
w^1(s_{n+i},s_{j})&=w^1(a^{2i-2}b,a^{2j-2})\\
&=(\lambda_{2j-2,1})^{2i-2} (\alpha \beta)^{2(i-1)(j-1)}[\sigma(a^{2j-2})]^{i-1}\\
&=w^1(a^{2i-2},a^{2j-2})\\
&=w^1(s_i,s_j)
\end{align*}
and
\begin{align*}
w^1(s_{n+i},s_{n+j})&=w^1(a^{2i-2}b,a^{2j-2}b)\\
&=(\lambda_{2j-2,1})^{2i-2} (\alpha \beta)^{2(i-1)(j-1)}[\sigma(a^{2j-2})]^{i-1}\\
&=w^1(a^{2i-2},a^{2j-2})\\
&=w^1(s_i,s_j).
\end{align*}
Therefore we have
\begin{gather*}
(w^1(s_i,s_j))_{1\leq i,j \leq 2n} = \begin{pmatrix} A^1 & A^1 \\ A^1 & A^1 \end{pmatrix} \sim \begin{pmatrix} A^1 & A^1 \\ 0 & 0 \end{pmatrix}.
\end{gather*}
  Thus $w^1$ is non-degenerated and we know that $R'_{\alpha,\beta}$ is not minimal quasitriangular structure on $K(8n,\sigma,\tau)$ by Lemma \ref{lem3.3}.
\end{proof}

\end{document}